\documentclass[a4paper, 11pt]{preprint}

\usepackage[T1]{fontenc} % Font encoding
\usepackage[utf8]{inputenc} % Input encoding

\usepackage[english]{babel} % Localization to English

\usepackage{amsmath, amssymb, mathtools} % AMS suite
\usepackage{amsthm}

\usepackage[colorinlistoftodos,prependcaption,color=yellow,textsize=tiny,textwidth=2cm]{todonotes}

\makeatletter
\renewcommand{\@todonotes@drawMarginNoteWithLine}{%
  \ifvmode
    \vspace*{-\parskip}%
    \vskip-\baselineskip
    \noindent
  \fi
  \begin{tikzpicture}[remember picture, overlay, baseline=-0.75ex]%
    \node [coordinate] (inText) {};%
  \end{tikzpicture}%
  \marginpar{%
    \@todonotes@drawMarginNote%
    \@todonotes@drawLineToRightMargin%
  }%
}
\makeatother

\usepackage[osf]{newtxtext} % Set text font to a Times close, osf = old style serifs

\usepackage{mathrsfs} % Mathscr font

\usepackage{microtype} % Microtypography

\usepackage{placeins}

\usepackage{tikz}
\usetikzlibrary{positioning}
\usetikzlibrary{calc}

\usepackage{pgfplots}
\usepackage{pgfplotstable}
\pgfplotsset{compat=1.18}
\usepackage{standalone}

\definecolor{adamc}{RGB}{213,94,0}
\definecolor{gnc}{RGB}{0,114,178}
\definecolor{ngc}{RGB}{230,159,0}
\definecolor{sgdc}{RGB}{0,158,115}

\usepgfplotslibrary{fillbetween}
\usepgfplotslibrary{groupplots}
\usetikzlibrary{pgfplots.groupplots}

\pgfplotsset{
    colormap={cividis}{
        rgb=(0.0000,0.1262,0.3015) rgb=(0.0000,0.1292,0.3077) rgb=(0.0000,0.1321,0.3142)
        rgb=(0.0000,0.1350,0.3205) rgb=(0.0000,0.1379,0.3269) rgb=(0.0000,0.1408,0.3334)
        rgb=(0.0000,0.1437,0.3400) rgb=(0.0000,0.1465,0.3467) rgb=(0.0000,0.1492,0.3537)
        rgb=(0.0000,0.1519,0.3606) rgb=(0.0000,0.1546,0.3676) rgb=(0.0000,0.1574,0.3746)
        rgb=(0.0000,0.1601,0.3817) rgb=(0.0000,0.1629,0.3888) rgb=(0.0000,0.1657,0.3960)
        rgb=(0.0000,0.1685,0.4031) rgb=(0.0000,0.1714,0.4102) rgb=(0.0000,0.1743,0.4172)
        rgb=(0.0000,0.1773,0.4241) rgb=(0.0000,0.1798,0.4307) rgb=(0.0000,0.1817,0.4347)
        rgb=(0.0000,0.1834,0.4363) rgb=(0.0000,0.1852,0.4368) rgb=(0.0000,0.1872,0.4368)
        rgb=(0.0000,0.1901,0.4365) rgb=(0.0000,0.1930,0.4361) rgb=(0.0000,0.1958,0.4356)
        rgb=(0.0000,0.1987,0.4349) rgb=(0.0000,0.2015,0.4343) rgb=(0.0000,0.2044,0.4336)
        rgb=(0.0000,0.2073,0.4329) rgb=(0.0055,0.2101,0.4322) rgb=(0.0236,0.2130,0.4314)
        rgb=(0.0416,0.2158,0.4308) rgb=(0.0576,0.2187,0.4301) rgb=(0.0710,0.2215,0.4293)
        rgb=(0.0827,0.2244,0.4287) rgb=(0.0932,0.2272,0.4280) rgb=(0.1030,0.2300,0.4274)
        rgb=(0.1120,0.2329,0.4268) rgb=(0.1204,0.2357,0.4262) rgb=(0.1283,0.2385,0.4256)
        rgb=(0.1359,0.2414,0.4251) rgb=(0.1431,0.2442,0.4245) rgb=(0.1500,0.2470,0.4241)
        rgb=(0.1566,0.2498,0.4236) rgb=(0.1630,0.2526,0.4232) rgb=(0.1692,0.2555,0.4228)
        rgb=(0.1752,0.2583,0.4224) rgb=(0.1811,0.2611,0.4220) rgb=(0.1868,0.2639,0.4217)
        rgb=(0.1923,0.2667,0.4214) rgb=(0.1977,0.2695,0.4212) rgb=(0.2030,0.2723,0.4209)
        rgb=(0.2082,0.2751,0.4207) rgb=(0.2133,0.2780,0.4205) rgb=(0.2183,0.2808,0.4204)
        rgb=(0.2232,0.2836,0.4203) rgb=(0.2281,0.2864,0.4202) rgb=(0.2328,0.2892,0.4201)
        rgb=(0.2375,0.2920,0.4200) rgb=(0.2421,0.2948,0.4200) rgb=(0.2466,0.2976,0.4200)
        rgb=(0.2511,0.3004,0.4201) rgb=(0.2556,0.3032,0.4201) rgb=(0.2599,0.3060,0.4202)
        rgb=(0.2643,0.3088,0.4203) rgb=(0.2686,0.3116,0.4205) rgb=(0.2728,0.3144,0.4206)
        rgb=(0.2770,0.3172,0.4208) rgb=(0.2811,0.3200,0.4210) rgb=(0.2853,0.3228,0.4212)
        rgb=(0.2894,0.3256,0.4215) rgb=(0.2934,0.3284,0.4218) rgb=(0.2974,0.3312,0.4221)
        rgb=(0.3014,0.3340,0.4224) rgb=(0.3054,0.3368,0.4227) rgb=(0.3093,0.3396,0.4231)
        rgb=(0.3132,0.3424,0.4236) rgb=(0.3170,0.3453,0.4240) rgb=(0.3209,0.3481,0.4244)
        rgb=(0.3247,0.3509,0.4249) rgb=(0.3285,0.3537,0.4254) rgb=(0.3323,0.3565,0.4259)
        rgb=(0.3361,0.3593,0.4264) rgb=(0.3398,0.3622,0.4270) rgb=(0.3435,0.3650,0.4276)
        rgb=(0.3472,0.3678,0.4282) rgb=(0.3509,0.3706,0.4288) rgb=(0.3546,0.3734,0.4294)
        rgb=(0.3582,0.3763,0.4302) rgb=(0.3619,0.3791,0.4308) rgb=(0.3655,0.3819,0.4316)
        rgb=(0.3691,0.3848,0.4322) rgb=(0.3727,0.3876,0.4331) rgb=(0.3763,0.3904,0.4338)
        rgb=(0.3798,0.3933,0.4346) rgb=(0.3834,0.3961,0.4355) rgb=(0.3869,0.3990,0.4364)
        rgb=(0.3905,0.4018,0.4372) rgb=(0.3940,0.4047,0.4381) rgb=(0.3975,0.4075,0.4390)
        rgb=(0.4010,0.4104,0.4400) rgb=(0.4045,0.4132,0.4409) rgb=(0.4080,0.4161,0.4419)
        rgb=(0.4114,0.4189,0.4430) rgb=(0.4149,0.4218,0.4440) rgb=(0.4183,0.4247,0.4450)
        rgb=(0.4218,0.4275,0.4462) rgb=(0.4252,0.4304,0.4473) rgb=(0.4286,0.4333,0.4485)
        rgb=(0.4320,0.4362,0.4496) rgb=(0.4354,0.4390,0.4508) rgb=(0.4388,0.4419,0.4521)
        rgb=(0.4422,0.4448,0.4534) rgb=(0.4456,0.4477,0.4547) rgb=(0.4489,0.4506,0.4561)
        rgb=(0.4523,0.4535,0.4575) rgb=(0.4556,0.4564,0.4589) rgb=(0.4589,0.4593,0.4604)
        rgb=(0.4622,0.4622,0.4620) rgb=(0.4656,0.4651,0.4635) rgb=(0.4689,0.4680,0.4650)
        rgb=(0.4722,0.4709,0.4665) rgb=(0.4756,0.4738,0.4679) rgb=(0.4790,0.4767,0.4691)
        rgb=(0.4825,0.4797,0.4701) rgb=(0.4861,0.4826,0.4707) rgb=(0.4897,0.4856,0.4714)
        rgb=(0.4934,0.4886,0.4719) rgb=(0.4971,0.4915,0.4723) rgb=(0.5008,0.4945,0.4727)
        rgb=(0.5045,0.4975,0.4730) rgb=(0.5083,0.5005,0.4732) rgb=(0.5121,0.5035,0.4734)
        rgb=(0.5158,0.5065,0.4736) rgb=(0.5196,0.5095,0.4737) rgb=(0.5234,0.5125,0.4738)
        rgb=(0.5272,0.5155,0.4739) rgb=(0.5310,0.5186,0.4739) rgb=(0.5349,0.5216,0.4738)
        rgb=(0.5387,0.5246,0.4739) rgb=(0.5425,0.5277,0.4738) rgb=(0.5464,0.5307,0.4736)
        rgb=(0.5502,0.5338,0.4735) rgb=(0.5541,0.5368,0.4733) rgb=(0.5579,0.5399,0.4732)
        rgb=(0.5618,0.5430,0.4729) rgb=(0.5657,0.5461,0.4727) rgb=(0.5696,0.5491,0.4723)
        rgb=(0.5735,0.5522,0.4720) rgb=(0.5774,0.5553,0.4717) rgb=(0.5813,0.5584,0.4714)
        rgb=(0.5852,0.5615,0.4709) rgb=(0.5892,0.5646,0.4705) rgb=(0.5931,0.5678,0.4701)
        rgb=(0.5970,0.5709,0.4696) rgb=(0.6010,0.5740,0.4691) rgb=(0.6050,0.5772,0.4685)
        rgb=(0.6089,0.5803,0.4680) rgb=(0.6129,0.5835,0.4673) rgb=(0.6168,0.5866,0.4668)
        rgb=(0.6208,0.5898,0.4662) rgb=(0.6248,0.5929,0.4655) rgb=(0.6288,0.5961,0.4649)
        rgb=(0.6328,0.5993,0.4641) rgb=(0.6368,0.6025,0.4632) rgb=(0.6408,0.6057,0.4625)
        rgb=(0.6449,0.6089,0.4617) rgb=(0.6489,0.6121,0.4609) rgb=(0.6529,0.6153,0.4600)
        rgb=(0.6570,0.6185,0.4591) rgb=(0.6610,0.6217,0.4583) rgb=(0.6651,0.6250,0.4573)
        rgb=(0.6691,0.6282,0.4562) rgb=(0.6732,0.6315,0.4553) rgb=(0.6773,0.6347,0.4543)
        rgb=(0.6813,0.6380,0.4532) rgb=(0.6854,0.6412,0.4521) rgb=(0.6895,0.6445,0.4511)
        rgb=(0.6936,0.6478,0.4499) rgb=(0.6977,0.6511,0.4487) rgb=(0.7018,0.6544,0.4475)
        rgb=(0.7060,0.6577,0.4463) rgb=(0.7101,0.6610,0.4450) rgb=(0.7142,0.6643,0.4437)
        rgb=(0.7184,0.6676,0.4424) rgb=(0.7225,0.6710,0.4409) rgb=(0.7267,0.6743,0.4396)
        rgb=(0.7308,0.6776,0.4382) rgb=(0.7350,0.6810,0.4368) rgb=(0.7392,0.6844,0.4352)
        rgb=(0.7434,0.6877,0.4338) rgb=(0.7476,0.6911,0.4322) rgb=(0.7518,0.6945,0.4307)
        rgb=(0.7560,0.6979,0.4290) rgb=(0.7602,0.7013,0.4273) rgb=(0.7644,0.7047,0.4258)
        rgb=(0.7686,0.7081,0.4241) rgb=(0.7729,0.7115,0.4223) rgb=(0.7771,0.7150,0.4205)
        rgb=(0.7814,0.7184,0.4188) rgb=(0.7856,0.7218,0.4168) rgb=(0.7899,0.7253,0.4150)
        rgb=(0.7942,0.7288,0.4129) rgb=(0.7985,0.7322,0.4111) rgb=(0.8027,0.7357,0.4090)
        rgb=(0.8070,0.7392,0.4070) rgb=(0.8114,0.7427,0.4049) rgb=(0.8157,0.7462,0.4028)
        rgb=(0.8200,0.7497,0.4007) rgb=(0.8243,0.7532,0.3984) rgb=(0.8287,0.7568,0.3961)
        rgb=(0.8330,0.7603,0.3938) rgb=(0.8374,0.7639,0.3915) rgb=(0.8417,0.7674,0.3892)
        rgb=(0.8461,0.7710,0.3869) rgb=(0.8505,0.7745,0.3843) rgb=(0.8548,0.7781,0.3818)
        rgb=(0.8592,0.7817,0.3793) rgb=(0.8636,0.7853,0.3766) rgb=(0.8681,0.7889,0.3739)
        rgb=(0.8725,0.7926,0.3712) rgb=(0.8769,0.7962,0.3684) rgb=(0.8813,0.7998,0.3657)
        rgb=(0.8858,0.8035,0.3627) rgb=(0.8902,0.8071,0.3599) rgb=(0.8947,0.8108,0.3569)
        rgb=(0.8992,0.8145,0.3538) rgb=(0.9037,0.8182,0.3507) rgb=(0.9082,0.8219,0.3474)
        rgb=(0.9127,0.8256,0.3442) rgb=(0.9172,0.8293,0.3409) rgb=(0.9217,0.8330,0.3374)
        rgb=(0.9262,0.8367,0.3340) rgb=(0.9308,0.8405,0.3306) rgb=(0.9353,0.8442,0.3268)
        rgb=(0.9399,0.8480,0.3232) rgb=(0.9444,0.8518,0.3195) rgb=(0.9490,0.8556,0.3155)
        rgb=(0.9536,0.8593,0.3116) rgb=(0.9582,0.8632,0.3076) rgb=(0.9628,0.8670,0.3034)
        rgb=(0.9674,0.8708,0.2990) rgb=(0.9721,0.8746,0.2947) rgb=(0.9767,0.8785,0.2901)
        rgb=(0.9814,0.8823,0.2856) rgb=(0.9860,0.8862,0.2807) rgb=(0.9907,0.8901,0.2759)
        rgb=(0.9954,0.8940,0.2708) rgb=(1.0000,0.8979,0.2655) rgb=(1.0000,0.9018,0.2600)
        rgb=(1.0000,0.9057,0.2593) rgb=(1.0000,0.9094,0.2634) rgb=(1.0000,0.9131,0.2680)
        rgb=(1.0000,0.9169,0.2731)
    }
}

\usepackage{mhequ} % Martin's custom equations, automatic labelling
\usepackage{comment} % Martin's comments package, also works inside equations

\usepackage[colorlinks=true, linkcolor=colorLink, citecolor=colorCite, urlcolor=colorLink, linktocpage=true]{hyperref}
\usepackage{cleveref}

\crefname{lemma}{lemma}{lemmas}
\Crefname{lemma}{Lemma}{Lemmas}

\usepackage{orcidlink} % ORCID symbol
\usepackage{multirow} % Multi-row table cells
\usepackage{booktabs} % Professional table rules

\definecolor{colorLink}{RGB}{0,100,162} % Colour of hyperref urls
\definecolor{colorCite}{RGB}{8,124,100} % Colour of citations
\theoremstyle{plain}
\newtheorem{theorem}{Theorem}[section]

\newtheorem{lemma}[theorem]{Lemma}

\newtheorem*{theorem*}{Theorem}
\newtheorem*{definition*}{Definition}

\theoremstyle{definition}
\newtheorem{definition}[theorem]{Definition}

\newtheorem{remark}[theorem]{Remark}

\numberwithin{equation}{section}
\renewcommand{\theequation}{\thesection.\arabic{equation}}

\global\long\def\D{\mathrm{d}}%
\global\long\def\dif{\,\mathrm{d}}%
\global\long\def\R{\mathbb{R}}%
\global\long\def\P{\mathbb{P}}%
\global\long\def\Q{\mathbb{Q}}%
\global\long\def\E{\mathbb{E}}%
\global\long\def\KL{\textup{KL}}%
\global\long\def\FR{\textup{FR}}%
\global\long\def\AFR{\textup{AFR}}%
\global\long\def\ddh{\left.\frac{\D}{\D h}\right|_{h=0}}%
\global\long\def\V{L^{1}_{T}C^{1}_{\textup{Lip}}}%

\colorlet{darkblue}{blue!90!black}
\colorlet{darkgreen}{green!50!black}
\let\comm\comment

\begin{document}
\title{The Advective Fisher--Rao Geometry
of Deterministic Measure Transport
}
\author{
	Benjamin Gess\textsuperscript{$1$},
	Johannes M\"{u}ller\textsuperscript{$2,$\orcidlink{0000-0001-8729-0466}}
	}
\institute{
	Institut f\"{u}r Mathematik, Technische Universit\"{a}t Berlin \& Max-Planck-Institut f\"{u}r Mathematik in den Naturwissenschaften,
	Email: \href{mailto:benjamin.gess@tu-berlin.de}{\color{black}\texttt{benjamin.gess@tu-berlin.de}}
	\and
	Institut f\"{u}r Mathematik, Technische Universit\"{a}t Berlin, Email: \href{mailto:johannes.christoph.mueller@tu-berlin.de}{\color{black} \texttt{johannes.christoph.mueller@tu-berlin.de}}
	}
\maketitle
\begin{abstract}
    A novel \emph{advective Fisher--Rao metric} is introduced for optimization tasks on paths of probability measures governed by the continuity equation. This metric is shown to lead to optimal descent directions.
    
    It is then shown that this metric arises naturally from three different perspectives: As the rescaled zero-noise limit of the Fisher--Rao metric on path measures, as the expected value of the second variation of the Freidlin--Wentzell large deviation rate functional, and as the Hessian of the Benamou--Brenier action functional from dynamic optimal transport.
    
    We supplement this geometric construction with computational experiments. Here, we demonstrate empirically that the advective Fisher--Rao metric yields the desired optimal fitting of probability densities, whereas the Gauss--Newton method yields optimal fitting of velocity fields.
    
	\vspace{.5em}
	\noindent{\it MSC2020:} 35Q90, 49Q22, 53B12, 60H10, 68T07
	\noindent {\it Keywords:} Flow-based generative models, Fisher--Rao metric, natural gradient, Gauss--Newton method, Wasserstein geometry
\end{abstract}

\setcounter{tocdepth}{2}
\tableofcontents

\section{Introduction}
At the heart of a variety of problems lies the task of matching a reference path of probability measures $\rho_\bullet^\star = (\rho^\star_t)_{t\in[0, 1]}$, which means that one seeks to solve the problem
\begin{equ}\label{eq:matching-paths}
    \min_{\rho_\bullet\in \textup{AC}_T(\mathcal P_2(\R^d))} D(\rho_\bullet^\star, \rho_\bullet),
\end{equ}
where
$\textup{AC}_T(\mathcal P_2(\R^d))$ denotes the set of absolutely continuous paths of probability measures with respect to the Wasserstein-2 distance and
$D$ is some measure of distance.
For example, this problem arises in optimal control of densities \cite{annunziato2010optimal} including dynamic optimal transport and its entropic regularization in the form of Schr\"odinger bridges \cite{benamou2000computational, leonard2012schrodinger}. 
Recently, such control problems on the space of probability measures have attracted a great deal of attention in generative machine learning in the form of flow-matching and diffusion models \cite{Sohl-Dickstein2015, Ho2020, rombach2022high, lipman2023flow} as well as for approximate sampling methods~\cite{berner2024an, domingo2024stochastic, blessing2025trust}, see also \Cref{subsubsec:examples} for a more detailed discussion of problems of this kind. 
To approximately solve this problem, it is common to employ gradient-based optimization methods to minimize the discrepancy $D$.
This requires the choice of a Riemannian metric on the search space $\textup{AC}_T(\mathcal P_2(\R^d))$, where the choice of metric has a significant influence on the optimization dynamics.
A particularly favorable scenario arises if the gradient flow of $D$ with respect to $g$ drives $\rho_\bullet$ along a geodesic towards the reference path $\rho_\bullet^\star$, since it enables a global description of the optimization dynamics as well as fast global exponential convergence rates.
In this case, we call the metric $g$ \emph{compatible} with $D$.
In practice, the problem \eqref{eq:matching-paths} is typically not solved directly, but reformulated as an optimization problem on the space of velocity fields $v_\bullet$ that control the evolution of the density path $\rho_\bullet$ via \eqref{eq:introduction-CE} below.
This is justified by the fact that any absolutely continuous curve  $\rho_\bullet^v\in\textup{AC}_T(\mathcal P_2(\R^d))$ is induced by a time-dependent velocity field $v_\bullet = (v_t)_{t\in[0,1]}$ via the continuity equation
\begin{equ}\label{eq:introduction-CE}
    \partial_t \rho_t^v + \nabla\cdot(\rho_t^v v_t)=0, \qquad \rho_0^v=\rho_0.
\end{equ}
Assuming that the reference path $\rho^\star_\bullet$ is induced by  $v_\bullet^\star$, it is a common approach to consider the least-squares functional
\begin{equ}\label{eq:least-squares-objective}
    E(v_\bullet) = \frac12\int_0^1\int_{\mathbb{R}^d}\|v_t(x)-v_t^\star(x)\|^2 \rho_t^\star(\D x)\dif t
\end{equ}
as a proxy for the original problem~\eqref{eq:matching-paths}.
Importantly, the solution map
\begin{equ}
    \Phi\colon L^1_T C^1_{\textup{Lip}}(\R^d),\quad v_\bullet \mapsto \rho_\bullet^v
\end{equ}
of the continuity equation is neither linear nor injective, nor does it have an injective derivative.
As a consequence, optimization in the search space of velocity fields is not equivalent to the space of paths of probability densities.
The fact that the ultimate goal is to optimize the path of densities $p_\bullet^v$ rather than the velocity field $v_\bullet$ raises a natural question:
\begin{quote}
    \emph{
    Is there a Riemannian metric $g$ with respect to which the gradient flow of $E$ drives $\rho_\bullet^v$ along a geodesic towards the reference path $\rho_\bullet^\star$?
    }
\end{quote}
This question is answered positively in this work, based on the introduction of a novel geometry $g$, the \textit{advective Fisher--Rao metric}, which is shown to induce optimal gradient descent directions on the level of paths of probability measures.
Notably, this metric differs from the $L^2$ geometry induced by \eqref{eq:least-squares-objective} on velocities, and the latter does not lead to such optimal descent directions.
\subsection{Contributions and main results}
In this section, we first introduce the advective Fisher--Rao metric, which leads to optimal gradient descent directions on the level of paths of probability measures.
We discuss how this metric appears as a canonical object from three different perspectives, as well as practical implications for the training of generative models.

\subsubsection{The advective Fisher--Rao metric and its optimality}
As laid out in the introduction, we are looking for a metric on the paths of probability measures induced by deterministic advection \eqref{eq:introduction-CE}, and its corresponding flow
\begin{equ}\label{eq:intro-SDE}
    \dot X_t^v = v_t(X_t^v), \qquad X_0 \sim \rho_0.
\end{equ}
In the setting of diffusive convection with diffusivity $\varepsilon >0$, that is,
\begin{equ}\label{eq:intro-SDE-2}
 \mathrm{d}X_t^{v,\varepsilon}=v_t(X_t^{v,\varepsilon})\,\mathrm{d}t+\sqrt{\varepsilon}\,\mathrm{d}W_t,\qquad X_0^{\varepsilon}\sim \rho_0,
\end{equ}
with the corresponding Fokker-Planck equation
\begin{equ}
    \partial_t \rho_t^{v,\varepsilon} + \nabla\cdot(\rho_t^{v,\varepsilon} v_t) = \frac{\varepsilon}{2} \Delta \rho_t^{v,\varepsilon}, \qquad \rho_0^{\varepsilon}=\rho_0,
\end{equ}
the Fisher--Rao metric on the space of path measures offers a canonical choice. Indeed, since for $\varepsilon >0$, the  path measures $\P^{v,\varepsilon}=\mathcal{L}(X^{v,\varepsilon}_\cdot)$ for different choices of $v$ are equivalent, the Fisher--Rao metric is well-defined, and induces geodesic descent directions \cite{amari2008information}. 
However, for $\varepsilon =0$ the path measures are typically singular, and the Fisher--Rao metric is not defined.
An important insight of this section is that, in the noisy regime $\varepsilon >0$, the pullback of the Fisher--Rao metric on path measures along the solution map $v\mapsto \P^{v,\varepsilon}$ induces a natural geometry on the space of velocity fields. 
When expressed in terms of velocity fields and rescaled appropriately, this pulled-back Fisher--Rao metric admits a nontrivial singular limit as $\varepsilon \to 0$. 
This limiting structure motivates the definition of the advective Fisher--Rao metric for deterministic measure-valued flows. We then show that the resulting metric yields gradient descent directions that are optimal at the level of the induced density paths.
As a first step towards the definition of a compatible metric in the absence of noise, we give an explicit expression for the pullback of the Fisher--Rao metric along the solution map of the SDE \eqref{eq:intro-SDE-2}.

\begin{theorem*}[\Cref{thm:FR-SDE}]
Let $\varepsilon>0$ be the noise level and let $\P^{v,\varepsilon}$ denote the path measure of the SDE \eqref{eq:intro-SDE}. 
Then, under a Novikov condition on $v_\bullet$, $u_\bullet$, and $w_\bullet$, the pullback of the Fisher--Rao metric along the map $v_\bullet\mapsto \P^{v,\varepsilon}$ is given by
\begin{equ}\label{eq:FR-SDE-intro}
    g_{v_\bullet}^{\textup{FR},\varepsilon}(u_\bullet,w_\bullet)=\frac1\varepsilon\int_{0}^{1}\int_{\R^d}u_{t}(x)\cdot w_{t}(x) \rho_{t}^{v,\varepsilon}(\D x)\dif t. 
\end{equ}
\end{theorem*}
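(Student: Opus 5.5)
The plan is to compute the Fisher--Rao metric on path space directly from Girsanov's theorem. I use the standard definition of the Fisher--Rao metric on a parametrized family of equivalent measures: for a smooth curve $h\mapsto v_\bullet+hu_\bullet$,
\begin{equ}
g^{\textup{FR},\varepsilon}_{v_\bullet}(u_\bullet,w_\bullet)=\E_{\P^{v,\varepsilon}}\Big[\ddh \log\frac{\D\P^{v+hu,\varepsilon}}{\D\P^{v,\varepsilon}}\;\cdot\;\ddh\log\frac{\D\P^{v+hw,\varepsilon}}{\D\P^{v,\varepsilon}}\Big].
\end{equ}
The polarization $\frac{\D}{\D h}$ along $u$ and along $w$ can be handled separately, so it suffices to compute the score along a single direction.

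\textbf{Step 1 (Girsanov density).} Under the Novikov condition, the law of $X^{v+hu,\varepsilon}$ is absolutely continuous with respect to $\P^{v,\varepsilon}$ (same initial law $\rho_0$). On the canonical path space, with $\D X_t=v_t(X_t)\dif t+\sqrt\varepsilon\,\D W_t$ under $\P^{v,\varepsilon}$, the Radon--Nikodym derivative is
\begin{equ}
\log\frac{\D\P^{v+hu,\varepsilon}}{\D\P^{v,\varepsilon}}=\frac{h}{\sqrt\varepsilon}\int_0^1 u_t(X_t)\cdot \D W_t-\frac{h^2}{2\varepsilon}\int_0^1\|u_t(X_t)\|^2\dif t .
\end{equ}
Here $\D W_t=\varepsilon^{-1/2}(\D X_t-v_t(X_t)\dif t)$ is a $\P^{v,\varepsilon}$-Brownian motion.

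\textbf{Step 2 (score).} Differentiating at $h=0$ gives the score $S_u=\varepsilon^{-1/2}\int_0^1 u_t(X_t)\cdot\D W_t$. Exchanging derivative and expectation is harmless because the log-density is an explicit quadratic polynomial in $h$. It remains only to note that the metric is defined through these scores, which are square integrable under the assumptions.

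\textbf{Step 3 (Itô isometry).} By the Itô isometry,
\begin{equ}
\E[S_uS_w]=\frac1\varepsilon\int_0^1\E_{\P^{v,\varepsilon}}\big[u_t(X_t)\cdot w_t(X_t)\big]\dif t=\frac1\varepsilon\int_0^1\!\!\int_{\R^d}u_t\cdot w_t\,\D\rho^{v,\varepsilon}_t\dif t.
\end{equ}
The last equality holds because the time-$t$ marginal of $\P^{v,\varepsilon}$ is $\rho_t^{v,\varepsilon}$, which solves the Fokker--Planck equation. This is exactly \eqref{eq:FR-SDE-intro}.

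\textbf{Main obstacle.} The only delicate point is justifying Step 1 and the integrability behind Step 3. Concretely, one must check three things: the Girsanov exponential is a true martingale, which is where the Novikov condition on $u_\bullet,w_\bullet$ (and on $v_\bullet$ for well-posedness) is used; the score lies in $L^2(\P^{v,\varepsilon})$, so that the Itô isometry applies; and the Fisher--Rao metric of the parametric family really equals the $L^2$ pairing of scores, i.e.\ one may differentiate under the expectation.

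For the integrability, I would use that $\E\int_0^1\|u_t(X_t)\|^2\dif t<\infty$ follows from the Novikov bound $\E\exp(\frac1{2\varepsilon}\int\|u_t\|^2)<\infty$.
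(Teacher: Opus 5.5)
Your computation is the same as the core of the paper's proof. Girsanov gives $\ln\frac{\D\P^{v+hu,\varepsilon}}{\D\P^{v,\varepsilon}}=hf-h^2g$ with $f=\varepsilon^{-1/2}\int_0^1u_t(X_t)\cdot\D W_t$ and $g=\frac1{2\varepsilon}\int_0^1\|u_t(X_t)\|^2\dif t$. The score is $f$, and the It\^o isometry produces the formula.

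Two small corrections:
\begin{itemize}
\item The hypothesis \eqref{eq:novikov-fisher-rao} carries the factor $h^2$ and holds only for $|h|<\delta$. You should extract $\E\int_0^1\|u_t(X_t)\|^2\dif t<\infty$ at some fixed $0<|h|<\delta$, not with the constant $\frac1{2\varepsilon}$.
\item Novikov only yields a measure under which $X$ solves the SDE with drift $v+hu$. Identifying that measure with $\P^{v+hu,\varepsilon}$ uses uniqueness in law for $v+hu$. This is a separate assumption, not a consequence of a Novikov condition on $v$.
\end{itemize}

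\textbf{The main omission.} You do not show that $\Xi^{u,\varepsilon}=\ddh\P^{v+hu,\varepsilon}$ is a tangent vector of $\mathcal P(\P^{v,\varepsilon})$ in the Pistone--Sempi sense \eqref{eq:definition-PS-tangent}. This is where the paper spends most of its proof. It is what makes the expression a pullback of the Fisher--Rao metric rather than merely the covariance of two scores, and it is part of the precise statement \Cref{thm:FR-SDE}.

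Your obstacle (iii) misidentifies the difficulty. No derivative needs to be exchanged with an expectation, since pointwise the derivative of $hf-h^2g$ is trivial. What is needed is $h^{-1}(hf-h^2g)\to f$ in the exponential Orlicz space $L^\Phi(\P^{v,\varepsilon})$ with $\Phi(t)=e^{|t|}-1$. That means $f,g\in L^\Phi$, which requires exponential moments, not second moments.
\begin{itemize}
\item For $g$, exponential moments can be read off directly from the hypothesis.
\item For the stochastic integral $f$, they do not follow from $\E\int\|u\|^2<\infty$.
\end{itemize}

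The paper handles $f$ as follows. Set $M_t=\varepsilon^{-1/2}\int_0^tu_s(X_s)\cdot\D W_s$ and let $Z$ be the stochastic exponential of $2\alpha M$. By the same Novikov hypothesis, $Z$ is a true martingale for $\alpha<\delta/2$. Then $\E[e^{\alpha M_1}]=\E[Z_1^{1/2}e^{\alpha^2\langle M\rangle_1}]\le\E[Z_1]^{1/2}\,\E[e^{2\alpha^2\langle M\rangle_1}]^{1/2}<\infty$. Applying the same bound to $-u$ controls $\E[e^{\alpha|f|}]$.

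In the paper's framework, this Orlicz membership is also what guarantees that $\ddh\P^{v+hu,\varepsilon}=f\,\P^{v,\varepsilon}$ exists as a derivative of measures. As written, your argument proves the formula for the score pairing, but not that the variations lie in the tangent space on which the Fisher--Rao metric is defined.
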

Based on this, we propose an extension of the Fisher--Rao metric to the setting of deterministic dynamics.
\begin{definition*}[Advective Fisher--Rao metric]
Consider a velocity field $v_\bullet\in\V$. 
For $u_\bullet, w_\bullet\in L^2_T C^1_{\textup{Lip}}$ we call 
\begin{equ}\label{eq:intro-AFR}
    g^{\textup{AFR}}_{v_{\bullet}}(u_{\bullet},w_{\bullet})=\int_{0}^{T}\int_{\R^d} u_{t}(x)\cdot w_{t}(x) \rho_{t}^{v}(\D x) \dif t, 
\end{equ}
the \emph{advective Fisher--Rao} metric, where $\rho_{\bullet}^{v}$ solves the continuity equation \eqref{eq:introduction-CE}.
\end{definition*}
We next show that the advective Fisher--Rao metric is indeed compatible with the least-squares energy $E$ in the sense that the gradient flow of $E$ with respect to the metric follows a geodesic in the space of paths of probability densities from any initial condition to the global optimizer.

\begin{theorem*}[\Cref{thm:FR-gradient-densities}, optimality]
Consider a velocity field $v_\bullet\in\V$ such that $\rho^\star_\bullet\ll\rho^v_\bullet$ and assume that the gradient with respect to the advective Fisher--Rao metric $u_{\bullet} = \nabla^{\AFR} E(v_{\bullet})$ exists in $L^1_TC^1_{\textup{Lip}}$. 
Then, it holds that
\begin{equ}
    \ddh \rho^{v+hu}_\bullet = \rho^v_\bullet - \rho^\star_\bullet. 
\end{equ}
\end{theorem*}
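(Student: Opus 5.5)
The plan is to identify the gradient explicitly, write down the equation satisfied by the derivative of the solution map $\Phi$, and check that $\rho^v_\bullet-\rho^\star_\bullet$ solves that equation. Throughout I assume $\rho^\star_\bullet=\rho^{v^\star}_\bullet$ solves \eqref{eq:introduction-CE} with velocity $v^\star_\bullet$ and the same initial datum $\rho_0$.

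\emph{Step 1: the gradient.} First I compute the first variation of $E$ in \eqref{eq:least-squares-objective}. Since $\rho^\star_\bullet$ does not depend on $v_\bullet$,
\begin{equ}
DE(v_\bullet)[w_\bullet]=\int_0^1\int_{\R^d}(v_t-v_t^\star)\cdot w_t \,\rho_t^\star(\D x)\dif t .
\end{equ}
By definition, $u_\bullet=\nabla^{\AFR}E(v_\bullet)$ satisfies $g^{\AFR}_{v_\bullet}(u_\bullet,w_\bullet)=DE(v_\bullet)[w_\bullet]$ for all test fields $w_\bullet$. The assumption $\rho^\star_\bullet\ll\rho^v_\bullet$ lets me rewrite the right-hand side against $\rho^v_\bullet$. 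This identifies
\begin{equ}
u_t=(v_t-v_t^\star)\frac{\D\rho_t^\star}{\D\rho_t^v}\quad \rho^v_t\text{-a.e.},
\qquad\text{equivalently}\qquad \rho^v_t u_t=\rho^\star_t(v_t-v^\star_t)
\end{equ}
as an identity of vector measures, which is the only form in which $u$ will enter later.

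\emph{Step 2: the linearized equation.} Next I show that the solution map $\Phi$ is differentiable in direction $u_\bullet\in\V$. The derivative $\delta\rho_\bullet=\ddh\rho^{v+hu}_\bullet$ should be the distributional solution of the linearized continuity equation
\begin{equ}
\partial_t\delta\rho_t+\nabla\cdot(\delta\rho_t\, v_t)=-\nabla\cdot(\rho^v_t u_t),\qquad \delta\rho_0=0 .
\end{equ}
I would prove this with the flow representation $\rho^{v+hu}_t=(X^{v+hu}_t)_\#\rho_0$. Differentiating the ODE \eqref{eq:intro-SDE} in $h$ gives the variational equation for $\partial_h X$, which is well posed since $v,u\in\V$. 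Pairing with test functions $\varphi\in C_c^\infty$ then yields the weak form above. I expect this step to be the main technical obstacle: one has to control the difference quotients uniformly via Gronwall and the Lipschitz bounds, and specify in which weak topology the derivative is taken.

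\emph{Step 3: verification and uniqueness.} By Step 1 the source term equals $-\nabla\cdot(\rho^\star_t(v_t-v^\star_t))$. Plugging in $\delta\rho=\rho^v-\rho^\star$ and using the continuity equations for $\rho^v$ and $\rho^\star$ gives
\begin{equ}
\partial_t(\rho^v_t-\rho^\star_t)+\nabla\cdot\bigl((\rho^v_t-\rho^\star_t)v_t\bigr)=\nabla\cdot(\rho^\star_t v^\star_t)-\nabla\cdot(\rho^\star_t v_t)=-\nabla\cdot\bigl(\rho^\star_t(v_t-v^\star_t)\bigr),
\end{equ}
with initial value $\rho_0-\rho_0=0$. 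So $\rho^v_\bullet-\rho^\star_\bullet$ solves the linearized equation. The linearized equation is a linear transport equation with Lipschitz velocity $v_\bullet$ and a given source. Its signed-measure solutions are therefore unique: by duality, one solves the backward transport equation for a test function along the flow of $v$. Hence $\ddh\rho^{v+hu}_\bullet=\rho^v_\bullet-\rho^\star_\bullet$.
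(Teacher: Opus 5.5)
Your proposal is correct and follows the paper's argument essentially step for step: the gradient enters only through $\rho^v_t u_t=\rho^\star_t(v_t-v^\star_t)$; the derivative solves the linearized continuity equation with source $-\nabla\cdot(\rho^v_t u_t)$ and zero initial datum, which is the paper's Lemma~\ref{thm:derivative-solution}, proved there via flow sensitivity; and $\rho^v_\bullet-\rho^\star_\bullet$ solves the same problem, so uniqueness gives equality. The one point to tighten is the uniqueness class in Step~3: $\ddh\rho^{v+hu}_t=-\nabla\cdot\bigl(\rho^v_t\,\psi^u_t\circ(\varphi^v_t)^{-1}\bigr)$ is in general only a first-order distribution in $(C^1_{\textup{Lip}})^\ast$, not a signed measure, so you need uniqueness for weak-$\ast$ continuous curves in that dual space (as the paper states), not only for signed-measure solutions.
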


Informally speaking, this result suggests that, given the existence of the gradient flow, the advective Fisher--Rao gradient flow
\begin{equ}
    \partial_\tau v^\tau_\bullet = -\nabla^{\AFR} E(v^\tau_{\bullet})
\end{equ}
leads to the following dynamics
\begin{equ}
    p^{v^\tau}_\bullet = p^{\star}_\bullet + e^{-\tau} (p^{v^0}_\bullet-p^{\star}_\bullet).
\end{equ}
This means that the density path $p^{\tau}_\bullet$ converges to $p^\star_\bullet$ at a rate of $O(e^{-\tau})$ along the linear interpolation, which is known as a mixture geodesic in information geometry \cite{amari2016information, ay2017information}.
In particular, this shows that the advective Fisher--Rao metric is compatible with $E$, thereby providing an affirmative answer to the original question.

\subsubsection{The Advective Fisher--Rao geometry from three perspectives}
We next show that the  advective Fisher--Rao metric introduced above is a canonical object from three different perspectives:
(1) as a zero-noise limit of the Fisher--Rao metric on path measures, (2) as the expectation of the second variation of the Freidlin--Wentzell rate functional over the initial distribution, and (3) as the Hessian metric of the Benamou--Brenier action functional.
\paragraph{(1) Information geometry.}
The advective Fisher--Rao metric arises as a natural extension of the Fisher--Rao metric to the setting of deterministic dynamics as a rescaled zero-noise limit in which the Fisher--Rao metric becomes singular.
\begin{theorem*}[\Cref{thm:zero-noise-limit}]
Consider $v_{\bullet}\in\V$ and assume that $\rho_0\in\mathcal{P}_2(\R^d)$.
Then we obtain the advective Fisher--Rao metric as the rescaled zero-noise limit of the Fisher--Rao metrics on path measures, that is, for any $u_{\bullet},w_{\bullet}\in L^{2}_{T}C_{\textup{Lip}}^{1}$ we have 
\begin{equation*}
    g_{v_{\bullet}}^{\textup{AFR}}(u_{\bullet},w_{\bullet})=\lim_{\varepsilon\to0}\varepsilon g_{v_\bullet}^{\textup{FR},\varepsilon}(u_\bullet,w_\bullet).
\end{equation*}
\end{theorem*}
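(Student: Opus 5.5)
By the pullback formula for the Fisher--Rao metric on path measures (\Cref{thm:FR-SDE}), the rescaled quantity is
\begin{equ}
\varepsilon g^{\FR,\varepsilon}_{v_\bullet}(u_\bullet,w_\bullet)=\int_0^T\int_{\R^d} u_t(x)\cdot w_t(x)\,\rho_t^{v,\varepsilon}(\D x)\dif t .
\end{equ}
The claim therefore reduces to showing that the integral of $f_t:=u_t\cdot w_t$ against $\rho^{v,\varepsilon}_t$ converges to its integral against $\rho^v_t$ as $\varepsilon\to0$. I will prove this by a pathwise coupling of the SDE \eqref{eq:intro-SDE-2} with the ODE \eqref{eq:intro-SDE}, followed by dominated convergence in $t$.

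\textbf{Step 1 (coupling).} Drive $X^{v,\varepsilon}$ and $X^v$ with the same initial datum $X_0\sim\rho_0$ and the same Brownian motion. Their difference satisfies
\begin{equ}
|X^{v,\varepsilon}_t-X^v_t|\le \int_0^t L_s|X^{v,\varepsilon}_s-X^v_s|\dif s+\sqrt\varepsilon|W_t|,
\end{equ}
where $L_s$ is the spatial Lipschitz constant of $v_s$, which is integrable since $v_\bullet\in\V$. Gronwall's lemma then gives
\begin{equ}
\sup_{t\le T}|X^{v,\varepsilon}_t-X^v_t|\le \sqrt\varepsilon\, e^{\int_0^T L_s\dif s}\sup_{t\le T}|W_t|,
\end{equ}
so $\E\sup_t|X^{v,\varepsilon}_t-X^v_t|^2=O(\varepsilon)$. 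In particular $W_2(\rho^{v,\varepsilon}_t,\rho^v_t)\to0$ uniformly in $t$. The same estimate, together with $\rho_0\in\mathcal P_2$ and the linear growth of $v_t$, bounds $\E|X^{v,\varepsilon}_t|^2$ uniformly in $\varepsilon\le1$ and $t\in[0,T]$.

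\textbf{Step 2 (convergence of the integrand).} Since $u_t,w_t\in C^1_{\textup{Lip}}$, each $f_t$ is continuous with $|f_t(x)|\le C_t(1+|x|^2)$, where $C_t\lesssim (\|u_t\|_{C^1_{\textup{Lip}}}^2+\|w_t\|_{C^1_{\textup{Lip}}}^2)$. The key step is to show that, for fixed $t$,
\begin{equ}
\E f_t(X^{v,\varepsilon}_t)\to\E f_t(X^v_t).
\end{equ}
This holds because convergence in $W_2$ implies convergence of integrals of continuous functions with quadratic growth; equivalently, convergence in probability plus uniform integrability of $|X^{v,\varepsilon}_t|^2$. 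A more quantitative route uses the local Lipschitz property of $f_t$:
\begin{equ}
|f_t(x)-f_t(y)|\le C_t(1+|x|+|y|)|x-y|,
\end{equ}
which, combined with Cauchy--Schwarz and Step 1, gives an $O(\sqrt\varepsilon)\,C_t$ bound.

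\textbf{Step 3 (integration in time).} The error from Step 2 is bounded by $C_t\cdot\sup_{\varepsilon,s}(1+\E|X^{v,\varepsilon}_s|^2)$, and $C_t$ is integrable because $u,w\in L^2_T$. Dominated convergence (or directly the $O(\sqrt\varepsilon)$ bound) then lets the limit pass inside the time integral, which gives $g^{\AFR}_{v_\bullet}(u_\bullet,w_\bullet)$.

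\textbf{Main obstacle.} I expect the difficulty to lie in Steps 2--3 rather than in the Gronwall estimate, for two reasons. First, $u\cdot w$ has quadratic rather than bounded growth, so the argument needs the uniform second-moment bound together with an integrable-in-time dominating function. Second, \Cref{thm:FR-SDE} presupposes a Novikov condition. If Lipschitz fields with linear growth do not obviously satisfy it, I would instead work directly with the explicit formula \eqref{eq:FR-SDE-intro}, justify it through a localisation argument, and note that the limit computation above only uses that formula.
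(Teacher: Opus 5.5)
Your proposal is correct and follows the same route as the paper. You reduce to the explicit time-marginal integral, get $W_2$ convergence $\rho^{v,\varepsilon}_t\to\rho^v_t$ from the synchronous coupling, pass to the limit for the quadratic-growth integrand $u_t\cdot w_t$, and conclude by dominated convergence in time. That integral is in fact the definition of $g^{\FR,\varepsilon}_{v_\bullet}$ (Definition~\ref{def:FR-velocities}), so the Novikov concern does not arise.

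The only variation is how you obtain the $W_2$ rate and the uniform second-moment bound. You get both at once from a pathwise Gronwall estimate using the full Lipschitz constant of $v$. The paper instead uses It\^{o}'s formula under a one-sided Lipschitz condition (Lemma~\ref{lem:wasserstein-convergence-marginals}), which takes the uniform moment bound as a hypothesis; your version derives that bound directly.
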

\paragraph{(2) Large deviations.}
As the metric is obtained as a zero-noise limit, we can also relate it to the Freidlin--Wentzell large deviation principle for small-noise diffusions, which describes the concentration of the path measures $\P^{v,\varepsilon}$ in the limit $\varepsilon\to0$.
The speed of the concentration is quantified by the Freidlin--Wentzell functional
\begin{equ}
    I_{v}(x_\bullet) = \frac{1}{2}\int_0^1 \lVert \dot{x}_t - v_t(x_t) \rVert_2^2 \dif t.
\end{equ}

\begin{theorem*}[\Cref{thm:Freidlin--Wentzell}]
Consider $\rho_0\in\mathcal{P}_2(\R^d)$, velocity fields $v_{\bullet}\in L^{2}_T C_{\textup{b}}^{2}$ and $u_{\bullet}\in L^{2}_T C_{\textup{Lip}}^{1}$, and denote the flow induced by $v_\bullet$ starting at $x_0$ by $\varphi^v(x_0)$.
Then it holds that 
\begin{equ}
    g_{v_{\bullet}}^{\textup{AFR}}(u_{\bullet},u_{\bullet})=
    \int_{\R^d} \left.\frac{\D^2}{\D h^2}\right|_{h=0}I_{v}(\varphi^{v+hu})(x_0) \rho_0(\D x_0) ,
\end{equ}
\end{theorem*}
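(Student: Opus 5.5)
Fix $x_0$ and write $x^h_t=\varphi^{v+hu}_t(x_0)$ for the trajectory of $\dot x^h_t=v_t(x^h_t)+h\,u_t(x^h_t)$ with $x^h_0=x_0$. The approach is to compute $\frac{\D^2}{\D h^2}|_{h=0}I_v(x^h_\bullet)$ pathwise and then integrate against $\rho_0$. The key identity is
\begin{equ}
\dot x^h_t - v_t(x^h_t) = h\,u_t(x^h_t).
\end{equ}
It gives the exact representation
\begin{equ}
I_v(x^h_\bullet)=\frac{h^2}{2}\int_0^1\|u_t(x^h_t)\|^2\dif t .
\end{equ}
This is the quadratic-in-$h$ structure I will exploit: the residual vanishes at $h=0$, so the first-order variation of $x^h$ only enters at higher order.

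\textbf{Step 1: continuity in $h$.} Set $F(h)=\frac12\int_0^1\|u_t(x^h_t)\|^2\dif t$, so that $I_v(x^h_\bullet)=h^2F(h)$. Then
\begin{equ}
\left.\frac{\D^2}{\D h^2}\right|_{h=0}\bigl(h^2F(h)\bigr)=2F(0)
\end{equ}
holds as soon as $F$ is continuous at $0$; indeed $(h^2F(h))'=2hF(h)+h^2F'(h)$, or one can use the difference quotient $\bigl(h^2F(h)-2\cdot 0+h^2F(-h)\bigr)/h^2\to 2F(0)$. For continuity, note that $v,u$ are Lipschitz in space with $L^1_T$ (indeed $L^2_T$) integrable constants. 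Gronwall then gives
\begin{equ}
\sup_t|x^h_t-x^0_t|\le C|h|\int_0^1\|u_t(x^0_t)\|\dif t .
\end{equ}
The Lipschitz continuity of $u_t$ then yields $F(h)\to F(0)$. If one wants the genuine second derivative rather than a symmetric difference quotient, differentiability of $h\mapsto x^h$ follows from the $C^2_{\mathrm b}$ assumption on $v$ and $C^1$ on $u$ via standard ODE sensitivity. One gets $\partial_h x^h=J^h$ solving $\dot J=(Dv+hDu)J+u(x^h)$, so $F\in C^1$ and the formula holds. This yields
\begin{equ}
\left.\frac{\D^2}{\D h^2}\right|_{h=0}I_v(\varphi^{v+hu})(x_0)=\int_0^1\|u_t(\varphi^v_t(x_0))\|^2\dif t .
\end{equ}

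\textbf{Step 2: integration against $\rho_0$.} Integrate the identity from Step 1 over $\rho_0$. Since $\rho^v_t=(\varphi^v_t)_\#\rho_0$ is the (unique, for Lipschitz $v$) solution of the continuity equation \eqref{eq:introduction-CE}, Fubini gives
\begin{equ}
\int_{\R^d}\int_0^1\|u_t(\varphi^v_t(x_0))\|^2\dif t\,\rho_0(\D x_0)=\int_0^1\int_{\R^d}\|u_t(x)\|^2\rho^v_t(\D x)\dif t=g^{\AFR}_{v_\bullet}(u_\bullet,u_\bullet).
\end{equ}

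\textbf{Main obstacle: integrability.} I expect the main obstacle to be justifying that the pointwise second derivative may be integrated, i.e.\ that everything is finite. Finiteness and Fubini use $\|u_t(x)\|\le \|u_t(0)\|+L_t|x|$ together with $\sup_t|\varphi^v_t(x_0)|\le C(1+|x_0|)$ from Gronwall. Then $\rho_0\in\mathcal P_2$ and $u\in L^2_T C^1_{\mathrm{Lip}}$ give an integrable bound. The statement places the derivative inside the integral, so no interchange of derivative and integral is strictly needed; should one instead want to differentiate $\int I_v\,\D\rho_0$, the same bounds applied to $F(h)$ for $|h|\le1$ give dominated convergence.
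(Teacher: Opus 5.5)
Your argument is correct, but it takes a different and more elementary route than the paper.

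\textbf{The paper's route.} The paper first computes the first and second G\^{a}teaux variations of $I_v$ on $H^1_T(\R^d)$; this is where $v_\bullet\in L^2_TC^2_{\textup{b}}$ is needed. It evaluates $\delta^2 I_v$ at the minimizer $\varphi^v_\bullet(x_0)$, where the term containing $D^2v_t$ drops because $\dot x_t=v_t(x_t)$. It then inserts $\psi^u_\bullet(x_0)$, which satisfies $\dot\psi_t-Dv_t(\varphi^v_t)\psi_t=u_t(\varphi^v_t)$ by Lemma~\ref{app:lem:sensitivity}. The form with $\frac{\D^2}{\D h^2}$ that you were asked to prove is reached only through a brief remark. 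That remark combines $\delta I_v(\varphi^v_\bullet(x_0))=0$ with a second-order chain rule that is not justified in detail.

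\textbf{Your route.} Your exact identity $\dot x^h_t-v_t(x^h_t)=h\,u_t(x^h_t)$ reduces everything to the scalar function $h^2F(h)$. You therefore need neither the second-variation formula nor any chain rule. You also need no $C^2_{\textup{b}}$ regularity: differentiability of $h\mapsto x^h_t$ at every small $h$ follows from Lemma~\ref{app:lem:sensitivity} applied with base field $v_\bullet+hu_\bullet$. So $v_\bullet,u_\bullet\in L^2_TC^1_{\textup{Lip}}$ suffice.

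\textbf{One sentence to tighten.} Continuity of $F$ at $0$ alone yields only the symmetric second difference quotient. For the genuine second derivative you need $F$ differentiable near $0$ with $hF'(h)\to0$. Your fallback does supply this. By Lemma~\ref{app:lem:stability}, the difference quotients in $h$ of $\|u_t(x^h_t)\|^2$ are dominated by a multiple of $(1+\|x_0\|)^2(\alpha'_t+\beta'_t)\beta'_t$. Here $\alpha'_t,\beta'_t\in L^2_T$ bound $\|u_t(0)\|$ and the Lipschitz constant of $u_t$, so this bound is integrable and $F'$ is bounded near $0$.

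\textbf{What the paper's route buys.} It yields the more geometric statement \eqref{eq:fw_identity}. That statement identifies $g^{\AFR}_{v_\bullet}$ as the Hessian $\delta^2 I_v$ at the minimizer, pulled back along the linearized flow map $u_\bullet\mapsto\psi^u_\bullet$. Your computation does not produce that interpretation directly.
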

\paragraph{(3) Optimal transport.}
The advective Fisher--Rao metric operates on velocity fields and is tightly connected to the solution operator of the continuity equation.
As such, it is a natural question how it is connected to the optimal transport geometry of the space of probability densities.
Recall the Benamou--Brenier formulation of the Wasserstein distance, which states
\begin{equ}
    W_2^2(\mu,\nu) = 2\inf_{p_\bullet, j_\bullet} \left\{\mathcal{A}(p_\bullet, j_\bullet) : p_0 = \mu, p_1 = \nu, \partial_t p_t + \nabla\cdot j_t = 0\right\},
\end{equ}
where the action functional is given by
\begin{equ}
    \mathcal{A}(p_\bullet, j_\bullet) =
        \frac12\int_0^1 \int_{\R^d} \frac{\lVert j_t(x) \rVert_2^2}{p_t(x)} \dif x \dif t
\end{equ}
Here, the current is given by $j_t = v_t p_t$.

\begin{theorem*}[\Cref{thm:hessian-action}]
Assume that the initial distribution $p_0\in\mathcal{P}_2(\R^d)$ has a positive density with respect to the Lebesgue measure.
For any velocity field $v_\bullet\in L^2_T C^1_{\textup{Lip}}$ and $u_\bullet \in C_{\textup{c}}^1([0,1]\times\R^d; \R^d)$, it holds that
\begin{equ}\label{eq:intro-difference-quotient}
    g^{\AFR}_{v_\bullet}(u_\bullet, u_\bullet) = 2\lim_{h\to0}\frac{\mathcal{A}(p_\bullet^{h}, j_\bullet^{h}) - \mathcal{A}(p_\bullet, j_\bullet) - \delta^+\mathcal{A}(p_\bullet, j_\bullet)[p_\bullet^{h}-p_\bullet, j_\bullet^{h} -j_\bullet]}{h^2},
\end{equ}
where $p_\bullet = p^{v}_\bullet$, $p^h_\bullet = p^{v+h u}_\bullet$, $j_\bullet = v_\bullet p^v_\bullet$, and $j^h_\bullet = (v_\bullet+h u_\bullet) p^{v+h u}_\bullet$.
\end{theorem*}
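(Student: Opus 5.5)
The plan is to recognise the numerator in \eqref{eq:intro-difference-quotient} as the second-order Taylor remainder of $\mathcal{A}$ along the curve $h\mapsto(p^h_\bullet,j^h_\bullet)$. The key structural fact is that the integrand $(p,j)\mapsto \tfrac12\|j\|^2/p$ is convex and $1$-homogeneous. It is jointly smooth wherever $p>0$, and its Hessian at $(p,j)$ applied to $(\delta p,\delta j)$ equals
\begin{equation*}
\frac{1}{p}\Big\|\delta j - \frac{j}{p}\,\delta p\Big\|^2 .
\end{equation*}
Since $j=vp$, the Bregman remainder $\mathcal{A}(p^h,j^h)-\mathcal{A}(p,j)-\delta^+\mathcal{A}(p,j)[p^h-p,j^h-j]$ is pointwise an exact quantity. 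Taking $\delta p=p^h-p$ and $\delta j=j^h-j$, it equals
\begin{equation*}
\frac12\int_0^1\!\!\int \frac{\|j^h-v\,p^h\|^2}{p^h}\dif x\dif t .
\end{equation*}
This follows by expanding the square: $\frac{\|j^h\|^2}{2p^h}-\frac{\|j\|^2}{2p}-v\cdot(j^h-j)+\frac{\|v\|^2}{2}(p^h-p)=\frac{\|j^h-vp^h\|^2}{2p^h}$. So no Taylor limit is needed at the integrand level. The identity is exact, and I would verify it first. Here I take $\delta^+\mathcal{A}$ to be the directional (one-sided) derivative, which at $p>0$ is $\int\!\int v\cdot\delta j-\tfrac12\|v\|^2\delta p$.

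Next insert $j^h=(v+hu)p^h$. Then $j^h-vp^h=h\,u\,p^h$, and the remainder becomes
\begin{equation*}
\frac{h^2}{2}\int_0^1\!\!\int \|u_t\|^2 p^h_t\dif x\dif t .
\end{equation*}
Dividing by $h^2$ and multiplying by $2$ gives $\int_0^1\!\int\|u_t\|^2\rho^{v+hu}_t(\D x)\dif t$. That is, the difference quotient equals $g^{\AFR}_{v+hu}(u,u)$ exactly. It remains to let $h\to0$ and obtain $g^{\AFR}_v(u,u)$.

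For the limit, write $\rho^{v+hu}_t=(\varphi^{v+hu}_t)_\#\rho_0$, so that
\begin{equation*}
\int\|u_t\|^2\dif\rho^{v+hu}_t=\int\|u_t(\varphi^{v+hu}_t(x_0))\|^2\rho_0(\D x_0).
\end{equation*}
For $v,u$ in $L^1_TC^1_{\textup{Lip}}$ or better, Grönwall gives $\varphi^{v+hu}_t(x_0)\to\varphi^v_t(x_0)$ uniformly. Since $u\in C^1_c$, $\|u\|^2$ is bounded and continuous, so dominated convergence yields the limit.

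The main obstacle is justifying the manipulations rigorously. Three points need care. First, $p^h$ must be positive where needed. Positivity of $p_0$ together with flow transport of a positive density by a $C^1$ Lipschitz field gives positive densities via Liouville's formula. Second, $\delta^+\mathcal{A}$ must be given the correct meaning. Third, all integrals must be finite. Since $u$ is compactly supported and bounded, each term in the expansion is integrable once $\int\|v_t\|^2p_t<\infty$. That bound follows from $p_0\in\mathcal P_2$ and the linear growth of $v$. I would handle these by working with the Lagrangian representation throughout: rewrite each term as an integral against $\rho_0$ of functions of the flow.
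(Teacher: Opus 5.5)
Your proposal is correct and follows the paper's proof. The paper's Lemma~\ref{lem:first-variation-action} establishes exactly your Bregman identity, $D_{\mathcal A}((p^h_\bullet,j^h_\bullet),(p_\bullet,j_\bullet))=\frac12\int_0^1\int_{\R^d}\lVert hu_t\rVert_2^2\,p^h_t\dif x\dif t$, and it justifies the formula for $\delta^+\mathcal A$ that you take as given: it sandwiches the convex difference quotients between the integrable functions $\Delta^0$ and $\Delta^1$, which is the interchange of limit and integral you flagged but did not supply. The only deviation is the final limit: the paper invokes $W_2$-stability (\Cref{lem:W2-stability}) together with quadratic growth of $\lVert u_t\rVert^2$, whereas your Lagrangian dominated-convergence argument, using boundedness of $u\in C^1_{\textup{c}}$, is equally valid and slightly more direct.
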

The result relates the advective Fisher--Rao metric to the Benamou--Brenier action functional.
Note that the difference quotient in \eqref{eq:intro-difference-quotient} is the difference quotient, which, given the existence of the second variation, differentiability of the map $\Psi(v_\bullet) = (p^v_\bullet, p_\bullet^v v_\bullet)$, and differentiability of the curve $(p^h_\bullet, j^h_\bullet)$ with respect to $h$, would converge to
\begin{equ}
    \delta^2 \mathcal{A}(\Psi(v_\bullet))[\delta \Psi(v_\bullet)[u_\bullet], \delta \Psi(v_\bullet)[u_\bullet]].
\end{equ}
In this case, the advective Fisher--Rao metric can be interpreted as the pullback of the Hessian metric of the Benamou--Brenier action functional along $\Psi$.

\subsubsection{The advective Fisher--Rao metric on paths of probability measures}
Having defined the advective Fisher--Rao metric on velocity fields, we now lift this construction to a Riemannian metric on the space of paths of probability measures. 
We work on the space of regular curves
\begin{equation}
    \mathrm{AC}_T^2(\mathcal{P}_2(\mathbb{R}^d))
    = \Bigl\{ \rho_\bullet\in\mathrm{AC}_T(\mathcal{P}_2(\mathbb{R}^d)) : \int_0^1 \lvert\dot\rho_t\rvert_{W_2}^2\,\mathrm{d}t < \infty \Bigr\},
\end{equation}
where $\lvert\dot\rho_t\rvert_{W_2}= \lim_{h\to0}W_2(\rho_{t+h},\rho_t)h^{-1}$ is the metric derivative \cite{ambrosio2005gradient}.
For any curve $\rho_\bullet\in\mathrm{AC}_T^2(\mathcal{P}_2(\mathbb{R}^d))$ there exists a unique velocity field $v_\bullet = v^\rho_\bullet\in L^2_TL^2(\rho_\bullet)$ such that the continuity equation
\begin{equation}
    \partial_t\rho_t + \nabla\cdot(\rho_t v_t) = 0,
\end{equation}
holds and $v_t\in\mathcal{T}_{\rho_t}\mathcal{P}_2(\mathbb{R}^d)$ for almost every $t$, where
\begin{equ}
    \mathcal{T}_\rho\mathcal{P}_2(\mathbb{R}^d)=\overline{\{\nabla\phi:\phi\in C^\infty_c(\mathbb{R}^d)\}}^{L^2(\rho)}
\end{equ}
denotes the Otto--Wasserstein tangent space.
To describe variations of a curve that remain compatible with the advective structure, we consider tangent vectors $\xi_\bullet=\ddh\rho^{v+hu}_\bullet$ at $\rho_\bullet=\rho^v_\bullet$, which arise from a variation of the velocity field. 
Such a tangent curve $\xi_\bullet$ satisfies the linearized continuity equation
\begin{equation}\label{eq:linearised-CE}
    \partial_t\xi_t + \nabla\cdot(\xi_t v_t) = -\nabla\cdot(\rho_t u_t)
\end{equation}
in a distributional sense. 
This leads to the definition of the tangent space 
\begin{equs}
    T_{\rho_{\bullet}}\mathrm{AC}^2_T(\mathcal P_2(\mathbb R^d))
    \coloneqq
    \left\{
    \xi_\bullet \in 
    T_{\rho_\bullet}L^2_T(\mathcal{P}_2(\R^d))
    :
    \begin{gathered} 
    \xi_t v_t \in \mathcal{D}'(\R^d) \textup{ for all } t\in[0,1]\\ 
    D_t \xi_\bullet \in T_{\rho_\bullet}L^2_T(\mathcal{P}_2(\R^d))
    \\
    \end{gathered}
    \right\},
\end{equs}
where we refer to 
\begin{equ}\label{eq:continuity-operator}
    D_t \xi_t \coloneqq \partial_t\xi_t + \nabla\cdot(\xi_t v_t)
\end{equ}
as the \emph{transport derivative} of $\xi_\bullet$ along $\rho_\bullet$, which is to be understood in the distributional sense. 
With these ingredients, we introduce the advective Fisher--Rao metric on paths of measures.

\begin{definition*}[Advective Fisher--Rao metric on paths of measures, Definition \ref{def:AFR-measures}]
Consider a path $\rho_\bullet \in \textup{AC}_T^2(\mathcal{P}_{2}(\R^{d}))$ and its velocity  $v_\bullet  \in T_{\rho_\bullet} L_T^2(\mathcal{P}_{2}(\R^{d}))$.
For two tangent vectors $\xi_\bullet, \zeta_\bullet\in T_{\rho_\bullet} \textup{AC}_T^2(\mathcal{P}_2(\R^d))$, we refer to
\begin{equ}
    g_{\rho_{\bullet}}^{\AFR}(\xi_{\bullet},\xi_{\bullet})=\int_{0}^{1}g_{\rho_{t}}^{\textup{WO}}(D_{t}\xi_{t}, D_{t}\xi_{t})\dif t 
\end{equ}
as the \emph{advective Fisher--Rao metric}. 
\end{definition*}

A natural point of reference is the \emph{integrated Wasserstein--Otto metric}
\begin{equation}
    g_{\rho_\bullet}^{\mathrm{WO}}(\xi_\bullet,\xi_\bullet)
    = \int_0^1 g_{\rho_t}^{\mathrm{WO}}(\xi_t,\xi_t)\,\mathrm{d}t
    = \int_0^1\!\!\int_{\mathbb{R}^d} \lVert v^\xi_t(x)\rVert^2\,\rho_t(\mathrm{d}x)\,\mathrm{d}t,
\end{equation}
where $v^\xi_t\in\mathcal{T}_{\rho_t}\mathcal{P}_2(\mathbb{R}^d)$ is the unique solution of $\xi_t = -\nabla\cdot(\rho_t v^\xi_t)$.
In contrast, the advective Fisher--Rao metric involves the {transport derivative} of $\xi_\bullet$ along the base velocity field and measures the size of the source term that appears when the variation $\xi_\bullet$ is transported along the underlying flow. 
Geometrically, $D_t$ coincides with the covariant derivative of a flat connection that differs from the Levi-Civita connection of the Wasserstein--Otto metric~\cite{ay2025information}.
The metric defined above is the natural counterpart of the advective Fisher--Rao metric on velocity fields and inherits several fundamental properties:
\begin{itemize}
    \item \emph{Compatibility.} Lemma~\ref{thm:FR-isometrc} shows that the solution map $v_\bullet\mapsto\rho_\bullet^v$ of the continuity equation is a Riemannian isometry with respect to the advective Fisher--Rao metrics.
    \item \emph{Relation to the Benamou--Brenier action.} Theorem~\ref{thm:convex-geometry-BB} identifies the AFR metric on paths as the second variation of the Benamou--Brenier action functional, establishing a direct connection to optimal transport.
    \item \emph{Optimality.} Theorem~\ref{thm:measures-optimality} proves that the gradient flow of the least-squares energy on the velocities with respect to the advective Fisher--Rao metric follows a mixture geodesic toward the reference path, guaranteeing exponential convergence. 
\end{itemize}

\subsubsection{Areas of application}\label{subsubsec:examples}

Here, we discuss different problems that are covered by our general framework of control of probability measures with quadratic cost on the velocity fields. 

\paragraph{Flow matching}
In generative modeling, it is the goal to produce approximate samples from an unknown data distribution $\rho_{\textup{data}}$. 
Flow matching was introduced in~\cite{lipman2023flow} and quickly became one of the most powerful paradigms in generative modeling. 
It aims to generate approximate samples by transforming sampling from an easy source distribution $\rho_{\textup{source}}$, for example, a standard Gaussian, along a flow of a velocity field. 
In flow matching, a neural network is used to parametrize a time-dependent velocity field $v^\theta_\bullet$ that generates the flow. 
Then, samples from a source distribution $\rho_{\textup{source}}$ are transformed according to 
\begin{equ}
    \dot{X}_t = v^\theta_t(X_t), \quad X_0\sim \rho_{\textup{source}}.
\end{equ}
Denoting the distribution of $X_t$ by $\rho_t$, it is the goal to achieve $\rho_1\approx \rho_{\textup{data}}$. 
To this end, one constructs a velocity field $v^\star_\bullet$ such that 
\begin{equ}
    \dot{X}^\star_t = v^\theta_t(X_t^\star), \quad X_0^\star\sim \rho_{\textup{source}} 
\end{equ}
achieves exact interpolation, in the sense that $\rho^\star_1= \rho_{\textup{data}}$, where $\rho^\star_t$ denotes the distribution of $X^\star_t$. 
Such a reference field $v^\star_\bullet$ is neither available in closed form nor unique, and various strategies for the construction and estimation of such velocity fields based on conditional interpolation or ideas from optimal transport have been developed. 
We refer to \cite{lipman2024flow, albergo2025stochastic, wald2025flow, pierret2026flow} for extensive discussions on this aspect of flow-based generative models. 
Having chosen a reference velocity field $v^\star_\bullet$, the network's parameters are optimized to minimize the flow matching loss  
\begin{equ}
    L_{\textup{FM}}(\theta) = \frac12\int_0^1\int_{\mathbb{R}^d}\|v_t^\theta(x)-v_t^\star(x)\|^2 \rho_t^\star(\D x) \dif t.
\end{equ}
This objective function can be written as $L(\theta) = E(v^\theta_\bullet)$ for a quadratic energy $E$ on the velocity fields and therefore of the form \eqref{eq:least-squares-objective}. 

\paragraph{Score matching}
Diffusion models are arguably one of the most successful generative models~\cite{Sohl-Dickstein2015, Ho2020, rombach2022high}. 
Here, we focus on score matching, where one considers forward dynamics 
\begin{equ}
    \D X_t^\star = f_t(X_t^\star) + g_t \D W_t, \quad X_0^\star \sim \rho_{\textup{data}}
\end{equ}
and we denote the density of $X_t^\star$ by $\rho^\star_t$, where $f_t$ and $g_t>0$ are specified by the user in a way that $\rho^\star_1\approx \rho_{\textup{source}}$, see \cite{lai2025principles}. 
The backwards ODE 
\begin{equ}
    \dot{X}_t = f_t(X_t) - \frac{g_t^2}2 s_t^\star(X_t), \quad X_1 \sim \rho^\star_1
\end{equ}
induces exactly the same time marginals as $X_t^\star$, meaning that $X_t \sim \rho^\star_t$, where the true score function is given by $s_t^\star = \nabla \ln p^\star_t$ and $p^\star_t$ denotes the Lebesgue density of $\rho^\star_t$. 
As $s^\star_\bullet$ is generally not accessible, it is approximated by a neural network $s^\theta_\bullet \approx s^\star_\bullet$, and the network's parameters $\theta$ are optimized to minimize the least-squares loss 
\begin{equ}
    L_{\textup{SM}}(\theta) = \frac12 \int_0^1 \alpha_t  \int_{\R^d}  \lVert s^\theta_t(x) - s^\star_t(x) \rVert_2^2 \rho^\star_t(\D x) \dif t,
\end{equ}
where $\alpha_t>0$ is a weighting factor specified by the user. 
The backward ODE with the approximate score function is given by 
\begin{equ}
    \dot{X}^\theta_t = f_t(X_t) - \frac{g_t^2}2 s_t^\theta(X_t), \quad X_1^\theta\sim \rho_{\textup{source}}. 
\end{equ}
Denoting the corresponding velocity fields by 
\begin{equ}
    v_t^\theta(x)\coloneqq f_t(x) - \frac{g_t^2}{2} s_t^\theta(x), \quad \textup{and } 
    v_t^\star(x)\coloneqq f_t(x) - \frac{g_t^2}{2} s_t^\star(x), 
\end{equ}
we can rewrite the loss function according to 
\begin{equ}
    L(\theta) = 2\int_0^1 \frac{\alpha_t}{g_t^4} \int_{\R^d} \lVert v^\theta_t(x) - v^\star_t(x) \rVert_2^2 \rho_t^\star(\D x) \dif t,
\end{equ}
which is a quadratic function of the velocity $v_\bullet^\theta$. 

\paragraph{Imitation learning in linear environments}
The reasoning above can readily be extended to imitation learning under general linear dynamics 
\begin{equ}
    \dot{x}_t = A_t x_t + B_t u_t
\end{equ}
where $x\in\R^d$ is the state and $u\in\R^n$ the control. 
For various real-world problems, incorporating expert demonstrations was necessary to achieve high performance~\cite{pomerleau1988alvinn,ross2010efficient,bojarski2016end,teng2023hierarchical,zhao2023learning}.
The integration of such prior knowledge is implemented in the framework known as \emph{imitation learning} or \emph{behavioral cloning}. 
Here, one assumes access to an expert policy $\pi^\star_\bullet\colon[0,1]\times\R^d\to\R^n$, uses a neural network $\pi^\theta_\bullet\colon[0,1]\times\R^d\to\R^n$, and trains the network's parameters to minimize the loss 
\begin{equs}
    L(\theta) & = \frac12 \int_{\R^d}\int_0^1 \lVert \pi^\theta_t(x) - \pi^\star_t(x) \rVert^2  \rho^\star_t(\D x) \dif t
\end{equs}
where $\rho^\star_t$ denotes the distribution of $x_t$ under the policy $\pi^\star_\bullet$. 
Despite the linear nature of the dynamics, the problem is not solvable via the Riccati equations as it involves a possibly nonlinear expert policy $\pi^\star_t$. 
For example, even in linear model predictive control, the expert policy becomes piecewise affine and intractable to compute for high-dimensional problems, and we refer to~\cite[Section 7.2]{reiter2026synthesis} for a survey of imitation learning approaches in this case. 

In this setting of linear dynamics, the state dynamics under a policy can be written as 
\begin{equ}
    \dot{x}_t = v^\theta_t(x_t), \quad \textup{where } v^\theta_t(x) = A_t x + B_t \pi^\theta_t(x). 
\end{equ}
If $B_t$ has full column rank for every $t\in[0,1]$, we obtain 
\begin{equs}
    L(\theta) & = \frac12 \int_{\R^d}\int_0^1 (v^\theta_t - v^\star_t)^\top (B_t^\top B_t)^{-1}(v^\theta_t - v^\star_t)  \rho^\star_t(\D x) \dif t, 
\end{equs}
which is again a quadratic objective in the velocity field and hence of the form \eqref{eq:least-squares-objective}. 

\paragraph{Computational experiments} 
To use the developed Riemannian metric for optimization of the velocity fields, we consider the application to the flow matching approach. 
Here, we discretize the advective Fisher--Rao gradient flow and obtain a natural gradient method, which updates the parameters $\theta$ of the neural network according to
\begin{equ}\label{eq:intro-natural-gradient}
    \theta_{k+1} = \theta_k - \eta \cdot F(\theta_k)^{-1} \nabla L_{\textup{FM}}(\theta_k),
\end{equ}
where the Fisher-information matrix $F(\theta)$ is a discretization of the advective Fisher--Rao metric with entries
\begin{equ}
    F(\theta)_{ij} = g^{\AFR}_{v^\theta_\bullet}(\partial_{\theta_i} v^\theta_\bullet, \partial_{\theta_j} v^\theta_\bullet) =  \int_0^1\int_{\R^d} \partial_{\theta_i}v^\theta(x)\cdot \partial_{\theta_j}v^\theta(x) p_t^{\theta}(x) \dif x \dif t.
\end{equ}
In contrast, the Gauss--Newton method replaces the Fisher-information matrix in the preconditioned gradient scheme \eqref{eq:intro-natural-gradient} by
\begin{equ}
    G(\theta)_{ij} = (\partial_{\theta_i} v^\theta_\bullet, \partial_{\theta_j} v^\theta_\bullet)_{L^2_TL^2(p^\star_\bullet)} =  \int_0^1\int_{\R^d} \partial_{\theta_i}v^\theta(x)\cdot \partial_{\theta_j}v^\theta(x) p_t^{\star}(x) \dif x \dif t.
\end{equ}
Consistent with the theoretical results, we demonstrate empirically that the natural gradient method leads to the desired optimal fitting of the paths of probability densities, while the Gauss--Newton method leads to an optimal fitting of the velocity fields, see \Cref{fig:velocity-GN}.
Further, we demonstrate that both of these second-order optimizers have the ability to increase the accuracy of flow-based generative models over first-order optimizers.
All experiments are presented in \Cref{subsec:NG-method}.

\subsection{Related works}
\paragraph{Schr\"odinger problems and entropy-regularized optimal transport.}
The objective \eqref{eq:least-squares-objective} is well known to coincide with the Kullback--Leibler divergence between path measures induced by the underlying SDEs, a correspondence that dates back at least to the classical work of \cite{follmer1985entropy}. Minimizing this path-space relative entropy gives rise to $L^2$ stochastic optimal control problems, whose optimal solutions characterize dynamic Schr\"odinger bridges and thereby the entropic regularization of dynamic optimal transport \cite{benamou2000computational}.
A large body of work has been devoted to understanding the convergence of these entropy-regularized problems in the small-noise limit $\varepsilon\to0$.
Foundational results qualitatively establish the convergence in the vanishing-noise limit \cite{mikami2002optimal,mikami2004monge,leonard2012schrodinger,carlier2017convergence}.
More recently, quantitative convergence rates have been developed through asymptotic expansions of the regularization error, including first-order expansions in \cite{carlier2023convergence,eckstein2024convergence,malamut2025convergence} and second-order expansions in \cite{conforti2021formula,chizat2020faster,pal2024difference, nutz2026entropic}.
Closely related works further investigate the convergence of Schr\"odinger potentials and transport maps in the small-noise regime~\cite{gigli2021second,pooladian2021entropic,nutz2022entropic,bernton2022entropic,chiarini2023gradient,carlier2023convergence,malamut2025convergence,lopez2026uniform}.
Our perspective is complementary to this line of research.
Rather than studying the asymptotic behavior of the optimal values or minimizers of entropy-regularized optimal transport problems, we investigate the differential geometry induced by the regularization functional itself.
More precisely, we study the Riemannian metric arising from its Hessian on path space and identify its singular limit as the noise level tends to zero, leading to the advective Fisher--Rao geometry for deterministic flows of probability measures.
\paragraph{Stochastic optimal control and sampling}
The objective \eqref{eq:least-squares-objective} arises in $L^2$ stochastic optimal control problems.
A similar least-squares objective was used in \cite{richter2021solving} and subsequent works \cite{berner2024an} to phrase the training of score-based diffusion samplers as a path-space stochastic optimal control problem.
A similar paradigm is pursued in \cite{zhang2022path}, which frames sampling from unnormalized densities as a path-space stochastic control problem optimized via path-integral methods, and in \cite{domingo2025adjoint}, where control and reward-based fine-tuning are addressed using an adjoint matching regression target.
Further, a geometric optimization framework for this ansatz utilizing a trust-region method based on the relative entropy on the space of path measures has been developed in \cite{blessing2025trust}.
Although closely related to our approach in spirit, these methods use a reverse KL divergence as their objective function rather than the forward KL divergence that is commonly used in flow and score matching models.
This choice of objective results in exponential geodesics, as opposed to the mixture geodesics that arise in our framework.
Further, this approach heavily relies on the presence of noise, as otherwise the KL divergence between path measures of different velocity fields will never be finite, necessitating our zero-noise construction of a corresponding geometry for deterministic forward dynamics.
\paragraph{Geometry of flow-based generative models}
Various works have studied the geometry of certain aspects of flow-based generative models.
\cite{ghimire2023geometry,vuong2025we} argue that the forward and backward dynamics in diffusion models should be interpreted through the lens of optimal transport, as they can be interpreted as Wasserstein gradient flows of the KL divergence with respect to the data distribution and the prior distribution, respectively.
This observation covers the data generation dynamics, but fails to address the objective function or the training process.
To study geometric structures embedded within these models, other recent works have introduced localized or pull-back Riemannian structures. For instance, \cite{karczewski2026the} analyzes the geometry induced by the Fisher information matrix along the reverse probability flow ODE to examine trajectory properties, while \cite{azeglio2025metric} leverages learned score functions to construct ambient-space metrics for navigating and exploring the underlying data manifold.
Though insightful, these approaches pursue a different goal and characterize the static space of data points or sample paths rather than the functional landscape of the objectives themselves.
On the side of optimization of flow-based generative models, several second-order heuristics have been proposed to improve their trainability.
For example, \cite{liu2021second} proposes a low-rank approximation of the Hessian for neural ODEs, which, however, is not derived from a functional perspective and does not provide a geometric interpretation, thereby leading to suboptimal update directions~\cite{muller2024position}.
Further, \cite{hu2026sing} proposes a natural gradient method for SDE drifts, which uses the Fisher information matrix as a preconditioner and thereby inherently requires noise in the forward dynamics.

\paragraph{Reinforcement learning and natural policy gradients}
The Riemannian metric constructed in this work is conceptually related, though with important differences, to the metric underlying many important successes of reinforcement learning.
Here, Kakade proposed an adaptive mixture of the Fisher--Rao metrics on the individual components as a metric on the space of Markov kernels~\cite{kakade2001natural}.
This geometry has been studied in terms of invariance axiomatics~\cite{Lebanon2005AxiomaticGO, montufar2014fisher}, has been characterized as isometric to the conditional Fisher--Rao metric on occupancy measures \cite{muller2024geometry}, and has been shown to arise from the Fisher--Rao metric on path measures~\cite{bagnell2003covariant, peters2003reinforcement, nagaoka2017exponential}.
The advective Fisher--Rao metric constructed by us is closely related to the pullback of the Fisher--Rao metric on path measures.
However, the advective Fisher--Rao metric introduced here operates in a regime of continuous time and deterministic forward dynamics, requiring a rescaled zero-noise limit, thereby offering close links to large deviation theory and optimal transport.
Furthermore, most reinforcement learning focuses on a different objective function from the one considered here, which prevents a characterization as a metric that yields the optimal update directions obtained here.
\paragraph{Fisher--Rao gradient flows and sampling}
A substantial body of work uses Fisher--Rao gradient flows, or their discretisations, for sampling and achieves $O(e^{-t})$ convergence, independently of the target, see~\cite{lu2019accelerating,lu2023birth,carrillo2026fisher} and subsequent work.
These works are related in philosophy to our approach in that they propose a Fisher--Rao geometry for the optimization of an entropic objective.
However, they consider the Fisher--Rao metric for a fixed-time marginal, whereas our $g^{\mathrm{AFR}}$ metric is a global, pathwise tensor that accounts for the temporal coupling induced by the continuity equation.
\paragraph{Wasserstein--Fisher--Rao geometry}
Our work studies the zero-noise limit of the Fisher--Rao metric on path measures, which we show to converge to the Hessian geometry of the Benamou--Brenier action functional.
As such, it can be seen as recovering a geometry induced by a functional appearing in optimal transport as the limiting object of the central metric in information geometry.
Thus, we want to contrast our work with the construction of a geometry unifying the Fisher--Rao and Wasserstein geometries, which is known as the Wasserstein--Fisher--Rao geometry or Hellinger--Kantorovich geometry.
This construction was motivated by unbalanced optimal transport \cite{piccoli2014generalized} and allows for mass creation and destruction in addition to the transport of mass, thereby offering an interpolation between the Wasserstein and Fisher--Rao geometries~\cite{liero2018optimal,chizat2018interpolating}.
The resulting object is a geometry on static probability measures, whereas our construction works on the space of paths of probability measures, thereby capturing the temporal coupling induced by the continuity equation.
\section{Preliminaries and Setup}
This section serves the purpose of introducing the notation and technical assumptions we will use throughout the paper.

\subsection{Notation and conventions}
We denote the $d$-dimensional Euclidean space by $\R^{d}$, the Euclidean gradient by $\nabla$, and the divergence of a vector field $v$ by $\nabla\cdot v$.
We denote the space of distributions by $\mathcal{D}'(\R^d)$, which is the topological dual space to the locally convex vector space $C^\infty_{\textup{c}}(\R^d)$ of compactly supported smooth functions on $\R^d$. 
For the sake of simplified notation, we set $L^p_T\coloneqq L^p([0,1])$ for $p\in[1,\infty)\cup\{+\infty\}$ and for a Banach space $X$, we denote the Bochner--Lebesgue and Sobolev--Bochner--Lebesgue spaces by $L^p_T(X)\coloneqq L^p([0,1];X)$ and $H^1_T(X)\coloneqq H^1([0,1];X)$, respectivly. 
The space of continuous paths is $C_{T}(\R^{d})\coloneqq C([0,T];\R^{d})$.
With $\mathcal{P}_{2}(\R^{d})$ we denote the space of probability measures on $\R^{d}$ with finite second moment 
\begin{equ}
    \mathcal{P}_{2}(\R^{d})\coloneqq\left\{\mu\in\mathcal{P}(\R^{d}): m_{2}(\mu)<\infty\right\},
    \qquad
    m_{2}(\mu)\coloneqq\int_{\R^{d}}\|x\|^{2}\,\dif\mu(x).
\end{equ}
The Wasserstein-$2$ distance between $\mu,\nu\in\mathcal{P}_{2}(\R^{d})$ is given by
\begin{equ}
    W_{2}^{2}(\mu,\nu)\coloneqq\inf_{\pi\in\Pi(\mu,\nu)}\int_{\R^{d}\times\R^{d}}\|x-y\|^{2}\,\dif\pi(x,y), \end{equ}
where $\Pi(\mu,\nu)$ denotes the set of couplings of $\mu$ and $\nu$.
For a measurable map $\Phi$, the pushforward of a measure $\mu$ is $(\Phi)_{\sharp}\mu$.
The dual pairing between a measure $\mu$ and a function $f$ is $\langle\mu,f\rangle\coloneqq\int f\dif\mu$.
For $\alpha\in(0,1]$, the  $\alpha$-H\"older semi-norm of a function $\phi\colon \R^{d}\to\R$ is given by 
\begin{equation*}
    [\phi]_{\alpha} \coloneqq \sup_{x\neq y} \frac{|\phi(x)-\phi(y)|}{\|x-y\|^{\alpha}}. 
\end{equation*}
In the special case $\alpha=1$, we denote the Lipschitz semi-norm by 
\begin{equation*}
    [\phi]_{\mathrm{Lip}} \coloneqq \sup_{x\neq y} \frac{|\phi(x)-\phi(y)|}{\|x-y\|}. 
\end{equation*}
For a convex functional $F\colon C\to\R\cup\{+\infty\}$ we denote the Bregman divergence by 
\begin{equ}
    D_F(x,y)\coloneqq F(x) - F(y) - \delta^+ F(y)[x-y] \quad \text{for all } x,y\in\operatorname{dom}(F), 
\end{equ}
where for any direction $d=y-x$ for $x,y\in\operatorname{dom}(F)$ the right-sided first variation is defined as
\begin{equ}
    \delta^+ F(x)[d] \coloneqq \lim_{h\searrow0} \frac{F(x+hd)-F(x)}{h}.
\end{equ}

\subsection{Flows, the continuity equation, and path measures}
We consider the SDE
\begin{equ}
    \D X_{t}=v_{t}(X_{t})\D t+\sqrt{\varepsilon}\D W_{t},\label{eq:SDE}
\end{equ}
where $X_{0}\sim p_{0}$, where $v_{\bullet}$ is a time-dependent velocity field.
The introduction of the noise term $\sqrt{\varepsilon}\D W_{t}$ allows us to construct a non-degenerate geometry Fisher--Rao metric in the noiseless case, via a rescaled zero-noise limit.
\paragraph{Regularity of velocity fields}
We work in the classic setting of Carath\'{e}odory solutions of ODEs, which is a convenient setting for the existence of solutions to the continuity equation that also covers many neural network architectures.
We denote the space of all continuously differentiable Lipschitz vector fields by
\begin{equation*}
    C_{\textup{Lip}}^{1}(\R^{d};\R^{d})\coloneqq\left\{ v\in C^{1}(\R^{d};\R^{d}):v\textup{ is globally Lipschitz}\right\} .
\end{equation*}
This is a Banach space with the norm
\begin{equation*}
	\lVert v\rVert_{C_{\textup{Lip}}^{1}(\R^{d};\R^{d})}\coloneqq\lVert v(0)\rVert+\lVert Dv\rVert_{\infty}.
\end{equation*}
Further, we consider the Bochner-Lebesgue space\footnote{Note that as a time horizon we choose $T=1$ throughout this paper; nevertheless, the notation $L^1_T$ indicates that $L^1$ is an integrability assumption in time.}
\begin{equation}
	\V\coloneqq L^{1}([0,1];C_{\textup{Lip}}^{1}(\R^{d};\R^{d}))
\end{equation}
of time-dependent velocity fields that are continuously differentiable and Lipschitz in space, such that
\begin{equation*}
	\lVert v_{t}(0)\rVert\le\alpha_{t}\quad\text{and }\lVert v_{t}(x)-v_{t}(y)\rVert\le\beta_{t}\lVert x-y\rVert
\end{equation*}
for all $x,y\in\R^{d}$, $t\in[0,1]$ and some $\alpha,\beta\in L^1_T$.
Here, the norm on $\V$ is given by
\begin{equation*}
    \lVert v_\bullet\rVert_{\V}=\int_{0}^{1}\lVert v_{t}\rVert_{C_{\textup{Lip}}^{1}(\R^{d};\R^{d})}\dif t = \lVert \alpha \rVert_{L^1_T} + \lVert \beta \rVert_{L^1_T}.
\end{equation*}
Note that by the Carath\'{e}odory existence theorem, $\V$ is a convenient space of velocity fields for the existence of solutions of the ODE and consequently for the existence of solutions to the continuity equation.
Indeed, a velocity field $v_\bullet\in \V$ induces a unique flow $\varphi_{\bullet}^{v}$ satisfying
\begin{equation*}
    \partial_{t}\varphi_{t}^{v}(x)=v_{t}(\varphi_{t}^{v}(x)),\quad\text{and }\varphi_{0}^{v}(x)=x
\end{equation*}
for all $t\in[0,1]$, $x\in\R^{d}$, see \cite[Lemma 8.1.4]{ambrosio2008gradient}.
Sometimes, we require stronger square-integrability in time and write
\begin{equ}
    L^2_TC^{1}_{\textup{Lip}} \coloneqq L^{2}([0,1];C_{\textup{Lip}}^{1}(\R^{d};\R^{d})).
\end{equ}
In this case, the functions $\alpha, \beta$ are in $L^2_T$ and equivalent norms are given by
\begin{equ}
    \lVert \alpha \rVert_{L^2_T} + \lVert \beta \rVert_{L^2_T} \quad\text{or}\quad \sqrt{\lVert \alpha \rVert_{L^2_T}^2 + \lVert \beta \rVert_{L^2_T}^2}.
\end{equ}
Note that for $v\in\V$, not only the ODE but also the SDE \eqref{eq:SDE} has a unique strong solution for any initial condition $X_{0}\sim p_{0}$, where $p_{0}$ is a probability measure with finite second moments, see \cite{karatzas2014brownian}.
In flow-matching and diffusion models, one typically uses neural networks $v^\theta_\bullet$ that take time as an explicit input.
If the activation function is continuous and differentiable with a bounded derivative, so is the resulting velocity if a joint function of space and time, and hence contained in $\V$.
The regularity of $\V$ is enough to ensure well-posedness of the forward dynamics.
However, sometimes, we require square integrability in time and denote the corresponding space by $L^2_T C^1_{\textup{Lip}}$.
For some results, we require stronger spatial regularity and hence introduce the function space
This is a Banach space with the norm
\begin{equation*}
	\lVert v\rVert_{C_{\textup{b}}^{2}(\R^{d};\R^{d})}\coloneqq\lVert v(0)\rVert+\lVert Dv\rVert_{\infty}+\lVert D^2v\rVert_{\infty}.
\end{equation*}
Similarly for $\alpha\in(0,1]$, we define 
\begin{equation*}
    C_{\textup{Lip}}^{1, \alpha}(\R^{d};\R^{d})\coloneqq\left\{ v\in C^{1}(\R^{d};\R^{d}): Dv\textup{ is globally bounded and $\alpha$-H\"older}\right\} .
\end{equation*}
with the corresponding norm 
\begin{equation*}
	\lVert v\rVert_{C_{\textup{Lip}}^{1, \alpha}(\R^{d};\R^{d})}\coloneqq\lVert v(0)\rVert+\lVert Dv\rVert_{\infty} + [Dv]_\alpha.
\end{equation*}

\paragraph{The continuity equation}
It is well-known that for any initial probability measure $p_{0}\in\mathcal{P}_{2}(\R^{d})$ with finite second moments the transported measures $\rho_{t}\coloneqq(\varphi_{t}^{v})_{\sharp}p_{0}\in\mathcal{P}_{2}(\R^{d})$ are a distributional solution to the continuity equation
\begin{equation*}
	\partial_{t}\rho_{t}+\nabla\cdot(v_{t}\rho_{t})=0
\end{equation*}
meaning that for any test function $\phi\in C_{\textup{c}}^{\infty}((0,1)\times\R^{d})$ it holds that
\begin{equation*}
	\int_{0}^{1}\int_{\R^{d}}\left(\partial_{t}\phi_{t}(x)+v_{t}(x)\cdot\nabla\phi_{t}(x)\right) \rho_{t}(\dif x). 
\end{equation*}
For a proof, we refer to \cite[Lemma 8.1.6.]{ambrosio2008gradient} or \cite[Lemma 4.1.1.]{figalli2021invitation}.
In particular, this shows that if $p_0$ is a measure with positive Lebesgue density, then so is $p_t$ for any $t\in[0,1]$.
The method of characteristics can easily be generalized to arbitrary finite signed measures $\rho_{0}$.
Further, any solution of the continuity equations is given by the method of characteristics, and in particular, solutions of the continuity equation are unique under the regularity conditions on $v_\bullet$.
Finally, we remind the reader of the continuity equation with source term $f_{\bullet}$, which is given by
\begin{equ}
    \partial_{t}\xi_{t}+\nabla\cdot(v_{t}\xi_{t})=f_{t}
\end{equ}
admits a unique solution for $v_\bullet\in\V$ given by the methods of characteristics
\begin{equ}
    \xi_t = (\varphi_t^v)_\sharp \xi_0 + \int_0^t (\varphi_{t,s}^v)_\sharp f_s \dif s,
\end{equ}
see for example \cite{ambrosio2014continuity}.
\paragraph{Girsanov's theorem
}
We call the space $C_T(\R^{d})$ of continuous paths the $d$-dimensional \emph{path space} and refer to probability measures on this space as path measures.
It is well known that the $L^{2}$-costs on the level of velocity fields can be related to the Kullback--Leibler divergence between the path measures $\P^{v,\varepsilon}$ and $\P^{\star,\varepsilon}$ via Girsanov's theorem.
To state a version of Girsanov's theorem, we consider two path measures $\mathbb{P}^{v, \varepsilon}$ and $\P^{w, \varepsilon}$ induced by the SDEs
\begin{equs}
	\D X_{t} & =v_{t}(X_{t})\D t+ \sqrt{\varepsilon}\D W_{t},\quad X_{0}\sim p_0\label{eq:SDE-X}\\
	\D Y_{t} & =w_{t}(Y_{t})\D t+\sqrt{\varepsilon}\D W_{t},\quad Y_{0}\sim p_0,
\end{equs}
where we assume weak existence and uniqueness in law.
We set $u_{t}(x)\coloneqq v_{t}(x)-w_{t}(x)$ and assume that Novikov's condition
\begin{equ}
    \E_{\mathbb{P}^{w, \varepsilon}}\left[\exp\left(\frac{1}{2\varepsilon}\int\|u_{t}(X_{t})\|_{2}^{2}\D t\right)\right]<+\infty
\end{equ}
holds.
Then we have
\begin{equ}\label{eq:girsanov}
	\frac{\textup{d}\mathbb{P}^{v, \varepsilon}}{\textup{d}\mathbb{P}^{w, \varepsilon}}(x_{\bullet}) =
	\exp\left(\frac{1}{{\varepsilon}}\int_{0}^{1}u_{t}(x_{t})\cdot(\mathrm{d}x_{t}-w_{t}(x_{t})\D t)-\frac{1}{2\varepsilon}\int_{0}^{1}\|u_{t}(x_{t})\|_{2}^{2}\,\mathrm{d}t\right). \label{eq:girsanov}
\end{equ}
Note that as under $\P^{w,\varepsilon}$ we have $\D x_t - w_t(x_t)\D t = \sqrt{\varepsilon} \D W_t$, the Radon-Nikodym derivative is often written as
\begin{equ}
	\frac{\textup{d}\mathbb{P}^{v, \varepsilon}}{\textup{d}\mathbb{P}^{w, \varepsilon}}(x_{\bullet}) =
	\exp\left(\frac{1}{{\sqrt{\varepsilon}}}\int_{0}^{1}u_{t}(x_{t})\cdot\D W_t-\frac{1}{2\varepsilon}\int_{0}^{1}\|u_{t}(x_{t})\|_{2}^{2}\,\mathrm{d}t\right).
\end{equ}
For a proof, we refer to \cite{ustunel2013transformation} for additive and to \cite{nusken2021solving} for multiplicative noise, respectively.
Girsanov's theorem yields a direct expression of the relative entropy between the path measures $\P^{v}$ and $\P^{w}$ as a least-squares cost of the difference of the drifts $v_{\bullet}$ and $w_{\bullet}$, which is well-known in the field of stochastic control and diffusion processes, see e.g. \cite{follmer1985entropy,leonard2012girsanov}.
Recall that for two probability measures $\P$
and $\Q$ the \emph{relative entropy}, \emph{Kullback--Leibler divergence} or \emph{shortly
KL-divergence} is given by
\begin{equation}
D_{\textup{KL}}(\P,\Q)=\begin{cases}
\E_{\P}\left[\ln\left(\frac{\D\P}{\D\Q}\right)\right], & \text{if }\P\ll\Q,\\
+\infty, & \text{otherwise}.
\end{cases}
\end{equation}
Consider two path measures $\P^{v}$ and $\P^{w}$
induced by the SDE \eqref{eq:SDE}. Then it holds that
\begin{equation*}
D_{\KL}(\P^{w},\P^{v})=
    \frac{1}{2\varepsilon}\int_{0}^{1}\int_{\R^{d}}\|w_{t}(x)-v_{t}(x)\|_{2}^{2}\;\rho_{t}^{w}(\D x)\dif t,
\end{equation*}
For a proof, we refer to \cite{lai2025principles}.
\subsection{Information geometry and natural gradients}
We have seen that a least-squares objective on velocities can be viewed as a KL-divergence on the level of path measures.
The KL-divergence is inherently connected to the negative entropy, which induces a natural Riemannian structure given by the Fisher--Rao metric, which is a central object in information geometry.
For a more comprehensive introduction to information geometry, we refer to the extensive monographs \cite{amari2016information,ay2017information}.
Consider a probability measure $\P$ and two variations $\P^h$ and $\Q^h$ of $\P$ meaning that $\P^{0}=\Q^{0}=\P$, where we assume that $\P^h, \Q^h\ll\P$ and that
the derivatives $\Xi = \ddh \P^h$ and $Z = \ddh \Q^h$ exist in a suitable sense.
Then, the \emph{Fisher--Rao metric} is, given the existence of the expectation, given by
\begin{equs}\label{eq:log-expression-FR}
    g_{\P}^{\FR}(\Xi,Z) & =
    \E_{\P}\left[\ddh
    \ln\left(\frac{\D\P^{h}}{\D\P}\right)\ddh
    \ln\left(\frac{\D\Q^{h}}{\D\P}\right)\right].
\end{equs}
A precise meaning to the tangent space of the measures equivalent to $\P$ and in particular the derivatives $\ddh \P^h$ and $\ddh \Q^h$ was given by Pistone and Sempi in their seminal work~\cite{pistone1995infinite}.
They showed that one can endow the space of equivalent measures as a smooth Banach manifold over the Orlicz space $L^\Phi(\P)$, where $\Phi(t)=e^{\lvert t \rvert}-1$.
In particular, this gives the following characterization of the tangent space
\begin{equ}\label{eq:definition-PS-tangent}
    T_\P \mathcal{P}(\P) = \left\{ \ddh \P^h : \ddh \ln \frac{\D\P^h}{\D\P} \textup{ exists in } L^\Phi(\P) \right\},
\end{equ}
where $\mathcal{P}(\P) = \{ \Q : \Q\ll\P \textup{ and } \P\ll\Q \}$ denotes the set of probability measures equivalent to $\P$.
In particular, for such variations $\P^h$, the derivative $\ddh\P^h$ exists with respect to the total variation distance and is given by
\begin{equ}
    \ddh \P^h = \left( \ddh \ln \frac{\D\P^h}{\D\P} \right) \P\in T_\P \mathcal{P}(\P).
\end{equ}
The topology on the tangent space is inherited from the Orlicz space $L^\Phi(\P)$, and the corresponding convergence is known as e-convergence.
For a detailed discussion, we refer to the original construction \cite{pistone1995infinite} and the more recent textbook \cite{ay2017information}.
It is well known that the Fisher--Rao metric is inherently connected to the negative entropy, as it can be regarded as its Hessian.
Much of information geometry is devoted to the study of the duality induced by this convex function. 
In particular, it induces a \emph{dually flat} structure on the space of probability measures that leads to the following notions of geodesics, which are dual to each other.
The $m$-geodesic and $e$-geodesic between two probability measures are given by linear interpolation and log-linear interpolation of the densities, respectively, meaning that
\begin{equ}
    \P_{t}^{(m)}=(1-t)\P_{0}+t\P_{1},\quad\text{and }\frac{\D\P_{t}^{(e)}}{\D\Q}\propto\left(\frac{\D\P_{0}}{\D\Q}\right)^{1-t}\left(\frac{\D\P_{1}}{\D\Q}\right)^{t},
\end{equ}
where $\Q$ is a reference measure dominating both $\P_{0}$ and $\P_{1}$.
Note that the $e$-geodesic is independent of the reference measure $\Q$.
The $e$-geodesic is also known as the \emph{exponential} geodesic, while the $m$-geodesic is also known as the \emph{mixture} geodesic.
They are not geodesics with respect to the Fisher--Rao metric, but with respect to the dually flat connections, which are not metric connections.
\begin{theorem}
[Fisher--Rao gradients]\label{thm:FR-gradients}
If $D_{\KL}(\P^\star, \P)<+\infty$ and $D_{\KL}(\P, \P^\star)<+\infty$, it holds that 
\begin{equs}
	\nabla_{\P}^{\FR}D_{\KL}(\P^{\star},\P)& =\P-\P^{\star},\quad\text{and }
	\\
	\nabla_{\P}^{\FR}D_{\KL}(\P,\P^{\star})&=\left(\ln\left(\frac{\D\P}{\D\P^{\star}}\right)-D_{\KL}(\P,\P^{\star})\right)\P.
\end{equs}
This shows that the negative Fisher--Rao gradients of forward and backward KL point into the directions of the $m$- and $e$-geodesics connecting $\P$ to $\P^{\star}$, respectively.
\end{theorem}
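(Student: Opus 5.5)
We propose to work directly with the Riesz characterization of the Fisher--Rao gradient. By definition, $\nabla^{\FR}_\P F$ is the tangent vector $G\in T_\P\mathcal P(\P)$ such that $g^{\FR}_\P(G,\Xi)=\ddh F(\P^h)$ for every admissible variation $\P^h$ with $\ddh\P^h=\Xi$. Write $\Xi = a\,\P$ with score $a=\ddh\ln\frac{\D\P^h}{\D\P}\in L^\Phi(\P)$. Note that $\E_\P[a]=0$, since $\E_\P[\D\P^h/\D\P]=1$ for all $h$. From \eqref{eq:log-expression-FR}, the metric is then $g^{\FR}_\P(a\P,b\P)=\E_\P[ab]$. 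So the task reduces to identifying a centered density $b$ with $\ddh F(\P^h)=\E_\P[ab]$ for all such $a$, and checking that $b\P$ lies in the tangent space.

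For the forward divergence, write $D_{\KL}(\P^\star,\P^h)=\E_{\P^\star}[\ln\frac{\D\P^\star}{\D\P}]-\E_{\P^\star}[\ln\frac{\D\P^h}{\D\P}]$. Differentiating at $h=0$ gives $-\E_{\P^\star}[a]=-\E_\P\bigl[\tfrac{\D\P^\star}{\D\P}\,a\bigr]$. Because $\E_\P[a]=0$, this equals $\E_\P\bigl[(1-\tfrac{\D\P^\star}{\D\P})\,a\bigr]$. Hence $b=1-\frac{\D\P^\star}{\D\P}$ and $b\,\P=\P-\P^\star$. This is centered and is the velocity of the $m$-geodesic $(1-t)\P+t\P^\star$ at $t=0$, up to sign. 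For the backward divergence, write $D_{\KL}(\P^h,\P^\star)=\E_{\P^h}[\ln\frac{\D\P^h}{\D\P^\star}]$ and differentiate the product $\frac{\D\P^h}{\D\P}\ln\frac{\D\P^h}{\D\P^\star}$ under $\E_\P$. This yields $\E_\P\bigl[a\ln\frac{\D\P}{\D\P^\star}\bigr]+\E_\P[a]$, and the last term vanishes. Centering the coefficient, which is allowed since $\E_\P[a]=0$, gives $b=\ln\frac{\D\P}{\D\P^\star}-D_{\KL}(\P,\P^\star)$. Its negative is the velocity of the $e$-geodesic $\P_t\propto(\D\P/\D\P^\star)^{-t}\P$ at $t=0$; the normalizing constant's derivative produces exactly the centering term.

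The main obstacle is justifying that differentiation and expectation commute, and that $b\P$ is a genuine Pistone--Sempi tangent vector, i.e.\ $b\in L^\Phi(\P)$. For the forward case, the two KL finiteness assumptions give $\P\sim\P^\star$. We would use the $e$-convergence of the difference quotients of $\ln\frac{\D\P^h}{\D\P}$ in $L^\Phi(\P)$ together with the Orlicz duality between $L^\Phi$ and $L\log L$. Finite $D_{\KL}(\P^\star,\P)$ puts $\frac{\D\P^\star}{\D\P}$ in $L\log L(\P)$, so pairing with the difference quotients passes to the limit. For the backward case, $D_{\KL}(\P,\P^\star)<\infty$ gives only $\ln\frac{\D\P}{\D\P^\star}\in L^1(\P)$. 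We would therefore interpret the gradient in the weak sense, pairing against sufficiently regular variations (e.g.\ bounded scores $a$), and apply dominated convergence via the convexity of $x\ln x$. If more integrability is needed for membership in $T_\P\mathcal P(\P)$, we would state it as the tacit condition under which the gradient exists, consistent with the paper's phrasing.
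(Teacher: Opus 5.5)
Your argument is correct, and its core is the same as the paper's. You differentiate each divergence, use that the direction has zero $\P$-mean to write the derivative as $\E_\P[a\,b]$ with $b$ centered, and read off $b\P$. The two routes differ in the class of variations and in how the interchange of limit and integral is justified.

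The paper perturbs linearly, $\P+h\Xi$; two-sidedness in $h$ forces $\frac{\D\Xi}{\D\P}$ to be bounded. It then uses monotone difference quotients, coming from concavity of $\ln$ for the forward divergence and convexity of $x\ln x$ for the backward one. This handles both divergences uniformly.

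You perturb along curves in the exponential chart and, for the forward divergence, use the H\"older inequality between $L^\Phi(\P)$ and $L\log L(\P)$. This is cleaner, and it gives the identity for every Pistone--Sempi tangent direction, not only bounded ones. It has no analogue for the backward divergence, however. There your fallback should restrict the curves themselves, e.g.\ $(1+ha)\P$ with $a$ bounded, not merely their scores, since a bounded score does not control the curve for $h\neq 0$. That restriction returns you to the paper's setting.

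One point in your framing needs correcting. In the forward case you list $b=1-\frac{\D\P^\star}{\D\P}\in L^\Phi(\P)$ as something to justify, but it cannot be proved, because it can fail. Take $\P$ uniform on $(0,1)$ and $f=\frac{\D\P^\star}{\D\P}=\frac{1}{2\sqrt{x}}$. Both divergences are finite, yet $\E_\P[e^{\alpha f}]=\infty$ for every $\alpha>0$. In the same example the backward direction $\ln f-\E_\P[\ln f]$ does lie in $L^\Phi(\P)$, so the forward case is the delicate one here, not the backward one.

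Your Orlicz argument in fact shows exactly what is true: the forward derivative is represented by an element of $L\log L$, i.e.\ on the dual (mixture) side. So you have two options. You can add an exponential moment of $\frac{\D\P^\star}{\D\P}$ under $\P$ as a hypothesis. Or you can read both identities weakly, as the paper's appendix does (``$\ddh D_{\KL}(\P^\star,\P+h\Xi)=g^{\FR}_\P(\P-\P^\star,\Xi)$ for all admissible $\Xi$''). The paper's remark that $\P-\P^\star$ lies in the tangent space is equally unsupported.
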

Despite both identities being known in the literature, we provide a short, self-contained proof in the appendix. These identities were used in various works obtaining $O(e^{-t})$ convergence guarantees for Fisher--Rao gradient flows~\cite{lu2019accelerating, liu2021second,lu2023birth, chen2026sampling}.
In contrast to Wasserstein gradient flows, the convergence rate is independent of the target measure $\P^{\star}$, which makes it an attractive choice for optimization and sampling.
As the space of probability measures is infinite-dimensional, it is typically not feasible to compute the Fisher--Rao gradient directly on the level of probability measures.
Hence, one resorts to discretizations of the space of probability measures, such as parametric families, which leads to the notion of natural gradients.
Given a Riemannian manifold $(\mathcal{M}, g)$, a parametric family $\mathcal{M}_{\Theta}=\{p_{\theta}:\theta\in\R^{d_{\textup{p}}}\}\subseteq\mathcal{M}$, and a loss function $L\colon\R^{d_{\textup{p}}}\to\R$ given by $L(\theta)=E(p_{\theta})$, we call a solution $\delta\theta$ to the linear system
\begin{equ}
	G(\theta)\delta\theta=\nabla L(\theta),
\end{equ}
\emph{natural gradient}, where $G(\theta)_{ij}=g_{p_{\theta}}(\partial_{\theta_{i}}p_{\theta},\partial_{\theta_{j}}p_{\theta})$. Here, $G(\theta)$ is
called the \emph{Gramian matrix}.
Natural gradients can also be shown to be the best approximation of the gradient descent dynamics in the manifold $\mathcal{M}$ while staying on the parametric model $\mathcal{M}_{\Theta}$.
To make this precise, we denote the parametrization map by $P=(\theta\mapsto p_{\theta})$.
Then, for any natural gradient direction $\delta\theta$ it holds that
\begin{equ}\label{eq:projection}
	DP(\theta)\delta\theta=\Pi_{T_{\theta}\mathcal{M}_{\Theta}}\nabla^{g}E(p_{\theta}),
\end{equ}
where $\Pi_{T_{\theta}\mathcal{M}_{\Theta}}$ denotes the $g$-orthogonal projection onto the generalized tangent space
\begin{equ}
	T_{\theta}\mathcal{M}_{\Theta}=\operatorname{span}\{\partial_{\theta_{i}}p_{\theta}:i=1,\ldots,d_{\textup{p}}\}.
\end{equ}
For a proof, we refer to \cite{amari2016information} for regular Gramians and to \cite{vanOostrum2023invariance} and \cite{muller2023achieving} for the degenerate and infinite-dimensional case, respectively.
In the specific case of the Fisher--Rao metric, the Gramian is given by the \emph{Fisher-information} matrix
\begin{equ}
	F(\theta)_{ij}\coloneqq g_{\P_{\theta}}^{\FR}(\partial_{\theta_{i}}\P_{\theta},\partial_{\theta_{j}}\P_{\theta})=\E_{\P_{\theta}}\left[\partial_{\theta_{i}}\ln\left(\frac{\D\P_{\theta}}{\D\Q}\right)\cdot\partial_{\theta_{j}}\ln\left(\frac{\D\P_{\theta}}{\D\Q}\right)\right],
\end{equ}
where $\Q$ is a reference measure dominating all $\P_{\theta}$.
Note that the Fisher-information matrix is independent of the choice of $\Q$.
\section{The Advective Fisher--Rao Metric}
In this section, we introduce the advective Fisher--Rao metric, which is the main object of this paper.
Our definition is motivated by an explicit expression of the pullback of the Fisher--Rao metric on path measures of SDEs to the space of velocity fields.
Further, we show that the advective Fisher--Rao metric maintains an important property of the Fisher--Rao metric, as it is compatible with the least-squares objective $E$.

\subsection{The Fisher--Rao metric of path measures of SDEs}
In the presence of noise
\begin{equ}
	\D X_{t}=v_{t}(X_{t})\D t+\sqrt{\varepsilon}\D W_{t}, \quad X_0\sim \rho_0, \label{eq:SDE-eps}
\end{equ}
the least-squares objective on the velocity fields agrees with the Kullback--Leibler divergence of the path measures.
The Kullback--Leibler divergence naturally gives rise to the Fisher--Rao metric, as the Fisher--Rao metric is the local Hessian of the KL-divergence and its gradients of the KL-divergence point into the directions of the $m$- and $e$-geodesics, see \Cref{thm:FR-gradients}.
This motivates us to study the Fisher--Rao metric on the space of path measures of SDEs, and we obtain an explicit expression.

\begin{theorem}[Fisher--Rao metric on path measures]\label{thm:FR-SDE}
Consider a velocity field $v_{\bullet}$ such that the SDE \eqref{eq:SDE-eps} admits a unique weak solution.
Further, consider two velocity fields $u_{\bullet}$ and $w_{\bullet}$ and assume that there is $\delta>0$ such that Novikov's condition
\begin{equs}
	\E_{\P^{v,\varepsilon}}\left[\exp\left(\frac{h^{2}}{2\varepsilon}\int_{0}^{1}\|u_{t}(X_{t})\|_{2}^{2}\dif t\right)\right] & <+\infty\quad\text{and}\label{eq:novikov-fisher-rao}\\
	\E_{\P^{v,\varepsilon}}\left[\exp\left(\frac{h^{2}}{2\varepsilon}\int_{0}^{1}\|w_{t}(X_{t})\|_{2}^{2}\dif t\right)\right] & <+\infty\label{eq:novikov-fisher-rao-2}
\end{equs}
holds for all $\lvert h\rvert<\delta$ and assume that the SDE \eqref{eq:SDE-eps}
admits a unique weak solution for $v_{\bullet}+hu_{\bullet}$ and $v_{\bullet}+hw_{\bullet}$
for all $\lvert h\rvert<\delta$.
Then $\Xi^{u,\varepsilon}=\ddh \P^{v+hu,\varepsilon}$ and $Z^{w,\varepsilon}=\ddh\P^{v+hw,\varepsilon}$ exist as elements in $T_{\P^{v, \varepsilon}} \mathcal{P}(\P^{v, \varepsilon})$ and the Fisher--Rao metric is given by 
\begin{equ}\label{eq:FR-SDE}
	g_{\P^{v,\varepsilon}}^{\FR}(\Xi^{u,\varepsilon},Z^{w,\varepsilon})=\frac{1}{\varepsilon}\int_{0}^{1}\int_{\R^d}u_{t}(x)\cdot w_{t}(x) \rho_{t}^{v,\varepsilon}(\D x)\dif t,
\end{equ}
where $\rho^{v,\varepsilon}_t$ denotes the law of $X_t$ under $\P^{v,\varepsilon}$. 
\end{theorem}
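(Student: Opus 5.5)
The plan is to compute the score functions of the two variations explicitly via Girsanov's theorem and then evaluate the Fisher--Rao metric in its log-derivative form \eqref{eq:log-expression-FR} using the It\^o isometry. Fix $\lvert h\rvert<\delta$. By the Novikov conditions \eqref{eq:novikov-fisher-rao}--\eqref{eq:novikov-fisher-rao-2} and the assumed weak uniqueness for $v_\bullet+hu_\bullet$, the Girsanov formula \eqref{eq:girsanov} with drift difference $hu_\bullet$ gives, $\P^{v,\varepsilon}$-almost surely,
\begin{equ}
	\ln\frac{\D\P^{v+hu,\varepsilon}}{\D\P^{v,\varepsilon}}(X_\bullet) = \frac{h}{\sqrt{\varepsilon}}\int_0^1 u_t(X_t)\cdot\D W_t - \frac{h^2}{2\varepsilon}\int_0^1\|u_t(X_t)\|_2^2\dif t,
\end{equ}
where $W$ is the Brownian motion driving $X$ under $\P^{v,\varepsilon}$. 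The analogous formula holds for $w_\bullet$. In particular, all measures involved are equivalent to $\P^{v,\varepsilon}$.

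Differentiating at $h=0$, which is trivial pointwise since the expression is a polynomial in $h$, yields the candidate score
\begin{equ}
	S^u\coloneqq\ddh\ln\frac{\D\P^{v+hu,\varepsilon}}{\D\P^{v,\varepsilon}} = \frac{1}{\sqrt{\varepsilon}}\int_0^1 u_t(X_t)\cdot\D W_t.
\end{equ}
To conclude $\Xi^{u,\varepsilon}\in T_{\P^{v,\varepsilon}}\mathcal P(\P^{v,\varepsilon})$ in the sense of \eqref{eq:definition-PS-tangent}, two facts are needed. First, $S^u\in L^\Phi(\P^{v,\varepsilon})$, i.e.\ $\E[e^{\lambda\lvert S^u\rvert}]<\infty$ for some $\lambda>0$. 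Second, the difference quotients $h^{-1}\ln(\D\P^{v+hu,\varepsilon}/\D\P^{v,\varepsilon})$ converge to $S^u$ in $L^\Phi$.

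The difference quotient equals $S^u - \frac{h}{2\varepsilon}\int_0^1\|u_t(X_t)\|^2\dif t$. The second claim therefore reduces to showing that $A\coloneqq\int_0^1\|u_t(X_t)\|^2\dif t$ has some exponential moment, so that $hA\to0$ in $L^\Phi$. This follows directly from \eqref{eq:novikov-fisher-rao}.

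For the first claim, I would use the standard exponential martingale trick. For $\lambda$ small, Cauchy--Schwarz gives
\begin{equ}
	\E\big[e^{\lambda S^u}\big]\le \E\big[e^{2\lambda S^u - \frac{4\lambda^2}{2}\langle S^u\rangle}\big]^{1/2}\,\E\big[e^{4\lambda^2 \langle S^u\rangle}\big]^{1/2},
\end{equ}
with $\langle S^u\rangle = A/\varepsilon$. The first factor is at most $1$, being a supermartingale expectation. The second factor is finite by Novikov with $h^2/2 = 4\lambda^2$. The same bound applies to $-S^u$. I expect this integrability and $L^\Phi$-convergence step to be the main obstacle. The key is to choose $\lambda$ in terms of $\delta$ so that the Novikov exponent suffices, rather than the formal computation itself.

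Finally, \eqref{eq:log-expression-FR} gives $g^{\FR}(\Xi^{u,\varepsilon},Z^{w,\varepsilon})=\E[S^uS^w]$. Since $\E[A]<\infty$, both $S^u$ and $S^w$ are square-integrable martingales at time $1$. The It\^o isometry, in its polarized form, then yields
\begin{equ}
	\E[S^uS^w]=\frac1\varepsilon\E\Big[\int_0^1 u_t(X_t)\cdot w_t(X_t)\dif t\Big].
\end{equ}
Fubini together with the definition of $\rho^{v,\varepsilon}_t$ as the law of $X_t$ turns this into \eqref{eq:FR-SDE}.
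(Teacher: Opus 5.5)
Your proposal is correct and follows the paper's proof essentially step for step: Girsanov writes the log-density as $hS^u - \frac{h^2}{2\varepsilon}A$; $S^u$ gets an exponential moment via Cauchy--Schwarz against the stochastic exponential of $2\lambda S^u$, with Novikov controlling the $e^{c\langle S^u\rangle}$ factor; and the polarized It\^o isometry gives the metric. The only differences are minor. You bound the stochastic-exponential factor by $1$ using the supermartingale property, whereas the paper invokes Novikov to make it a true martingale, which an upper bound does not need. Your second factor carries $4\lambda^2$ where Cauchy--Schwarz actually gives $2\lambda^2$; this is harmless because $\langle S^u\rangle\ge 0$, but it shrinks the admissible range of $\lambda$ by a factor $\sqrt2$ compared to the paper's $\alpha<\delta/2$.
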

\begin{proof}
From the definition \eqref{eq:definition-PS-tangent} of the tangent space, the statement that $\Xi^{u, \varepsilon}$ and $Z^{w,\varepsilon}$ exist as elements in the tangent space $T_{\P^{v,\varepsilon}}\mathcal{P}(\P^{v,\varepsilon})$ is equivalent to the statement that the logarithmic densities 
\begin{equs}
    \ln\left(\frac{\D\mathbb{P}^{v+hu, \varepsilon}}{\D\mathbb{P}^{v, \varepsilon}}\right) \quad \textup{and } 
    \ln\left(\frac{\D\mathbb{P}^{v+hw, \varepsilon}}{\D\mathbb{P}^{v, \varepsilon}}\right)
\end{equs}
are differentiable with respect to $h$ in the Orlicz space $L^\Phi(\P^{v,\varepsilon})$, where $\Phi(t) = e^{\lvert t \rvert }-1$. 
We use Girsanov's theorem and, in particular, \eqref{eq:girsanov} to express the Radon--Nikodym derivative of the path measures and obtain
\begin{equation*}
    \ln\left(\frac{\D\mathbb{P}^{v+hu, \varepsilon}}{\D\mathbb{P}^{v, \varepsilon}}(x_{\bullet})\right)
    =
    h f(x_\bullet) - h^{2}g(x_\bullet),
\end{equation*}
where
\begin{equs}
    f(x_\bullet) & = \frac1\varepsilon \int_{0}^{1}u_{t}(x_{t})\cdot (\D x_{t}-v_{t}(x_{t})\D t) \quad \textup{and }  
    g(x_\bullet) = \frac1{2\varepsilon} \int_0^1 \lVert u_t(x_t) \rVert_2^2 \dif t
\end{equs}
do not depend on $h$. 
As $hf+h^2g$ is quadratic in $h$, it suffices to show $f, g\in L^\Phi(\P^{v, \varepsilon})$, meaning that 
\begin{equ}
    \E_{\P^{v,\varepsilon}}\left[e^{\alpha \lvert f \rvert}\right], \E_{\P^{v,\varepsilon}}\left[e^{\beta\lvert  g \rvert}\right] < +\infty 
\end{equ}
for some $\alpha, \beta>0$. 
From \eqref{eq:novikov-fisher-rao} we immediately obtain 
\begin{equ}
    \E_{\P^{v,\varepsilon}}\left[e^{\beta \lvert g \rvert}\right] 
    = 
    \E_{\P^{v,\varepsilon}}\left[\exp\left(\frac{\beta}{2\varepsilon}\int_{0}^{1}\|u_{t}(X_{t})\|_{2}^{2}\dif t\right)\right]
    < +\infty 
\end{equ}
for $\beta<\delta^2$. 
To show $\E_{\P^{v,\varepsilon}}[e^{\alpha \lvert f \rvert}]<+\infty$ we first bound $\E_{\P^{v,\varepsilon}}[e^{ \alpha f}]$. 
It holds that 
\begin{equs}
    \E_{\P^{v,\varepsilon}} \left[e^{ \alpha f}\right] 
    = 
    \E_{\P^{v,\varepsilon}}\left[ \exp\left( \frac\alpha{\sqrt{\varepsilon}} \int_0^1 u_t(X_t) \cdot \D W_t \right) \right]
\end{equs}
and therefore, we consider 
\begin{equ}
    M_t \coloneqq \frac{1}{\sqrt{\varepsilon}}\int_0^t u_s(X_s) \cdot \D W_s, 
\end{equ}
for $t\in[0,1]$, which is a square-integrable martingale as Novikov's condition \eqref{eq:novikov-fisher-rao} implies 
\begin{equs}
    \E_{\P^{v, \varepsilon}}\left[\int_0^1 \lVert u_t(X_t) \rVert^2_2 \dif t\right]<+\infty.
\end{equs}
Consider now the stochastic exponential of $2\alpha M_t$, which is given by 
\begin{equ}
    Z_t \coloneqq \exp\left( 2\alpha M_t - 2 \alpha^2 \langle M \rangle_t \right). 
\end{equ}
If $\alpha<\frac{\delta}{2}$ Novikov's condition \eqref{eq:novikov-fisher-rao} ensures 
\begin{equs}
    \E_{\P^{v, \varepsilon}}\left[\exp\left( \frac12 \langle 2 \alpha M \rangle_1 \right)\right]
    & = 
    \E_{\P^{v, \varepsilon}}\left[\exp\left( 2 \alpha^2 \langle M \rangle_1 \right)\right] 
    \\ & 
    = 
    \E_{\P^{v, \varepsilon}}\left[\exp\left( \frac{2 \alpha^2}{\varepsilon} \int_0^1 \lVert u_t(X_t) \rVert_2^2 \dif t \right)\right] < +\infty.
\end{equs}
Hence, for $\alpha<\frac{\delta}{2}$, the stochastic exponential is a martingale and it holds that 
\begin{equs}
    \E_{\P^{v,\varepsilon}} \left[e^{ \alpha f}\right]  & = \E_{\P^{v, \varepsilon}}\left[e^{\alpha M_1}\right] 
    \\ & = 
    \E_{\P^{v, \varepsilon}}\left[Z_1^\frac12 e^{\alpha^2 \langle M\rangle _1}\right] 
    \\ & 
    \le \E_{\P^{v, \varepsilon}}\left[Z_1\right]^\frac12 \E_{\P^{v, \varepsilon}}\left[e^{2\alpha^2 \langle M\rangle _1}\right]^\frac12 < +\infty. 
\end{equs}
To bound $\E_{\P^{v, \varepsilon}}[e^{-\alpha f}]$, note that $\tilde u = -u$ also satisfies the same Novikov condition \eqref{eq:novikov-fisher-rao} and therefore the same argument as above applies to $\tilde f = -f$. 
Hence, $\E_{\P^{v, \varepsilon}}[e^{-\alpha f}] = \E_{\P^{v, \varepsilon}}[e^{\alpha \tilde f}]<+\infty$ for $\alpha<\frac{\delta}{2}$. 
Finally, for $\alpha<\frac{\delta}{2}$ we have 
\begin{equ}
    \E_{\P^{v, \varepsilon}}\left[e^{\alpha \lvert f \rvert}\right] \le \E_{\P^{v, \varepsilon}}\left[e^{\alpha f}\right]+ \E_{\P^{v, \varepsilon}}\left[e^{-\alpha f}\right] < +\infty.
\end{equ}

Having established that $f,g\in L^\Phi(\P^{v, \varepsilon})$, we can differentiate with respect to $h$ and obtain 
\begin{equation*}
    \ddh\ln\left(\frac{\D\mathbb{P}^{v+hu, \varepsilon}}{\D\mathbb{P}^{v, \varepsilon}}(x_{\bullet})\right)
    = 
    \frac{1}{\varepsilon}\int_{0}^{1}u_{t}(x_{t})\cdot (\D x_{t}-v_{t}(x_{t})\D t)
\end{equation*}
and analogously 
\begin{equation*}
\ddh\ln\left(\frac{\D\mathbb{P}^{v+hw, \varepsilon}}{\D\mathbb{P}^{v, \varepsilon}}(x_{\bullet})\right)
=
\frac1\varepsilon\int_{0}^{1}w_{t}(x_{t})\cdot (\D x_{t}-v_{t}(x_{t})\D t).
\end{equation*}
Recall that $\varepsilon^{-\frac12}(\D X_{t}-v_{t}(X_{t})\D t)=\D W_{t}$ under $\mathbb{P}^{v, \varepsilon}$.
Using the It\^{o} isometry, we obtain from the definition \eqref{eq:log-expression-FR} of the Fisher--Rao metric that
\begin{equation*}
	\begin{split}
		g_{\mathbb{P}^{v, \varepsilon}}^{\textup{FR}}(\Xi^{u, \varepsilon},Z^{w, \varepsilon}) & =\mathbb{E}_{\mathbb{P}^{v, \varepsilon}}\bigg[\frac{1}{\varepsilon^2} \int_0^1 u_{t}(X_{t})\cdot (\D X_{t}-v_{t}(X_{t})\D t)\times
		\\
		& \qquad\qquad\int_{0}^{1}w_{t}(X_{t})\cdot (\D X_{t}-v_{t}(X_{t})\D t)\bigg]
		\\
		& =\frac1{\varepsilon}\cdot\mathbb{E}_{\mathbb{P}^{v, \varepsilon}}\left[\int_{0}^{1}u_{t}(X_{t})\cdot w_{t}(X_{t})\dif t\right]\\
		& = \frac1{\varepsilon}\int_{0}^{1}\int_{\R^d}u_{t}(x)\cdot w_{t}(x) \rho_t^{v,\varepsilon}(\D x) \dif t.
	\end{split}
\end{equation*}
\end{proof}

Novikov's condition can readily be replaced by weaker conditions ensuring that Girsanov's theorem holds. However, if both $u$ and $w$ have at most linear growth, then Novikov's condition is always satisfied.
To show this, we use basic notions of sub-Gaussian random variables and refer to \cite{vershynin2018high} for an extensive introduction to this concept.
Recall that a random variable $X$ is called \emph{sub-Gaussian} if either of the following two equivalent conditions holds
\begin{enumerate}
	\item There is $\lambda_{0}>0$ such that $\E[e^{\lambda\lVert X\rVert_{2}^{2}}]<+\infty$ for all $\lambda<\lambda_{0}$.
	\item There is $c>0$ such that $\P(\lVert X\rVert_{2}>t)\le2e^{-\frac{t^{2}}{c}}$.
\end{enumerate}
For a Gaussian $\mathcal{N}(0,\sigma^{2}I)$ one can choose $\lambda_{0}=(2\sigma^{2})^{-1}$. Finally, recall that the sum of two sub-Gaussians $X, Y$ with constants $c_{X},c_{Y}>0$ is again sub-Gaussian.
\begin{lemma}[Linear growth implies Novikov] \label{lem:novikov}
Let $v_{\bullet}$, $u_{\bullet}$ be velocity fields that satisfy the linear growth condition
\begin{equs}
    \|v_{t}(x)\|_{2}\le a_t(1+\|x\|_{2}) \text{ and }
    \|u_{t}(x)\|_{2}\le b_t(1+\|x\|_{2})\text{ for }t\in[0,1],x\in\R^{d},
\end{equs}
for some $a\in L^1_T$ and $b\in L^2_T$, assume that the SDE \eqref{eq:SDE-eps} has a unique weak solution and assume that the initial condition is sub-Gaussian.
Then, Novikov's condition \eqref{eq:novikov-fisher-rao} holds for a suitably small $\delta>0$.
\end{lemma}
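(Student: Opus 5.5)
The plan is to reduce Novikov's condition to a uniform sub-Gaussian bound on the running supremum $S\coloneqq\sup_{t\in[0,1]}\|X_t\|_2$ of the solution of \eqref{eq:SDE-eps}. By the linear growth of $u_\bullet$,
\begin{equ}
    \frac{h^2}{2\varepsilon}\int_0^1\|u_t(X_t)\|_2^2\dif t\le \frac{h^2}{2\varepsilon}\int_0^1 b_t^2(1+\|X_t\|_2)^2\dif t\le \frac{h^2}{\varepsilon}\|b\|_{L^2_T}^2\,(1+S^2).
\end{equ}
Hence \eqref{eq:novikov-fisher-rao} follows as soon as $\E[e^{\lambda S^2}]<+\infty$ for some $\lambda>0$. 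We then choose $\delta>0$ with $\delta^2\|b\|_{L^2_T}^2/\varepsilon<\lambda$.

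To control $S$, we write the SDE in integral form, $X_t=X_0+\int_0^t v_s(X_s)\dif s+\sqrt{\varepsilon}W_t$. The linear growth of $v_\bullet$ gives
\begin{equ}
    \|X_t\|_2\le \|X_0\|_2+\|a\|_{L^1_T}+\sqrt{\varepsilon}\,W^*+\int_0^t a_s\|X_s\|_2\dif s,\qquad W^*\coloneqq\sup_{s\in[0,1]}\|W_s\|_2 .
\end{equ}
Grönwall's inequality in integral form, which holds for $a\in L^1_T$, then yields the pathwise bound
\begin{equ}
    S\le e^{\|a\|_{L^1_T}}\bigl(\|X_0\|_2+\|a\|_{L^1_T}+\sqrt{\varepsilon}\,W^*\bigr).
\end{equ}
This step is deterministic given the path, so it applies to any weak solution. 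The uniqueness assumption is only used to identify $\P^{v,\varepsilon}$.

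It remains to show that the right-hand side is sub-Gaussian. The initial condition $\|X_0\|_2$ is sub-Gaussian by assumption, and a constant is trivially sub-Gaussian. For $W^*$, the reflection principle or Doob's maximal inequality applied to the submartingale $e^{\lambda\|W_t\|^2}$ gives $\E[e^{\lambda (W^*)^2}]<\infty$ for small $\lambda>0$. Equivalently, each coordinate satisfies $\P(\sup_s|W^i_s|>r)\le 2\P(|W^i_1|>r)$, and a union bound over the coordinates finishes. We then use the recalled fact that sums of sub-Gaussian variables are sub-Gaussian; this needs no independence between $X_0$ and $W$, since one can use $(x+y+z)^2\le 3(x^2+y^2+z^2)$ and Hölder. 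It follows that $S$ is sub-Gaussian with some $\lambda_0>0$, and taking $\lambda<\lambda_0$ concludes the argument.

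The main technical point is the Grönwall step with only integrable coefficients $a\in L^1_T$, together with the need for a pathwise bound rather than a moment bound. Pathwise control is essential here, because exponential moments of squares do not follow from moment estimates alone. The integral form of Grönwall handles this cleanly. The sub-Gaussianity of the Brownian supremum is standard.
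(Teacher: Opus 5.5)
Your proposal is correct and follows the paper's route: a pathwise Gr\"onwall bound $\|X_t\|\le e^{\|a\|_{L^1_T}}\bigl(\|X_0\|+\|a\|_{L^1_T}+\sqrt{\varepsilon}\sup_{s}\|W_s\|\bigr)$, sub-Gaussianity of the right-hand side, and then $\int_0^1\|u_t(X_t)\|^2\dif t\le 2\|b\|_{L^2_T}^2(1+S^2)$ with $S=\sup_t\|X_t\|$. You spell out more than the paper does: how $\delta$ is chosen, why $\sup_s\|W_s\|$ is sub-Gaussian, and that summing sub-Gaussians needs no independence, but the argument is the same.
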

\begin{proof}
We need to show that
\begin{equ}
   \left( \int_0^1 \lVert u_t(X_t) \rVert^2 \dif t\right)^{\frac12}
\end{equ}
is a sub-Gaussian random variable, where $X_t$ is the solution of the SDE \eqref{eq:SDE-eps}.
It holds that
\begin{equ}
    X_t = X_0 + \int_0^t v_s(X_s) \dif s + \sqrt{\varepsilon} W_t.
\end{equ}
The triangle inequality yields
\begin{equs}
    \lVert X_t \rVert & \le \lVert X_0 \rVert + \int_0^t \lVert v_s(X_s) \rVert \dif s + \sqrt{\varepsilon} \lVert W_t \rVert
    \\ &
    \le \lVert X_0 \rVert + \int_0^t a_s (1+\lVert X_s \rVert) \dif s + \sqrt{\varepsilon} \sup_{s\in[0,1]} \lVert W_s \rVert.
\end{equs}
Gronwall's inequality yields
\begin{equs}
    \lVert X_t \rVert & \le \left(\lVert X_0 \rVert + \lVert a \rVert_{L^1_T} + \sqrt{\varepsilon} \sup_{s\in[0,1]} \lVert W_s \rVert\right) e^{\lVert a \rVert_{L^1_T}}
    \\ & \le c Z,
\end{equs}
where $Z = 1+\lVert X_0 \rVert + \sqrt{\varepsilon} \sup_{s\in[0,1]} \lVert W_s \rVert$.
Note that $Z$ is sub-Gaussian as $\lVert X_0 \rVert$ is sub-Gaussian by assumption and $\sup_{s\in[0,1]} \lVert W_s \rVert$ is sub-Gaussian as the supremum of a Gaussian process.
We estimate
\begin{equ}
    \lVert u_t(X_t) \rVert \le b_t (1+\lVert X_t \rVert) \le b_t (1+c Z)
\end{equ}
and consequently
\begin{equ}
    \int_0^1 \lVert u_t(X_t) \rVert^2 \dif t \le \int_0^1 b_t^2 (1+c Z)^2 \dif t \le 2\lVert b \rVert_{L^2_T}^2 (1+c^2 Z^2).
\end{equ}
As $Z$ is sub-Gaussian, this finishes the proof.
\end{proof}

\Cref{thm:FR-SDE} provides the explicit expression
\begin{equs}
    g_{v_{\bullet}}^{\textup{FR}, \varepsilon}(u_{\bullet},w_{\bullet})
    & = g_{\P^{v, \varepsilon}}^{\textup{FR}}\left(\ddh \P^{v+hu, \varepsilon}, \ddh \P^{v+hw, \varepsilon}\right) 
    \\ & = \frac1\varepsilon \int_{0}^{1}\int_{\R^d}u_{t}(x)\cdot w_{t}(x)\rho_{t}^{v, \varepsilon}(\D x)\dif t\label{eq:def-FR-velocities}
\end{equs}
of the pullback of the Fisher--Rao metric on path measures of SDEs to the space of velocity fields under strong regularity conditions.  
This definition, however, is meaningful on a much larger space of velocity fields. 

\begin{definition}[Fisher--Rao metric on velocity fields]\label{def:FR-velocities}
Let $v_{\bullet}\in \V$ and let $\rho_{\bullet}^{v, \varepsilon}$  denote the densities of the solution of the SDE \eqref{eq:SDE-eps}. 
For two velocity fields $u_{\bullet},w_{\bullet}$ we refer to
\begin{equation}
	g_{v_{\bullet}}^{\textup{FR}, \varepsilon}(u_{\bullet},w_{\bullet})= \frac1\varepsilon \int_{0}^{1}\int_{\R^d}u_{t}(x)\cdot w_{t}(x)\rho_{t}^{v, \varepsilon}(\D x)\dif t\label{eq:def-FR-velocities}
\end{equation}
as the \emph{Fisher--Rao metric} on the space of velocity fields whenever this integral is well-defined. 
\end{definition}

\begin{remark}[Extensions]
The expression of the Fisher--Rao metric can readily be extended to the case of multiplicative noise as well as to cases where the initial condition is not fixed.
More precisely, in the case of multiplicative noise
\begin{equ}
    \D X_{t}=v_{t}(X_{t})\D t+\sigma_{t}(X_{t})\D W_{t},
\end{equ}
the Fisher--Rao metric is given by
\begin{equ}
    g_{\P^{v,\sigma}}^{\FR}(\Xi^{u,\sigma},Z^{w,\sigma})=\int_{0}^{1}\int_{\R^d}u_{t}(x)\cdot \left(\sigma_{t}(x)\sigma_{t}(x)^{\top}\right)^{-1}w_{t}(x)\rho_{t}^{v,\sigma}(\D x)\dif t.
\end{equ}
Here, we assume that $\sigma_{t}(x)^{\top}\sigma_{t}(x)$ is invertible for all $t\in[0,1]$ and $x\in \R^d$ and denote the path measure of the process by $\P^{v,\sigma}$ as well as the time martingals by $\rho^{v,\sigma}$.
In the case of a non-fixed initial condition, the Fisher--Rao metric is given by the sum of the Fisher--Rao metric on path measures with a fixed initial condition and the Fisher--Rao metric on the initial distribution.
Both statements follow from a general version of Girsanov's theorem incorporating multiplicative noise and non-fixed initial conditions, see \cite{richter2021solving}.
\end{remark}

\subsection{The advective Fisher--Rao metric and its optimality}
Equation \eqref{eq:FR-SDE} gives an explicit expression for the Fisher--Rao metric on the space of path measures induced by the SDE \eqref{eq:SDE-eps} that blows up as $\varepsilon\to0$.
It is natural to extend the Fisher--Rao metric by simply removing the factor of $\varepsilon^{-1}$, which
yields the following Riemannian metric.
\begin{definition}[Advective Fisher--Rao metric on velocity fields]\label{def:AFR-velocities}
Consider a velocity field $v_{\bullet}\in \V$ and denote the solution $\rho_{\bullet}^{v}$ of the continuity equation
\begin{equ}\label{eq:CE}
    \partial_t \rho_t^v + \nabla\cdot(\rho_t^v v_t)=0, \qquad \rho_0^v=\rho_0.
\end{equ}
For two velocity fields $u_{\bullet},w_{\bullet}$ we refer to
\begin{equation}
	g_{v_{\bullet}}^{\textup{AFR}}(u_{\bullet},w_{\bullet})=\int_{0}^{1}\int_{\R^d}u_{t}(x)\cdot w_{t}(x)\rho_{t}^{v}(\D x)\dif t\label{eq:def-FR-velocities}
\end{equation}
as the \emph{advective Fisher--Rao (AFR) metric} on the space of velocity fields whenever this integral is well-defined.
\end{definition}
\begin{remark}
    [Existence]
    Note that if $\rho_\bullet^v$ has uniformly bounded second moments, the metric is well defined for $u_\bullet, w_\bullet\in L^{2}_T C_{\textup{Lip}}^{1}$.
    This is for example the case if $\rho_0\in \mathcal{P}_2(\R^d)$ and $v_\bullet\in \V$, see \cite{ambrosio2012user}.
\end{remark}
Similar to the construction of the Wasserstein--Otto metric in optimal transport theory, we define the tangent space as the completion of the space of continuously differentiable Lipschitz vector fields with respect to the Fisher--Rao metric.
This yields
\begin{equ}
	T_{v_{\bullet}}^{\textup{AFR}} \V =\overline{L^{2}_T C_{\textup{Lip}}^{1}}^{g_{v_{\bullet}}^{\AFR}}=L^2_T L^2(\rho_{\bullet}^{v})
\end{equ}
where
\begin{equ}
    L^2_T L^2(\rho_{\bullet}^{v}) \coloneqq \left\{ v\colon[0,1]\times\R^d\to\R^d : \int_0^1 \int_{\R^d} \lVert v_t(x) \rVert^2 \rho_t^v(\D x)  \dif t < +\infty \right\}.
\end{equ}
Note that the Fisher--Rao metric depends on $v_{\bullet}$, unlike the $L^{2}$-metric, where the reference density $\rho_{\bullet}^{\star}$ is used.
Recall that we are investigating the energy function
\begin{equ}
    E\colon L^2_T C^1_{\textup{Lip}} \to \R, \quad v_\bullet \mapsto \frac12 \int_0^1 \int_{\R^d} \lVert v_t(x) - v^\star_t(x) \rVert_2^2 \rho^\star_t(\D x) \dif t.
\end{equ}
To ensure finiteness of the energy we assume $\rho_0\in\mathcal{P}_2(\R^d)$, in which case $\rho_t^\star$ has finite second moments $m_2(\rho_t^\star)$, which are bounded uniformly in time, see for example \cite{ambrosio2012user} or Lemma \ref{lem:evolution-W2}.
Note that
\begin{equ}
    \lVert v_t(x) - v^\star_t(x) \rVert \le \lVert v_t - v_t^\star \rVert_{C^1_{\textup{Lip}}}(1+\lVert x \rVert)
\end{equ}
and we can estimate
\begin{equs}
    \lVert v_t - v^\star_t \rVert_{L^2(\rho^\star_t)}
    & \le
    \lVert v_t - v_t^\star \rVert_{C^1_{\textup{Lip}}} \lVert 1+\lVert x \rVert \rVert_{L^2(\rho^\star_t)}
    \\ & \le
    \lVert v_t - v_t^\star \rVert_{C^1_{\textup{Lip}}} (1+\lVert \lVert x \rVert \rVert_{L^2(\rho^\star_t)})
    \\ & =
    \lVert v_t - v_t^\star \rVert_{C^1_{\textup{Lip}}} (1+\sqrt{m_2(\rho_t^\star)}).
    \\ & =
    c \lVert v_t - v_t^\star \rVert_{C^1_{\textup{Lip}}}.
\end{equs}
This shows the finiteness of the energy $E$ if $v_\bullet, v^\star_\bullet \in L^2_T C^1_{\textup{Lip}}$ and $p_0\in\mathcal{P}_2(\R^d)$.
To understand the dynamics of the advective Fisher--Rao gradient flow, we first derive an explicit expression in the space of velocity fields.
\begin{lemma}\label{thm:FR-gradient-velocity}
Assume that the initial distribution $p_0\in\mathcal{P}_2(\R^d)$ has a positive density and consider $v_\bullet, v^\star_\bullet \in L^2_TC^1_{\textup{Lip}}$.
Then for any $u_\bullet \in L^2_TC^1_{\textup{Lip}}$ it holds that
\begin{equ}
    g^{\AFR}_{v_\bullet}\left( (v_{\bullet}-v_{\bullet}^{\star})\cdot\frac{p_{\bullet}^{\star}}{p_{\bullet}^{v}}, u_\bullet\right) = \delta E(v_\bullet)[u_\bullet].
\end{equ}
\end{lemma}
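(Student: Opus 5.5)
The plan is to compute both sides directly and compare them pointwise in $(t,x)$. Nothing beyond the definitions and the positivity of densities should be needed. The key observation is that $E$ is a quadratic functional whose weight $\rho^\star_\bullet$ does not depend on $v_\bullet$. So $E$ is a genuinely quadratic function on the affine space $L^2_TC^1_{\textup{Lip}}$, and its first variation is elementary. The metric $g^{\AFR}_{v_\bullet}$ carries the weight $\rho^v_\bullet$ instead, and the factor $p^\star_\bullet/p^v_\bullet$ is exactly the Radon--Nikodym derivative that converts one weight into the other.

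\textbf{Step 1: the first variation of $E$.} For $u_\bullet\in L^2_TC^1_{\textup{Lip}}$ I expand
\begin{equation*}
E(v_\bullet+hu_\bullet)=E(v_\bullet)+h\int_0^1\int_{\R^d}(v_t-v^\star_t)\cdot u_t\,\rho^\star_t(\D x)\dif t+\frac{h^2}{2}\int_0^1\int_{\R^d}\lVert u_t\rVert^2\rho^\star_t(\D x)\dif t .
\end{equation*}
All three terms are finite by the estimate preceding the lemma: $\lVert w_t\rVert_{L^2(\rho^\star_t)}\le c\lVert w_t\rVert_{C^1_{\textup{Lip}}}$, using that $m_2(\rho^\star_t)$ is uniformly bounded, together with Cauchy--Schwarz in $L^2_TL^2(\rho^\star_\bullet)$. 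Hence
\begin{equation*}
\delta E(v_\bullet)[u_\bullet]=\int_0^1\int_{\R^d}(v_t-v^\star_t)\cdot u_t\,p^\star_t(x)\dif x\dif t .
\end{equation*}

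\textbf{Step 2: positivity and the change of measure.} Since $p_0$ has a positive Lebesgue density and $v_\bullet,v^\star_\bullet\in\V$, the method of characteristics gives positive densities for both paths. Indeed, $p_t^v=(\varphi^v_t)_\sharp p_0$ with $\varphi^v_t$ a $C^1$ diffeomorphism, so $p^v_t(x)=p_0((\varphi^v_t)^{-1}x)\,\lvert\det D(\varphi^v_t)^{-1}(x)\rvert>0$. The same holds for $p^\star_t$; this is the remark made in the preliminaries about the continuity equation. Consequently $p^\star_t/p^v_t$ is a well-defined, measurable, positive function. Inserting the candidate field into the definition of the advective Fisher--Rao metric, the factor $p^v_t$ cancels pointwise:
\begin{equation*}
g^{\AFR}_{v_\bullet}\Bigl((v_\bullet-v^\star_\bullet)\tfrac{p^\star_\bullet}{p^v_\bullet},u_\bullet\Bigr)=\int_0^1\int_{\R^d}(v_t-v^\star_t)\cdot u_t\,\frac{p^\star_t}{p^v_t}\,p^v_t\dif x\dif t ,
\end{equation*}
and this coincides with the expression from Step 1.

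\textbf{The main obstacle} is well-definedness, not the algebra. The left-hand side is a priori defined only when its integral converges, and the field $(v-v^\star)p^\star/p^v$ need not lie in $L^2_TL^2(\rho^v_\bullet)$. I would therefore interpret the metric pairing as the integral $\int\!\!\int (\cdot)\cdot u\,\D\rho^v\dif t$ and check its absolute convergence after the cancellation. After cancelling, the integrand is $\lvert(v_t-v^\star_t)\cdot u_t\rvert p^\star_t$, which is integrable by Step 1's bound. Tonelli then justifies performing the cancellation before integrating, so both sides are finite and equal. If square-integrability of the gradient itself were required, I would add a remark that it holds precisely when $\int\!\!\int\lVert v-v^\star\rVert^2 (p^\star)^2/p^v<\infty$. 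That condition is not needed for the identity as stated.
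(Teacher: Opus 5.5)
Your proposal is correct and is essentially the paper's proof. Both compute $\delta E(v_\bullet)[u_\bullet]$ directly from the quadratic structure of $E$, then cancel $p^v_\bullet$ pointwise in the advective Fisher--Rao pairing, using that the continuity equation preserves positivity; your explicit check that the cancelled integrand is absolutely integrable is a small but welcome addition, matching the paper's later remark that the gradient is only guaranteed to lie in $L^2_TL^1(p^v_\bullet)$, not in the tangent space.
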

\begin{proof}
First, note that positivity is preserved by the continuity equation.
A direct computation yields that for any $u_{\bullet}\in L^2_T C^1_{\textup{Lip}}$ it holds that
\begin{equation*}
\begin{split}
	\delta E(v_{\bullet})[u_{\bullet}]
    & =\lim_{h\to0}\frac{E(v_\bullet+hu_\bullet)-E(v_\bullet)}{h} \\
    & =\lim_{h\to0}\int_{0}^{1}\int_{\R^d}\left(u_t(x)\cdot(v_t(x)-v^\star_t(x)) + h \lVert u_t(x) \rVert^2\right) p_{t}^{\star}(x)\dif x\dif t \\
    & =\int_{0}^{1}\int_{\R^d}(v_{t}(x)-v_{t}^{\star}(x))\cdot u_{t}(x)p_{t}^{\star}(x)\dif x\dif t
	\\&
	=g_{v_{\bullet}}^{\textup{AFR}}\left((v_{\bullet}-v_{\bullet}^{\star})\cdot\frac{p_{\bullet}^{\star}}{p^v_{\bullet}},u_{\bullet}\right),
\end{split}
\end{equation*}
which yields the claim.
\end{proof}
Note that it is not clear whether
\begin{equ}
    (v_\bullet -v_\bullet^\star)\cdot\frac{p_\bullet^\star}{p_\bullet^v} \in L^2_TL^2(p^v_\bullet) = T_{v_\bullet}^{\AFR} \V.
\end{equ}
As such, it is not guaranteed that the gradient of the energy functional exists in the tangent space, hence in the classical sense.
However, it always exists in the larger space $L^2_T L^1(p_\bullet^v)$, as can be checked readily.
Nevertheless, we write $\nabla^{\AFR}E(v_{\bullet})=(v_{\bullet}-v_{\bullet}^{\star})\cdot\frac{p_{\bullet}^{\star}}{p_{\bullet}^{v}}$ in slightly informal fashion.
To understand how the advective Fisher--Rao gradient flow affects the densities $p_{\bullet}^{v}$, we use the following general result.
\begin{lemma}[Derivative of the solution operator]\label{thm:derivative-solution}
Let $v_\bullet,u_\bullet\in \V$ and assume that $\rho_0$ has a finite second moment. 
Then, the limit
\begin{equ}\label{eq:weak-star-convergence}
    \xi_{t}^{u} \coloneqq\lim_{h\to0}\frac{\rho_{t}^{v+hu}-\rho_{t}^{v}}{h}
\end{equ}
exists for every $t\in[0,1]$ as a weak-$\ast$ limit in $(C^1_{\textup{Lip}})^\ast$ as well as a weak-$\ast$ limit in $(L^1_T C^1_{\textup{Lip}})^\ast$.
For any $\phi\in C^1_{\textup{Lip}}$ it holds that
\begin{equ}
    \langle \xi_t^{u}, \phi\rangle = \int_{\R^d} \nabla \phi(x) \cdot \psi_t^{u}((\varphi^{v}_t)^{-1}(x)) \rho_t(\D x),
\end{equ}
where $\varphi^v_\bullet$ denotes the flow induced by $v_\bullet$ and $\psi^{u}_\bullet$ is the unique solution to
\begin{equ}
    \partial_{t}\psi_{t}^{u}(x)=Dv_{t}(\varphi_{t}^{v}(x))\cdot\psi_{t}^{u}(x)+u_{t}(\varphi_{t}^{v}(x)).
\end{equ}
Further, $\xi_{\bullet}^{u}$ is continuous as a curve in $(C^1_{\textup{Lip}})^\ast$ with respect to the weak-$\ast$ topology and is a distributional solution of the continuity equation with source term given by
\begin{equ}
	\partial_{t}\xi_{t}^{u}+\nabla\cdot(\xi_{t}^{u}v_{t})=-\nabla\cdot(\rho_{t}^{v}u_{t}) \quad \text{and } \xi_0=0.
    \label{eq:linear-transport}
\end{equ}
\end{lemma}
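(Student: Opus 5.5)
We go through the Lagrangian representation $\rho_t^{w}=(\varphi^{w}_t)_\sharp\rho_0$ and differentiate the flow map in $h$. Set $w^h=v+hu$. For $\phi\in C^1_{\textup{Lip}}$ we have
\[
\langle \rho_t^{v+hu}-\rho_t^v,\phi\rangle=\int_{\R^d}\bigl(\phi(\varphi_t^{v+hu}(x_0))-\phi(\varphi_t^{v}(x_0))\bigr)\rho_0(\D x_0).
\]
So the main task is the pointwise differentiability of $h\mapsto\varphi_t^{v+hu}(x_0)$ at $h=0$, together with a dominating bound.

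\textbf{Step 1: the variational equation.} Write $\Delta^h_t(x_0)=\varphi^{v+hu}_t(x_0)-\varphi^v_t(x_0)$. Then
\[
\Delta^h_t=\int_0^t\bigl(v_s(\varphi^{v+hu}_s)-v_s(\varphi^v_s)\bigr)\dif s+h\int_0^t u_s(\varphi^{v+hu}_s)\dif s.
\]
Gronwall with $\beta\in L^1_T$ gives $\|\Delta^h_t\|\le |h|\,C\,(1+\sup_s\|\varphi^{v+hu}_s(x_0)\|)$. Another Gronwall gives $\sup_s\|\varphi^{w}_s(x_0)\|\le C(1+\|x_0\|)$ uniformly for $|h|\le1$. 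Hence $\|\Delta^h_t\|\le C|h|(1+\|x_0\|)$.

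Next put $r^h_t=\Delta^h_t/h-\psi^u_t$. The linear Carathéodory ODE for $\psi^u$ has a unique solution because $Dv_t(\varphi_t^v(x))$ is bounded by $\beta_t\in L^1_T$ and $u_t(\varphi^v_t(x))$ is integrable in $t$; it satisfies $\|\psi_t^u(x_0)\|\le C(1+\|x_0\|)$. Expanding $v_s(\varphi^v_s+\Delta^h_s)-v_s(\varphi^v_s)=\int_0^1 Dv_s(\varphi^v_s+\theta\Delta^h_s)\D\theta\,\Delta^h_s$, the difference $r^h$ solves a linear inequality. It is driven by two error terms:
\[
\Bigl\|\int_0^1\bigl(Dv_s(\varphi^v_s+\theta\Delta^h_s)-Dv_s(\varphi^v_s)\bigr)\D\theta\Bigr\|\,\|\Delta^h_s\|/|h|
\quad\text{and}\quad
\|u_s(\varphi^{v+hu}_s)-u_s(\varphi^v_s)\|.
\]
Both tend to $0$ pointwise in $s$ for each fixed $x_0$: the first by continuity of $Dv_s$ and $\Delta^h_s\to0$, the second by continuity of $u_s$. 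They are dominated by $2\beta_s\,C(1+\|x_0\|)$, respectively $2\|u_s\|_{C^1_{\textup{Lip}}}(1+C(1+\|x_0\|))$, both integrable in $s$. Dominated convergence in $s$ plus Gronwall then gives $\sup_t\|r^h_t(x_0)\|\to0$ for each $x_0$.

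\textbf{Step 2: passing to the measures.} Write
\[
\phi(\varphi^{v+hu}_t)-\phi(\varphi^v_t)=\int_0^1\nabla\phi(\varphi^v_t+\theta\Delta^h_t)\D\theta\cdot\Delta^h_t .
\]
Divided by $h$, this converges pointwise to $\nabla\phi(\varphi^v_t(x_0))\cdot\psi^u_t(x_0)$, using continuity of $\nabla\phi$. It is dominated by $\|\nabla\phi\|_\infty C(1+\|x_0\|)$, which lies in $L^1(\rho_0)$ because $\rho_0$ has a finite second moment. Dominated convergence yields the limit
\[
\int_{\R^d}\nabla\phi(\varphi^v_t(x_0))\cdot\psi^u_t(x_0)\,\rho_0(\D x_0).
\]
The push-forward $\rho_t=(\varphi^v_t)_\sharp\rho_0$ together with invertibility of the flow turns this into the stated formula. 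The bound is uniform in $t$ and linear in $\|\phi\|_{C^1_{\textup{Lip}}}$. Replacing $\phi$ by $\phi_t$ with $\phi_\bullet\in L^1_TC^1_{\textup{Lip}}$ and integrating in $t$ (dominated by $\|\nabla\phi_t\|_\infty C(1+\|x_0\|)$, integrable in $(t,x_0)$) gives the weak-$\ast$ convergence in $(L^1_TC^1_{\textup{Lip}})^\ast$ as well. Continuity of $t\mapsto\langle\xi^u_t,\phi\rangle$ follows from continuity of $t\mapsto(\varphi^v_t,\psi^u_t)$ and dominated convergence.

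\textbf{Step 3: the equation.} For $\phi\in C^\infty_c$, differentiate $\langle\xi^u_t,\phi\rangle=\int\nabla\phi(\varphi^v_t)\cdot\psi^u_t\,\D\rho_0$ in $t$ in the absolutely continuous sense:
\[
\frac{\D}{\D t}\langle\xi^u_t,\phi\rangle=\int\Bigl(D^2\phi(\varphi^v_t)[v_t(\varphi^v_t),\psi^u_t]+\nabla\phi(\varphi^v_t)\cdot Dv_t(\varphi^v_t)\psi^u_t+\nabla\phi(\varphi^v_t)\cdot u_t(\varphi^v_t)\Bigr)\D\rho_0 .
\]
The first two terms combine to $\nabla(v_t\cdot\nabla\phi)(\varphi^v_t)\cdot\psi^u_t$, which is $\langle\xi^u_t,v_t\cdot\nabla\phi\rangle$. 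The last term is $\int u_t\cdot\nabla\phi\,\D\rho_t$. Together this is exactly the weak form of $\partial_t\xi^u_t+\nabla\cdot(\xi^u_tv_t)=-\nabla\cdot(\rho^v_tu_t)$, and $\xi^u_0=0$ since $\psi^u_0=0$.

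The main obstacle is Step 1 under mere $L^1$-in-time regularity and only continuity (not Lipschitz continuity) of $Dv_t$. This is why I use the integral-mean-value form and dominated convergence in $s$ instead of a second-order Taylor bound. A smaller point is that $v_t\cdot\nabla\phi$ is only $C^1_{\textup{Lip}}$-admissible in the needed sense when applying the formula; restricting to $C^\infty_c$ test functions for the distributional statement avoids this.
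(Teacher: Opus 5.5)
Your proof is correct. Steps 1 and 2 follow the paper. The paper takes the pointwise derivative $\psi^u_t$ and the linear-growth majorant from \Cref{app:lem:sensitivity} and \Cref{app:lem:stability}, then applies dominated convergence against $\rho_0$, including the time-integrated version for $L^1_TC^1_{\textup{Lip}}$. Its sensitivity lemma handles the merely continuous $Dv_s$ through a modulus of continuity on a compact set, dominated by $2\beta_s$, which is the same mechanism as your integral mean-value form.

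Step 3 is where you genuinely diverge. The paper never differentiates the representation formula. It writes the weak formulation of the perturbed equation for $\rho^{v+hu}_\bullet$, subtracts the one for $\rho^v_\bullet$, divides by $h$, and passes to the limit in two terms:
\begin{itemize}
\item $\int_0^1\langle h^{-1}(\rho^{v+hu}_t-\rho^v_t),\partial_t\phi_t+v_t\cdot\nabla\phi_t\rangle\dif t$ converges by the weak-$\ast$ convergence in $(L^1_TC^1_{\textup{Lip}})^\ast$;
\item the source term $\int_0^1\langle\rho^{v+hu}_t,u_t\cdot\nabla\phi_t\rangle\dif t$ converges via $W_2$-convergence $\rho^{v+hu}_t\to\rho^v_t$ (\Cref{lem:W2-stability}) together with a majorant in time.
\end{itemize}

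The two routes buy different things. The paper's Eulerian limit only needs weak convergence of the difference quotients, so it would survive where no explicit Lagrangian formula is available. The price is that it needs stability of $\rho^{v+hu}_t$ and gives the equation only in space-time integrated form. Your Lagrangian route is more self-contained: it needs no $W_2$ stability, and it gives the sharper statement that $t\mapsto\langle\xi^u_t,\phi\rangle$ is absolutely continuous with explicit derivative. Your Step 2 also actually proves the weak-$\ast$ continuity of $t\mapsto\xi^u_t$, which the lemma asserts but the paper's proof does not address.

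To make Step 3 fully rigorous, do two things:
\begin{itemize}
\item State the majorant that justifies Fubini. For example, with $R$ the radius of $\operatorname{supp}\phi$, the first term is bounded by $\|D^2\phi\|_\infty(\alpha_t+\beta_t R)\,C(1+\|x_0\|)$, and similar bounds hold for the other two terms; all are integrable in $(t,x_0)$.
\item For space-time test functions, either include the term $\nabla\partial_t\phi_t(\varphi^v_t)\cdot\psi^u_t$ in your computation, or invoke the standard equivalence between the two weak formulations.
\end{itemize}
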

\begin{proof}
First, we show convergence of the difference quotients $\Delta_{t}^{h}\coloneqq\frac{\rho_{t}^{v+hu}-\rho_{t}^{v}}{h}$ for a fixed $t\in[0,1]$.
For this, we take a function $\phi\in  C_{\textup{Lip}}^{1}(\R^{d})$, and compute
\begin{equs}\label{eq:proof-derivative}
    \langle \Delta_t^h, \phi \rangle & =h^{-1}\left(\int_{\R^{d}}\phi(x)\rho_{t}^{v+hu}(\D x)-\int_{\R^{d}}\phi_t(x) \rho_{t}^{v}(\D x) \right)\\
    & =\int_{\R^{d}}\frac{\phi(\varphi_{t}^{v+hu}(x))-\phi(\varphi_{t}^{v}(x))}{h}\rho_{0}(\D x),
\end{equs}
where $\varphi^{v+hu}_\bullet$ denotes the flow induced by the velocity field $v_{\bullet}+hu_\bullet$.
By Lemma \ref{app:lem:sensitivity}, the pointwise limit is given by
\begin{equ}
    \lim_{h\to0} \frac{\phi(\varphi_{t}^{v+hu}(x))-\phi(\varphi_{t}^{v}(x))}{h} = \nabla \phi(\varphi_{t}^{v}(x))\cdot\psi_{t}^{u}(x)
\end{equ}
for all $t\in[0,1]$, $x\in\R^d$.
By Lemma \ref{app:lem:stability}, there is a constant $c>0$ such that
\begin{equ}
    \lVert \varphi^{v+hu}_t(x) - \varphi_t^{u}(x) \rVert \le c(1+\lVert x \rVert) \quad \textup{for all } x\in\R^d, \lvert h \rvert \le1.
\end{equ}
This yields
\begin{equs}
    \left \lvert \frac{\phi(\varphi_{t}^{v+hu}(x))-\phi(\varphi_{t}^{v}(x))}{h} \right\rvert
    & \le \lVert \phi \rVert_{C^1_{\textup{Lip}}} \cdot \frac{\lVert \varphi^{v+hu}_t(x) - \varphi_t^{u}(x) \rVert }{h}
    \\ & \le \lVert \phi \rVert_{C^1_{\textup{Lip}}} \cdot c(1+\lVert x \rVert).
\end{equs}
This provides a dominating function as
\begin{equs}
    \int_{\R^{d}}
    \left \lvert \frac{\phi(\varphi_{t}^{v+hu}(x))-\phi(\varphi_{t}^{v}(x))}{h} \right\rvert \rho_{0}(\D x) \le \lVert \phi \rVert_{ C^1_{\textup{Lip}}} c(1+m_1(\rho_0))
\end{equs}
and hence, we obtain
\begin{equs}
    \lim_{h\to0} \langle \Delta_t^h, \phi \rangle
    & =  \int_{\R^d} \nabla \phi(\varphi^{v}_t(x)) \cdot \psi_t^{u}(x) \rho_0(\D x)
    \\ & = \int_{\R^d} \nabla \phi(x) \cdot \psi_t^{u}((\varphi^{v}_t)^{-1}(x)) \rho_t(\D x)
    \\ & \eqqcolon \langle \xi_t^{u}, \phi \rangle.
\end{equs}
Note that this defines a linear bounded functional $\xi_t^{u}$ with respect to $\phi$ as $\psi_t^{u}$ has linear growth with a constant bounded uniformly in $t$, see for example Lemma \ref{app:lem:flow-grow}, which ensures
\begin{equs}
    \left\lvert \int_{\R^d} \nabla \phi(\varphi^{v}_t(x)) \cdot \psi_t^{u}(x) \rho_0(\D x) \right\rvert
    & \le
    \int_{\R^d} \left\lvert \nabla \phi(\varphi^{v}_t(x)) \cdot \psi_t^{u}(x) \right\rvert \rho_0(\D x)
    \\ & \le
    \lVert \phi \rVert_{C^1_{\textup{Lip}}}  \int_{\R^d}  \lVert \psi_t^{u}(x) \rVert \rho_0(\D x)
    \\ & \le
     \lVert \phi \rVert_{C^1_{\textup{Lip}}} \int_{\R^d} c(1+\lVert x \rVert) \rho_0(\D x)
    \\ & \le c(1+m_1(\rho_0)) \lVert \phi \rVert_{C^1_{\textup{Lip}}}.
\end{equs}
Note that the argument above also provides
\begin{equs}\label{proof:eq:weak-stat-time}
    \lim_{h\to0} \int_0^1 \langle \Delta_t^h, \phi_t \rangle \dif t
    =
    \int_0^1 \langle \xi_t^{u}, \phi_t \rangle \dif t
\end{equs}
for any $\phi_\bullet\in L^1_TC^1_{\textup{Lip}}$ as again
\begin{equs}
    \lvert \langle \xi_t^{u}, \phi_t \rangle \rvert
    & \le \lVert \phi_t \rVert_{C^1_{\textup{Lip}}} \cdot c(1+ m_1(\rho_0)),
\end{equs}
which provides a dominating function in time.
To see that the limit $\xi_{\bullet}^{u}$ is a distributional solution of \eqref{eq:linear-transport}, we first note that for a test function $\phi_\bullet\in C^\infty_{\textup{c}}((0,1)\times\R^d;\R^d)$ the continuity equation ensures
\begin{equation*}
    \int_{0}^{1}\left\langle \rho_{t}^{v+hu}, \partial_{t}\phi_{t}+v_{t}\cdot \nabla\phi_{t}+hu_{t}\cdot \nabla\phi_{t}\right\rangle\dif t=0
\end{equation*}
for any $h\in\R$.
Taking the difference quotient, we obtain
\begin{equation*}
    \int_{0}^{1}\left( \left\langle \rho_{t}^{v+hu}, u_{t}\cdot \nabla\phi_{t}\right\rangle  + \left\langle \Delta_{t}^{h}, \partial_{t}\phi_{t}+v_{t}\cdot \nabla\phi_{t}\right\rangle \right)\dif t=0.
\end{equation*}
From \eqref{proof:eq:weak-stat-time} we obtain
\begin{equ}
    \lim_{h\to0} \int_{0}^{1} \left\langle \Delta_{t}^{h}, \partial_{t}\phi_{t}+v_{t}\cdot \nabla\phi_{t}\right\rangle \dif t
    =
    \int_{0}^{1} \left\langle \xi_t^{u}, \partial_{t}\phi_{t}+v_{t}\cdot \nabla\phi_{t}\right\rangle \dif t
\end{equ}
as $\partial_t \phi_\bullet + v_\bullet\cdot\nabla\phi_\bullet \in L^1_TC^1_{\textup{Lip}}$.
To treat the second term, we use that $p^{v+hu}_t \to p_t^v$ in $W_2$ as $h \to 0$, see Lemma \ref{lem:W2-stability} below. 
Since $W_2$ convergence implies convergence when testing with functions with at most quadratic growth, see \cite[Proposition 3.4]{ambrosio2012user}, and $u_\bullet\cdot \nabla \phi_\bullet\in L^1_TC^1_{\textup{Lip}}(\R^d)$, we can use dominated convergence to obtain
\begin{equ}
    \lim_{h\to0} \left\langle \rho_{t}^{v+hu}, u_{t}\cdot \nabla\phi_{t}\right\rangle  = \lim_{h\to0} \left\langle \rho_{t}^{v}, u_{t}\cdot \nabla\phi_{t}\right\rangle
\end{equ}
for a fixed time $t\in[0,1]$.
Finally, we obtain a majorant in time, as for any $\lvert h \rvert\le1$ we can estimate
\begin{equs}
    \left\lvert \left\langle \rho_t^{v+hu}, u_t\cdot \nabla \phi_t \right\rangle \right\rvert
    & \le \int_{\R^d} \lvert u_t(x)\cdot \nabla \phi_t(x) \rvert \rho_t^{v+hu}(\D x)
    \\ & \le \int_{\R^d} \lVert u_t(x) \rVert \cdot \lVert \nabla \phi_t(x) \rVert \rho_t^{v+hu}(\D x)
    \\ & \le \lVert \nabla \phi_t \rVert_{C^1_{\textup{Lip}}} \int_{\R^d} c(1+ \lVert \varphi^{v+hu}_t(x)\rVert)  \rho_0(\D x)
    \\ & \le \lVert \nabla \phi_t \rVert_{C^1_{\textup{Lip}}} \int_{\R^d} C(1+ \lVert x\rVert)  \rho_0(\D x)
    \\ & \le \lVert \nabla \phi_t \rVert_{C^1_{\textup{Lip}}} C (1+m_1(\rho_0)) ,
\end{equs}
where we used that the flow $\varphi_t^{v+hu}$ has at most linear growth with a constant uniform in time and $\lvert h \rvert \le 1$, see Lemma \ref{app:lem:flow-grow}.
Overall, we obtain
\begin{equs}
    \int_0^1 \langle \xi_t^{u}, \partial_t \phi_t + v_t\cdot\nabla\phi_t \rangle \dif t = -\int_0^1 \langle \rho_t^{v}, u_t\cdot\nabla \phi_t \rangle \dif t,
\end{equs}
which shows that $\xi_\bullet^{u}$ is a distributional solution to
\begin{equ}
    \partial_t \xi_t + \nabla \cdot(\xi_t v_t) = - \nabla \cdot (\rho_t u_t).
\end{equ}
\end{proof}
\begin{theorem}[Fisher--Rao gradient and $m$-geodesics]\label{thm:FR-gradient-densities}
Consider $v_{\bullet},v_{\bullet}^{\star}\in \V$ as well as an initial distribution $\rho_0\in\mathcal{P}_2(\R^d)$. 
Assume that $\rho^\star_\bullet\ll\rho^v_\bullet$ and that $u_{\bullet}\coloneqq\nabla^{\FR}E(v_{\bullet})=(v_{\bullet}-v_{\bullet}^{\star})\cdot\frac{\D \rho_{\bullet}^{\star}}{\D \rho_{\bullet}^{v}}\in \V$.
Then, it holds that
\begin{equation}
    \ddh \rho_{\bullet}^{v+hu}=\rho_{\bullet}^{v}-\rho_{\bullet}^{\star}.
\end{equation}
\end{theorem}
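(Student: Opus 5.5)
The plan is to use Lemma~\ref{thm:derivative-solution} to identify $\xi^u_\bullet = \ddh \rho^{v+hu}_\bullet$ as the unique distributional solution of the continuity equation with source term
\begin{equation*}
    \partial_t \xi_t + \nabla\cdot(\xi_t v_t) = -\nabla\cdot(\rho^v_t u_t), \qquad \xi_0 = 0 .
\end{equation*}
I will then show that $\eta_t \coloneqq \rho^v_t - \rho^\star_t$ solves the same equation. Uniqueness for the continuity equation with source, via the method of characteristics for $v_\bullet\in\V$ as recalled in the preliminaries, then forces $\xi^u_\bullet = \eta_\bullet$.

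The key computation is the source term. By the definition of $u_\bullet$ and the assumption $\rho^\star_\bullet \ll \rho^v_\bullet$, we have
\begin{equation*}
    \rho^v_t u_t = (v_t - v^\star_t)\,\frac{\D\rho^\star_t}{\D\rho^v_t}\,\rho^v_t = (v_t - v^\star_t)\,\rho^\star_t
\end{equation*}
as vector-valued measures. Hence the right-hand side becomes $-\nabla\cdot(\rho^\star_t v_t) + \nabla\cdot(\rho^\star_t v^\star_t)$. On the other hand, using $\partial_t\rho^v_t = -\nabla\cdot(\rho^v_t v_t)$ and $\partial_t\rho^\star_t = -\nabla\cdot(\rho^\star_t v^\star_t)$, we get
\begin{equation*}
    \partial_t \eta_t + \nabla\cdot(\eta_t v_t) = -\nabla\cdot(\rho^v_t v_t) + \nabla\cdot(\rho^\star_t v^\star_t) + \nabla\cdot(\rho^v_t v_t) - \nabla\cdot(\rho^\star_t v_t),
\end{equation*}
which equals exactly that source term. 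Moreover $\eta_0 = \rho_0 - \rho_0 = 0$, because both paths start at $\rho_0$. I will carry all of this out in weak form, testing against $\phi\in C^\infty_{\textup{c}}((0,1)\times\R^d)$. All integrals are finite since $v_\bullet$, $v^\star_\bullet$ and $u_\bullet$ have linear growth and $\rho^v_t,\rho^\star_t$ have uniformly bounded second moments.

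The main obstacle is the uniqueness step. The solution $\xi^u_\bullet$ from Lemma~\ref{thm:derivative-solution} lives a priori only in $(C^1_{\textup{Lip}})^\ast$, whereas $\eta_\bullet$ is a signed measure, so uniqueness must be argued in a class containing both. I would argue by duality. The difference $\zeta_t = \xi^u_t - \eta_t$ is a weak-$\ast$ continuous curve in $(C^1_{\textup{Lip}})^\ast$ with $\zeta_0 = 0$ that solves the homogeneous equation $\partial_t\zeta_t + \nabla\cdot(\zeta_t v_t) = 0$. For a fixed $\psi\in C^1_{\textup{Lip}}$ and time $s$, I take the backward transported test function $\phi_t = \psi\circ\varphi^v_{s,t}$. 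Under the $C^1$ flow regularity it solves $\partial_t\phi_t + v_t\cdot\nabla\phi_t = 0$ and stays in $C^1_{\textup{Lip}}$ with time-integrable norm. Testing the equation with $\phi$ (using weak-$\ast$ continuity in time, and approximating by cutoffs, which is legitimate by the linear-growth bounds) gives $\langle\zeta_s,\psi\rangle = \langle\zeta_0,\phi_0\rangle = 0$. Since $\psi$ and $s$ are arbitrary, $\zeta\equiv 0$, which proves the claim.
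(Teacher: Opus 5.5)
Your proposal is correct and follows the paper's proof: you identify $\xi_\bullet=\ddh\rho^{v+hu}_\bullet$ through Lemma~\ref{thm:derivative-solution}, use $\rho^v_t u_t=\rho^\star_t(v_t-v^\star_t)$ to show that $\rho^v_\bullet-\rho^\star_\bullet$ solves the same sourced equation with zero initial datum, and conclude by uniqueness for the homogeneous equation, which the paper simply attributes to the method of characteristics. Your duality argument goes beyond what the paper writes, but the obstacle there is spatial regularity, not growth: $\zeta_t$ is a first-order distribution while $\phi_t=\psi\circ\varphi^v_{s,t}$ is only $C^1$ in space for $v_\bullet\in\V$, so cutoffs do not justify testing the limiting equation; instead, test the measure-valued equations for $\rho^{v+hu}_\bullet$, $\rho^v_\bullet$ and $\rho^\star_\bullet$ with this $\phi$ before letting $h\to0$ (as in the proof of Lemma~\ref{thm:derivative-solution}), which gives $\langle\xi_s,\psi\rangle=\int_0^s\langle\rho^v_t,u_t\cdot\nabla\phi_t\rangle\dif t=\langle\rho^v_s-\rho^\star_s,\psi\rangle$ directly.
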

\begin{proof}
We set $\xi_{\bullet}\coloneqq\ddh \rho_{\bullet}^{v+hu}$. 
By Lemma \ref{thm:derivative-solution} we have
\begin{equs}
\partial_{t}\xi_{t}+ & \nabla\cdot\left(\xi_{t}v_{t}\right)=-\nabla\cdot\left(\rho_{t}u_{t}\right).
\end{equs}
Compare this to the tangent vector $\zeta_{\bullet}=p_{\bullet}^{v}-p_{\bullet}^{\star}$, for which the continuity equation yields
\begin{equs}
	\partial_{t}\zeta_{t} & =\partial_{t}\rho_{t}^{v}-\partial_{t}\rho_{t}^{\star}\\
 	& =-\nabla\cdot(\rho_{t}^{v}v_{t}-\rho_{t}^{\star}v_{t}^{\star})\\
 	& =-\nabla\cdot(\zeta_{t}v_{t}+\rho_{t}^{\star}v_{t}-\rho_{t}^{\star}v_{t}^{\star})\\
 	& =-\nabla\cdot(\zeta_{t}v_{t}+\rho_{t}u_{t}).
\end{equs}
Note that both $\xi_\bullet$ and $\zeta_\bullet$ are weak-$\ast$ continuous in $(C^1_{\textup{Lip}})^\ast$ with respect to $t$ and satisfy $\xi_{0}=\zeta_{0}=0$. Subtracting the two equations, we obtain
\begin{equation*}
    \partial_{t}(\xi_{t}-\zeta_{t})+\nabla\cdot(v_{t}(\xi_{t}-\zeta_{t}))=0
\end{equation*}
as well as $\xi_{0}-\zeta_{0}=0$.
By the method of characteristics, $0$ is the unique solution of this equation, which yields $\xi_{\bullet}=\zeta_{\bullet}$.
\end{proof}

Informally, this result suggests that, given its existence, the advective Fisher--Rao gradient flow 
\begin{equ}
    \partial_\tau v^\tau_\bullet = -\nabla^{\AFR} E(v^\tau_{\bullet})
\end{equ}
leads to dynamics of the probability densities $\rho^\tau_\bullet=\rho^{v^\tau}_\bullet$, which are given by
\begin{equ}
    \rho^{\tau}_\bullet = \rho^{\star}_\bullet + e^{-\tau} (\rho^{0}_\bullet-\rho^{\star}_\bullet).
\end{equ}
This means that the density path $p^{\tau}_\bullet$ converges to $p^\star_\bullet$ at a rate of $O(e^{-\tau})$ along the linear interpolation, which is known as a mixture geodesic in information geometry \cite{amari2016information, ay2017information}.
In particular, this shows that the advective Fisher--Rao metric is compatible with $E$, thereby providing an affirmative answer to the original question.

\begin{remark}[Extension]\label{rem:non-euclidean-loss}
Consider the case that the least-squares objective is not with respect to the Euclidean norm, but given by 
\begin{equ}
    E(v_\bullet) \coloneqq \frac12 \int_0^1 \int_{\R^d} \lVert \sigma_t(x)^{-1}(v_t(x) - v_t^\star(x))\rVert_2^2 \rho^\star_t(\D x) \dif t, 
\end{equ}
where, we assume that $\sigma_{t}(x)$ is invertible for all $t\in[0,1]$ and $x\in \R^d$. 
Recall that for the noisy case with multiplicative noise
\begin{equ}
    \D X_{t}=v_{t}(X_{t})\D t+\sigma_{t}(X_{t})\D W_{t},
\end{equ}
the Fisher--Rao metric is given by
\begin{equ}
    g_{\P^{v,\sigma}}^{\FR}(\Xi^{u,\sigma},Z^{w,\sigma})=\int_{0}^{1}\int_{\R^d}u_{t}(x)\cdot \left(\sigma_{t}(x)\sigma_{t}(x)^{\top}\right)^{-1}w_{t}(x)\rho_{t}^{v,\sigma}(\D x)\dif t,
\end{equ}
where we denote the path measure of the process by $\P^{v,\sigma}$ as well as the time marginals by $\rho^{v,\sigma}$. 
A natural extension of the advective Fisher--Rao metric to a non-Euclidean geometry is now given by 
\begin{equ}
    g_{v_\bullet}^{\AFR, \sigma}(u_\bullet, w_\bullet)=\int_{0}^{1}\int_{\R^d}u_{t}(x)\cdot \left(\sigma_{t}(x)\sigma_{t}(x)^{\top}\right)^{-1}w_{t}(x)\rho_{t}^{v}(\D x)\dif t.
\end{equ}
\end{remark}

\section{The Advective Fisher--Rao Geometry from Three Perspectives}
We have motivated the advective Fisher--Rao metric via an explicit expression of the Fisher--Rao metric of path measures of SDEs.
Here, we show that the rescaled zero-noise limit of these Fisher--Rao metrics does indeed converge to the advective Fisher--Rao metric.
Motivated by this characterization as a zero-noise limit, we connect the advective Fisher--Rao metric to the Freidlin--Wentzell rate functional, which describes the concentration of the path measures of the SDE around the ODE.
Finally, we study the relation of the advective Fisher--Rao metric to optimal transport and show that it agrees with the Hessian of the Benamou--Brenier action functional.
\subsection{Information geometry}
A central object in information geometry is the Fisher--Rao metric, which arises naturally from the Hessian of the Kullback-- Leibler divergence.
We now show that the advective Fisher--Rao metric on velocity fields can be obtained as the rescaled zero-noise limit of the Fisher--Rao metric on path measures.
\begin{theorem}\label{thm:zero-noise-limit}
Consider $v_{\bullet}\in\V$ and assume that $\rho_0\in\mathcal{P}_2(\R^d)$.
Then we obtain the advective Fisher--Rao metric as the rescaled zero-noise limit of the Fisher--Rao metrics on path measures, that is, for any $u_{\bullet},w_{\bullet}\in L^{2}_{T}C_{\textup{Lip}}^{1}$ we have that
\begin{equation*}
    g_{v_{\bullet}}^{\textup{AFR}}(u_{\bullet},w_{\bullet})=\lim_{\varepsilon\to0}\varepsilon g_{v_\bullet}^{\textup{FR},\varepsilon}(u_\bullet,w_\bullet).
\end{equation*}
\end{theorem}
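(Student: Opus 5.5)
By Definition \ref{def:FR-velocities} the factor $\varepsilon$ cancels exactly:
\begin{equation*}
\varepsilon g^{\textup{FR},\varepsilon}_{v_\bullet}(u_\bullet,w_\bullet)=\int_0^1\int_{\R^d}u_t(x)\cdot w_t(x)\,\rho_t^{v,\varepsilon}(\D x)\dif t .
\end{equation*}
The claim therefore reduces to showing that
\begin{equation*}
\int_0^1\langle\rho_t^{v,\varepsilon}-\rho_t^v,\,f_t\rangle\dif t\to 0, \qquad f_t\coloneqq u_t\cdot w_t .
\end{equation*}
Since $u_t,w_t$ are Lipschitz, $f_t$ has at most quadratic growth, $\lvert f_t(x)\rvert\le a_tb_t(1+\lVert x\rVert)^2$, where $a_t=\lVert u_t\rVert_{C^1_{\textup{Lip}}}$, $b_t=\lVert w_t\rVert_{C^1_{\textup{Lip}}}$ and $a_tb_t\in L^1_T$ by Cauchy--Schwarz. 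So I would prove $W_2$-convergence $\rho_t^{v,\varepsilon}\to\rho_t^v$ for each $t$, together with a uniform-in-$\varepsilon$ second-moment bound. Testing against quadratically growing continuous functions then converges by \cite[Proposition 3.4]{ambrosio2012user}, and dominated convergence in time finishes the argument.

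For the $W_2$ convergence I would use the synchronous coupling. Let $X_t^\varepsilon$ solve \eqref{eq:SDE-eps} and $X_t$ solve the ODE, with the same initial condition $X_0\sim\rho_0$. Then
\begin{equation*}
X^\varepsilon_t-X_t=\int_0^t\bigl(v_s(X^\varepsilon_s)-v_s(X_s)\bigr)\dif s+\sqrt\varepsilon W_t ,
\end{equation*}
so with the Lipschitz modulus $\beta\in L^1_T$ we get
\begin{equation*}
\lVert X^\varepsilon_t-X_t\rVert\le\int_0^t\beta_s\lVert X^\varepsilon_s-X_s\rVert\dif s+\sqrt\varepsilon\sup_{s\le1}\lVert W_s\rVert .
\end{equation*}
Gronwall's inequality gives
\begin{equation*}
\sup_t\lVert X_t^\varepsilon-X_t\rVert\le\sqrt\varepsilon\, e^{\lVert\beta\rVert_{L^1_T}}\sup_{s\le1}\lVert W_s\rVert .
\end{equation*}
Taking second moments, using Doob's inequality, yields
\begin{equation*}
\sup_tW_2^2(\rho_t^{v,\varepsilon},\rho_t^v)\le\E\bigl[\sup_t\lVert X^\varepsilon_t-X_t\rVert^2\bigr]\le C\varepsilon .
\end{equation*}
This also gives $m_2(\rho_t^{v,\varepsilon})\le 2m_2(\rho^v_t)+2C\varepsilon$. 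The latter is bounded uniformly in $t$ and in $\varepsilon\le1$ (the deterministic moments are bounded as in the Remark on existence).

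To conclude, the domination for fixed $t$ is the uniform moment bound. Rather than invoking the abstract test-function result, I could argue directly:
\begin{equation*}
\lvert\E[f_t(X^\varepsilon_t)-f_t(X_t)]\rvert\le\E\bigl[\lvert u_t(X^\varepsilon_t)\rvert\,\lvert w_t(X^\varepsilon_t)-w_t(X_t)\rvert\bigr]+\E\bigl[\lvert w_t(X_t)\rvert\,\lvert u_t(X^\varepsilon_t)-u_t(X_t)\rvert\bigr] ,
\end{equation*}
which is at most
\begin{equation*}
a_tb_t\,C\bigl(1+\sqrt{m_2}\bigr)\,\E\bigl[\lVert X_t^\varepsilon-X_t\rVert^2\bigr]^{1/2}\le C' a_tb_t\sqrt\varepsilon
\end{equation*}
by Cauchy--Schwarz. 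Integrating in time against $a_tb_t\in L^1_T$ yields a quantitative rate $O(\sqrt\varepsilon)$, which is stronger than the claim.

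The only delicate point I expect is the pure integrability of the velocity fields. Here $v_\bullet$ is merely in $L^1_T$ in time, so Gronwall must be used in its integral form with $\beta\in L^1_T$, and the existence of a strong solution of the SDE must be taken from the Preliminaries. With that in hand, the rest is routine.
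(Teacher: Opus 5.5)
Your proof is correct, and its skeleton is the paper's: cancel $\varepsilon$ via the definition, couple the SDE and the ODE synchronously to get $W_2^2(\rho_t^{v,\varepsilon},\rho_t^v)=O(\varepsilon)$, test against the quadratically growing $u_t\cdot w_t$, and dominate in time with a uniform second-moment bound.

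The difference is in how the coupling estimate is obtained.

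The paper (\Cref{lem:wasserstein-convergence-marginals}) applies It\^o's formula to $\lVert X_t^\varepsilon-X_t\rVert^2$ and needs only a one-sided Lipschitz bound, which yields $W_2^2\le \varepsilon d t e^{2\int_0^t\beta_s\dif s}$. This route would also cover merely dissipative drifts and has a slightly sharper constant. To discard the stochastic integral, however, that lemma \emph{assumes} second moments of $\rho_t^{v,\varepsilon}$ bounded uniformly in $\varepsilon$ and $t$. The theorem's proof then uses the lemma's conclusion, via \Cref{lem:second-moment-via-W2}, to derive exactly this moment bound, without checking the hypothesis independently.

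Your pathwise Gronwall argument uses the full Lipschitz constant available for $v_\bullet\in\V$. It needs no martingale property and no a priori integrability. It controls $\sup_t\lVert X_t^\varepsilon-X_t\rVert$, so you get convergence of whole paths rather than just of marginals. It also produces the uniform moment bound as a byproduct, which removes that small circularity.

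Your direct Cauchy--Schwarz estimate on the coupling then upgrades the paper's qualitative limit to an explicit rate, $\bigl\lvert\varepsilon g^{\textup{FR},\varepsilon}_{v_\bullet}(u_\bullet,w_\bullet)-g^{\textup{AFR}}_{v_\bullet}(u_\bullet,w_\bullet)\bigr\rvert\le C\sqrt{\varepsilon}\int_0^1 a_tb_t\dif t$. The paper instead concludes through \cite[Proposition 3.4]{ambrosio2012user} and dominated convergence, without stating a rate.
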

We require some auxiliary results.
\begin{lemma}[Wasserstein convergence of time marginals] \label{lem:wasserstein-convergence-marginals}
Consider a velocity field $v_{\bullet}$, which satisfies the one-sided Lipschitz condition
\begin{equ}
    (x-y)\cdot(v_t(x) - v_t(y)) \leq \beta_t \|x-y\|^2 \quad\text{for all }t\in[0,1],x,y\in\R^d
\end{equ}
for some $\beta\in L^1_T$ and assume that for every $\varepsilon\ge0$ the SDE \eqref{eq:SDE-eps} admits a unique weak solution with path measure $\P^{v,\varepsilon}$ and assume that the corresponding time marginals
$\rho_t^{v,\varepsilon}$ have finite second moments, which are uniformly bounded in $\varepsilon\in[0,\delta]$ and $t\in[0,1]$ for some $\delta>0$.
Then for all $\varepsilon\in[0,\delta]$ and $t\in[0,1]$ it holds that
\begin{equ}
    W_2^2(\rho^{v,\varepsilon}_t, \rho^v_t) \leq\varepsilon d t e^{2\int_0^t \beta_s \dif s}.
\end{equ}
\end{lemma}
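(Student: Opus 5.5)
We prove the bound by a synchronous coupling: the noisy and the noiseless process are driven by the same initial condition, and the one-sided Lipschitz condition controls their mean-square distance. On a common probability space, take $X_0\sim\rho_0$, a Brownian motion $W$ independent of $X_0$, and the strong solution $X^\varepsilon_t$ of \eqref{eq:SDE-eps}. Let $X^0_t=\varphi^v_t(X_0)$ be the deterministic flow from the same starting point. If only weak existence and uniqueness is available, we take a weak solution on some space and build $X^0_t=\varphi^v_t(X_0)$ on that same space; uniqueness in law then gives $\mathcal{L}(X^\varepsilon_t)=\rho^{v,\varepsilon}_t$. The flow $\varphi^v_t$ exists as a solution to the ODE since we work with $v_\bullet$ for which the deterministic marginals $\rho^v_t$ are defined. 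The pair $(X^\varepsilon_t,X^0_t)$ is a coupling of $\rho^{v,\varepsilon}_t$ and $\rho^v_t$, hence
\begin{equation*}
    W_2^2(\rho^{v,\varepsilon}_t,\rho^v_t)\le \E\bigl[\|X^\varepsilon_t-X^0_t\|^2\bigr]\eqqcolon m(t).
\end{equation*}

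Next we apply It\^o's formula to $Z_t\coloneqq X^\varepsilon_t-X^0_t$. It satisfies $\D Z_t=(v_t(X^\varepsilon_t)-v_t(X^0_t))\D t+\sqrt\varepsilon\D W_t$ with $Z_0=0$. Therefore
\begin{equation*}
    \|Z_t\|^2=\int_0^t 2Z_s\cdot\bigl(v_s(X^\varepsilon_s)-v_s(X^0_s)\bigr)\dif s+\varepsilon d\,t+2\sqrt\varepsilon\int_0^t Z_s\cdot\D W_s .
\end{equation*}
The one-sided Lipschitz condition bounds the drift integrand by $2\beta_s\|Z_s\|^2$. We take expectations, which removes the stochastic integral, and obtain
\begin{equation*}
    m(t)\le \varepsilon d\,t+\int_0^t 2\beta_s\, m(s)\dif s .
\end{equation*}
Gronwall's inequality in integral form, with the nondecreasing forcing term $\varepsilon d t$, gives $m(t)\le \varepsilon d t\, e^{2\int_0^t\beta_s\dif s}$. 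This is the claim. Note that for $\varepsilon=0$ the bound is trivially zero.

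The main obstacle is justifying that the stochastic integral has zero mean and that $m$ is finite, so that Gronwall applies. Both follow from the assumption of uniformly bounded second moments. Indeed $m(t)\le 2m_2(\rho^{v,\varepsilon}_t)+2m_2(\rho^v_t)$ is bounded uniformly in $t$, so $\E\int_0^1\|Z_s\|^2\dif s<\infty$. This makes $\int_0^\cdot Z_s\cdot\D W_s$ a square-integrable martingale with zero expectation.

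To be safe, we would first localize with stopping times $\tau_n=\inf\{t:\|Z_t\|\ge n\}$ and derive the inequality for $\E\|Z_{t\wedge\tau_n}\|^2$. A subtlety is that $\beta_s$ may be negative, so the drift term $\int 2\beta_s\|Z_s\|^2$ does not have a sign. We handle it by replacing $\beta$ with $\beta^+$ in the localized step if needed, or by passing to the limit $n\to\infty$ by dominated convergence using the second-moment bound. In that limit the sharp $\beta$ version follows, because the integrand $2\beta_s\|Z_s\|^2$ is integrable: $\beta\in L^1_T$ and $\sup_s\E\|Z_s\|^2<\infty$.

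A cleaner alternative is to apply It\^o's formula to $e^{-2\int_0^t\beta_s\dif s}\|Z_t\|^2$ directly. This gives the exponential factor without any Gronwall step.
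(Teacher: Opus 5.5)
Your argument is the same as the paper's: synchronous coupling of $X^\varepsilon_t$ with $\varphi^v_t(X_0)$, It\^o's formula for $\|Z_t\|^2$, the one-sided Lipschitz bound on the drift, the martingale term vanishing by the uniform second-moment bound, and then Gr\"onwall. For $\beta\ge 0$ this is correct and complete.

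The one genuine flaw is how you handle the sign of $\beta$. The integral Gr\"onwall implication $m(t)\le a(t)+\int_0^t b_s m(s)\dif s \Rightarrow m(t)\le a(t)e^{\int_0^t b_s\dif s}$ (for nondecreasing $a$) needs $b\ge 0$. Integrability of $2\beta_s\|Z_s\|^2$ only justifies taking expectations; it does not repair this step. Your exponential-weight alternative, done correctly, gives
\[ \E\|Z_t\|^2\le \varepsilon d\int_0^t e^{2\int_s^t\beta_r\dif r}\dif s . \]
This is \emph{not} bounded by $\varepsilon d\, t\, e^{2\int_0^t\beta_r\dif r}$ when $\beta$ is negative. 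In fact no argument can give the sharp version. Take $d=1$, $v_t(x)=-x$ (so $\beta\equiv-1$) and $\rho_0=\delta_0$. Then $\rho^v_t=\delta_0$ and $\rho^{v,\varepsilon}_t=\mathcal N(0,\varepsilon(1-e^{-2t})/2)$, so $W_2^2(\rho^{v,\varepsilon}_t,\rho^v_t)=\varepsilon(1-e^{-2t})/2$. This is strictly larger than $\varepsilon t e^{-2t}$ for every $t\in(0,1]$, since $e^{2t}-1>2t$. So your sentence that ``the sharp $\beta$ version follows'' in the limit is false, and the statement itself fails as written once $\beta$ may be negative.

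The paper's proof has the same tacit restriction: it applies Gr\"onwall with a possibly negative $\beta$. This is harmless for its use in Theorem~\ref{thm:zero-noise-limit}, where $\beta_t$ is the Lipschitz constant of $v_t$ and hence nonnegative. The fix is to assume $\beta\ge 0$, or to replace $\beta$ by $\beta^+$ in the conclusion. That is exactly your ``replace $\beta$ with $\beta^+$'' step; just do not claim more than that.
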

\begin{proof}
Let us denote the solutions of the SDE \eqref{eq:SDE-eps} and the corresponding ODE by $X_t^\varepsilon$ for $\varepsilon>0$ and $X_t$ for $\varepsilon=0$, respectively.
Then, $(X_t^\varepsilon, X_t)$ is a coupling of $\rho_t^{v,\varepsilon}$ and $\rho_t^v$ and hence
\begin{equation*}
    W_2^2(\rho^{v,\varepsilon}_t, \rho^v_t) \leq \mathbb{E}\left[\|X_t^\varepsilon - X_t\|^2\right].
\end{equation*}
We set $Z_t = X_t^\varepsilon - X_t$, which satisfies the SDE
\begin{equation*}
    \D Z_t = (v_t(X_t^\varepsilon) - v_t(X_t)) \D t + \sqrt{\varepsilon} \D W_t \quad Z_0=0.
\end{equation*}
Using It\^{o}'s formula, we obtain
\begin{equation*}
    \|Z_t\|^2 = 2\int_0^t Z_s\cdot (v_s(X_s^\varepsilon) - v_s(X_s)) \D s + 2\sqrt{\varepsilon} \int_0^t Z_s\cdot \D W_s + d\varepsilon t.
\end{equation*}
By assumption, $X_t^\varepsilon$ and $X_t$ have uniformly bounded second moments and hence
$\E[\int_0^1 \lVert Z_t\rVert_2^2 \dif t] < +\infty$.
This shows that the stochastic integral $\int_0^t Z_s\cdot \D W_s$ is a martingale with expectation zero and hence its expectation vanishes.
Using the one-sided Lipschitz continuity of $v_{\bullet}$, we obtain
\begin{equation*}
    \mathbb{E}\left[\|Z_t\|^2\right] \leq 2\int_0^t \beta_s \mathbb{E}\left[\|Z_s\|^2\right] \dif s + d\varepsilon t.
\end{equation*}
Now, by Gr\"{o}nwall's inequality we obtain
\begin{equation*}
    \mathbb{E}\left[\|Z_t\|^2\right] \leq d\varepsilon t e^{2\int_0^t \beta_s \dif s},
\end{equation*}
which finishes the proof.
\end{proof}
\begin{lemma}[Bounding second moments by Wasserstein-$2$]\label{lem:second-moment-via-W2}
    Consider two probability measures $\mu,\nu\in\mathcal{P}_2(\R^d)$ with finite second moments $m_2(\mu)$ and $m_2(\nu)$.
    Then it holds that
    \begin{equ}
        \left\lvert \sqrt{m_2(\mu)} - \sqrt{m_2(\nu)} \right\rvert \leq W_2(\mu, \nu)
    \end{equ}
    as well as
    \begin{equ}
        m_2(\mu) \le m_2(\nu) + 2\sqrt{m_2(\nu)} W_2(\mu, \nu) + W_2^2(\mu, \nu).
    \end{equ}
\end{lemma}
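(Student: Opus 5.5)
The plan is to realize $W_2(\mu,\nu)$ through an optimal coupling and read $\sqrt{m_2}$ as an $L^2$ norm on the coupling space. Let $\pi\in\Pi(\mu,\nu)$ be an optimal coupling; it exists by standard compactness arguments since both measures lie in $\mathcal{P}_2(\R^d)$. Alternatively, take a sequence of near-optimal couplings and pass to the limit at the end. Let $(X,Y)\sim\pi$ on some probability space. Then $\sqrt{m_2(\mu)}=\lVert X\rVert_{L^2}$, $\sqrt{m_2(\nu)}=\lVert Y\rVert_{L^2}$, and $W_2(\mu,\nu)=\lVert X-Y\rVert_{L^2}$. Here $\lVert Z\rVert_{L^2}\coloneqq \E[\lVert Z\rVert^2]^{1/2}$ is the norm of the Hilbert space $L^2(\Omega;\R^d)$.

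For the first inequality, I apply the reverse triangle inequality in this Hilbert space:
\begin{equ}
\left\lvert \lVert X\rVert_{L^2}-\lVert Y\rVert_{L^2}\right\rvert\le \lVert X-Y\rVert_{L^2}=W_2(\mu,\nu).
\end{equ}
This gives the first claim directly. If optimal couplings are to be avoided, the same bound holds with $\lVert X-Y\rVert_{L^2}$ for every coupling, and taking the infimum over $\Pi(\mu,\nu)$ yields the claim.

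The second inequality follows from the first. From $\sqrt{m_2(\mu)}\le\sqrt{m_2(\nu)}+W_2(\mu,\nu)$, squaring both sides gives
\begin{equ}
m_2(\mu)\le m_2(\nu)+2\sqrt{m_2(\nu)}\,W_2(\mu,\nu)+W_2^2(\mu,\nu),
\end{equ}
which is the claim. Squaring is legitimate because both sides are nonnegative.

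I do not expect a real obstacle. The only point needing care is the existence of an optimal coupling, or equivalently justifying the infimum. I would sidestep it by proving the first inequality for an arbitrary coupling and only then taking the infimum.
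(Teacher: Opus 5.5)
Your proof is correct and follows essentially the paper's argument: the paper also applies the triangle inequality in $L^2(\pi)$ for an arbitrary coupling $\pi\in\Pi(\mu,\nu)$, takes the infimum over couplings, and uses symmetry for the absolute value. For the second inequality the paper factors $m_2(\mu)-m_2(\nu)$ as a difference of squares instead of squaring directly; your squaring of nonnegative sides is equally valid and a little more direct.
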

\begin{proof}
It holds that
\begin{equ}
    \lVert x \rVert \le \lVert y \rVert + \lVert x-y \rVert\quad\text{for all }x,y\in\R^d.
\end{equ}
Hence, if we set $f(x,y)\coloneqq \rVert x \rVert$, $g(x,y)\coloneqq \rVert y \rVert$, and $h(x,y)\coloneqq \rVert x-y \rVert$ we obtain for any coupling $\pi$ of $\mu$ and $\nu$ that
\begin{equ}
    \lVert f \rVert_{L^2(\pi)}
    \le \lVert g+h\rVert_{L^2(\pi)}
    \leq \lVert g \rVert_{L^2(\pi)} + \lVert h \rVert_{L^2(\pi)}.
\end{equ}
Note that $\lVert f \rVert_{L^2(\pi)} = \sqrt{m_2(\mu)}$, $\lVert g \rVert_{L^2(\pi)} = \sqrt{m_2(\nu)}$ for any coupling $\pi$.
Taking the infimum over all couplings $\pi$ yields
\begin{equ}
    \sqrt{m_2(\mu)}
    \leq \sqrt{m_2(\nu)} + W_2(\mu, \nu).
\end{equ}
By symmetry, we also obtain $\sqrt{m_2(\nu)} \leq \sqrt{m_2(\mu)} + W_2(\mu, \nu)$ and hence the desired result.
Finally, the second estimate follows from the first one as
\begin{equs}
    m_2(\mu)-m_2(\nu) & = \left(\sqrt{m_2(\mu)}-\sqrt{m_2(\nu)}\right)\left(\sqrt{m_2(\mu)}+\sqrt{m_2(\nu)}\right) \\
    & \leq W_2(\mu, \nu)\left(\sqrt{m_2(\mu)}+\sqrt{m_2(\nu)}\right) \\
    & \leq W_2(\mu, \nu)\left(\sqrt{m_2(\nu)}+\sqrt{m_2(\nu)}+W_2(\mu, \nu)\right) \\
    & = 2\sqrt{m_2(\nu)} W_2(\mu, \nu) + W_2^2(\mu, \nu).
\end{equs}
\end{proof}

\begin{lemma}[Growth of Wasserstein-$2$]\label{lem:evolution-W2}
Consider a velocity field $v_\bullet$ such that there are functions
$\alpha,\beta\in L^1_T$ satisfying
\begin{equ}
    \lVert v_t(0) \rVert \le \alpha_t, \quad \lVert v_t(x)-v_t(y) \rVert \le \beta_t \lVert x -y\rVert \quad \text{for all } t\in[0,1], x, y\in\R^d,
\end{equ}
and consider $\rho_0\in\mathcal{P}_2(\R^d)$, and denote the solution of the continuity equation $\rho_{\bullet}^{v}$.
Then we have
\begin{equ}
    W_2(\rho_t^v, \rho_0) \le \lVert \alpha \rVert_{L^1([0,t])} e^{\lVert \beta \rVert_{L^1([0,t])}} + \sqrt{m_2(\rho_0)} \left(e^{\lVert \beta \rVert_{L^1([0,t])}} -1  \right)
\end{equ}
and
\begin{equ}
    \sqrt{m_2(\rho_t^v)} \le \lVert \alpha \rVert_{L^1([0,t])} e^{\lVert \beta \rVert_{L^1([0,t])}} + \sqrt{m_2(\rho_0)} e^{\lVert \beta \rVert_{L^1([0,t])}}.
\end{equ}
\end{lemma}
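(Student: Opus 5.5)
We represent $\rho_t^v=(\varphi^v_t)_\sharp\rho_0$ via the flow of Carath\'eodory solutions, take $X_0\sim\rho_0$ and set $X_t\coloneqq\varphi^v_t(X_0)$. The pair $(X_0,X_t)$ is then a coupling of $\rho_0$ and $\rho_t^v$, so
\begin{equation*}
W_2(\rho_t^v,\rho_0)\le \E\left[\lVert X_t-X_0\rVert^2\right]^{1/2}\quad\text{and}\quad\sqrt{m_2(\rho^v_t)}=\E\left[\lVert X_t\rVert^2\right]^{1/2}.
\end{equation*}
Both estimates therefore reduce to a pathwise Gr\"onwall bound on a single trajectory. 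After that bound is established, Minkowski's inequality in $L^2(\rho_0)$ handles the expectation.

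First we bound $\lVert X_t\rVert$. From $X_t=X_0+\int_0^t v_s(X_s)\dif s$ and the growth bound $\lVert v_s(x)\rVert\le\alpha_s+\beta_s\lVert x\rVert$ we get
\begin{equation*}
\lVert X_t\rVert\le \lVert X_0\rVert+\int_0^t\alpha_s\dif s+\int_0^t\beta_s\lVert X_s\rVert\dif s.
\end{equation*}
The integral form of Gr\"onwall's inequality with an $L^1$ kernel gives
\begin{equation*}
\lVert X_t\rVert\le\left(\lVert X_0\rVert+\lVert\alpha\rVert_{L^1([0,t])}\right)e^{\lVert\beta\rVert_{L^1([0,t])}}.
\end{equation*}
Taking the $L^2(\rho_0)$ norm and applying Minkowski yields the second claim directly.

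Next we treat the displacement $D_t\coloneqq\lVert X_t-X_0\rVert$. Since $\lVert v_s(X_s)\rVert\le\alpha_s+\beta_s\lVert X_0\rVert+\beta_sD_s$, we have
\begin{equation*}
D_t\le\int_0^t\left(\alpha_s+\beta_s\lVert X_0\rVert\right)\dif s+\int_0^t\beta_sD_s\dif s.
\end{equation*}
The forcing term is nondecreasing in $t$, so Gr\"onwall gives $D_t\le(A_t+B_t\lVert X_0\rVert)e^{B_t}$, where $A_t=\lVert\alpha\rVert_{L^1([0,t])}$ and $B_t=\lVert\beta\rVert_{L^1([0,t])}$.

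This crude bound carries the coefficient $B_te^{B_t}$ on $\lVert X_0\rVert$, whereas the statement has the sharper $e^{B_t}-1$. To recover it we apply Gr\"onwall in its sharp integral form instead of the crude one. The function $\beta_s\lVert X_0\rVert$ is the derivative of $B_s\lVert X_0\rVert$, and the sharp form gives
\begin{equation*}
D_t\le A_te^{B_t}+\lVert X_0\rVert\int_0^t\beta_se^{B_t-B_s}\dif s=A_te^{B_t}+\lVert X_0\rVert\left(e^{B_t}-1\right).
\end{equation*}
Alternatively one can compare with the ODE $y'=\alpha+\beta(\lVert X_0\rVert+y)$, $y_0=0$, which has the same solution bound. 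Taking the $L^2(\rho_0)$ norm with Minkowski then gives the first claim.

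The main point needing care is this sharp Gr\"onwall step with merely $L^1$ coefficients in time. It holds for absolutely continuous functions: we either work with the integral inequality directly, or set $y_t=\int_0^t\beta_sD_s\dif s$ and integrate $\left(e^{-B_t}y_t\right)'\le e^{-B_t}\beta_tF_t$. Everything else is routine.
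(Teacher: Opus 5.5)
Your proof is correct and follows the paper's argument: you use the coupling $(X_0,\varphi^v_t(X_0))$, a pathwise Gr\"onwall bound on the flow (the paper's appendix lemma on flow growth), and Minkowski's inequality in $L^2(\rho_0)$. The one difference is the displacement bound: the paper skips your second, sharp Gr\"onwall step by inserting the already established bound $\lVert X_s\rVert\le(\lVert X_0\rVert+A_s)e^{B_s}$ into $D_t\le\int_0^t(\alpha_s+\beta_s\lVert X_s\rVert)\dif s$ and using $\beta_se^{B_s}=\frac{\D}{\D s}e^{B_s}$, which yields $A_te^{B_t}+\lVert X_0\rVert(e^{B_t}-1)$ directly.
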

\begin{proof}
By the definition of the Wasserstein distance, we have
\begin{equ}
    W_2^2(\rho_t^v, \rho_0) \le \int_{\R^d} \lVert \varphi_t^v(x) - x \rVert^2 \rho_0(\D x) =
    \lVert \varphi_t^v(x) - x \rVert_{L^2(\rho_0)}^2.
\end{equ}
Note that pointwise $\lVert \varphi_t^v(x) - x \rVert \le A + B \lVert x\rVert$ for
\begin{equ}
    A = \lVert \alpha \rVert_{L^1([0,t])}e^{\lVert \beta \rVert_{L^1([0,t])}} \quad  \text{and } B=(e^{\lVert \beta\rVert_{L^1([0,t])}}-1)
\end{equ}
follows from \Cref{app:lem:flow-grow}.
Hence, using the triangle inequality, we obtain
\begin{equ}
    \lVert \varphi_t^v(x) - x \rVert_{L^2(\rho_0)} \le
    \lVert A + B \lVert x \rVert \rVert_{L^2(\rho_0)} \le
    A + B \lVert \lVert x \rVert \rVert_{L^2(\rho_0)},
\end{equ}
where $\lVert \lVert x \rVert \rVert_{L^2(\rho_0)}=\sqrt{m_2(\rho_0)}$.
The proof for the bound of the second moment follows analogously.
\end{proof}

\begin{proof}[Proof of \Cref{thm:zero-noise-limit}]
From Definition \ref{def:FR-velocities} we obtain 
\begin{equ}
    \varepsilon g_{\P^{v,\varepsilon}}^{\textup{FR}}(u, w)
    =
    \int_{0}^{1}\int_{\R^d}u_{t}(x)\cdot w_{t}(x)\rho_{t}^{v,\varepsilon}(\D x)\dif t.
\end{equ}
First note that for fixed $t\in[0,1]$ we have $\rho_{t}^{v,\varepsilon}\to \rho_{t}^{v}$ in $\mathcal{P}_2(\R^d)$ by Lemma \ref{lem:wasserstein-convergence-marginals}.
As the integrand $\lvert u_{t}(x)\cdot w_{t}(x)\rvert$ is continuous and has at most quadratic growth in $x$ it holds that
\begin{equ}
    \lim_{\varepsilon\to0}\int_{\R^d}  u_{t}(x)\cdot w_{t}(x) \rho_{t}^{v,\varepsilon}(\D x) = \int_{\R^d} u_{t}(x)\cdot w_{t}(x) \rho_{t}^{v}(\D x) ,
\end{equ}
see for example \cite[Proposition 3.4]{ambrosio2012user}.
Hence, it remains to find an integrable dominating function in time.
Note that as $u_\bullet, w_\bullet \in L^2_T(C^1_{\textup{Lip}}(\R^d))$ there are functions $a, b \in L^2_T$ such that
\begin{equs}
    \lVert u_{t}(x) \rVert & \leq a_t (1+\lVert x \rVert) \quad\text{and }
    \lVert w_{t}(x) \rVert \leq b_t (1+\lVert x \rVert)
\end{equs}
for all $t\in[0,1]$ and $x\in\R^d$.
Hence, we have
\begin{equs}
    \int_{0}^{1}\int_{\R^d}\lvert u_{t}(x)\cdot w_{t}(x)\rvert \rho_{t}^{v,\varepsilon}(\D x)\dif t
    & \leq \int_{0}^{1}a_t b_t\int_{\R^d}\left(1+\lVert x\rVert\right)^{2}\rho_{t}^{v,\varepsilon}(\D x) \dif t
    \\
    & \leq 2\int_{0}^{1}a_t b_t\int_{\R^d}\left(1+\lVert x\rVert^2\right)\rho_{t}^{v,\varepsilon}(\D x) \dif t
    \\
    & = 2\int_{0}^{1}a_t b_t\left(1+m_2(\rho_t^{v,\varepsilon})\right)\dif t.
\end{equs}
Hence, it remains to bound the second moment of $\rho_{t}^{v,\varepsilon}$.
Note that by Lemma \ref{lem:second-moment-via-W2}
\begin{equ}
    m_2(\rho_{t}^{v,\varepsilon}) \le m_2(\rho_{t}^{v}) + 2\sqrt{m_2(\rho_{t}^{v})} W_2(\rho_{t}^{v,\varepsilon}, \rho_{t}^{v}) + W_2^2(\rho_{t}^{v,\varepsilon}, \rho_{t}^{v}).
\end{equ}
Note that by Lemma \ref{lem:wasserstein-convergence-marginals} we have
    $W_2^2(\rho_{t}^{v,\varepsilon}, \rho_{t}^{v}) \le c \varepsilon$
and by Lemma \ref{lem:evolution-W2} the second moment $m_2(\rho_t^v)$ can be bounded uniformly in $t\in[0,1]$.
Hence, we obtain the existence of a constant $C>0$ such that
\begin{equ}
    \int_{0}^{1}\int_{\R^d}\lvert u_{t}(x)\cdot w_{t}(x)\rvert \rho_{t}^{v,\varepsilon}(\D x) \dif t
    \leq C\int_{0}^{1}a_t b_t \dif t<+\infty.
\end{equ}
\end{proof}

\subsection{Large deviation theory}
We have shown that the advective Fisher--Rao metric can be obtained as the rescaled zero-noise limit of the Fisher--Rao metric on path measures.
The concentration of the path measures in the zero-noise limit admits a large deviation principle with rate functional given by the Freidlin--Wentzell functional \cite{freidlin1998random}.
The following result shows that the advective Fisher--Rao metric can be obtained by evaluating the second variation of the Freidlin--Wentzell rate functional, which is given by
\begin{equ}
    I_v(x_\bullet) = \frac{1}{2} \int_0^1 \|\dot{x}_t - v_t(x_t)\|^2 \dif t
\end{equ}
for a path $x_\bullet$ and a fixed velocity field $v_\bullet$.

\begin{theorem}[Connection to Freidlin--Wentzell functional]\label{thm:Freidlin--Wentzell}
Consider a velocity field $v_{\bullet}\in L^{2}_T C_{\textup{b}}^{2}$ and $u_{\bullet}\in L^{2}_T C_{\textup{Lip}}^{1}$ and assume that $\rho_0\in\mathcal{P}_2(\R^d)$.
Then it holds that
\begin{equation}\label{eq:fw_identity}
    g^{\AFR}_{v_\bullet}(u_\bullet,u_\bullet)
    = \int_{\R^d} \delta^2 I_v(\varphi^{v}_\bullet(x_0))\left[\psi_\bullet^u(x_0), \psi_\bullet^u(x_0)\right] \rho_0(\D x_0),
\end{equation}
where $\delta^2 I_v$ denotes the second variation of $I_v$ and
\begin{equ}
    \psi_t^u(x_0) = \ddh \varphi^{v+hu}_t(x_0),
\end{equ}
where $\varphi^{v+hu}_t(x_0)$ denotes the flow generated by $v_{\bullet} + hu_{\bullet}$ starting at $x_0$.
\end{theorem}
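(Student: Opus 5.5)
The plan is to compute the second variation of $I_v$ pathwise at the ODE solution $x_\bullet=\varphi^v_\bullet(x_0)$ and observe that it collapses to a single quadratic term, because the residual $\dot x_t - v_t(x_t)$ vanishes there. Write $I_v(x_\bullet)=\frac12\int_0^1\|R_t(x_\bullet)\|^2\dif t$ with residual $R_t(x_\bullet)=\dot x_t-v_t(x_t)$. Take a direction $y_\bullet\in H^1_T(\R^d)$. Since $v_\bullet\in L^2_TC^2_{\textup{b}}$, the map $x_\bullet\mapsto R(x_\bullet)$ is twice differentiable from $H^1_T$ into $L^2_T$ (the curves are continuous, so the Taylor remainders are controlled by $\|D^2v_t\|_\infty\in L^2_T$). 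Its derivatives are
\begin{equ}
\delta R(x_\bullet)[y]_t=\dot y_t-Dv_t(x_t)y_t,\qquad \delta^2R(x_\bullet)[y,y]_t=-D^2v_t(x_t)[y_t,y_t].
\end{equ}
The chain rule then gives
\begin{equ}
\delta^2I_v(x_\bullet)[y,y]=\int_0^1\|\delta R(x_\bullet)[y]_t\|^2\dif t+\int_0^1 R_t(x_\bullet)\cdot\delta^2R(x_\bullet)[y,y]_t\dif t .
\end{equ}

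Evaluate this at $x_\bullet=\varphi^v_\bullet(x_0)$. There $R_t\equiv0$ for a.e. $t$, so the second term drops out. This is exactly the mechanism that makes the Hessian of a nonnegative functional at a zero a Gauss--Newton-type form.

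Now insert $y=\psi^u_\bullet(x_0)$. By Lemma~\ref{app:lem:sensitivity}, as already used in Lemma~\ref{thm:derivative-solution}, $\psi^u$ solves the linearized equation
\begin{equ}
\dot\psi^u_t=Dv_t(\varphi^v_t(x_0))\psi^u_t+u_t(\varphi^v_t(x_0)).
\end{equ}
Hence $\delta R[\psi^u]_t=u_t(\varphi^v_t(x_0))$, and
\begin{equ}
\delta^2I_v(\varphi^v_\bullet(x_0))[\psi^u,\psi^u]=\int_0^1\|u_t(\varphi^v_t(x_0))\|^2\dif t .
\end{equ}
We also need $\psi^u(x_0)\in H^1_T$. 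It is absolutely continuous with $\dot\psi^u\in L^2_T$, because $Dv\in L^2_T L^\infty$, $\psi^u$ is bounded by the linear growth estimate of Lemma~\ref{app:lem:flow-grow}, and $u\in L^2_TC^1_{\textup{Lip}}$.

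Finally, integrate against $\rho_0$ and use Fubini. Since $\rho^v_t=(\varphi^v_t)_\sharp\rho_0$, we get
\begin{equ}
\int_{\R^d}\int_0^1\|u_t(\varphi^v_t(x_0))\|^2\dif t\,\rho_0(\D x_0)=\int_0^1\int_{\R^d}\|u_t(x)\|^2\rho^v_t(\D x)\dif t=g^{\AFR}_{v_\bullet}(u_\bullet,u_\bullet).
\end{equ}
Fubini is justified because $\|u_t(x)\|\le a_t(1+\|x\|)$ with $a\in L^2_T$, and Lemma~\ref{lem:evolution-W2} bounds $m_2(\rho^v_t)$ uniformly in $t$, so the integrand is integrable.

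The main obstacle is making the second variation rigorous, i.e.\ checking that $I_v$ is twice Gâteaux differentiable on $H^1_T$ and that the second-order term really involves $D^2v$ integrably, so that the cancellation $R\equiv0$ is legitimate. I would handle this by writing $h\mapsto I_v(x+hy)$ explicitly and differentiating twice under the time integral. Dominated convergence applies with majorant $\|D^2v_t\|_\infty\|y\|_\infty^2\,(\|\dot x_t\|+\|v_t(x_t)\|+\|y\|_\infty\|Dv_t\|_\infty)$ plus similar terms, which lie in $L^1_T$ by Cauchy--Schwarz since $v\in L^2_TC^2_{\textup{b}}$.
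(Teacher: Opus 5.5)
Your proposal is correct and follows essentially the same route as the paper. You compute $\delta^2 I_v$ on $H^1_T(\R^d)$ for $v_\bullet\in L^2_TC^2_{\textup{b}}$, where your residual-map chain rule gives the same formula as the paper's difference quotients of the first variation. You then drop the $D^2v_t$ term because the residual vanishes along $\varphi^v_\bullet(x_0)$, use the sensitivity equation to get $\dot\psi^u_t - Dv_t(\varphi^v_t(x_0))\psi^u_t = u_t(\varphi^v_t(x_0))$, and push forward $\rho_0$ to recover $g^{\AFR}_{v_\bullet}(u_\bullet,u_\bullet)$. One small simplification: the integrand in the last step is nonnegative, so Tonelli alone justifies exchanging the integrals.
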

We begin by computing the first and second variation of the rate functional.
\begin{lemma}[First and second variation]\label{lem:variations-freidlin-wentzell}
Consider a velocity field $v_\bullet\in L^2_TC^1_{\textup{Lip}}$.
Then the Freidlin--Wentzell functional $I_v\colon H^1_T(\R^d)\to\R$
is G\^{a}teaux differentiable and the first variation is given by
\begin{equ}
    \delta I_v(x_\bullet)[y_\bullet] = \int_0^1 (\dot{x}_t - v_t(x_t))\cdot (\dot{y}_t - Dv_t(x_t) y_t) \dif t
\end{equ}
for all $x_\bullet, y_\bullet\in H^1_T(\R^d)$.
If further $v_\bullet\in L^2_T C^2_{\textup{b}}$, then $I_v$ is twice G\^{a}teaux differentiable and the second variation is given by
\begin{equs}
    \delta^2 I_v(x_\bullet)[y_\bullet, z_\bullet] & = \int_0^1 (\dot{y}_t - Dv_t(x_t) y_t) \cdot (\dot{z}_t - Dv_t(x_t) z_t) \dif t
    \\ & \quad - \int_0^1 (\dot{x}_t - v_t(x_t))\cdot D^2 v_t(x_t)[y_t, z_t] \dif t
\end{equs}
for all $x_\bullet, y_\bullet, z_\bullet\in H^1_T(\R^d)$.
\end{lemma}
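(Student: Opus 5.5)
We prove Lemma~\ref{lem:variations-freidlin-wentzell} by writing $I_v(x_\bullet)=\frac12\lVert F(x_\bullet)\rVert_{L^2_T}^2$ with the residual map $F(x_\bullet)_t\coloneqq \dot x_t - v_t(x_t)$, viewed as a map $H^1_T(\R^d)\to L^2_T(\R^d)$. It is well defined: $H^1_T\subset C_T$, so $\lVert x_t\rVert\le\lVert x\rVert_\infty$ and $\lVert v_t(x_t)\rVert\le \alpha_t+\beta_t\lVert x\rVert_\infty$ with $\alpha,\beta\in L^2_T$. The chain rule then gives $\delta I_v(x)[y]=\langle F(x),\delta F(x)[y]\rangle_{L^2_T}$ and
\begin{equ}
\delta^2 I_v(x)[y,z]=\langle \delta F(x)[y],\delta F(x)[z]\rangle_{L^2_T}+\langle F(x),\delta^2F(x)[y,z]\rangle_{L^2_T}.
\end{equ}
So it suffices to compute the Gâteaux derivatives of $F$ in $L^2_T$. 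We expect $\delta F(x)[y]_t=\dot y_t-Dv_t(x_t)y_t$ and $\delta^2F(x)[y,z]_t=-D^2v_t(x_t)[y_t,z_t]$. Inserting these into the two inner products gives exactly the claimed formulas.

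\textbf{First variation.} For $h\neq0$ we have
\begin{equ}
\frac{F(x+hy)_t-F(x)_t}{h}=\dot y_t-\int_0^1 Dv_t(x_t+shy_t)\,y_t\dif s .
\end{equ}
Since $v_t\in C^1$, the integrand converges pointwise in $t$ to $Dv_t(x_t)y_t$ as $h\to0$. It is dominated by $\beta_t\lVert y\rVert_\infty\in L^2_T$. Dominated convergence therefore gives convergence in $L^2_T$, i.e. Gâteaux differentiability of $F$. Because $F(x+hy)$ also converges to $F(x)$ in $L^2_T$, expanding the square
\begin{equ}
\tfrac12\lVert F(x+hy)\rVert^2=\tfrac12\lVert F(x)\rVert^2+h\langle F(x),\Delta_h\rangle+\tfrac{h^2}{2}\lVert\Delta_h\rVert^2,
\end{equ}
where $\Delta_h$ denotes the difference quotient, yields the first-variation formula.

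\textbf{Second variation.} Here $v\in L^2_TC^2_{\textup b}$, so $\lVert D^2v_t\rVert_\infty\le\gamma_t$ with $\gamma\in L^2_T$. We differentiate $h\mapsto\delta I_v(x+hz)[y]$ at $h=0$. The product $(\dot x_t+h\dot z_t-v_t(x_t+hz_t))\cdot(\dot y_t-Dv_t(x_t+hz_t)y_t)$ splits into two terms. The difference quotient of the first factor converges in $L^2_T$ to $\dot z_t-Dv_t(x_t)z_t$, as shown above. The difference quotient of the second factor is
\begin{equ}
-\int_0^1D^2v_t(x_t+shz_t)[z_t,y_t]\dif s .
\end{equ}
It converges pointwise by continuity of $D^2v_t$ and is dominated by $\gamma_t\lVert y\rVert_\infty\lVert z\rVert_\infty\in L^2_T$, so it converges in $L^2_T$. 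Meanwhile the undifferentiated factors converge in $L^2_T$. Pairing $L^2_T$-convergent sequences therefore gives the limit of the product of difference quotients. This yields the stated $\delta^2I_v$, with symmetry in $y,z$ evident.

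\textbf{Main obstacle.} The main technical point is securing $L^2_T$ (not merely pointwise) convergence of the difference quotients, since the time-integrability of $v$ is only $L^2_T$. This is exactly where we use the embedding $H^1_T\hookrightarrow C_T$, which provides uniform spatial bounds, together with the $L^2_T$ majorants $\beta_t$ and $\gamma_t$. In particular, no continuity of $Dv_t$ or $D^2v_t$ in $t$ is needed.
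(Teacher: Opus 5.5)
Your argument is correct and is essentially the paper's proof. Both are dominated-convergence arguments on difference quotients: the pointwise limits come from the $C^1$ (resp.\ $C^2$) regularity of $v_t$, and the $L^2_T$ majorants are built from $\beta_t$, $\gamma_t$ and the sup bounds provided by $H^1_T\hookrightarrow C_T$. The only difference is packaging. You prove $L^2_T$-convergence of the difference quotients of the residual map $F(x_\bullet)_t=\dot x_t-v_t(x_t)$ and of $h\mapsto Dv_t(x_t+hz_t)\,y_t$, and then conclude by bilinearity of the $L^2_T$ inner product. The paper instead dominates the scalar integrands $\Delta_t^h$ and $\Gamma_t^h$ directly, so your route avoids the quadratic majorant for $\Gamma_t^h$, which the paper only sketches.
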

\begin{proof}
To verify the existence of the first variation, it suffices to show the existence of the difference quotients
\begin{equ}
    \frac{I_v(x_\bullet+hy_\bullet) - I_v(x_\bullet)}{h} =
    \int_0^1 \Delta_t^h \dif t,
\end{equ}
where
\begin{equs}
    \Delta^h_t
    & = \frac{\lVert \dot{x}_t + h\dot{y}_t - v_t(x_t+hy_t) \rVert^2 - \lVert \dot{x}_t - v_t(x_t) \rVert^2}{2h}
    \\ & = \frac{h}{2} \lVert \dot{y}_t\rVert^2 + (\dot{x}_t - v_t(x_t+h y_t))\cdot \dot{y}_t \\ & \qquad + \frac1{2h} \left( \lVert \dot{x}_t - v_t(x_t+hy_t) \rVert^2 - \lVert \dot{x}_t - v_t(x_t) \rVert^2 \right).
\end{equs}
Pointwise, the limit is given by
\begin{equ}
    \lim_{h\to0} \Delta_t^h = (\dot{x}_t - v_t(x_t))\cdot(\dot{y}_t - Dv_t(x_t)y_t).
\end{equ}
To construct a dominating function, we use that $H^1$-paths are continuous, which in particular gives
\begin{equ}
    M_x\coloneqq \sup_{t \in [0,1]} \|x_t\|, M_y\coloneqq \sup_{t \in [0,1]} \|y_t\| < \infty
\end{equ}
and we denote the spatial Lipschitz constant of $v_t$ by $\beta_t$.
For $\lvert h \rvert \le1$ we can estimate
\begin{equs}
    |\Delta^h_t| \leq \|\dot{x}_t\|^2 + 2\|\dot{y}_t\|^2 + 2\|v_t(0)\|^2 + 2(M_x^2 + M_y^2)\beta_t^2,
\end{equs}
which is integrable.
Hence, by dominated convergence, we obtain
\begin{equ}
    \delta I_v(x_\bullet)[y_\bullet] = \int_0^1 (\dot{x}_t - v_t(x_t))\cdot (\dot{y}_t - Dv_t(x_t) y_t) \dif t.
\end{equ}
For the existence of the second variation, we assume $v_\bullet\in L^2_T C^2_{\textup{b}}$.
We need to guarantee the existence of the limit of the difference quotients
\begin{equ}
    \frac{\delta I_v(x_\bullet + hy_\bullet)[z_\bullet] - \delta I_v(x_\bullet)[z_\bullet]}{h}
    =
    \int_0^1 \Gamma_t^h \dif t,
\end{equ}
where
\begin{equs}
    \Gamma_t^h & = \left(\dot{y}_t - \frac{v_t(x_t+hy_t)-v_t(x_t)}{h}\right)\cdot(\dot{z}_t-Dv_t(x_t+hy_t)z_t)
    \\
    &\qquad - (\dot{x}_t-v_t(x_t))\cdot\left(\frac{Dv_t(x_t+hy_t)-Dv_t(x_t)}{h}\right)z_t.
\end{equs}
Pointwise, the limit is given by
\begin{equ}
    \lim_{h\to0} \Gamma_t^h = (\dot{y}_t-Dv_t(x_t)y_t)\cdot(\dot{z}_t-Dv_t(x_t)z_t)-(\dot{x}_t-v_t(x_t))\cdot D^2v_t(x_t)[y_t,z_t].
\end{equ}
We set $\gamma_t \coloneqq \lVert D^2 v_t \rVert_{\infty}$.
By multiple applications of the mean value theorem applied to $v_t$ and $Dv_t$, together with Young's inequality, each term in $\Gamma_t^h$ can be bounded by quadratic expressions in $\dot{x}_t$, $\dot{y}_t$, $\dot{z}_t$, $v_t(0)$, $\beta_t$, and $\gamma_t$.
Hence there exists a constant $C=C(M_x,M_y,M_z)>0$, independent of $t$ and $h$, such that for $|h|\le1$,
\begin{equ}
    |\Gamma_t^h| \leq C\left(\|\dot{x}_t\|^2+\|\dot{y}_t\|^2+\|\dot{z}_t\|^2+\|v_t(0)\|^2+\beta_t^2+\gamma_t^2\right).
\end{equ}
Since $\beta_\bullet,\gamma_\bullet\in L^2(0,1)$, the right-hand side is integrable, and hence by dominated convergence, we obtain
\begin{equs}
    \delta^2 I_v(x_\bullet)[y_\bullet,z_\bullet] & = \int_0^1 (\dot{y}_t-Dv_t(x_t)y_t)\cdot(\dot{z}_t-Dv_t(x_t)z_t) \dif t
    \\ & \quad - \int_0^1 (\dot{x}_t-v_t(x_t))\cdot D^2v_t(x_t)[y_t,z_t] \dif t.
\end{equs}
\end{proof}
\begin{proof}[Proof of \Cref{thm:Freidlin--Wentzell}]
Recall that we want to show
\begin{equ}
    \int_{\R^d}\delta^2 I_v(x_\bullet)[\psi_\bullet^u(x_0), \psi_\bullet^u(x_0)] \rho_0(\D x_0)
     =  g^{\AFR}_{v_\bullet}(u_\bullet, u_\bullet).
\end{equ}
First, note that we can apply Lemma \ref{lem:variations-freidlin-wentzell} as $v_\bullet\in L^2_T C^2_{\textup{b}}$ and $\psi_\bullet(x)\in H^1_T(\R^d)$.
With $x_\bullet = \varphi_\bullet^v(x_0)$, we have $\dot{x}_t = v_t(x_t)$ for almost every $t \in [0,1]$ and hence
\begin{equ}
    \delta^2 I_v(x_\bullet)[\psi_\bullet^u(x_0), \psi_\bullet^u(x_0)] = \int_0^1 \|\dot{y}_t - Dv_t(x_t) y_t\|^2 \dif t.
\end{equ}
By \Cref{app:lem:sensitivity} we obtain for $y_t = \psi_t^u(x_0)$ that $\dot{y}_t - Dv_t(x_t) y_t = u_t(x_t)$.
As $v_\bullet\in L^2_TC^2_\textup{b}$ this ensures $y_\bullet\in H^1_T(\R^d)$
and hence we can use the expression of the second variation and obtain
\begin{equ}
    \delta^2 I_v(x_\bullet)[\psi_\bullet^u(x_0), \psi_\bullet^u(x_0)]
    =  \int_0^1 \|u_t(x_t)\|^2 \dif t
    =  \int_0^1 \|u_t(\varphi_t^v(x_0))\|^2 \dif t.
\end{equ}
Taking the expectation with respect to the initial condition $x_0$, we obtain
\begin{equs}
    \int_{\R^d}\delta^2 I_v(x_\bullet)[\psi_\bullet^u(x_0), \psi_\bullet^u(x_0)] \rho_0(\D x_0)
    & =  \int_{\R^d}\int_0^1 \|u_t(\varphi^v_t(x_0))\|^2 \dif t \rho_0(\D x)
    \\ & =  \int_0^1 \int_{\R^d}\|u_t(\varphi^v_t(x_0))\|^2 \rho_0(\D x)  \dif t
    \\ & =  \int_0^1 \int_{\R^d}\|u_t(x)\|^2 \rho_t(\D x)  \dif t
    \\ & =  g^{\AFR}_{v_\bullet}(u_\bullet, u_\bullet).
\end{equs}
\end{proof}
This shows that the advective Fisher--Rao metric is the expected value of the second variation of the Freidlin--Wentzell rate functional.
Note, however, that the rate functional includes the base velocity field $v_\bullet$, which is not differentiated in the second variation.
In the introduction, we stated the theorem as
\begin{equ}
    g^{\AFR}_{v_\bullet}(u_\bullet,w_\bullet) = \int_{\R^d} \left.\frac{\D^2}{\D h^2}\right|_{h=0}I_{v}(\varphi^{v+hu}(x_0)) \rho_0(\D x_0) .
\end{equ}
To see that this is equivalent to the statement of \Cref{thm:Freidlin--Wentzell}, recall that $I_v$ is minimized at $\varphi_\bullet^v(x_0)$ with value $0$,  and hence we have $\delta I_v(\varphi^{v}_\bullet(x_0)) = 0$.
Now we obtain
\begin{equs}
    \left.\frac{\D^2}{\D h^2}\right|_{h=0}I_{v}(\varphi^{v+hu}(x_0))
    & = \delta^2 I_v(\varphi^{v}_\bullet(x_0))[\psi_\bullet^u(x_0), \psi_\bullet^u(x_0)].
\end{equs}
\subsection{Optimal transport}
The advective Fisher--Rao metric operates on velocity fields and is tightly connected to the continuity equation, which describes the transport of mass along a flow.
As such, it is a natural question how it is connected to the geometry of optimal transport.
In this subsection, we relate both the least-squares objective on velocities and the advective Fisher--Rao metric to the action functional appearing in the dynamic formulation of optimal transport.
Recall the Benamou--Brenier formulation of the Wasserstein distance
\begin{equ}
    W_2^2(\mu,\nu) = 2\inf_{p_\bullet, j_\bullet} \left\{\mathcal{A}(p_\bullet, j_\bullet) : p_0 = \mu, p_1 = \nu, \partial_t p_t + \nabla\cdot j_t = 0\right\},
\end{equ}
where the action functional in momentum or current form is given by
\begin{equ}
    \mathcal{A}(p_\bullet, j_\bullet) =
        \frac12\int_0^1 \int_{\R^d} \frac{\lVert j_t(x) \rVert_2^2}{p_t(x)} \dif x \dif t.
\end{equ}
In the context of the Benamou--Brenier formulation, the current is given by $j_t = v_t p_t$.
It is known that the action functional is convex, and so is its domain
\begin{equ}
    \operatorname{dom}(\mathcal A) = \left\{(p_\bullet, j_\bullet)  : p_\bullet \in \textup{AC}_T(\mathcal{P}_2(\R^d)), j_\bullet \textup{ measurable}, \mathcal{A}(p_\bullet, j_\bullet)<+\infty\right\}.
\end{equ}

\begin{theorem}\label{thm:hessian-action}
Assume that the initial distribution $\rho_0\in\mathcal{P}_2(\R^d)$ has a positive density $p_0$ with respect to the Lebesgue measure.
For any velocity field $v_\bullet\in L^2_T C^1_{\textup{Lip}}$ and $u_\bullet \in C_{\textup{c}}^1([0,1]\times\R^d; \R^d)$, it holds that
\begin{equ}\label{eq:difference-quotient}
    g^{\AFR}_{v_\bullet}(u_\bullet, u_\bullet) = 2\lim_{h\to0}\frac{\mathcal{A}(p_\bullet^{h}, j_\bullet^{h}) - \mathcal{A}(p_\bullet, j_\bullet) - \delta^+\mathcal{A}(p_\bullet, j_\bullet)[p_\bullet^{h}-p_\bullet, j_\bullet^{h} -j_\bullet]}{h^2},
\end{equ}
where $p_\bullet = p^{v}_\bullet$, $p^h_\bullet = p^{v+h u}_\bullet$, $j_\bullet = v_\bullet p^v_\bullet$, and $j^h_\bullet = (v_\bullet+h u_\bullet) p^{v+h u}_\bullet$ denote densities of the solution of the continuity equation and the corresponding currents. 
\end{theorem}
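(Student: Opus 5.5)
The plan is to compute the Bregman divergence of $\mathcal{A}$ exactly, which removes the need for any second-order expansion. The integrand of $\mathcal{A}$ is the perspective function $f(p,j)=\frac{\lVert j\rVert^2}{2p}$ on $(0,\infty)\times\R^d$. It is convex, and its gradient is $\nabla_p f=-\frac12\lVert j/p\rVert^2$, $\nabla_j f = j/p$. Write $v=j/p$ and $v'=j'/p'$. A one-line algebraic computation gives the pointwise identity
\begin{equ}
f(p',j')-f(p,j)-\nabla f(p,j)\cdot(p'-p,j'-j) = \tfrac{p'}{2}\lVert v'\rVert^2 - \tfrac{p}{2}\lVert v\rVert^2 + \tfrac{p'-p}{2}\lVert v\rVert^2 - v\cdot(p'v'-pv) = \tfrac{p'}{2}\lVert v'-v\rVert^2 .
\end{equ}
We apply it with $p=p^v_t(x)$, $j=v_tp^v_t$, $p'=p^h_t(x)$, $j'=(v_t+hu_t)p^h_t$, so that $v'-v=hu_t$. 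Both densities are positive, because positivity of $p_0$ is preserved by the method of characteristics. The integrand of the numerator in \eqref{eq:difference-quotient} is then exactly $\frac{h^2}{2}\lVert u_t(x)\rVert^2 p^h_t(x)$.

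First I will check that every term in \eqref{eq:difference-quotient} is finite. For $\mathcal{A}(p^h_\bullet,j^h_\bullet)=\frac12\int_0^1\int\lVert v_t+hu_t\rVert^2\,p^h_t\dif x\dif t$, I use the linear growth of $v_t+hu_t$ with an $L^2_T$ coefficient, together with the second-moment bound of Lemma~\ref{lem:evolution-W2}, which is uniform in $t$ and in $\lvert h\rvert\le1$.

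Second, I will identify $\delta^+\mathcal{A}(p_\bullet,j_\bullet)[p^h_\bullet-p_\bullet,j^h_\bullet-j_\bullet]$ with $\int_0^1\int \nabla f(p_t,j_t)\cdot(p^h_t-p_t,j^h_t-j_t)\dif x\dif t$. By convexity of $f$ along the segment, the difference quotients $\frac{f((p,j)+s\Delta)-f(p,j)}{s}$ are monotone nonincreasing as $s\searrow0$. They are bounded above by the value at $s=1$, namely $f(p^h,j^h)-f(p,j)$, which is integrable by the first step. Pointwise they converge to $\nabla f\cdot\Delta$. Monotone convergence then gives the exchange of limit and integral. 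The limit is integrable, since $\lvert\nabla f\cdot\Delta\rvert\le \frac12\lVert v\rVert^2(p^h+p)+\lVert v\rVert\,\lVert v+hu\rVert p^h+\lVert v\rVert^2p$, and all of these terms are controlled by second moments.

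Combining the two steps, the expression inside the limit in \eqref{eq:difference-quotient} equals exactly $\int_0^1\int_{\R^d}\lVert u_t(x)\rVert^2 p^{v+hu}_t(\D x)\dif t$ for every $h\neq0$, so the factor $h^{-2}$ cancels. It remains to show that this converges to $\int_0^1\int\lVert u_t\rVert^2 p^v_t(\D x)\dif t = g^{\AFR}_{v_\bullet}(u_\bullet,u_\bullet)$. Since $u\in C^1_{\textup{c}}$, the function $\lVert u_t\rVert^2$ is bounded and continuous. By Lemma~\ref{lem:W2-stability} we have $p^{v+hu}_t\to p^v_t$ in $W_2$, hence weakly, for each $t$. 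Dominated convergence in $t$, with the bound $\lVert u\rVert_\infty^2$, finishes the argument.

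I expect the main obstacle to be the second step: making the one-sided first variation of the possibly extended-valued functional $\mathcal{A}$ rigorous, in particular controlling the negative part of the difference quotients. If monotone convergence runs into integrability issues, I would instead bound $\lvert\nabla f\cdot\Delta\rvert$ directly as above and use dominated convergence, with the same moment bounds.
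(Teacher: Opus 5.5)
Your proposal is correct and follows the paper's proof closely. The paper also rewrites the numerator as the Bregman divergence of the perspective integrand $f(p,j)=\lVert j\rVert^2/(2p)$ via Lemma~\ref{lem:first-variation-action}, justifying $\delta^+\mathcal{A}$ with the same monotone difference-quotient argument bounded by the values at $0$ and $1$. It then obtains $\frac{h^2}{2}\int_0^1\int_{\R^d}\lVert u_t\rVert^2 p^h_t\dif x\dif t$, cancels $h^2$, and concludes with Lemma~\ref{lem:W2-stability}. Your use of the boundedness of $u\in C^1_{\textup{c}}$ to get a time-uniform dominating function is a slightly cleaner finish than the paper's appeal to linear growth. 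One bookkeeping slip: the quotient inside the limit equals $\frac12\int_0^1\int_{\R^d}\lVert u_t\rVert^2 p^h_t\dif x\dif t$, not $\int_0^1\int_{\R^d}\lVert u_t\rVert^2 p^h_t\dif x\dif t$, and it is the prefactor $2$ that gives $g^{\AFR}_{v_\bullet}(u_\bullet,u_\bullet)$.
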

In the
proof, we will use the following lemma, which is of independent interest as it states that the least-squares objective on velocity fields is the Bregman divergence induced by the action functional.
\begin{lemma}\label{lem:first-variation-action}
Consider $(p_\bullet, j_\bullet), (p_\bullet', j_\bullet')\in\operatorname{dom}(\mathcal A)$ and assume that $p_\bullet>0$ and $p'_\bullet>0$, and that
\begin{equ}\label{eq:integrability-assumption}
    \int_0^1 \int_{\R^d} \lVert v_t(x) \rVert_2^2 p'_t(x) \dif x \dif t \quad  \text{and }
    \int_0^1 \int_{\R^d} \lVert v'_t(x) \rVert_2^2 p_t(x) \dif x \dif t < +\infty,
\end{equ}
where $v_\bullet=\frac{j_\bullet}{p_\bullet}$ and $v_\bullet'=\frac{j_\bullet'}{p_\bullet'}$.
Then, the right-sided first variation
$\delta^+\mathcal A(p_\bullet, j_\bullet)[p'_\bullet-p_\bullet, j'_\bullet-j_\bullet]$ exists
and is given by
\begin{equs}
    \delta^+ \mathcal{A}(p_\bullet, j_\bullet)[p'_\bullet-p_\bullet, j'_\bullet-j_\bullet]
    & = \int_0^1 \int_{\R^d} \left( v'_t\cdot v_t p'_t
    -
    \lVert v_t\rVert_2^2 (p_t+p'_t) \right)\dif x \dif t,
\end{equs}
 Further, the Bregman divergence induced by the action functional is given by
\begin{equ}
    D_{\mathcal{A}}((p_\bullet, j_\bullet), (p'_\bullet, j'_\bullet)) = \frac12 \int_0^1 \int_{\R^d} \lVert v_t(x) - v'_t(x) \rVert_2^2 p_t(x) \dif x \dif t.
\end{equ}
\end{lemma}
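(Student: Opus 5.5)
The plan is to reduce everything to a pointwise computation for the convex integrand $f(p,j)=\frac{\lVert j\rVert_2^2}{2p}$ on $(0,\infty)\times\R^d$, and then justify exchanging the limit $h\searrow0$ with the integral using the integrability assumption \eqref{eq:integrability-assumption}. For fixed $(t,x)$, set $p^h=p+h(p'-p)$ and $j^h=j+h(j'-j)$ with $h\in[0,1]$. Then $p^h>0$ because $p,p'>0$, and $f$ is smooth there with
\begin{equ}
    \partial_j f(p,j)=\frac{j}{p}=v,\qquad \partial_p f(p,j)=-\frac{\lVert j\rVert_2^2}{2p^2}=-\tfrac12\lVert v\rVert_2^2 .
\end{equ}
Hence the pointwise right-derivative of $h\mapsto f(p^h,j^h)$ at $h=0$ equals
\begin{equ}
    v\cdot(v'p'-vp)-\tfrac12\lVert v\rVert_2^2(p'-p)=v\cdot v'\,p'-\tfrac12\lVert v\rVert_2^2(p+p'),
\end{equ}
using $j=vp$ and $j'=v'p'$. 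This is the integrand in the claimed formula for $\delta^+\mathcal A$. My computation gives the factor $\tfrac12$ in front of $\lVert v\rVert_2^2(p+p')$; I would carry it through, since it is exactly what makes the Bregman identity below come out.

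To interchange limit and integral I would use convexity rather than a dominating function. Since $f$ is jointly convex (a perspective function), $h\mapsto f(p^h,j^h)$ is convex on $[0,1]$. Therefore the difference quotients $q_h=\frac{f(p^h,j^h)-f(p,j)}{h}$ are nondecreasing in $h$ and converge monotonically downward to the pointwise derivative as $h\searrow0$. They are bounded above by $q_1=f(p',j')-f(p,j)$, which is integrable because both pairs lie in $\operatorname{dom}(\mathcal A)$. The limit is bounded below by the integrand computed above. That integrand is integrable by \eqref{eq:integrability-assumption} together with Cauchy--Schwarz: we have $\lvert v\cdot v'\rvert p'\le \tfrac12(\lVert v\rVert^2+\lVert v'\rVert^2)p'$, and each term $\lVert v\rVert^2p$, $\lVert v\rVert^2p'$, $\lVert v'\rVert^2p'$ is integrable. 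Monotone convergence, applied to the decreasing family $q_1-q_h\ge0$, then gives existence of $\delta^+\mathcal A(p_\bullet,j_\bullet)[p'_\bullet-p_\bullet,j'_\bullet-j_\bullet]$ together with the stated formula.

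For the Bregman divergence I apply the definition $D_{\mathcal A}(x,y)=\mathcal A(x)-\mathcal A(y)-\delta^+\mathcal A(y)[x-y]$ with $x=(p_\bullet,j_\bullet)$ and $y=(p'_\bullet,j'_\bullet)$. The formula just proved, with the roles of the primed and unprimed pairs swapped (the hypotheses are symmetric), gives
\begin{equ}
    \delta^+\mathcal A(p'_\bullet,j'_\bullet)[p_\bullet-p'_\bullet,j_\bullet-j'_\bullet]=\int_0^1\int_{\R^d}\Bigl(v\cdot v'\,p-\tfrac12\lVert v'\rVert_2^2(p+p')\Bigr)\dif x\dif t .
\end{equ}
Subtracting this from $\int\!\!\int \tfrac12\lVert v\rVert_2^2p-\tfrac12\lVert v'\rVert_2^2p'$ leaves the pointwise expression $\tfrac12\bigl(\lVert v\rVert_2^2-2v\cdot v'+\lVert v'\rVert_2^2\bigr)p=\tfrac12\lVert v-v'\rVert_2^2p$. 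Integrating gives the claim. All terms are finite, so splitting the integrals is legitimate.

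The only delicate step is the limit interchange for the first variation. Pointwise differentiation is trivial, but near regions where $p$ is small the quotients are not obviously dominated. Monotonicity from convexity resolves this, with \eqref{eq:integrability-assumption} supplying the integrable lower bound.
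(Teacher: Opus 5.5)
Your proposal is correct and follows essentially the same route as the paper: a pointwise chain-rule computation for the convex perspective function $f(p,j)=\lVert j\rVert_2^2/(2p)$, then monotonicity of the convex difference quotients, which sit between the integrable bounds $\Delta^0$ (the pointwise derivative) and $\Delta^1=f(p',j')-f(p,j)$, to pass to the limit (the paper invokes dominated rather than monotone convergence), and finally the same algebra for the Bregman divergence with the two pairs swapped. You are also right about the factor $\tfrac12$ in front of $\lVert v_t\rVert_2^2(p_t+p'_t)$: the paper's own proof derives it, so its absence from the displayed statement is a typo.
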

\begin{proof}
First, we rewrite the action functional as
\begin{equ}
    \mathcal{A}(p_\bullet, j_\bullet) = \int_0^1 \int_{\R^d} f(p_t(x), j_t(x)) \dif x \dif t,
\end{equ}
where
\begin{equ}
    f\colon \R_{>0}\times \R^d \to \R, \quad f(p,j) = \frac{\lVert j \rVert_2^2}{2p}
\end{equ}
is a convex function.
Then, we need to establish the existence of the limit of the difference quotients
\begin{equ}
    \frac{\mathcal A(p_\bullet+h(p'_\bullet-p_\bullet), j_\bullet + h(j'_\bullet - j_\bullet)) - \mathcal A(p_\bullet, j_\bullet)}{h} = \int_0^1\int_{\R^d} \Delta^h_t(x) \dif x \dif t,
\end{equ}
where
\begin{equ}
    \Delta^h_t = \frac{f(p_t+h(p'_t-p_t),j_t+h(j'_t-j_t))  - f(p_t, j_t)}{h}.
\end{equ}
To compute the pointwise limit of $\Delta_t(x)$ we use that for $p>0$ the first derivative of $f$ is given by
\begin{equs}
    \nabla f(p,j)
    &
    = \begin{pmatrix} -\dfrac{\lVert j \rVert_2^2}{2p^2} \\[10pt] \dfrac{j}{p} \end{pmatrix}
    = \begin{pmatrix} -\dfrac{\lVert v \rVert_2^2}{2} \\[10pt] v \end{pmatrix}
    ,
\end{equs}
where $v = \frac{j}{p}$.
Now the chain rule implies
\begin{equs}
    \lim_{h\to0} \Delta_t^h(x)
    & = - (p'_t(x)-p_t(x)) \cdot\frac{\lVert v_t(x) \rVert^2}{2} + v_t(x)\cdot(p'_t(x)v'_t(x)-p_t(x)v_t(x)),
    \\ & = - (p'_t(x)+p_t(x)) \cdot\frac{\lVert v_t(x) \rVert^2}{2} + v_t(x)\cdot v'_t(x) p'_t(x),
    \\ & \eqqcolon \Delta^0_t(x),
\end{equs}
which is an integrable function over time and space by  assumption~\eqref{eq:integrability-assumption}.
To use the dominated convergence theorem, we first note that by the convexity of $f$, the difference quotient $\Delta^h$ is pointwise monotonically decreasing and hence
\begin{equ}
    \Delta_t^h(x) \ge \Delta^0_t(x) \quad \text{for all } t\in[0,1], x\in\R^d.
\end{equ}
Further, for all $t\in[0,1], x\in\R^d$ we have
\begin{equ}
    \Delta^h_t(x) \le\Delta^1_t(x) = f(p'_t(x),j'_t(x)) - f(p_t(x), j_t(x)),
\end{equ}
which is integrable over $[0,1]\times\R^d$ as $(p_\bullet, j_\bullet), (p_\bullet', j_\bullet')\in\operatorname{dom}(\mathcal A)$.
As $\Delta^h$ is pointwise bounded from above and below by integrable functions
, we can apply the dominated convergence theorem and obtain
\begin{equs}
    \delta^+\mathcal{A}(p_\bullet, j_\bullet)[p_\bullet'-p_\bullet, j_\bullet'-j_\bullet]
    & = \lim_{h\searrow0} \int_0^1\int_{\R^d} \Delta_t^h(x) \dif x \dif t
    \\ & = \int_0^1\int_{\R^d} \lim_{h\searrow0} \Delta_t^h(x) \dif x \dif t
    \\ & = \int_0^1 \int_{\R^d} v'\cdot v p'\dif x \dif t - \frac12 \int_0^1 \int_{\R^d} \lVert v\rVert_2^2 (p+p')\dif x \dif t.
\end{equs}
Finally, we compute the Bregman divergence
\begin{equs}
    D_{\mathcal{A}}((p_\bullet, j_\bullet), (p_\bullet', j_\bullet'))
    & = \mathcal{A}(p_\bullet, j_\bullet) - \mathcal{A}(p_\bullet', j_\bullet') - \delta^+\mathcal{A}(p_\bullet', j_\bullet')[p_\bullet-p_\bullet', j_\bullet-j_\bullet']
    \\ & = \frac12 \int_0^1\int_{\R^d} \lVert v \rVert_2^2 p  \dif x \dif t
    - \frac12 \int_0^1\int_{\R^d} \lVert v' \rVert_2^2 p'  \dif x \dif t
    \\ & \quad - \int_0^1 \int_{\R^d} v\cdot v' p\dif x \dif t + \frac12 \int_0^1 \int_{\R^d} \lVert v'\rVert_2^2 (p'+p)\dif x \dif t
    \\ & = \frac12 \int_0^1\int_{\R^d} \lVert v - v' \rVert_2^2 p \dif x \dif t.
\end{equs}
\end{proof}
\begin{remark}[G\^{a}teaux differentiability of the action]
With the notation $(\xi_\bullet, n_\bullet) = (p_\bullet', j_\bullet')-(p_\bullet, j_\bullet)$, the expression of the right-sided first variation in Lemma \ref{lem:first-variation-action} becomes
\begin{equ}
    \delta^+\mathcal{A}(p_\bullet, j_\bullet)[\xi_\bullet, n_\bullet] = \int_0^1 \int_{\R^d} \left(v_t(x)\cdot n_t(x) - \frac12 \lVert v_t(x) \rVert_2^2 \xi_t(x) \right) \dif x \dif t.
\end{equ}
As this expression is linear in $(\xi_\bullet, n_\bullet)$, this implies that if $(p_\bullet, j_\bullet) - h (\xi_\bullet, n_\bullet)\in\operatorname{dom}(\mathcal A)$ for $h>0$ small, then also the left-sided difference quotients converge to the same limit.
Consequently, this shows the G\^{a}teaux differentiability in the case that the two-sided difference quotients can be formed.
\end{remark}
Further, we require a bound on the Wasserstein-2 distance in terms of the least-squares distance of the velocity fields.
An analogous bound under stronger regularity conditions on the velocity fields can be found in \cite{benton2024error}.

\begin{lemma}\label{lem:W2-stability}
Consider $v_\bullet, w_\bullet\in L^2_TC^1_{\textup{Lip}}$ and assume that the initial condition $\rho_0$ has finite second moments.
Then for all $t\in[0,1]$ it holds that 
\begin{equ}
    W_2(\rho^v_t, \rho^w_t)
    \le e^{\lVert v_\bullet\rVert_{\V}} \, \| v_\bullet - w_\bullet \|_{L^1_T L^2(\rho^w_\bullet)}
    \le e^{\lVert v_\bullet\rVert_{\V}} \, \| v_\bullet - w_\bullet \|_{L^2_T L^2(\rho^w_\bullet)}.
\end{equ}
\end{lemma}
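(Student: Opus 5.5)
We couple the two flows through the initial point: since $\rho^v_t=(\varphi^v_t)_\sharp\rho_0$ and $\rho^w_t=(\varphi^w_t)_\sharp\rho_0$, the pair $(\varphi^v_t(X_0),\varphi^w_t(X_0))$ with $X_0\sim\rho_0$ is a coupling of $\rho^v_t$ and $\rho^w_t$. Hence
\begin{equ}
    W_2(\rho^v_t,\rho^w_t)\le \lVert \varphi^v_t-\varphi^w_t\rVert_{L^2(\rho_0)},
\end{equ}
and it suffices to bound the right-hand side by a Gr\"onwall argument in $L^2(\rho_0)$. All quantities are finite, because both flows have at most linear growth (\Cref{app:lem:flow-grow}) and $\rho_0$ has finite second moment.

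Set $e_t(x)\coloneqq\varphi^v_t(x)-\varphi^w_t(x)$, so that $e_0=0$ and
\begin{equ}
    \partial_t e_t(x) = \bigl(v_t(\varphi^v_t(x))-v_t(\varphi^w_t(x))\bigr) + \bigl(v_t-w_t\bigr)(\varphi^w_t(x)).
\end{equ}
The key step is to split the increment by inserting $v_t(\varphi^w_t(x))$, so that the Lipschitz constant of $v$ appears instead of that of $w$, and the discrepancy is evaluated along the $w$-flow. The first bracket has norm at most $\beta_t\lVert e_t(x)\rVert$, where $\beta_t=\lVert Dv_t\rVert_\infty$. For the second bracket, pushing forward along $\varphi^w_t$ gives
\begin{equ}
    \lVert (v_t-w_t)\circ\varphi^w_t\rVert_{L^2(\rho_0)}=\lVert v_t-w_t\rVert_{L^2(\rho^w_t)}.
\end{equ}

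To derive an integral inequality for $N_t\coloneqq\lVert e_t\rVert_{L^2(\rho_0)}$, write
\begin{equ}
    e_t=\int_0^t \partial_s e_s\dif s
\end{equ}
pointwise and apply Minkowski's integral inequality in $L^2(\rho_0)$:
\begin{equ}
    N_t\le \int_0^t \beta_s N_s\dif s + \int_0^t \lVert v_s-w_s\rVert_{L^2(\rho^w_s)}\dif s.
\end{equ}
This route sidesteps differentiating the norm where $N_t=0$. The main technical point is justifying Minkowski's inequality and measurability, i.e.\ that $s\mapsto \lVert \partial_s e_s\rVert_{L^2(\rho_0)}$ is integrable. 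This follows from the linear-growth bounds $\lVert v_s(y)\rVert\le\alpha_s(1+\lVert y\rVert)$ together with the uniform linear growth of the flows, since $\alpha\in L^2_T\subset L^1_T$.

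Gr\"onwall's inequality, with a nondecreasing forcing term, then gives
\begin{equ}
    N_t\le e^{\int_0^t\beta_s\dif s}\,\lVert v_\bullet-w_\bullet\rVert_{L^1([0,t];L^2(\rho^w_\bullet))}.
\end{equ}
Since $\int_0^1\beta_s\dif s\le\lVert v_\bullet\rVert_{\V}$, this yields the first inequality of the lemma. The second inequality is Cauchy--Schwarz on $[0,1]$: $\lVert f\rVert_{L^1_T}\le\lVert f\rVert_{L^2_T}$.
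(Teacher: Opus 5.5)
Your proposal is correct and follows the paper's proof essentially step for step: the same flow coupling $W_2(\rho^v_t,\rho^w_t)\le\lVert\varphi^v_t-\varphi^w_t\rVert_{L^2(\rho_0)}$, the same insertion of $v_s\circ\varphi^w_s$ (so that the Lipschitz constant of $v$ appears and the discrepancy is measured in $L^2(\rho^w_s)$), Minkowski in $L^2(\rho_0)$, Gr\"onwall, and Cauchy--Schwarz in time. Your remarks on the integrability and measurability of $s\mapsto\lVert\partial_s e_s\rVert_{L^2(\rho_0)}$ supply a detail that the paper leaves implicit.
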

\begin{proof}
Let $\varphi^v$ and $\varphi^w$ be the flow maps associated with the vector fields $v_\bullet$ and $w_\bullet$.
By the definition of the Wasserstein distance, we can bound $W_2$ using this specific coupling
\begin{equ}
    W_2(\rho_t^v, \rho_t^w) \le \lVert \varphi_t^v - \varphi_t^w \rVert_{L^2(\rho_0)}.
\end{equ}
We can express the difference between the two flow maps at time $t$ via
\begin{equ}
    \varphi_t^v(x) - \varphi_t^w(x) = \int_0^t \left( v_s(\varphi_s^v(x)) - w_s(\varphi_s^w(x)) \right) \dif s.
\end{equ}
Taking the $L^2(p_0)$ norm of both sides and applying the triangle inequality, we obtain
\begin{equs}
    \lVert \varphi_t^v - \varphi_t^w \rVert_{L^2(\rho_0)}
     & \le \int_0^t \lVert v_s \circ \varphi_s^v - w_s \circ \varphi_s^w \rVert_{L^2(\rho_0)} \dif s.
    \\ & \le \int_0^t \lVert v_s \circ \varphi_s^v - v_s \circ \varphi_s^w \rVert_{L^2(\rho_0)} \dif s \\ & \quad + \int_0^t \lVert v_s \circ \varphi_s^w - w_s \circ \varphi_s^w \rVert_{L^2(\rho_0)} \dif s
    \\ &
    \le \int_0^t \beta_s \lVert \varphi_s^v - \varphi_s^w \rVert_{L^2(\rho_0)} \dif s + \int_0^t \lVert v_s - w_s\rVert_{L^2(p^w_s)} \dif s,
\end{equs}
where $\beta_s$ denotes the spatial Lipschitz constant of $v_s$.
Applying Gr\"onwall's inequality yields
\begin{equs}
    \lVert \varphi_t^v - \varphi_t^w \rVert_{L^2(\rho_0)} &\le
    e^{\int_0^t \beta_r \dif r} \int_0^t \lVert v_s - w_s\rVert_{L^2(p^w_s)} \dif s
     \\ & \le
     e^{\int_0^t \beta_r \dif r} \sqrt{t} \left(\int_0^t \lVert v_s - w_s\rVert_{L^2(p^w_s)}^2 \dif s\right)^{\frac12}
     \\ & \le
     e^{\int_0^t \beta_r \dif r} \sqrt{t} \, \| v_\bullet - w_\bullet \|_{L^2_T L^2(p^w_\bullet)}.
\end{equs}
\end{proof}

\begin{proof}[Proof of \Cref{thm:hessian-action}]
First, we notice that for $v_\bullet\in L^2_TC^1_{\textup{Lip}}$, we have $(p^v_\bullet, v_\bullet p^v_\bullet)\in \operatorname{dom}(\mathcal A)$.
Indeed, $v_\bullet\in L^2_TC^1_{\textup{Lip}}$ implies that $p_\bullet^v$ has uniformly in time bounded second moments, see, for example, Lemma \ref{lem:evolution-W2}, and hence
\begin{equs}
    \mathcal{A}(p^v_\bullet, v_\bullet p^v_\bullet) & = \frac12 \int_0^1 \int_{\R^d} \lVert v_t(x) \rVert_2^2 p_t^v(x) \dif x \dif t
     < +\infty.
\end{equs}
Consider now $v^h_\bullet=v_\bullet+h u_\bullet$, $p^h_\bullet=p_\bullet^{v+hu}$ and $j^h_\bullet=j^{v+hu}_\bullet$, then we obtain
\begin{equs}
    \frac{\mathcal{A}(p_\bullet^h,j_\bullet^h)
    -\mathcal{A}(p_\bullet,j_\bullet)
    -\delta^+\mathcal{A}(p_\bullet,j_\bullet)[\xi_\bullet^h, n_\bullet^h]}{h^2}
    & = h^{-2}D_{\mathcal{A}}((p_\bullet^h,j_\bullet^h), (p_\bullet,j_\bullet))
    \\ & = \frac{1}{2h^2} \int_0^1 \int_{\R^d} \lVert v^h_t(x) - v_t(x) \rVert_2^2 p_t^h(x) \dif x \dif t
    \\ & = \frac{1}{2} \int_0^1 \int_{\R^d} \lVert u_t(x) \rVert_2^2 p_t^h(x) \dif x \dif t.
\end{equs}
Note that $p^h_t\to p_t$ as $h\to 0$ in $\mathcal{P}_2(\R^d)$ at a rate of $O(h)$ uniformly in time, see for example Lemma \ref{lem:W2-stability}.
Together with the linear growth of $u_t$, this yields that
\begin{equ}
    \lim_{h\to0}\frac12\int_0^1 \int_{\R^d} \lVert u_t(x) \rVert_2^2 p_t^h(x) \dif x \dif t =
    \frac12\int_0^1 \int_{\R^d} \lVert u_t(x) \rVert_2^2 p_t(x) \dif x \dif t.
\end{equ}
\end{proof}

\section{The Advective Fisher--Rao Metric on Paths of Probability Measures}\label{sec:convex-geometry-BB}
So far, we have defined the advective Fisher--Rao metric on the space of velocity fields.
However, it is the goal to solve the control problem~\eqref{eq:matching-paths} on the paths of probability densities.
Note that there are multiple velocity fields that can generate the same path of probability densities via the continuity equation.
Hence, it is not directly possible to pushforward the geometry onto the space of paths of probability densities.
In this section, we will define a version of the advective Fisher--Rao metric on the space of paths of probability densities that is compatible with the advective Fisher--Rao metric on the space of gradient velocity fields.
Additionally, we show that it preserves the energy compatibility property and that it arises as the Hessian metric of the contracted Benamou--Brenier action functional.
We will construct a Riemannian metric on the space
\begin{equ}
    \textup{AC}_T^2(\mathcal{P}_2(\R^d)) = \left\{ \rho_\bullet\in\textup{AC}_T(\mathcal{P}_2(\R^d)) : \int_0^1 \lvert \dot{\rho}_t \rvert_{W_2}^2 \dif t <+\infty \right\},
\end{equ}
where $\lvert \dot{\rho}_t \rvert_{W_2}= \lim_{h\to0} W_2(\rho_{t+h}, \rho_t)h^{-1}$ is the metric derivative, see \cite{ambrosio2005gradient}.
To construct a tangent space for this space of paths, recall the definition of the Wasserstein--Otto tangent space
\begin{equs}
    T_{\rho}\mathcal{P}_{2}(\R^{d})&\coloneqq \{ - \nabla \cdot (\rho v) : v\in L^2(\rho) \} \subseteq \mathcal{D}'(\R^d). 
\end{equs}
which dates back to \cite{otto2001geometry}.
Note that $v\mapsto -\nabla\cdot(\rho v)$ is not injective. 
To remove this redundancy, it is common to consider 
\begin{equ}
    \mathcal{T}_\rho\mathcal{P}_2(\R^d)\coloneqq \overline{\left\{\nabla \phi : \phi\in C_\textup{c}^\infty(\R^d)\right\} }^{L^2(\rho)}
\end{equ}
and to identify the spaces $\mathcal{T}_\rho\mathcal{P}_2(\R^d) \cong {T}_\rho\mathcal{P}_2(\R^d)$ according to $v\mapsto -\nabla\cdot(\rho v) $ and to refer to $\mathcal{T}_\rho\mathcal{P}_2(\R^d)$ as the tangent space.
However, for us, it is convenient to keep the distinction between those two spaces.
The \emph{Wasserstein--Otto} metric is given by 
\begin{equ}
    g_\rho^{\textup{WO}}(\xi, \zeta) = \int_{\R^d} u\cdot w \rho(\D x), 
\end{equ}
where $\xi = -\nabla\cdot(\rho v)$ and $\zeta = -\nabla\cdot(\rho w)$. 

We define the time-dependent version of the tangent spaces by
\begin{equs}
    T_{\rho_{\bullet}}L^2_T(\mathcal P_2(\mathbb R^d))
    & \coloneqq
    \left\{
    \xi_\bullet
    :
    \begin{gathered}
        \xi_t \in T_{\rho_t} \mathcal{P}_2(\R^d) \text{ for every } t\in[0,1] 
    \end{gathered}
    \right\}, 
\end{equs}
and similarly 
\begin{equ}
    \mathcal{T}_{\rho_{\bullet}}L^2_T(\mathcal P_2(\mathbb R^d))
    \coloneqq
    \left\{
    v_\bullet\in L^2_TL^2(\rho_\bullet)
    :
    \begin{gathered}
        v_t \in \mathcal{T}_{\rho_t} \mathcal{P}_2(\R^d) \textup{ for every } t\in[0,1]
    \end{gathered}
    \right\}.
\end{equ}
The corresponding time-dependent metric is given by 
\begin{equ}
    g_{\rho_{\bullet}}^{\textup{WO}}(\xi_{\bullet},\xi_{\bullet})=\int_{0}^{1}g_{\rho_{t}}^{\textup{WO}}(\xi_{t}, \xi_{t})\dif t. 
\end{equ}

For a curve $\rho_\bullet\in \textup{AC}_T^2\mathcal{P}_2(\R^d)$, it is well known that one can find a velocity field $v_\bullet^\rho\in \mathcal{T}_{\rho_\bullet} L^2_T(\mathcal{P}_2(\R^d))$ such that
\begin{equ}
    \partial_t \rho_t + \nabla \cdot(\rho_t v_t^\rho) = 0,
\end{equ}
see \cite{ambrosio2005gradient}, which we refer to as the velocity of $\rho_\bullet$.
Now, we can proceed with our construction of a metric on the space $\textup{AC}_T^2(\mathcal{P}_2(\R^d))$, which respects the advective nature of the continuity equation.

\begin{definition}[Advective Fisher--Rao metric on paths of measures]\label{def:AFR-measures}
Consider a path $\rho_\bullet \in \textup{AC}_T^2(\mathcal{P}_{2}(\R^{d}))$ and its velocity  $v_\bullet  \in T_{\rho_\bullet} L_T^2(\mathcal{P}_{2}(\R^{d}))$.
Then we define the tangent space of $\textup{AC}_T^2(\mathcal{P}_2(\R^d))$ as
\begin{equs}
    T_{\rho_{\bullet}}\mathrm{AC}^2_T(\mathcal P_2(\mathbb R^d))
    \coloneqq
    \left\{
    \xi_\bullet \in 
    T_{\rho_\bullet}L^2_T(\mathcal{P}_2(\R^d))
    :
    \begin{gathered} 
    \xi_t v_t \in \mathcal{D}'(\R^d) \textup{ for all } t\in[0,1]\\ 
    D_t \xi_\bullet \in T_{\rho_\bullet}L^2_T(\mathcal{P}_2(\R^d))
    \end{gathered}
    \right\},
\end{equs}
where we refer to 
\begin{equ}\label{eq:continuity-operator}
    D_t \xi_t \coloneqq \partial_t\xi_t + \nabla\cdot(\xi_t v_t)
\end{equ}
as the \emph{transport derivative} of $\xi_\bullet$ along $\rho_\bullet$, which is to be understood in the distributional sense. 
For two tangent vectors $\xi_\bullet, \zeta_\bullet\in T_{\rho_\bullet} \textup{AC}_T^2(\mathcal{P}_2(\R^d))$, we refer to
\begin{equ}
    g_{\rho_{\bullet}}^{\AFR}(\xi_{\bullet},\xi_{\bullet})=\int_{0}^{1}g_{\rho_{t}}^{\textup{WO}}(D_{t}\xi_{t}, D_{t}\xi_{t})\dif t 
\end{equ}
as the \emph{advective Fisher--Rao metric}. 
\end{definition}

It can be shown that the transport derivative $D_t$ is the covariant derivative with respect to a flat connection, which is not the Levi-Civita connection with respect to the Wasserstein--Otto metric~\cite{ay2025information}.

\begin{remark}[Explicit expression of the advective Fisher--Rao metric]\label{rem:explicit-expression-AFR}
For tangent vectors $\xi_\bullet, \zeta_\bullet\in T_{\rho_\bullet} \textup{AC}_T^2(\mathcal{P}_2(\R^d))$, the advective Fisher--Rao metric is given by 
\begin{equ}
    g_{\rho_{\bullet}}^{\textup{AFR}}(\xi_{\bullet},\zeta_{\bullet})\coloneqq\int_{0}^{1}\int_{\R^d}u^{\xi}_{t}(x)\cdot w^{\zeta}_{t}(x)\rho_{t}(\D x)\dif t,
\end{equ}
where
\begin{equs}\label{eq:phi-psi}
    \partial_{t}\xi_{t}+\nabla\cdot(\xi_{t} v_t) & =-\nabla\cdot(\rho_{t}u^{\xi}_{t}) \quad \text{and}\\
    \partial_{t}\zeta_{t}+\nabla\cdot(\zeta_{t}v_t) & =-\nabla\cdot(\rho_{t}w^{\zeta}_{t}).
\end{equs}    
\end{remark}

The following result shows the compatibility of the Fisher--Rao metric on the space of paths of densities with the Fisher--Rao metric on the space of gradient velocity fields.
In particular, it shows that the solution operator of the continuity equation is a Riemannian isometry between the two spaces.
Further, it gives a class of non-trivial examples in which the advective Fisher--Rao metric on the space of paths of densities is well-defined.
\begin{lemma}[Compatibility]\label{thm:FR-isometrc}
Consider gradient velocity fields 
\begin{equ}
    v_\bullet=\nabla\phi_\bullet \in L^2_TC^1_{\textup{Lip}}, \quad
    u_\bullet=\nabla\psi_\bullet\in L^2_T C^1_{\textup{Lip}}(\R^d), \quad
    w_\bullet=\nabla\psi'_\bullet\in L^2_T C^1_{\textup{Lip}}(\R^d)
\end{equ}
and denote the solutions of the continuity equation with velocity fields $v_\bullet+h u_\bullet$ and $v_\bullet+h w_\bullet$ by $\rho^{v+hu}_\bullet, \rho^{v+hw}_\bullet\in \textup{AC}_T^2(\mathcal{P}_2(\R^d))$, respectively,
and assume that $\rho_0\in \mathcal{P}_2(\R^d)$.
Then
\begin{equ}
    \xi_\bullet\coloneqq \ddh \rho^{v+hu}_\bullet \quad \text{and }\zeta_\bullet\coloneqq \ddh \rho^{v+hw}_\bullet
\end{equ}
are elements in $T_{\rho_\bullet} \textup{AC}_T^2(\mathcal{P}_2(\R^d))$ and it holds that
\begin{equ}\label{eq:compatibility}
    g^{\AFR}_{\rho^v_\bullet}(\xi_\bullet, \zeta_\bullet) = g^{\AFR}_{v_\bullet}(u_\bullet, w_\bullet).
\end{equ}
\end{lemma}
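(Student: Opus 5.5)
We build everything on Lemma~\ref{thm:derivative-solution}. Since $v_\bullet,u_\bullet,w_\bullet\in L^2_TC^1_{\textup{Lip}}\subseteq\V$ and $\rho_0\in\mathcal P_2(\R^d)$, that lemma gives the existence of $\xi_\bullet=\ddh\rho^{v+hu}_\bullet$ and $\zeta_\bullet=\ddh\rho^{v+hw}_\bullet$ as weak-$\ast$ limits in $(C^1_{\textup{Lip}})^\ast$. It also shows that they solve, in the distributional sense,
\begin{equ}
    \partial_t\xi_t+\nabla\cdot(\xi_tv_t)=-\nabla\cdot(\rho_t u_t),\qquad \partial_t\zeta_t+\nabla\cdot(\zeta_tv_t)=-\nabla\cdot(\rho_t w_t),
\end{equ}
with $\xi_0=\zeta_0=0$. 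Membership in the tangent space and the metric identity are then checked directly from this.

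\textbf{Step 1: $\xi_t\in T_{\rho_t}\mathcal P_2(\R^d)$.} The explicit formula in Lemma~\ref{thm:derivative-solution} gives $\langle\xi_t,\phi\rangle=\int\nabla\phi\cdot \tilde\psi_t\,\D\rho_t$ with $\tilde\psi_t\coloneqq\psi^u_t\circ(\varphi^v_t)^{-1}$. Hence $\xi_t=-\nabla\cdot(\rho_t\tilde\psi_t)$. Moreover $\tilde\psi_t\in L^2(\rho_t)$, because $\psi^u_t$ has linear growth uniformly in $t$ (Lemma~\ref{app:lem:flow-grow}) and $\rho_0$ has a finite second moment.

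\textbf{Step 2: $\xi_tv_t\in\mathcal D'(\R^d)$.} For $\phi\in C^\infty_{\textup c}$ we set $\langle\xi_tv_t,\phi\rangle\coloneqq\langle\xi_t,v_t\phi\rangle$. This is well defined because $v_t\phi\in C^1_{\textup{Lip}}$ componentwise.

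\textbf{Step 3: the transport derivative.} The displayed equation gives $D_t\xi_t=-\nabla\cdot(\rho_tu_t)$, and $u_t\in L^2(\rho_t)$ for a.e.\ $t$ by the linear growth of $u_t$ and the moment bound of Lemma~\ref{lem:evolution-W2}. So $D_t\xi_\bullet\in T_{\rho_\bullet}L^2_T(\mathcal P_2(\R^d))$. The same argument applies to $\zeta_\bullet$, so $\xi_\bullet,\zeta_\bullet\in T_{\rho_\bullet}\textup{AC}^2_T(\mathcal P_2(\R^d))$. Here $\rho_\bullet=\rho^v_\bullet$ lies in $\textup{AC}^2_T$ with velocity $v_\bullet$, since $\int_0^1\|v_t\|^2_{L^2(\rho_t)}\dif t<\infty$. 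Because $v_t=\nabla\phi_t$ is a gradient with at most linear growth, it lies in the $L^2(\rho_t)$-closure of $\{\nabla\phi:\phi\in C^\infty_{\textup c}\}$ (approximate with cutoffs $\chi_R\phi_t$), so $v_\bullet$ is indeed the tangent velocity used in Definition~\ref{def:AFR-measures}.

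\textbf{Step 4: the identity.} By Remark~\ref{rem:explicit-expression-AFR}, $g^{\AFR}_{\rho_\bullet}(\xi_\bullet,\zeta_\bullet)=\int_0^1\int u^\xi_t\cdot w^\zeta_t\,\D\rho_t\dif t$. Here $u^\xi_t,w^\zeta_t$ must be the Wasserstein--Otto representatives, that is, the elements of $\mathcal T_{\rho_t}\mathcal P_2$ with $D_t\xi_t=-\nabla\cdot(\rho_tu^\xi_t)$. The gradient assumption is used exactly here. Since $u_t=\nabla\psi_t$ with $\psi_t\in C^1$ and $\nabla\psi_t$ Lipschitz, the same cutoff approximation $\nabla(\chi_R\psi_t)\to\nabla\psi_t$ in $L^2(\rho_t)$ shows $u_t\in\mathcal T_{\rho_t}\mathcal P_2(\R^d)$. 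This uses dominated convergence with the quadratic-growth majorant and the finite second moment of $\rho_t$; the cutoff error $\psi_t\nabla\chi_R$ is controlled by $|\psi_t|/R\lesssim(1+\|x\|^2)/R$ on $\{R\le\|x\|\le2R\}$, which tends to zero in $L^2(\rho_t)$. The representative is unique because $-\nabla\cdot(\rho_t\,\cdot\,)$ is injective on $\mathcal T_{\rho_t}\mathcal P_2$. Therefore $u^\xi_t=u_t$ and $w^\zeta_t=w_t$ $\rho_t$-a.e., and the right-hand side equals $\int_0^1\int u_t\cdot w_t\,\D\rho^v_t\dif t=g^{\AFR}_{v_\bullet}(u_\bullet,w_\bullet)$.

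I expect this last point, showing that $u_t$ lies in the closure $\mathcal T_{\rho_t}\mathcal P_2$ so that it is the Wasserstein--Otto representative, to be the main technical step. The cutoff error term needs care if $\psi_t$ grows quadratically. If it fails, one can instead show that $u_t$ is $L^2(\rho_t)$-orthogonal to the divergence-free fields $\{w:\nabla\cdot(\rho_tw)=0\}$, which characterizes $\mathcal T_{\rho_t}\mathcal P_2$.
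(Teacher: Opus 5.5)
Your proposal is correct and follows the paper's proof: both obtain $\xi_t=-\nabla\cdot\bigl(\rho_t\,\psi^u_t\circ(\varphi^v_t)^{-1}\bigr)$ and $D_t\xi_t=-\nabla\cdot(\rho_tu_t)$ from Lemma~\ref{thm:derivative-solution}, check square-integrability using linear growth and the moment bound, and then identify the metric with $\int_0^1\int_{\R^d}u_t\cdot w_t\,\rho^v_t(\D x)\dif t$. Your cutoff argument placing $u_t=\nabla\psi_t$ and $v_t=\nabla\phi_t$ in $\mathcal T_{\rho_t}\mathcal P_2(\R^d)$ makes explicit the step where the gradient hypothesis is actually used, which the paper leaves implicit; the estimate you were worried about also closes, since on $\{R\le\lVert x\rVert\le 2R\}$ one has $(1+\lVert x\rVert^2)/R\le R^{-1}+2\lVert x\rVert$, and its $L^2(\rho_t)$-norm over $\{\lVert x\rVert\ge R\}$ tends to zero by the finite second moment.
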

\begin{proof}
By Lemma \ref{thm:derivative-solution}, $\xi_t$ and $\zeta_t$ exist as weak-$\ast$ limits in $(C^1_{\textup{Lip}})^\ast$ for every $t\in[0,1]$, and satisfy
\begin{equs}\label{eq:proof:compatibility}
    \partial_{t}\xi_{t}+\nabla\cdot(\xi^u_{t}\nabla\phi_{t}) & =-\nabla\cdot(\rho_{t}u_{t}) \quad \text{and}\\
    \partial_{t}\zeta_{t}+\nabla\cdot(\zeta^w_{t}\nabla\phi_{t}) & =-\nabla\cdot(\rho_{t}w_{t}).
\end{equs}
Further, as the second moments of $\rho^v_\bullet$ are uniformly bounded in time by Lemma \ref{lem:evolution-W2} and $u_\bullet, w_\bullet\in L^2_TC^1_{\textup{Lip}}(\R^d)$, we obtain $u_\bullet, w_\bullet\in L^2_TL^2(\rho_\bullet^v)$.
To show that $\xi_\bullet\in T_{\rho_\bullet} \textup{AC}_T^2(\mathcal{P}_2(\R^d))$, it remains to verify $\xi_\bullet\in T_{\rho_\bullet}L^2_T\mathcal{P}_2(\R^d)$.
For a fixed time $t\in[0,1]$, Lemma \ref{thm:derivative-solution} gives
\begin{equ}
    \langle \xi_t, \phi \rangle = \int_{\R^d} \nabla\phi(x) \cdot \psi^{u}_t((\varphi^v_t)^{-1}(x)) \rho_t(\D x)
\end{equ}
for all $\phi\in C^1_{\textup{Lip}}(\R^d)$, where for any $x\in\R^d$ $\psi^u_\bullet(x)$ is the unique solution to
\begin{equ}
    \partial_t \psi^u_t(x) = Dv_t(\varphi^v_t(x)) \psi^u_t(x) + u_t(\varphi^v_t(x)), \quad \psi_0(x)=0 .
\end{equ}
Hence, in distribution it holds that
\begin{equ}
    \xi_t = -\nabla\cdot(\rho_t^v v^\xi_t),
\end{equ}
where $v^\xi_t(x) = \psi^{u}_t((\varphi^v_t)^{-1}(x))$. 
To show $\xi_t\in T_{\rho_t}(\mathcal{P}_2(\R^d))$, we first show $v^\xi_\bullet \in L^2_TL^2(\rho_\bullet^v)$. 
From Lemma \ref{app:lem:flow-grow} we obtain
\begin{equs}
    \lVert \psi_t(x) \rVert & \le e^{\lVert v_\bullet \rVert_{\V}} \int_0^t \lVert u_s(\varphi^v_s(x)) \rVert \dif s
    \\ & \le e^{\lVert v_\bullet \rVert_{\V}} \int_0^t \lVert u_s \rVert_{C^1_{\textup{Lip}}} (1+\lVert \varphi^v_s(x) \rVert) \dif s
    \\ & \le \lVert v_\bullet \rVert_{\V} e^{2\lVert v_\bullet \rVert_{\V}} \int_0^t  \lVert u_s \rVert_{C^1_{\textup{Lip}}}(1+  \lVert x \rVert) \dif s ,
    \\ & \le \lVert u_\bullet \rVert_{\V} \lVert v \rVert_{\V} e^{2\lVert v_\bullet \rVert_{\V}} (1+  \lVert x \rVert),
    \\ & \le C (1+  \lVert x \rVert).
\end{equs}
Further, by Lemma \ref{app:lem:flow-grow} $\varphi^v_t$ is Lipschitz continuous for every $t\in[0,1]$ with constant bounded by $e^{\lVert v_\bullet \rvert_{\V}}$. 
As $(\varphi_t^v)^{-1}$ can be interpreted as the flow of $-v_{t-s}$, the map $(\varphi_t^v)^{-1}$ is also Lipschitz continuous with constant $e^{\lVert v_\bullet \rVert_{\V}}$ for every $t\in[0,1]$. 
Hence, $v^\xi_t$ is Lipschitz continuous with a Lipschitz constant bounded uniformly in $t\in[0,1]$. 
As $\rho_0\in\mathcal{P}_2(\R^d)$, Lemma \ref{lem:evolution-W2} guarantees that the second moments of $\rho_\bullet^v$ stay uniformly bounded in $t\in[0,1]$. 
Together, this shows $v_\bullet^\xi \in L^2_T L^2(\rho_\bullet^v)$ and by symmetry also $\zeta_\bullet \in T_{\rho_\bullet^v} \textup{AC}_T^2(\mathcal{P}_2(\R^d))$. 

Having checked that $\xi_\bullet, \zeta_\bullet\in T_{\rho_\bullet^v} \textup{AC}_T^2(\mathcal{P}_2(\R^d))$, it remains to show \eqref{eq:compatibility}. 
Definition \ref{def:AFR-measures} of the advective Fisher--Rao metric on $\textup{AC}_T^2(\mathcal{P}_2(\R^d))$ and \eqref{eq:proof:compatibility} yield 
\begin{equ}
    g_{\rho^v_\bullet}^{\AFR}(\xi_\bullet, \zeta_\bullet) = \int_0^1\int_{\R^d}  u_t(x) \cdot w_t(x) \rho_t^v(\D x)\dif t. 
\end{equ}
On the other hand, Definition \ref{def:AFR-velocities} of the advective Fisher--Rao metric on velocity fields gives 
\begin{equ}
    g_{v_\bullet}^{\AFR}(u_\bullet,w_\bullet) = \int_0^1\int_{\R^d}  u_t(x) \cdot w_t(x) \rho_t^v(\D x)\dif t. 
\end{equ}
\end{proof}
Similar to before, we can relate the advective Fisher--Rao metric to the action functional from the dynamic formulation of optimal transport.
To this end, we write the Benamou--Brenier formulation of the Wasserstein distance as
\begin{equ}
    W_2^2(\mu,\nu) = \inf_{\rho_\bullet \in \textup{AC}_T^2(\mathcal{P}_2(\R^d))} \left\{\mathcal{A}(p_\bullet) : \rho_0 = \mu, \rho_1 = \nu\right\},
\end{equ}
where the action of a curve is given by
\begin{equ}
    \mathcal{A}(\rho_\bullet) = \frac12\inf_{v_\bullet} \left\{\int_0^1 \int_{\R^d} \lVert v_t(x) \rVert_2^2 \rho_t(\D x)  \dif t : \partial_t \rho_t + \nabla\cdot(\rho_{t}v_{t}) = 0\right\}.
\end{equ}
Alternatively, the action functional can be written as
\begin{equ}
    \mathcal{A}(\rho_\bullet) = \frac12 \int_0^1 \int_{\R^d} \lVert v^\rho_t(x) \rVert_2^2 \rho_t(\D x) \dif t,
\end{equ}
where $v_\bullet^\rho\in T_{\rho_0} L^2_T(\mathcal{P}_2(\R^d))$ is the velocity of $\rho_\bullet$ meaning that $\partial_t \rho_t + \nabla\cdot(\rho_{t}v_{t}^\rho) = 0$.
\begin{theorem}\label{thm:convex-geometry-BB}
    Consider two gradient velocity fields $\nabla\phi_\bullet,\nabla\psi_\bullet\in L^2_T C^1_{\textup{Lip}}(\R^d)$ and set $\nabla\phi^h_\bullet \coloneqq \nabla\phi_\bullet+h\nabla\psi_\bullet \in L^2_TC^1_{\textup{Lip}}(\R^d)$ and denote the corresponding solutions of the continuity equation by $\rho_\bullet^h\in\textup{AC}_T(\mathcal{P}_2(\R^d))$. 
    Assume that for $\lvert h \rvert$ small enough, we have $\rho^h_\bullet\ll\rho^0_\bullet$ with $\frac{\D\rho^h_\bullet}{\D\rho^0_\bullet} \in L^\infty_T L^\infty(\rho^0_\bullet)$,
    and that there is a solution $v^h_\bullet\in \mathcal{T}_{\rho_\bullet^0}\textup{AC}_T^2(\mathcal{P}_2(\R^d))$ to 
    \begin{equ}
        \partial_t(\rho^h_t-\rho_t^0)+\nabla\cdot(\rho_t^0 v^h_t)=0.
    \end{equ}
    Then, for $\xi_\bullet \coloneqq \ddh \rho^{h}_\bullet \in T_{\rho_\bullet^0} \textup{AC}_T^2(\mathcal{P}_2(\R^d))$ it holds that 
    \begin{equ}
        g_{\rho_{\bullet}^0}^{\AFR}(\xi_{\bullet}, \xi_{\bullet})  =
        2 \lim_{h \to 0} \frac{\mathcal{A}(\rho^h_{\bullet}) - \mathcal{A}(\rho_{\bullet}^0) - \delta^+ \mathcal{A}(\rho_{\bullet}^0)[\rho^h_{\bullet} - \rho_{\bullet}^0]}{h^2}.
    \end{equ}
\end{theorem}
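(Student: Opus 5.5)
The plan is to repeat the strategy of \Cref{thm:hessian-action}: identify the numerator as a Bregman divergence of the action, and then pass to the limit using the Wasserstein stability of the marginals. The difference is that we now work on the contracted action $\mathcal{A}(\rho_\bullet)=\frac12\int_0^1\int\lVert v^\rho_t\rVert^2\rho_t(\D x)\dif t$, whose velocity is the Otto tangent velocity.

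First, since $\nabla\phi^h_\bullet$ is a gradient field that is Lipschitz and has linear growth, for each $t$ it lies in $\mathcal{T}_{\rho^h_t}\mathcal{P}_2(\R^d)$. Hence the velocity of $\rho^h_\bullet$ is exactly $v^{\rho^h}_\bullet=\nabla\phi^h_\bullet$, and $\mathcal{A}(\rho^h_\bullet)=\frac12\int\int\lVert\nabla\phi_t+h\nabla\psi_t\rVert^2\rho^h_t\dif t$, which is finite by \Cref{lem:evolution-W2}.

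Next, compute the one-sided first variation $\delta^+\mathcal{A}(\rho^0_\bullet)[\rho^h_\bullet-\rho^0_\bullet]$ along the segment $\rho^0+s(\rho^h-\rho^0)$, $s\in[0,1]$.
\begin{itemize}
\item The segment is generated by the current $j^s=\rho^0\nabla\phi+s(\rho^h\nabla\phi^h-\rho^0\nabla\phi)$.
\item This current need not be a tangent velocity times the density, so $\mathcal{A}$ along the segment is bounded above by the action of $(p^s,j^s)$. That gives an upper bound on the difference quotients, and \Cref{lem:first-variation-action} evaluates its limit as $\int\int\bigl(\nabla\phi\cdot\nabla\phi^h\rho^h-\tfrac12\lVert\nabla\phi\rVert^2(\rho^0+\rho^h)\bigr)$.
\item For the matching lower bound, use the assumed solution $v^h$ of $\partial_t(\rho^h-\rho^0)+\nabla\cdot(\rho^0v^h)=0$: the velocity of the segment is the $L^2(\rho^s)$-projection onto gradients of $(\rho^0\nabla\phi+s\rho^0v^h)/\rho^s$. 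Since $\nabla\phi$ is already a gradient, the first-order term in $s$ pairs $\nabla\phi$ with the difference of currents, and the projection does not change it.
\item The density bound $\D\rho^h/\D\rho^0\in L^\infty$ guarantees $\rho^s\ge(1-s)\rho^0$, which makes the quotient well defined and justifies dominated convergence.
\end{itemize}
Thus $\delta^+\mathcal{A}(\rho^0)[\rho^h-\rho^0]=\int\int\nabla\phi\cdot\nabla\phi^h\rho^h-\tfrac12\lVert\nabla\phi\rVert^2(\rho^0+\rho^h)$, exactly as in \Cref{lem:first-variation-action}.

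Combining the two previous steps, the numerator is the Bregman divergence
\begin{equ}
\mathcal{A}(\rho^h)-\mathcal{A}(\rho^0)-\delta^+\mathcal{A}(\rho^0)[\rho^h-\rho^0]=\frac12\int_0^1\int\lVert\nabla\phi^h_t-\nabla\phi_t\rVert^2\rho^h_t(\D x)\dif t=\frac{h^2}2\int_0^1\int\lVert\nabla\psi_t\rVert^2\rho^h_t(\D x)\dif t.
\end{equ}
Dividing by $h^2$ and letting $h\to0$, \Cref{lem:W2-stability} gives $W_2(\rho^h_t,\rho^0_t)=O(h)$ uniformly in $t$. The integrand $\lVert\nabla\psi_t\rVert^2\le a_t^2(1+\lVert x\rVert)^2$ has quadratic growth with $a\in L^2_T$, so the uniform second-moment bounds give convergence to $\frac12\int\int\lVert\nabla\psi_t\rVert^2\rho^0_t\dif t$. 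By \Cref{thm:FR-isometrc} with $u=w=\nabla\psi$, this equals $\frac12 g^{\AFR}_{\rho^0}(\xi,\xi)$; that lemma also confirms $\xi\in T_{\rho^0}\textup{AC}^2_T$.

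The main obstacle is the first-variation step. The contracted action is an infimum over velocities, so it is not simply the action evaluated on a linearly interpolated current. The argument needs the sandwich between the current-form action (the upper bound) and the projection argument using $v^h$ together with the $L^\infty$ density-ratio assumption (the lower bound), with dominated convergence justified on both sides.
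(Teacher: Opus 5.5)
Your argument is correct, and its skeleton matches the paper's. You identify the numerator with the Bregman divergence $\tfrac12\int_0^1\int\lVert\nabla\phi^h_t-\nabla\phi_t\rVert^2\,\D\rho^h_t\dif t$, pass to the limit by $W_2$-stability plus dominated convergence in time, and conclude with \Cref{thm:FR-isometrc}.

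The genuine difference is how you obtain $\delta^+\mathcal A(\rho^0_\bullet)[\rho^h_\bullet-\rho^0_\bullet]$. The paper stays entirely on the dual side. It writes $\mathcal A$ as a supremum of the linear functionals $L_\phi$ and reads off the one-sided derivative from the subdifferential of this support function along near-maximizing sequences $\nabla\phi^n\to v^\rho$ (\Cref{lem:first-variation}). This yields a first-variation formula and a Bregman identity for general pairs of curves with arbitrary tangent velocity, which the paper reuses for the projected energy $\mathcal E$ in \Cref{thm:measures-optimality}.

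You instead squeeze the difference quotients. The upper bound is primal: the current-form action of the interpolated current $j^s$, controlled by \Cref{lem:first-variation-action}. The lower bound is dual: pairing with $\nabla\phi$. This is more elementary. It sidesteps the infinite-dimensional Danskin-type subdifferential formula for a supremum over $C^\infty_{\textup{c}}$ that is in general not attained, which is the most delicate point of the paper's argument. The price is that you rely on the specific structure here. The base velocity $\nabla\phi$ is an explicit smooth gradient with linear growth, hence tangent to $\rho^s_t$ for every $s$, and $\rho^h$ comes with the explicit current $\rho^h\nabla\phi^h$.

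Two things to tighten.

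First, state the lower bound as an exact inequality rather than a first-order expansion of the $s$-dependent projection, which you could not easily control. Let $P_{\rho^s_t}$ denote the orthogonal projection onto $\mathcal T_{\rho^s_t}\mathcal P_2(\R^d)$. Since $\nabla\phi_t$ is tangent, for a.e.\ $t$ and any velocity $w^s_t$ generating $\rho^s$ you have $\tfrac12\lVert P_{\rho^s_t}w^s_t\rVert^2_{L^2(\rho^s_t)}\ge\langle\nabla\phi_t,w^s_t\rangle_{L^2(\rho^s_t)}-\tfrac12\lVert\nabla\phi_t\rVert^2_{L^2(\rho^s_t)}$. Take $w^s=j^s/\rho^s$ and let $U$ be your first-variation expression. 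Integrating gives $\mathcal A(\rho^s_\bullet)\ge\mathcal A(\rho^0_\bullet)+sU$ for every $s\in(0,1]$. So $v^h$, the divergence-free pairing identity and the $L^\infty$ bound are not needed for this step. Note also that $\rho^s\ge(1-s)\rho^0$ holds simply because $\rho^h\ge0$, not because of the density bound.

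Second, \Cref{lem:first-variation-action} is stated for positive Lebesgue densities, which the theorem does not assume. Apply it with densities relative to $\rho^0_t$. This is exactly where $\rho^h_t\ll\rho^0_t$ is needed. On any part of $\rho^h_t$ singular to $\rho^0_t$, the current-form derivative would exceed $U$ by $\tfrac12\int\lVert\nabla\phi^h_t-\nabla\phi_t\rVert^2\,\D\rho^h_t$ over that part, and the squeeze would not close.
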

We use the following auxiliary result.
\begin{lemma}[Dual formulation of the action]
Consider $\rho_\bullet \in \textup{AC}_T^2(\mathcal{P}_2(\R^d))$.
Then, it holds that
\begin{equ}
    \mathcal{A}(\rho_\bullet) = \sup_{\phi_\bullet \in C_{\textup{c}}^\infty((0,1)\times\R^d)} - \int_0^1 \int_{\R^d} \left( \partial_t\phi_t(x) + \frac12 \lVert \nabla \phi_t(x) \rVert_2^2 \right) \rho_t(\D x) \dif t .
\end{equ}
\end{lemma}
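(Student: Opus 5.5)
The plan is to prove the two inequalities between $\mathcal{A}(\rho_\bullet)$ and the supremum separately. Throughout, $v^\rho_\bullet\in\mathcal{T}_{\rho_\bullet}L^2_T(\mathcal{P}_2(\R^d))$ denotes the velocity of $\rho_\bullet$, so that $\mathcal{A}(\rho_\bullet)=\frac12\int_0^1\int\lVert v^\rho_t\rVert_2^2\,\rho_t(\D x)\dif t$. Since $\rho_\bullet\in\textup{AC}_T^2$, this quantity is finite, and moreover $v^\rho_\bullet$ minimizes the kinetic energy among all velocities solving the continuity equation.

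For the inequality ``$\ge$'', fix $\phi_\bullet\in C_{\textup{c}}^\infty((0,1)\times\R^d)$. Testing the continuity equation $\partial_t\rho_t+\nabla\cdot(\rho_t v^\rho_t)=0$ with $\phi$ gives
\begin{equ}
    -\int_0^1\int_{\R^d}\partial_t\phi_t\,\dif\rho_t\dif t=\int_0^1\int_{\R^d} v^\rho_t\cdot\nabla\phi_t\,\dif\rho_t\dif t .
\end{equ}
The boundary terms vanish because $\phi$ has compact support in $(0,1)$. Hence the functional inside the supremum equals
\begin{equ}
    \int_0^1\int_{\R^d}\Bigl(v^\rho_t\cdot\nabla\phi_t-\tfrac12\lVert\nabla\phi_t\rVert_2^2\Bigr)\dif\rho_t\dif t .
\end{equ}
By Young's inequality $a\cdot b-\frac12\lVert b\rVert^2\le\frac12\lVert a\rVert^2$, this is at most $\mathcal{A}(\rho_\bullet)$. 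Taking the supremum over $\phi$ gives ``$\ge$''.

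For the inequality ``$\le$'', the same rewriting shows that the supremum equals
\begin{equ}
    \mathcal{A}(\rho_\bullet)-\tfrac12\inf_{\phi}\int_0^1\int_{\R^d}\lVert v^\rho_t-\nabla\phi_t\rVert_2^2\,\dif\rho_t\dif t .
\end{equ}
It therefore suffices to show that $\{\nabla\phi_\bullet:\phi\in C^\infty_{\textup{c}}((0,1)\times\R^d)\}$ is dense in the closed subspace $\mathcal{T}_{\rho_\bullet}L^2_T(\mathcal{P}_2(\R^d))\subseteq L^2(\dif\rho_t\dif t)$. This density is the main obstacle. The definition of $\mathcal{T}_{\rho_t}$ only gives density at each fixed time, and we must pass to space-time test functions.

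To handle this, I would argue by duality in the Hilbert space $H=L^2(\dif\rho_t\dif t;\R^d)$. Let $w\in H$ with $w_t\in\mathcal{T}_{\rho_t}$ for a.e. $t$, and suppose $\int_0^1\int w_t\cdot\nabla\phi_t\,\dif\rho_t\dif t=0$ for all space-time test functions $\phi$.

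First choose $\phi_t(x)=\chi(t)\eta(x)$ with $\chi\in C^\infty_{\textup{c}}(0,1)$ and $\eta\in C^\infty_{\textup{c}}(\R^d)$. This gives $\int_{\R^d} w_t\cdot\nabla\eta\,\dif\rho_t=0$ for a.e. $t$, for each fixed $\eta$.

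Next use a countable family $\{\eta_k\}$ whose gradients are dense in $C^1_{\textup{c}}$-norm on compacts. Such a family is dense in every $\mathcal{T}_{\rho_t}$, because each $\rho_t$ is a finite measure. Taking a common null set over the countable family, we conclude that for a.e. $t$, $w_t$ is orthogonal to all of $\mathcal{T}_{\rho_t}$. Since also $w_t\in\mathcal{T}_{\rho_t}$, this forces $w_t=0$, so the orthogonal complement is trivial and density follows.

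Since $v^\rho_\bullet$ lies in $\mathcal{T}_{\rho_\bullet}L^2_T$, the infimum above is zero. This yields ``$\le$'' and completes the proof. The measurability of $t\mapsto\mathcal{T}_{\rho_t}$ is sidestepped by the countable-family argument.
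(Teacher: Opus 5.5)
Your proposal is correct and takes essentially the paper's route. For $\mathcal{A}(\rho_\bullet)\ge\sup$, your Young's inequality is the same computation as the paper's weak-duality step of taking the inner infimum over $v$ in the Lagrangian. For the reverse inequality, you use the same argument that space-time gradients are dense in $\mathcal{T}_{\rho_\bullet}L^2_T$, via product test functions $\chi(t)\eta(x)$. Your countable family $\{\eta_k\}$ also makes rigorous the choice of a single null set of times, which the paper's proof glosses over.
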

\begin{proof}
The action functional is given by
\begin{equ}
    \mathcal A(\rho_\bullet) = \inf \left\{ \mathcal{E}_\rho(v_\bullet): \partial_t \rho_t + \nabla\cdot(\rho_tv_t)=0 \right\},
\end{equ}
where
\begin{equ}
    \mathcal{E}_\rho(v_\bullet) = \frac12 \int_0^1 \int_{\R^d} \lVert v_t(x) \rVert^2 \rho_t(\D x) \dif t.
\end{equ}
For a fixed path $\rho_\bullet\in\textup{AC}_T(\mathcal{P}_2(\R^d))$ we  consider the Lagrangian of the constrained problem
\begin{equs}
    \mathcal{L}_\rho\colon L^2_T L^2(\rho_\bullet)\times C_{\textup{c}}^\infty((0,1)\times\R^d)\to\R,
\end{equs}
where
\begin{equs}
    \mathcal{L}_\rho(v_\bullet, \phi_\bullet)
    & =  \int_0^1 \int_{\R^d} \left(\frac12\lVert v_t \rVert^2_2 - \partial_t \phi_t - v_t\cdot \nabla \phi_t \right) \rho_t(\D x) \dif t .
\end{equs}
Then, it holds that
\begin{equs}
    \mathcal{A}(\rho_\bullet)
    & = \inf_{v_\bullet\in L^2_T L^2(p_\bullet)} \sup_{\phi\in C_{\textup{c}}^\infty((0,1)\times\R^d)} \mathcal{L}_\rho(v_\bullet, \phi_\bullet)
    \\& \ge \sup_{\phi\in C_{\textup{c}}^\infty((0,1)\times\R^d)} \inf_{v_\bullet\in  L^2_T L^2(p_\bullet)}  \mathcal{L}_\rho(v_\bullet, \phi_\bullet).
\end{equs}
Carrying out the inner infimum gives
\begin{equ}
    \mathcal{A}(\rho_\bullet) \ge \sup_{\phi\in C_{\textup{c}}^\infty((0,1)\times\R^d)} - \int_0^1 \int_{\R^d} \left( \partial_t\phi_t + \frac12 \lVert \nabla \phi_t \rVert_2^2 \right) \rho_t(\D x) \dif t .
\end{equ}
Let $v_\bullet^\rho\in T_{\rho_\bullet} L^2_T \mathcal{P}_2(\R^d)$ denote the velocity of $\rho_\bullet$ meaning that
\begin{equ}
    \partial_t \rho_t + \nabla \cdot(\rho_t v_t^\rho) = 0
\end{equ}
holds in the distributional sense.
Next, we seek an approximation $\nabla \phi^\varepsilon_\bullet\to v_\bullet^\rho$ in $L^2_TL^2(\rho_\bullet)$, where $\phi_\bullet^\varepsilon \in C_{\textup{c}}^\infty ((0,1)\times\R^d)$ for $\varepsilon\to0$.
To see that this exists, it suffices to show that for a velocity field $v_\bullet\in T_{\rho_\bullet}L^2_T\mathcal{P}_2(\R^d)$, the orthogonality 
\begin{equ}
    \int_0^1 \int_{\R^d} v_t \cdot \nabla\phi_t \rho_t(\D x) \dif t  =0 \quad \text{for all } \phi_\bullet\in C^{\infty}_{\textup{c}}((0,1)\times\R^d)
\end{equ}
implies that $v_\bullet=0$. 
To see this, we note that this implies 
\begin{equ}
    \int_0^1 \eta(t) \int_{\R^d} v_t \cdot \nabla\psi \rho_t(\D x) \dif t =0 \quad \text{for all } \psi\in C^{\infty}_{\textup{c}}(\R^d), \eta\in  C^{\infty}_{\textup{c}}((0,1)).
\end{equ}
In particular, this implies
\begin{equ}
    \int_{\R^d} v_t \cdot \nabla\psi \rho_t(\D x) \dif t \quad \text{for all } \psi\in C^{\infty}_{\textup{c}}(\R^d)
\end{equ}
for almost every $t\in[0,1]$.
As $v_t\in T_{\rho_t}(\mathcal{P}_2(\R^d))$ for almost every $t\in[0,1]$ this implies $v_t=0$ for almost every $t\in[0,1]$ and hence $v_\bullet=0$ in $L^2_TL^2(\rho_t)$.
Now, we proceed and estimate
\begin{equs}
    & \sup_{\phi\in C_{\textup{c}}^\infty((0,1)\times\R^d)} - \int_0^1 \int_{\R^d} \left( \partial_t\phi_t + \frac12 \lVert \nabla \phi_t \rVert_2^2 \right) \rho_t(\D x) \dif t
    \\ \qquad & \ge
    - \int_0^1 \int_{\R^d} \left( \partial_t\phi_t^\varepsilon + \frac12 \lVert \nabla \phi_t^\varepsilon \rVert_2^2 \right) \rho_t(\D x) \dif t
    \\ & =
    \int_0^1 \int_{\R^d} \left( v_t^\rho\cdot \nabla\phi_t^\varepsilon - \frac12 \lVert \nabla \phi_t^\varepsilon \rVert_2^2 \right) \rho_t(\D x) \dif t
    \\ & \to
    \frac12 \int_0^1 \int_{\R^d} \lVert v_t^\rho \rVert_2^2  \rho_t(\D x) \dif t
    = \mathcal{A}(\rho_\bullet)
\end{equs}
for $\varepsilon\to0$, showing the other inequality.
\end{proof}
\begin{lemma}[First variation of the action]\label{lem:first-variation}
    Consider $\rho_\bullet, \sigma_\bullet\in\textup{AC}_T^2(\mathcal{P}_2(\R^d))$, set $\xi_t\coloneqq \sigma_t-\rho_t$, and denote the velocity of $\rho_\bullet$ by $v_\bullet^\rho\in \mathcal{T}_{\rho_\bullet}\textup{AC}_T^2(\mathcal{P}_2(\R^d))$.
    Assume now that $\sigma_t\ll\rho_t$ and $\frac{\D \sigma_\bullet}{\D \rho_\bullet} \in L^\infty_T L^\infty(\rho_\bullet)$, as well as the existence of $v^\xi_\bullet \in L^2_TL^2(\rho_\bullet)$ such that
    \begin{equ}
        \partial_t \xi_t + \nabla \cdot (\rho_t v^\xi_t)=0
    \end{equ}
    holds in the distributional sense.
    Then it holds that
    \begin{equ}\label{eq:first-variation-action}
        \delta^+ \mathcal{A}(\rho_\bullet)[\xi_\bullet]
        = \int_0^1 \left( \int_{\R^d}
          v_t^\rho\cdot v_t^\xi \rho_t(\D x)
          - \int_{\R^d} \frac12 \lVert v_t^\rho \rVert^2 \xi_t(\D x) \right) \dif t.
    \end{equ}
\end{lemma}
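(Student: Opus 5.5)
We compute the one-sided derivative of $h\mapsto\mathcal A(\rho_\bullet+h\xi_\bullet)$ at $h=0^+$ by squeezing it between two bounds: a lower bound from the dual formulation, and an upper bound from the admissible velocity field $v^\rho+h(v^\xi-v^\rho)$. For $h\in(0,1]$ set $\rho^h_\bullet\coloneqq\rho_\bullet+h\xi_\bullet=(1-h)\rho_\bullet+h\sigma_\bullet$. This is again a curve of probability measures. It is convex, so $\mathcal A(\rho^h_\bullet)\le(1-h)\mathcal A(\rho_\bullet)+h\mathcal A(\sigma_\bullet)<\infty$. Since $\sigma_t\ll\rho_t$ with bounded density $f_t\coloneqq\D\sigma_t/\D\rho_t\le C$, we have $\rho^h_t\ll\rho_t$ with $\D\rho^h_t/\D\rho_t=1+h(f_t-1)\le 1+hC$. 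This bound lets us compare $L^2(\rho^h_\bullet)$ and $L^2(\rho_\bullet)$ norms. Denote the right-hand side of \eqref{eq:first-variation-action} by $R$.

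\emph{Upper bound.} The field $w^h_t\coloneqq \frac{\rho_t v^\rho_t+h\rho_t v^\xi_t}{\rho^h_t}$ (a ratio of densities with respect to $\rho_t$) solves $\partial_t\rho^h_t+\nabla\cdot(\rho^h_tw^h_t)=0$. Hence
\begin{equ}
\mathcal A(\rho^h_\bullet)\le\frac12\int_0^1\int\frac{\|v^\rho_t+hv^\xi_t\|^2}{1+h(f_t-1)}\,\rho_t(\D x)\dif t .
\end{equ}
Pointwise in $(t,x)$, the difference quotient of the integrand at $h=0$ converges to $v^\rho\cdot v^\xi-\frac12\|v^\rho\|^2(f-1)$. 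Integrated against $\rho_t$, this limit equals $R$, because $\int\|v^\rho_t\|^2(f_t-1)\,\D\rho_t=\int\|v^\rho_t\|^2\,\D\xi_t$. To pass to the limit we need a dominating function, and this is the main technical point. The denominator is bounded below by $1-h\ge\frac12$ for $h\le\frac12$, since $f\ge0$. Using $f\le C$ and $v^\rho,v^\xi\in L^2_TL^2(\rho_\bullet)$, the quotients are then bounded by a constant times $\|v^\rho\|^2+\|v^\xi\|^2$, which is integrable. Hence $\limsup_{h\searrow0}h^{-1}(\mathcal A(\rho^h_\bullet)-\mathcal A(\rho_\bullet))\le R$.

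\emph{Lower bound.} We use the dual formulation from the preceding lemma. For a test function $\phi\in C^\infty_{\textup c}((0,1)\times\R^d)$, write $\Lambda(\phi,\mu_\bullet)=-\int_0^1\int(\partial_t\phi_t+\frac12\|\nabla\phi_t\|^2)\,\D\mu_t\dif t$. This is affine in $\mu_\bullet$, so
\begin{equ}
\mathcal A(\rho^h_\bullet)\ge\Lambda(\phi,\rho_\bullet)+h\Lambda(\phi,\xi_\bullet).
\end{equ}
The continuity equations for $\rho_\bullet$ and $\xi_\bullet$ give
\begin{equs}
\Lambda(\phi,\rho_\bullet)&=\int\int\big(v^\rho\cdot\nabla\phi-\tfrac12\|\nabla\phi\|^2\big)\,\D\rho_t\dif t,\\
\Lambda(\phi,\xi_\bullet)&=\int\int v^\xi\cdot\nabla\phi\,\D\rho_t\dif t-\tfrac12\int\int\|\nabla\phi\|^2\,\D\xi_t\dif t .
\end{equs}
Choose $\phi^\varepsilon$ with $\nabla\phi^\varepsilon\to v^\rho$ in $L^2_TL^2(\rho_\bullet)$, as constructed in the proof of the dual formulation. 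Then $\Lambda(\phi^\varepsilon,\rho_\bullet)=\mathcal A(\rho_\bullet)-\frac12\|\nabla\phi^\varepsilon-v^\rho\|^2_{L^2_TL^2(\rho_\bullet)}$. Because $|\xi_t|\le(1+C)\rho_t$, we also get $\Lambda(\phi^\varepsilon,\xi_\bullet)\to R$. Thus
\begin{equ}
\frac{\mathcal A(\rho^h_\bullet)-\mathcal A(\rho_\bullet)}{h}\ge \Lambda(\phi^\varepsilon,\xi_\bullet)-\frac{\eta_\varepsilon}{2h},\qquad \eta_\varepsilon\coloneqq\|\nabla\phi^\varepsilon-v^\rho\|^2_{L^2_TL^2(\rho_\bullet)} .
\end{equ}
For fixed $h$, let $\varepsilon\to0$ first; this gives $h^{-1}(\mathcal A(\rho^h_\bullet)-\mathcal A(\rho_\bullet))\ge R$ for every $h>0$. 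Combined with the upper bound, the limit exists and equals $R$.

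Two steps need care. The first is the domination argument in the upper bound; note the lower bound on the denominator relies on $h\le\frac12$, not on any lower bound for $f$. The second is checking that $w^h$ is an admissible velocity, which follows since $\rho^h_tw^h_t=\rho_tv^\rho_t+h\rho_tv^\xi_t$ in the sense of measures. We do not need $w^h$ to lie in the tangent space, since $\mathcal A$ is an infimum over all admissible velocities.
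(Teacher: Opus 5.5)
Your proof is correct. It takes a genuinely different route from the paper.

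\textbf{The paper's route.} The paper writes $f(h)=\mathcal{A}(\rho_\bullet+h\xi_\bullet)$ as a supremum of the affine maps $a_\phi(h)=L_\phi(\rho_\bullet+h\xi_\bullet)$. It then invokes a formula for the subdifferential of such a supremum: $\partial f(0)$ is the closed convex hull of the limiting slopes $\lim_n\bigl(a_{\phi^n}(1)-a_{\phi^n}(0)\bigr)$ along maximizing sequences. Finally it shows that every maximizing sequence has $\nabla\phi^n\to v^\rho_\bullet$ in $L^2_TL^2(\rho_\bullet)$, so all these slopes equal the right-hand side $R$ of \eqref{eq:first-variation-action}.

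\textbf{Your lower bound.} This is essentially the easy half of that argument. For your squeeze you only need one approximating sequence $\phi^\varepsilon$, not control over all maximizing sequences.

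\textbf{Your upper bound.} Here you genuinely depart from the paper. The support-function formula, for a non-compact family of affine maps, is exactly the delicate half of a Danskin-type theorem, and the paper only cites it. You avoid it by exhibiting the explicit competitor momentum $\rho_t v^\rho_t+h\rho_t v^\xi_t$ for $\rho^h_\bullet$ and differentiating the resulting majorant by dominated convergence. This is the $(p,j)$-computation of \Cref{lem:first-variation-action}, with densities taken relative to $\rho_t$, combined with the contraction $\mathcal{A}(\rho_\bullet)=\inf_{j}\mathcal{A}(\rho_\bullet,j_\bullet)$.

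\textbf{What each route buys.} Your route is elementary and self-contained. It also makes visible where the bound on $\D\sigma_\bullet/\D\rho_\bullet$ is needed: to define $w^h$ and to dominate the quotient, not only to pass to the limit in $\int\lVert\nabla\phi^n_t\rVert^2\,\D\xi_t$. The paper's route, granted the formula, identifies the whole subdifferential at once, in the convex-duality language it later uses for the Bregman divergence.

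\textbf{Two-sided derivative.} Your sandwich also works verbatim for small negative $h$, since $1+h(f_t-1)\ge 1-\lvert h\rvert C>0$. Dividing by $h<0$ swaps the roles of the primal and dual bounds, so you get the two-sided derivative without extra work.
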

\begin{proof}
The dual formulation of the action functional gives
\begin{equ}
    \mathcal{A}(\rho_\bullet) = \sup_{\phi_\bullet \in C_{\textup{c}}^\infty((0,1)\times\R^d)} L_\phi(\rho_\bullet) ,
\end{equ}
where
\begin{equs}
    L_\phi(\rho_\bullet)
    & = - \int_0^1 \int_{\R^d} \left(\partial_t\phi_t + \frac12 \lVert \nabla\phi_t \rVert_2^2 \right) \rho_t(\D x) \dif t ,
\end{equs}
where $L_\phi$ is a linear function of $\rho_\bullet$.
We now consider the following functionals on $\R$ given by
\begin{equ}
    a_\phi(h) \coloneqq L_\phi(\rho_\bullet + h \xi_\bullet),
\end{equ}
which is well defined for $h\in[0,1]$ and can be extended to globally linear functions on $\R$.
Further, we set
\begin{equ}
    f(h) \coloneqq \sup_{\phi\in C^\infty_{\textup{c}}((0,1)\times\R^d)} a_\phi(h).
\end{equ}
Note that $f(h)= \mathcal{A}(\rho_\bullet + h\xi_\bullet)$ for all $h\in[0,1]$.
It is the goal to compute the derivative $f'(0)$, for which we first compute its subgradient, which, as the subgradient of a support function, is given by
\begin{equ}
    \partial f(0) = \overline{\operatorname{conv}\left\{\lim_{n\to\infty} a_{\phi^n}(1)-a_{\phi^n}(0) : \lim_{n\to\infty}a_{\phi^n}(0)=f(0) \right\}},
\end{equ}
see for example \cite{rockafellar1997convex}.
Let us fix such a sequence $(\phi^{n}_\bullet)_{n\in\mathbb N}\subseteq C^\infty_{\textup{c}}((0,1)\times\R^d)$, then we have
\begin{equs}
    a_{\phi^n}(0) - f(0)
    & = L_{\phi^n}(\rho_\bullet) - \mathcal{A}(\rho_\bullet)
    \\ & =
    \int_0^1 \int_{\R^d} \left( \nabla \phi^n_t\cdot v^\rho_t - \frac12 \lVert \nabla \phi^n_t \rVert_2^2- \frac12 \lVert v^\rho_t \rVert_2^2 \right) \rho_t(\D x) \dif t
    \\ & =
    \int_0^1 \int_{\R^d}  \lVert \nabla \phi^n_t - v^\rho_t\rVert_2^2  \rho_t(\D x) \dif t \to0
\end{equs}
for $n\to\infty$, hence $\nabla \phi^n_\bullet\to v^\rho_\bullet$ for $n\to\infty$ in $L^2_TL^2(\rho_\bullet)$.
Now, we obtain
\begin{equs}
    a_{\phi^n}(1)-a_{\phi^n}(0) & =
    - \int_0^1 \left(\int_{\R^d} \partial_t\phi_t^n + \frac12 \lVert \nabla \phi_t^n \rVert^2 \right) \xi_t(\D x) \dif t
    \\ & = \int_0^1 \left(\int_{\R^d} \nabla\phi_t^n\cdot v^\xi_t  \rho_t(\D x) - \int_{\R^d}\frac12 \lVert \nabla \phi_t^n \rVert^2 \xi_t(\D x)\right) \dif t
\end{equs}
As $\sigma_\bullet$ is dominated by $\rho_\bullet$ with a globally bounded Radon--Nikodym derivative, we obtain
\begin{equ}
    \lim_{n\to\infty} a_{\phi^n}(1)-a_{\phi^n}(0) = \int_0^1 \left(\int_{\R^d} v_t^\rho\cdot v^\xi_t  \rho_t(\D x) - \int_{\R^d}\frac12 \lVert v_t^\rho \rVert^2 \xi_t(\D x)\right) \dif t
\end{equ}
as asserted.
\end{proof}
\begin{lemma}[Bregman divergence of the Benamou--Brenier action functional]
    Consider two curves $\rho_\bullet, \sigma_\bullet \in \textup{AC}_T^2(\mathcal{P}_2(\R^d))$
    with velocities $v^\rho_\bullet \in \mathcal{T}_{\rho_\bullet} \textup{AC}_T^2(\mathcal{P}_2(\R^d))$ and $v^\sigma_\bullet \in \mathcal{T}_{\sigma_\bullet} \textup{AC}_T^2(\mathcal{P}_2(\R^d))$, respectively.
    Assume that $\sigma_t\ll\rho_t$ and $\frac{\D \sigma_\bullet}{\D \rho_\bullet} \in L^\infty_T L^\infty(\rho_\bullet)$,
    that
    $v^{\sigma}_\bullet \in L^2_T L^2(\rho_\bullet)$,  $v^{\rho}_\bullet \in L^2_T L^2(\sigma_\bullet)$, and that there is a solution $v_\bullet\in \mathcal{T}_{\sigma_\bullet} \textup{AC}_T^2(\mathcal{P}_2(\R^d))$ to
    \begin{equ}
        \partial_t(\rho_t-\sigma_t)+\nabla\cdot(\sigma_t v_t)=0.
    \end{equ}
    Then the Bregman divergence of the action functional is given by
    \begin{equ}\label{eq:bregman-action}
        D_{\mathcal{A}}(\rho_{\bullet}, \sigma_{\bullet}) = \frac12\int_0^1 \int_{\R^d} \lVert v_t^\rho(x) - v_t^\sigma(x) \rVert_2^2 \rho_t(\D x) \dif t.
    \end{equ}
\end{lemma}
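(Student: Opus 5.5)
The plan is to reduce \eqref{eq:bregman-action} to the first-variation formula of Lemma~\ref{lem:first-variation}, now with base point $\sigma_\bullet$ and direction $\xi_\bullet\coloneqq\rho_\bullet-\sigma_\bullet$. One then identifies the cross term by testing the continuity equations against gradients, and finally expands a square.

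\emph{Step 1: the first variation.} By definition,
\begin{equ}
D_{\mathcal A}(\rho_\bullet,\sigma_\bullet)=\mathcal A(\rho_\bullet)-\mathcal A(\sigma_\bullet)-\delta^+\mathcal A(\sigma_\bullet)[\rho_\bullet-\sigma_\bullet].
\end{equ}
The assumed solution $v_\bullet$ of $\partial_t(\rho_t-\sigma_t)+\nabla\cdot(\sigma_t v_t)=0$ plays the role of $v^\xi_\bullet$ in Lemma~\ref{lem:first-variation}. That lemma then gives
\begin{equ}
\delta^+\mathcal A(\sigma_\bullet)[\xi_\bullet]=\int_0^1\Big(\int_{\R^d} v^\sigma_t\cdot v_t\,\sigma_t(\D x)-\frac12\int_{\R^d}\lVert v^\sigma_t\rVert^2(\rho_t-\sigma_t)(\D x)\Big)\dif t .
\end{equ}
The roles of base point and perturbation are swapped relative to Lemma~\ref{lem:first-variation}. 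So when invoking it I would check which domination is really used in its proof. That proof needs $\lVert \nabla\phi^n\rVert^2\to\lVert v^\sigma\rVert^2$ when integrated against $\xi_t=\rho_t-\sigma_t$. This follows from the $L^2(\sigma)$ convergence if $\rho_t\ll\sigma_t$ with bounded density, or directly from $v^\sigma_\bullet\in L^2_TL^2(\rho_\bullet)$ together with convergence in $L^2(\rho_\bullet)$. I would state the needed domination explicitly, in whichever direction makes the approximation argument close.

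\emph{Step 2: identify the cross term.} Subtract the continuity equations of $\rho_\bullet$ and $\sigma_\bullet$. Then, distributionally on $(0,1)\times\R^d$,
\begin{equ}
\nabla\cdot(\sigma_t v_t)=\nabla\cdot(\rho_t v^\rho_t-\sigma_t v^\sigma_t).
\end{equ}
Testing with $\phi_\bullet\in C^\infty_{\textup{c}}((0,1)\times\R^d)$ yields
\begin{equ}
\int_0^1\!\!\int\nabla\phi_t\cdot v_t\,\sigma_t(\D x)\dif t=\int_0^1\!\!\int\nabla\phi_t\cdot v^\rho_t\,\rho_t(\D x)\dif t-\int_0^1\!\!\int\nabla\phi_t\cdot v^\sigma_t\,\sigma_t(\D x)\dif t .
\end{equ}
As in the proof of the dual formulation of the action, choose $\nabla\phi^n_\bullet\to v^\sigma_\bullet$ in $L^2_TL^2(\sigma_\bullet)$ and pass to the limit. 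This gives
\begin{equ}
\int_0^1\!\!\int v^\sigma_t\cdot v_t\,\sigma_t(\D x)\dif t=\int_0^1\!\!\int v^\sigma_t\cdot v^\rho_t\,\rho_t(\D x)\dif t-\int_0^1\!\!\int \lVert v^\sigma_t\rVert^2\sigma_t(\D x)\dif t .
\end{equ}
The left side and the last term converge by Cauchy--Schwarz in $L^2(\sigma)$. The middle term needs $\nabla\phi^n\to v^\sigma$ in $L^2_TL^2(\rho_\bullet)$, together with $v^\rho_\bullet\in L^2_TL^2(\rho_\bullet)$ (finite action).

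\emph{Step 3: algebra.} Insert Step 2 into Step 1 and expand using $\mathcal A(\rho_\bullet)=\frac12\int\!\!\int\lVert v^\rho\rVert^2\rho$ and $\mathcal A(\sigma_\bullet)=\frac12\int\!\!\int\lVert v^\sigma\rVert^2\sigma$. The two $\sigma$-integrals of $\lVert v^\sigma\rVert^2$ cancel, and what remains is
\begin{equ}
\frac12\int_0^1\!\!\int\Big(\lVert v^\rho_t\rVert^2-2v^\rho_t\cdot v^\sigma_t+\lVert v^\sigma_t\rVert^2\Big)\rho_t(\D x)\dif t ,
\end{equ}
which is \eqref{eq:bregman-action}. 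All integrals are finite by the assumptions $v^\sigma_\bullet\in L^2_TL^2(\rho_\bullet)$ and $v^\rho_\bullet\in L^2_TL^2(\sigma_\bullet)$.

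\emph{Main obstacle.} The delicate point is the limit passage in Step 2, and in Lemma~\ref{lem:first-variation}: the approximating gradients converge to $v^\sigma$ in $L^2(\sigma)$, but they must also converge when integrated against $\rho$. I would first try to obtain this from a bounded density between $\rho_t$ and $\sigma_t$, which transfers $L^2(\sigma)$ convergence to $L^2(\rho)$. If the stated domination $\sigma\ll\rho$ goes the wrong way for this, I would instead choose the $\phi^n$ to approximate $v^\sigma$ in $L^2_TL^2(\rho_\bullet+\sigma_\bullet)$. Such $\phi^n$ exist if $v^\sigma$ lies in the $L^2(\rho+\sigma)$-closure of gradients, which I would verify from $v^\sigma_\bullet\in L^2_TL^2(\rho_\bullet)$ via the orthogonality argument used in the dual formulation of the action.
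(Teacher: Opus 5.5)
Your Steps 1--3 are essentially the paper's proof. The paper also applies Lemma~\ref{lem:first-variation} at the base point $\sigma_\bullet$, identifies $\int v^\sigma_t\cdot v_t\,\sigma_t(\D x)$ by testing the difference of the two continuity equations with gradients, passes to the limit ``by density and as $v^\sigma_t\in L^2(\rho_t)$'', and completes the square. The obstacle you flag is real, and it is exactly the step the paper glosses over. Your fallback does not close it, and under the hypothesis as stated nothing can.

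The orthogonality argument from the dual formulation only shows that an element already lying in a closed subspace, and orthogonal to it, vanishes. It cannot show that $v^\sigma$ belongs to the $L^2(\rho+\sigma)$-closure of gradients. More fundamentally, with only $\sigma_t\ll\rho_t$, the field $v^\sigma_t\in\mathcal{T}_{\sigma_t}\mathcal{P}_2(\R^d)$ is an element of $L^2(\sigma_t)$. It is therefore undetermined on sets that are $\sigma_t$-null but not $\rho_t$-null. Yet both the right-hand side of \eqref{eq:bregman-action} and the formula of Lemma~\ref{lem:first-variation} at base point $\sigma_\bullet$ depend on those values.

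Here is a concrete failure. Let $\mu,\nu$ be uniform on disjoint balls $B,B'$, and take the static curves $\sigma_t=\mu$ and $\rho_t=\frac12(\mu+\nu)$.
\begin{itemize}
\item All hypotheses hold with $v^\rho=v=0$ and $\frac{\D\sigma}{\D\rho}=2\cdot\mathbf{1}_B$.
\item $D_{\mathcal A}(\rho_\bullet,\sigma_\bullet)=0$, because every mixture $(1-h)\sigma_\bullet+h\rho_\bullet$ is static.
\item Choose the representative $v^\sigma=e_1\mathbf{1}_{B'}$. It is the same element of $\mathcal{T}_{\sigma_t}\mathcal{P}_2(\R^d)$ as $0$ and lies in $L^2_TL^2(\rho_\bullet)$.
\item It even coincides $(\rho_t+\sigma_t)$-a.e.\ with $\nabla(\chi x_1)$, where $\chi\in C^\infty_{\textup{c}}(\R^d)$ equals $1$ near $B'$ and $0$ near $B$. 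So your fallback condition is satisfied.
\end{itemize}
Nevertheless the right-hand side of \eqref{eq:bregman-action} equals $\frac14$. Your Step~1 formula returns $-\frac14$ for $\delta^+\mathcal A(\sigma_\bullet)[\rho_\bullet-\sigma_\bullet]$, whose true value is $0$. The reason is visible in the proof of Lemma~\ref{lem:first-variation}: the limit of $a_{\phi^n}(1)-a_{\phi^n}(0)$ now depends on the admissible sequence chosen ($\phi^n=0$ versus $\phi^n=\eta_n(t)\chi x_1$). So it yields some element of the subdifferential, not the one-sided derivative.

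The missing idea is to reverse the domination: assume $\rho_t\ll\sigma_t$ with $\frac{\D\rho_\bullet}{\D\sigma_\bullet}\in L^\infty_TL^\infty(\sigma_\bullet)$. This is precisely the hypothesis of Lemma~\ref{lem:first-variation} with base point $\sigma_\bullet$ and perturbed curve $\rho_\bullet$. It is also what Theorem~\ref{thm:convex-geometry-BB} actually supplies ($\rho^h\ll\rho^0$, base point $\rho^0$). It makes $v^\sigma_\bullet\in L^2_TL^2(\rho_\bullet)$ automatic and independent of the representative. It also transfers $L^2_TL^2(\sigma_\bullet)$-convergence of $\nabla\phi^n$ to $L^2_TL^2(\rho_\bullet)$-convergence, since
$\int_{\R^d}\lVert\nabla\phi^n_t-v^\sigma_t\rVert^2\,\rho_t(\D x)\le\lVert\tfrac{\D\rho_\bullet}{\D\sigma_\bullet}\rVert_{\infty}\int_{\R^d}\lVert\nabla\phi^n_t-v^\sigma_t\rVert^2\,\sigma_t(\D x)$.
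Fix this hypothesis up front rather than choosing the direction ``in whichever direction makes the approximation argument close''. With it, your Steps~1--3 go through as written.
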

\begin{proof}
The Bregman divergence is given by
\begin{equ}
    D_{\mathcal A}(\rho_\bullet, \sigma_\bullet) = \mathcal{A}(\rho_\bullet) - \mathcal{A}(\sigma_\bullet) - \delta^+\mathcal{A}(\sigma_\bullet)[\rho_\bullet-\sigma_\bullet].
\end{equ}
To use the expression of the first variation from Lemma \ref{lem:first-variation}, we first note that
\begin{equ}
    -\nabla \cdot (\sigma_t v_t)=\partial_t (\rho_t-\sigma_t)= -\nabla \cdot (\rho_t v^\rho_t - \sigma_t v^\sigma_t),
\end{equ}
which holds in distribution meaning that for any $\psi\in C_c^{\infty}(\R^d)$ and almost every $t\in[0,1]$, we have
\begin{equs}
    \int_{\R^d} \nabla \psi(x) \cdot \nabla v_t(x) \sigma_t(\D x) & =
    \int_{\R^d} \nabla \psi(x) \cdot v_t^\rho(x) \rho_t(\D x)
    \\ & \quad -
    \int_{\R^d} \nabla \psi(x) \cdot v_t^\sigma(x) \sigma_t(\D x).
\end{equs}
By density and as $v^\sigma_t\in L^2(\rho_t)$ for almost every $t\in[0,1]$, we obtain
\begin{equs}
    \int_{\R^d} v^\sigma_t(x) \cdot v_t(x) \sigma_t(\D x) & =
    \int_{\R^d} v^\sigma_t(x) \cdot v_t^\rho(x) \rho_t(\D x)
    \\ & \quad -
    \int_{\R^d} v^\sigma_t(x) \cdot v_t^\sigma(x) \sigma_t(\D x)
\end{equs}
for almost every $t\in[0,1]$.
Now, we use this to compute
\begin{equs}
    D_{\mathcal A}(\rho_\bullet, \sigma_\bullet)
    & =
    \frac12\int_0^1 \int_{\R^d} \lVert \nabla \phi_t^\rho \rVert_2^2 \rho_t(\D x) \dif t
    - \frac12\int_0^1 \int_{\R^d} \lVert \nabla \phi^\sigma_t \rVert_2^2 \sigma_t(\D x) \dif t
    \\ &
    - \int_0^1 \left(\int_{\R^d} \nabla \phi^\sigma_t\cdot \nabla \phi_t \sigma_t(\D x) - \frac12 \int_{\R^d} \lVert \nabla \phi^\sigma_t \rVert_2^2 (\rho_t - \sigma_t)(\D x)\right) \dif t
    \\ & =
    \frac12\int_0^1 \int_{\R^d} \lVert \nabla \phi_t^\rho \rVert_2^2 \rho_t(\D x) \dif t
    - \frac12\int_0^1 \int_{\R^d} \lVert \nabla \phi^\sigma_t \rVert_2^2 \sigma_t \dif x \dif t
    \\ & \quad
    - \int_0^1 \left( \int_{\R^d} \nabla \phi^\sigma_t\cdot \nabla \phi_t^\rho \rho_t(\D x) - \int_{\R^d} \lVert \nabla \phi^\sigma_t \rVert_2^2 \sigma_t(\D x) \right) \dif t
    \\ & \quad + \frac12 \int_0^1 \int_{\R^d}\lVert \nabla \phi^\sigma_t  \rVert_2^2(\rho_t - \sigma_t)(\D x) \dif t
    \\ & =
    \frac12\int_0^1 \int_{\R^d} \left(\lVert \nabla \phi_t^\rho \rVert_2^2 - 2 \nabla\phi^\rho_t\cdot\nabla\phi^\sigma_t+\lVert \nabla \phi_t^\sigma \rVert_2^2\right) \rho_t(\D x) \dif t
    \\ & =
    \frac12\int_0^1 \int_{\R^d} \lVert \nabla \phi_t^\rho -\nabla\phi_t^\sigma \rVert_2^2 \rho_t(\D x) \dif t.
\end{equs} 
\end{proof}

\begin{proof}[Proof of \Cref{thm:convex-geometry-BB}]
By definition of the Bregman divergence, we have
\begin{equs}
    \frac{\mathcal{A}(\rho^h_{\bullet}) - \mathcal{A}(\rho_{\bullet}^0) - \delta \mathcal{A}(\rho_{\bullet}^0)[\rho^h_{\bullet} - \rho_{\bullet}^0]}{h^2}
    &= \frac{D_{\mathcal{A}}(\rho^h_{\bullet}, \rho_{\bullet}^0)}{h^2}.
\end{equs}
Substituting the explicit formula \eqref{eq:bregman-action}
for the Bregman divergence yields
\begin{equs}
    \frac{D_{\mathcal{A}}(\rho^h_{\bullet}, \rho_{\bullet}^0)}{h^2}
    & =
    \frac{1}{2h^2} \int_0^1 \int_{\R^d} \lVert \nabla\phi_t^h(x) - \nabla\phi_t(x) \rVert_2^2 \rho_t^h(\D x) \dif t \\
    &= \frac{1}{2h^2} \int_0^1 \int_{\R^d} \lVert h \nabla \psi_t(x) \rVert_2^2 \rho_t^h(\D x) \dif t \\
    &= \frac12\int_0^1 \int_{\R^d} \lVert \nabla \psi_t(x) \rVert_2^2 \rho_t^h(\D x) \dif t.
\end{equs}
By standard $W_2$ stability of the continuity equation, $\rho^h_t \to \rho_t^0$ in $W_2$ as $h \to 0$, see for example \cite{ambrosio2012user}.
Since $\nabla \psi_t\in L^2_TC^1_{\textup{Lip}}(\R^d)$ it grows at most linearly.
Since $W_2$ convergence implies convergence when testing with functions with at most quadratic growth, see \cite[Proposition 3.4]{ambrosio2012user}, we obtain pointwise in $t$ convergence of the integrals
\begin{equ}
    \lim_{h\to 0} \int_{\R^d} \lVert \nabla \psi_t(x) \rVert_2^2 \rho_t^h(\D x) = \int_{\R^d} \lVert \nabla \psi_t(x) \rVert_2^2 \rho_t^0(\D x). 
\end{equ}
Finally, since the second moments of $\rho_t^h$ are uniformly bounded in $ t$ and $ h$, see Lemma \ref{lem:evolution-W2}, we can apply the dominated convergence theorem to pass the limit inside the time integral and obtain 
\begin{equ}
    \lim_{h\to0} \frac{\mathcal{A}(\rho^h_{\bullet}) - \mathcal{A}(\rho_{\bullet}^0) - \delta \mathcal{A}(\rho_{\bullet}^0)[\rho^h_{\bullet} - \rho_{\bullet}^0]}{h^2}
    = \int_0^1 \int_{\R^d} \lVert \nabla \psi_t(x) \rVert_2^2 \rho_t^0(\D x) \dif t. 
\end{equ}
Further, from Definition \ref{def:AFR-velocities} and Lemma \ref{thm:FR-isometrc} it holds that 
\begin{equ}
    g_{\rho_{\bullet}^0}^{\AFR}(\xi_{\bullet}, \xi_{\bullet}) 
    = g_{\nabla \phi_{\bullet}}^{\AFR}(\nabla \psi_\bullet, \nabla \psi_\bullet)
    = \int_0^1 \int_{\R^d} \lVert \nabla \psi_t(x) \rVert_2^2 \rho_t^0(\D x) \dif t. 
\end{equ}
\end{proof}
Consider the case that $\rho_\bullet, \rho_\bullet^\star \in \textup{AC}_T^2(\mathcal{P}_2(\R^d))$.
A natural projection of the flow-matching energy to the space of paths of densities is given by
\begin{equ}
    \mathcal{E}(\rho_{\bullet}) = \frac12 D_{\mathcal{A}}(\rho_{\bullet}^\star, \rho_{\bullet}) = \frac12\int_0^1 \int_{\R^d} \lVert v^\rho_t(x) - v_t^\star(x) \rVert_2^2 \rho_t^\star(x) \dif x \dif t,
\end{equ}
where $v^\rho_\bullet \in T_{\rho_\bullet} L^2_T \mathcal{P}_2(\R^d)$ and $v^\star_\bullet \in T_{\rho_\bullet^\star} L^2_T \mathcal{P}_2(\R^d)$ are the velocities of $\rho_\bullet$ and $\rho_\bullet^\star$, respectively, meaning that
\begin{equs}\label{eq:phis}
    \partial_t \rho_t + \nabla\cdot(\rho_{t} v_{t}^{\rho}) & = 0 \quad \text{and} \\
    \partial_t \rho_t^\star + \nabla\cdot(\rho_{t}^\star v^\star_{t}) & = 0.
\end{equs}
Also in this case, we obtain a characterization of the gradient in terms of an optimal update direction.

\begin{theorem}[Optimality] \label{thm:measures-optimality}
Consider $\rho^\star_\bullet\in \textup{AC}_T^2(\mathcal{P}_2(\R^d))$ and gradient fields $v_\bullet=\nabla\phi_\bullet, u_\bullet=\nabla\psi_\bullet \in L^2_T C^1_{\textup{Lip}}(\R^d)$ and denote the solutions to the continuity equation with velocity field $v_\bullet+hu_\bullet$ by $\rho^{v+hu}_\bullet$  and $\rho^v_0=\rho^\star_0$ and consider the variation $\xi_\bullet = \ddh \rho^{v+hu}_\bullet\in T_{\rho_\bullet} \textup{AC}_T^2(\mathcal{P}_2(\R^d))$. 
Assume that $\rho_\bullet^v\ll\rho^\star_\bullet$ and that 
\begin{equ}
    (v_\bullet - v_\bullet^\star) \cdot \frac{\D \rho^\star_\bullet}{\D \rho^v_\bullet} \in L^2_T L^2(\rho^v_\bullet),
\end{equ}
where $v_\bullet^\star\in T_{\rho_\bullet^\star} L^2_T(\mathcal{P}_2(\R^d))$ is the velocity field for $\rho_\bullet^\star$. 
Then it holds that
\begin{equ}
    \delta \mathcal{E}(\rho_{\bullet}^v)[\xi_\bullet] = g_{\rho_\bullet^v}^{\AFR}(\rho_{\bullet}^v-\rho^\star_{\bullet}, \xi_\bullet).
\end{equ}
\end{theorem}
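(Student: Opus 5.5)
The plan is to compute both sides of the claimed identity explicitly as the same integral,
\begin{equ}
    \int_0^1\int_{\R^d} (v_t(x)-v^\star_t(x))\cdot \nabla\psi_t(x)\,\rho^\star_t(\D x)\dif t .
\end{equ}
Throughout, I interpret $\delta\mathcal{E}(\rho^v_\bullet)[\xi_\bullet]$ as the derivative $\ddh \mathcal{E}(\rho^{v+hu}_\bullet)$ along the curve generating $\xi_\bullet$, which is consistent with $\xi_\bullet=\ddh\rho^{v+hu}_\bullet$.

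\emph{Left-hand side.} For each $h$, the curve $\rho^{v+hu}_\bullet$ solves the continuity equation with the gradient field $\nabla(\phi_\bullet+h\psi_\bullet)\in L^2_TC^1_{\textup{Lip}}$. I first check that this field is the velocity of the curve in the sense of Section \ref{sec:convex-geometry-BB}, i.e.\ that it lies in $\mathcal{T}_{\rho_t}\mathcal{P}_2(\R^d)$ for a.e.\ $t$. Since $\nabla\phi_t+h\nabla\psi_t$ has linear growth and $\rho^{v+hu}_t$ has uniformly bounded second moments by Lemma \ref{lem:evolution-W2}, I would approximate $\phi_t+h\psi_t$ by $\chi_R(\phi_t+h\psi_t)$ with a smooth cutoff $\chi_R$, then mollify. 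The error term $(\phi_t+h\psi_t)\nabla\chi_R$ is $O(\lVert x\rVert^2/R)$ on the annulus $\{R\le \lVert x\rVert\le 2R\}$, which is only $O(\lVert x\rVert)$ there, so it vanishes in $L^2(\rho_t)$ by dominated convergence. Hence
\begin{equ}
    \mathcal{E}(\rho^{v+hu}_\bullet)=\tfrac12\int_0^1\int_{\R^d}\lVert v_t+hu_t-v^\star_t\rVert^2\,\rho^\star_t(\D x)\dif t ,
\end{equ}
which is a quadratic polynomial in $h$. Finiteness follows from the linear growth of $v,u$, the second moments of $\rho^\star$, and $v^\star\in L^2_TL^2(\rho^\star_\bullet)$. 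Differentiating at $h=0$, exactly as in Lemma \ref{thm:FR-gradient-velocity}, gives the displayed integral.

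\emph{Right-hand side.} I use the explicit form of Remark \ref{rem:explicit-expression-AFR}.
\begin{itemize}
    \item By Lemma \ref{thm:derivative-solution} (and Lemma \ref{thm:FR-isometrc}), $D_t\xi_t=-\nabla\cdot(\rho^v_t\nabla\psi_t)$, so $u^\xi_\bullet=\nabla\psi_\bullet$.
    \item For $\zeta_\bullet=\rho^v_\bullet-\rho^\star_\bullet$, the computation in the proof of Theorem \ref{thm:FR-gradient-densities} gives $D_t\zeta_t=-\nabla\cdot(\rho^\star_t(v_t-v^\star_t))=-\nabla\cdot(\rho^v_t w_t)$ with $w_\bullet=(v_\bullet-v^\star_\bullet)\frac{\D\rho^\star_\bullet}{\D\rho^v_\bullet}$. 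This uses the absolute continuity hypothesis so that the density ratio makes sense. The assumption $w_\bullet\in L^2_TL^2(\rho^v_\bullet)$ then ensures $D_t\zeta_\bullet\in T_{\rho_\bullet}L^2_T(\mathcal{P}_2(\R^d))$, so $\zeta_\bullet$ is an admissible tangent vector.
\end{itemize}
Therefore
\begin{equ}
    g^{\AFR}_{\rho^v_\bullet}(\zeta_\bullet,\xi_\bullet)=\int_0^1\int_{\R^d} w_t\cdot\nabla\psi_t\,\rho^v_t(\D x)\dif t=\int_0^1\int_{\R^d}(v_t-v^\star_t)\cdot\nabla\psi_t\,\rho^\star_t(\D x)\dif t ,
\end{equ}
which matches the left-hand side.

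\emph{Main obstacle.} The representative $w_\bullet$ need not lie in $\mathcal{T}_{\rho^v_t}\mathcal{P}_2(\R^d)$; the canonical representative is its $L^2(\rho^v_t)$-projection $\Pi w_t$ onto that closure. I must check that the pairing does not depend on this choice. The difference $w_t-\Pi w_t$ is $L^2(\rho^v_t)$-orthogonal to the closure of $\{\nabla\eta:\eta\in C^\infty_c(\R^d)\}$. By the cutoff argument above, $\nabla\psi_t$ belongs to that closure, so $\int (w_t-\Pi w_t)\cdot\nabla\psi_t\,\D\rho^v_t=0$. This justifies using $w_\bullet$ directly and completes the argument.
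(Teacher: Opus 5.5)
Your proposal is correct and follows the paper's proof essentially step for step. Both arguments reduce the two sides to $\int_0^1\int_{\R^d}(v_t-v^\star_t)\cdot\nabla\psi_t\,\rho^\star_t(\D x)\dif t$, using Lemma~\ref{thm:derivative-solution} for $\xi_\bullet$, the transport equation $D_t(\rho^v_t-\rho^\star_t)=-\nabla\cdot(\rho^\star_t(v_t-v^\star_t))$, and the projection onto gradient fields. Your cutoff argument placing $\nabla\psi_t\in\mathcal{T}_{\rho^v_t}\mathcal{P}_2(\R^d)$ and $\nabla(\phi_t+h\psi_t)\in\mathcal{T}_{\rho^{v+hu}_t}\mathcal{P}_2(\R^d)$ makes explicit two steps the paper uses silently: that the projection does not change the pairing, and that $\mathcal{E}(\rho^{v+hu}_\bullet)$ is a quadratic polynomial in $h$.

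One caveat, shared with the paper. The identity $\rho^\star_t(v_t-v^\star_t)=\rho^v_t w_t$, and the very definition of $\frac{\D\rho^\star_\bullet}{\D\rho^v_\bullet}$, require $\rho^\star_t\ll\rho^v_t$, which is the direction assumed in Theorem~\ref{thm:FR-gradient-densities}. The statement as written assumes $\rho^v_\bullet\ll\rho^\star_\bullet$, so your remark that this hypothesis makes the density ratio meaningful only holds with the inclusion reversed.
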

\begin{proof}
Let us again fix $\nabla\psi^\xi\in L^2_T C^1_{\textup{Lip}}(\R^d)$.
The first variation is given by 
\begin{equs}
    \delta \mathcal{E}(\rho_{\bullet})[\xi_{\bullet}] & = \ddh \mathcal{E}(\rho^{v+hu}_{\bullet}) \\
    & = \ddh \frac12\int_0^1 \int_{\R^d} \lVert (v_t+hu_t)(x) - v_t^\star(x) \rVert_2^2 \rho_t^\star(\D x)  \dif t \\
    & = \int_0^1 \int_{\R^d} (v_t(x) - v_t^\star(x))\cdot u_t(x) \rho_t^\star(\D x) \dif t,
\end{equs}
where again
\begin{equ}
    \partial_t \xi_t + \nabla\cdot(\xi_t v_t) = -\nabla\cdot(\rho_t u_t)
\end{equ}
by Lemma \ref{thm:derivative-solution}.
Note that for $\zeta_\bullet = \rho_\bullet^v -\rho^\star_\bullet$ we have
\begin{equ}
    \partial_t \zeta_t + \nabla\cdot(\zeta_t v_t) = -\nabla\cdot(\rho_t^\star (v_t - v_t^\star)) = -\nabla\cdot(\rho_t \tilde{w}_t),
\end{equ}
where $\tilde{w}_\bullet = (v_\bullet - v_\bullet^\star) \frac{\D \rho^\star_\bullet}{\D \rho^v_\bullet} \in L^2_TL^2(\rho_\bullet)$.
Let $w_\bullet \in \mathcal{T}_{\rho_\bullet^v} L^2_T(\mathcal{P}_2(\R^d))$ be the $L^2_TL^2(\rho_\bullet)$ projection of $\tilde{w}_\bullet$ to the gradient fields, then we obtain  \begin{equ}
    \partial_t \zeta_t + \nabla\cdot(\zeta_t v_t) = -\nabla\cdot(\rho_t w_t) 
\end{equ}
as well as 
\begin{equ}
     -\nabla\cdot(\rho_t w_t) = -\nabla\cdot(\rho_t^\star (v_t - v_t^\star)). 
\end{equ}
Definition \ref{def:AFR-measures} now yields 
\begin{equs}
    g_{\rho_{\bullet}^v}^{\AFR}(\zeta_\bullet, \xi_\bullet)
    & = \int_0^1 \int_{\R^d} w_t(x)\cdot u_t(x) \rho_t^v(\D x) \dif t
    \\ & = \int_0^1 \int_{\R^d} (v_t(x) - v_t^\star(x))\cdot u_t^\xi(x) \rho_t^\star(\D x) \dif t
    \\ & = \delta \mathcal{E}(\rho_{\bullet}^v)[\xi_{\bullet}],
\end{equs}
which shows the assertion.
\end{proof}

\section{The Flat $L^2(\rho^\star_\bullet)$-Geometry}

Next, we contrast the advective Fisher--Rao metric with the flat
$L^{2}$-metric on velocity fields, which is given by
\begin{equation}
    (u_{\bullet},w_{\bullet})_{L^{2}(p_{\bullet}^{\star})}=\int_{0}^{1}\int_{\R^d}u_{t}(x)^{\top}w_{t}(x) \rho_{t}^{\star}(\D x)\dif t.
\end{equation}
Recall that we are investigating the energy function
\begin{equ}
    E\colon L^2_T C^1_{\textup{Lip}} \to \R, \quad v_\bullet \mapsto \frac12 \int_0^1 \int_{\R^d} \lVert v_t(x) - v^\star_t(x) \rVert_2^2 \rho^\star_t(\D x) \dif t.
\end{equ}
The $L^{2}$-metric agrees with the Hessian of $E$, which is a quadratic functional on the velocity fields. 
Hence, it is well known that the gradient points directly into the direction of the optimal velocity field $v_{\bullet}^{\star}$.

\begin{lemma}\label{lem:l2-gradient}
    The $L^{2}(\rho_{\bullet}^{\star})$-gradient of the energy $E\colon L^{2}(\rho_{\bullet}^{\star})\to\R$ is given by $v_{\bullet}-v_{\bullet}^{\star}$.
\end{lemma}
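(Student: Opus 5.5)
The plan is to compute the Gâteaux derivative of $E$ directly and then identify it through the Riesz representation in $L^2_TL^2(\rho^\star_\bullet)$. The computation mirrors the one in the proof of \Cref{thm:FR-gradient-velocity}, but the reference measure is now $\rho^\star_\bullet$ instead of $\rho^v_\bullet$, so no density ratio appears.

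First, I fix $v_\bullet$ and a direction $u_\bullet$, both in $L^2_TL^2(\rho^\star_\bullet)$. I then expand the square:
\begin{equ}
    E(v_\bullet+hu_\bullet)-E(v_\bullet) = h\int_0^1\int_{\R^d}(v_t-v^\star_t)\cdot u_t\,\rho^\star_t(\D x)\dif t + \frac{h^2}{2}\int_0^1\int_{\R^d}\lVert u_t\rVert_2^2\,\rho^\star_t(\D x)\dif t.
\end{equ}
This identity is exact, because $E$ is quadratic. Both integrals are finite by the Cauchy--Schwarz inequality, since $v_\bullet-v^\star_\bullet$ and $u_\bullet$ lie in $L^2_TL^2(\rho^\star_\bullet)$. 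Dividing by $h$ and letting $h\to0$ gives
\begin{equ}
    \delta E(v_\bullet)[u_\bullet]=(v_\bullet-v^\star_\bullet,u_\bullet)_{L^2(\rho^\star_\bullet)}.
\end{equ}
The remainder is $O(h\lVert u_\bullet\rVert^2)$, so the derivative is in fact a Fréchet derivative.

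Second, the map $u_\bullet\mapsto\delta E(v_\bullet)[u_\bullet]$ is a bounded linear functional on the Hilbert space $L^2_TL^2(\rho^\star_\bullet)$, and it is represented by the element $v_\bullet-v^\star_\bullet$ of that space. By uniqueness of the Riesz representer, $\nabla^{L^2(\rho^\star_\bullet)}E(v_\bullet)=v_\bullet-v^\star_\bullet$.

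The only point needing care is making sure the gradient genuinely lies in the tangent space, which is why I work on $L^2(\rho^\star_\bullet)$ itself rather than on $L^2_TC^1_{\textup{Lip}}$. If one wants the statement on $L^2_TC^1_{\textup{Lip}}$, one uses that $v_\bullet-v^\star_\bullet$ lies there, and that this space embeds into $L^2_TL^2(\rho^\star_\bullet)$ by the second-moment bound established before \Cref{thm:FR-gradient-velocity}. There is no real obstacle.
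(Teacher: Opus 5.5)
Your proof is correct and follows the paper's argument. The paper also writes $E(v_\bullet)=\frac12\lVert v_\bullet-v^\star_\bullet\rVert^2_{L^2(\rho^\star_\bullet)}$, computes the first variation directly as $(v_\bullet-v^\star_\bullet,u_\bullet)_{L^2(\rho^\star_\bullet)}$, and reads off the gradient; your exact quadratic expansion and the appeal to uniqueness of the Riesz representer simply make that ``direct computation'' explicit.
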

\begin{proof}
Note that the energy is given by the squared norm $E(v_\bullet)=\frac12\lVert v_\bullet-v_\bullet^{\star}\rVert_{L^2(\rho_\bullet^{\star})}^2$.
A direct computation yields that for any $u_{\bullet}\in L^{2}(\rho_{\bullet}^{\star})$ it holds that
\begin{equation*}
\begin{split}
	\delta E(v_{\bullet})[u_{\bullet}] & =\int_{0}^{1}\int_{\R^d}(v_{t}(x)-v_{t}^{\star}(x))\cdot u_{t}(x)\rho_{t}^{\star}(\D x)\dif t
	=(v_{\bullet}-v_{\bullet}^{\star},u_{\bullet})_{L^{2}(\rho_{\bullet}^{\star})},
\end{split}
\end{equation*}
which yields the claim.
\end{proof}
The $L^2$ gradient will not lead to a mixture geodesic in the space of paths of probability densities, but we can show that it is close to the mixture geodesic if the energy is small.

\begin{theorem}[$L^{2}$-gradients and $m$-geodesics]\label{thm:mismatch}
Consider an initial distribution $\rho_0\in\mathcal{P}_2(\R^d)$, velocity fields $v_{\bullet},v^{\star}_{\bullet}\in L^2_TC^{1}_{\textup{Lip}}$, the variation $u_{\bullet}\coloneqq v_{\bullet}-v_{\bullet}^{\star}$, denote the solution of the continuity equation with the velocity fields $v_\bullet$, $v^\star_\bullet$, and $v_\bullet+hu_\bullet$ by $\rho^{v}_\bullet$, $\rho^{\star}_\bullet$, and $\rho^{v+hu}_\bullet$,
and denote the corresponding variation $\xi_{\bullet}\coloneqq\ddh \rho_{\bullet}^{v+hu}$.
Then there is a constant $c>0$ such that for all $t\in[0,1]$ it holds that  
\begin{equ}
	\lVert \xi_{t} - (\rho_t^v - \rho^\star_t ) \rVert_{(C^1_{\textup{Lip}})^\ast}
    \le
    c  E(v_\bullet)^\frac12 . 
\end{equ}
Further, the constant can be chosen to be 
\begin{equ}
    c = \sqrt{2} \left(2e^{\lVert v_\bullet \rVert_{\V}} + \left(\lVert v_\bullet \rVert_{\V} + \lVert v^\star_\bullet \rVert_{\V} \right) e^{2\lVert v_\bullet \rVert_{\V}}\right). 
\end{equ}
\end{theorem}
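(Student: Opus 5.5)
The plan is to test the difference $\xi_t-(\rho^v_t-\rho^\star_t)$ against an arbitrary $f\in C^1_{\textup{Lip}}(\R^d)$ and express both terms as integrals against $\rho_0$ along characteristics. Only the Lagrangian representation is needed, not the PDE for the difference. The bound will use only $\lVert\nabla f\rVert_\infty\le\lVert f\rVert_{C^1_{\textup{Lip}}}$. This matters because $C^1_{\textup{Lip}}$ does not control the Lipschitz constant of $\nabla f$, so a second-order Taylor argument is unavailable.

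First, Lemma \ref{thm:derivative-solution} gives
\begin{equ}
\langle \xi_t,f\rangle=\int_{\R^d}\nabla f(\varphi^v_t(x))\cdot\psi^u_t(x)\,\rho_0(\D x),
\end{equ}
where $\psi^u$ solves $\partial_t\psi^u_t=Dv_t(\varphi^v_t)\psi^u_t+u_t(\varphi^v_t)$ with $\psi^u_0=0$. Second, by the push-forward representation,
\begin{equ}
\langle\rho^v_t-\rho^\star_t,f\rangle=\int_{\R^d}\bigl(f(\varphi^v_t(x))-f(\varphi^{v^\star}_t(x))\bigr)\rho_0(\D x).
\end{equ}
The mean value theorem bounds the integrand by $\lVert\nabla f\rVert_\infty\lVert\varphi^v_t(x)-\varphi^{v^\star}_t(x)\rVert$. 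By the triangle inequality and Cauchy--Schwarz in $L^2(\rho_0)$,
\begin{equ}
\bigl|\langle\xi_t-(\rho^v_t-\rho^\star_t),f\rangle\bigr|\le\lVert f\rVert_{C^1_{\textup{Lip}}}\Bigl(\lVert\psi^u_t\rVert_{L^2(\rho_0)}+\lVert\varphi^v_t-\varphi^{v^\star}_t\rVert_{L^2(\rho_0)}\Bigr).
\end{equ}
It therefore remains to bound both $L^2(\rho_0)$ norms by multiples of $\lVert v_\bullet-v^\star_\bullet\rVert_{L^1_TL^2(\rho^\star_\bullet)}\le\lVert v_\bullet-v^\star_\bullet\rVert_{L^2_TL^2(\rho^\star_\bullet)}=\sqrt2\,E(v_\bullet)^{1/2}$.

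The flow term is exactly what the proof of Lemma \ref{lem:W2-stability} delivers with $w=v^\star$. It gives $\lVert\varphi^v_t-\varphi^{v^\star}_t\rVert_{L^2(\rho_0)}\le e^{\lVert v_\bullet\rVert_{\V}}\lVert v_\bullet-v^\star_\bullet\rVert_{L^1_TL^2(\rho^\star_\bullet)}$, and this accounts for one of the two $e^{\lVert v\rVert}$ summands in $c$.

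For $\psi^u$, I would apply Gr\"onwall in $L^2(\rho_0)$ (Minkowski's integral inequality plus $\lVert Dv_s\rVert_\infty\le\beta_s$). This gives $\lVert\psi^u_t\rVert_{L^2(\rho_0)}\le e^{\lVert v_\bullet\rVert_{\V}}\int_0^t\lVert u_s\circ\varphi^v_s\rVert_{L^2(\rho_0)}\dif s$. The main obstacle is that this source is measured against $\rho^v_s$, whereas $E$ is measured against $\rho^\star_s$. I would handle it by comparing $u_s\circ\varphi^v_s$ with $u_s\circ\varphi^{v^\star}_s$, using that $u_s=v_s-v^\star_s$ is Lipschitz with constant at most $\beta^v_s+\beta^\star_s$:
\begin{equ}
\lVert u_s\circ\varphi^v_s\rVert_{L^2(\rho_0)}\le\lVert u_s\rVert_{L^2(\rho^\star_s)}+(\beta^v_s+\beta^\star_s)\lVert\varphi^v_s-\varphi^{v^\star}_s\rVert_{L^2(\rho_0)}.
\end{equ}
Inserting the flow bound from the previous step and integrating in $s$ yields
\begin{equ}
\lVert\psi^u_t\rVert_{L^2(\rho_0)}\le\Bigl(e^{\lVert v_\bullet\rVert_{\V}}+(\lVert v_\bullet\rVert_{\V}+\lVert v^\star_\bullet\rVert_{\V})e^{2\lVert v_\bullet\rVert_{\V}}\Bigr)\lVert v_\bullet-v^\star_\bullet\rVert_{L^1_TL^2(\rho^\star_\bullet)}.
\end{equ}
Adding the two contributions and taking the supremum over $\lVert f\rVert_{C^1_{\textup{Lip}}}\le1$ gives the stated constant
\begin{equ}
c=\sqrt2\bigl(2e^{\lVert v_\bullet\rVert_{\V}}+(\lVert v_\bullet\rVert_{\V}+\lVert v^\star_\bullet\rVert_{\V})e^{2\lVert v_\bullet\rVert_{\V}}\bigr),
\end{equ}
uniformly in $t\in[0,1]$.
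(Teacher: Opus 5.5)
Your proof is correct and follows the paper's argument. You use the same Lagrangian representation of both terms, the same split into the $\lVert\psi^u_t\rVert$ and $\lVert\varphi^v_t-\varphi^{v^\star}_t\rVert$ contributions, and the same transfer of the source term from $\rho^v_s$ to $\rho^\star_s$ via the Lipschitz constant of $u_s$ and the flow-stability estimate from Lemma~\ref{lem:W2-stability}. The differences are cosmetic: you run Gr\"onwall in $L^2(\rho_0)$ and compare $u_s\circ\varphi^v_s$ with $u_s\circ\varphi^{v^\star}_s$ directly, whereas the paper applies Gr\"onwall pointwise in $x$ and passes through $W_1(\rho^v_s,\rho^\star_s)$. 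Both routes give exactly the stated constant.
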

\begin{proof}
For any $\phi\in C^1_{\textup{Lip}}$ and $t\in[0,1]$, Lemma \ref{thm:derivative-solution} yields that 
\begin{equ}
    \langle \xi_t^{u}, \phi\rangle = \int_{\R^d} \nabla \phi(\varphi^v_t(x)) \cdot \psi_t^{u}(x) \rho_0(\D x),
\end{equ}
where $\varphi^v_\bullet$ denotes the flow induced by $v_\bullet$ and $\psi^{u}_\bullet$ is the unique solution to
\begin{equ}
    \partial_{t}\psi_{t}^{u}(x)=Dv_{t}(\varphi_{t}^{v}(x))\cdot\psi_{t}^{u}(x)+u_{t}(\varphi_{t}^{v}(x)), \quad \psi_0^v(x)=0
\end{equ}
for all $t\in[0,1]$ and $x\in\R^d$. 
It holds that 
\begin{equs}
    \left\lvert \langle \xi_t^{u} - (\rho^v_t-\rho^\star_t), \phi\rangle \right\rvert 
    & = \left\lvert \int_{\R^d} \nabla\phi(\varphi_t^v) \cdot \psi_t^u - (\phi(\varphi_t^v)-\phi(\varphi_t^\star))  \rho_0(\D x) \right\rvert
    \\ & \le \lVert \nabla\phi \rVert_{\infty} \int_{\R^d} \left(\lVert \psi_t^u \rVert + \lVert \varphi_t^v-\varphi_t^\star \rVert \right) \rho_0(\D x)
    \\ & \le \lVert \nabla\phi \rVert_{\infty} \left( \underbrace{\int_{\R^d} \lVert \psi_t^u \rVert \rho_0(\D x)}_{I_1} +  \underbrace{\int_{\R^d} \lVert  \varphi_t^v-\varphi_t^\star \rVert \rho_0(\D x)}_{I_2}\right). 
\end{equs}
The second term can be estimated analogously to Lemma \ref{lem:W2-stability}, which gives 
\begin{equs}\label{eq:proof:mismatch}
    I_2 & = \lVert \varphi^v_t - \varphi^\star_t \rVert_{L^1(\rho_0)}
    \le \lVert \varphi^v_t - \varphi^\star_t \rVert_{L^2(\rho_0)} 
    \le \sqrt{2} e^{\lVert v_\bullet\rVert_{\V}} E(v_\bullet)^\frac12.  
\end{equs}
The first term can be estimated according to 
\begin{equs}
    \lVert \psi_t(x) \rVert \le \int_0^t \lVert Dv_s( \varphi^v_t(x)) \cdot \psi_s^u(x) + u_s(\psi_s^v(x)) \rVert \dif s. 
\end{equs}
Gr\"{o}nwall's inequality yields 
\begin{equs}
    \lVert \psi_t(x) \rVert 
    & \le \exp\left( \int_{0}^t \lVert Dv_r \rVert_{\infty} \dif r \right) \int_0^t  \lVert u_s(\varphi^v_s(x)) \rVert \dif s 
    \\ & \le e^{\lVert v_\bullet \rVert_{\V}}\int_0^t  \lVert u_s(\varphi^v_s(x)) \rVert \dif s. 
\end{equs}
Integration with respect to $\rho_0$ yields 
\begin{equs}
    I_1
    & \le e^{\lVert v_\bullet \rVert_{\V}} \int_0^t \int_{\R^d} \lVert u_s(\varphi^v_s(x)) \rVert \rho_0(\D x) \dif s. 
    \\ & \le e^{\lVert v_\bullet \rVert_{\V}} \int_0^t \int_{\R^d} \lVert u_s(x) \rVert \rho_s^v(\D x) \dif s 
    \\ & \le e^{\lVert v_\bullet \rVert_{\V}} \int_0^t \int_{\R^d} \lVert u_s(x) \rVert \rho_s^\star(\D x) \dif s + e^{\lVert v_\bullet \rVert_{\V}} \int_0^t [u_s]_{\textup{Lip}} W_1(\rho^v_s, \rho^\star_s) \dif s 
\end{equs}
The first term can be estimated as follows 
\begin{equs}
    \int_0^t \int_{\R^d} \lVert u_s \rVert \rho_s^\star(\D x) \dif s 
    & \le 
    \int_0^t \left(\int_{\R^d} \lVert u_s \rVert^2 \rho_s^\star(\D x)\right)^\frac12 \dif s 
    \\ & \le 
    \sqrt{t}\left( \int_0^t \int_{\R^d} \lVert v_s-v^\star_s \rVert^2 \rho_s^\star(\D x) \dif s \right)^\frac12
    \\ & \le \sqrt{2} E(v_\bullet)^\frac12. 
\end{equs}
To estimate the second component, we bound the Wasserstein distance according to 
\begin{equs}
    W_1(\rho^v_s, \rho^\star_s) \le \lVert \varphi^v_s - \varphi^\star_s \rVert_{L^1(\rho_0)}
    \le \sqrt{2} e^{\lVert v_\bullet\rVert_{\V}} E(v_\bullet)^\frac12,
\end{equs}
where we used \eqref{eq:proof:mismatch}. 
This implies 
\begin{equs}
    \int_0^t [u_s]_{\textup{Lip}} W_1(\rho^v_s, \rho^\star_s) \dif s\cdot 
    & \le  \sqrt{2} \int_0^t [u_s]_{\textup{Lip}} \dif s e^{\lVert v_\bullet\rVert_{\V}} E(v_\bullet)^\frac12
    \\ & \le \sqrt{2} \lVert v_\bullet - v^\star_\bullet \rVert_{\V} e^{\lVert v_\bullet\rVert_{\V}} E(v_\bullet)^\frac12
    \\ & \le \sqrt{2} \left(\lVert v_\bullet \rVert_{\V} + \lVert v^\star_\bullet \rVert_{\V} \right) e^{\lVert v_\bullet\rVert_{\V}} E(v_\bullet)^\frac12
\end{equs}
Combining the bounds yields 
\begin{equs}
    I_1\le \sqrt{2} \left(e^{\lVert v_\bullet \rVert_{\V}} + \left(\lVert v_\bullet \rVert_{\V} + \lVert v^\star_\bullet \rVert_{\V} \right) e^{2\lVert v_\bullet \rVert_{\V}}\right) E(v_\bullet)^\frac12. 
\end{equs}
Overall, we obtain 
\begin{equs}
    \left\lvert \langle \xi_t^{u} - (\rho^v_t-\rho^\star_t, \phi\rangle \right\rvert 
    & \le \lVert \nabla \phi \rVert_{\infty} (I_1+I_2) 
    \le c \lVert \nabla \phi \rVert_{\infty}  E(v_\bullet)^\frac12,
\end{equs}
where the constant $c>0$ is given by 
\begin{equ}
    c = \sqrt{2} \left(2e^{\lVert v_\bullet \rVert_{\V}} + \left(\lVert v_\bullet \rVert_{\V} + \lVert v^\star_\bullet \rVert_{\V} \right) e^{2\lVert v_\bullet \rVert_{\V}}\right). 
\end{equ}
\end{proof}

\Cref{thm:mismatch} ensures that the update direction of the $L^2$-gradient is close to the mixture geodesic if the energy $E(v_\bullet)$ is small, which will typically be the case late in training.
Note that the norm $\lVert v_{\bullet}\rVert_{L^2_T C^1_{\textup{Lip}}}$ is not controlled by the energy $E(v_{\bullet})$. 
However, it can be controlled by the Lipschitz constant of the network function.
For example, it can be controlled by means of the magnitude of the weights of the network, which is commonly done in practice.

\section{Implications for Flow-Based Generative Models}\label{subsec:NG-method}
In generative modeling, it is the goal to design a sampling procedure that approximately samples from a target distribution $p_{\textup{data}}$, which is unknown but can be accessed via samples.
Most modern approaches first sample $X_{0}\sim p_{0}$ from an easy distribution $p_{0}$ like a Gaussian or uniform distribution and then apply a transformation $X_{1}=\Phi(X_{0})$.
Then, it is the task to choose $\Phi$ such that $X_{1}$ is approximately distributed according to $p_{\textup{data}}$, hence such that $\Phi_{\sharp}p_{0}\approx p_{\textup{data}}$.
Earlier approaches like variational auto-encoders (VAEs) \cite{Kingma2014}, generative adversarial networks (GANs) \cite{goodfellow2014generative} directly parameterize the transformation $\Phi^\theta$ and optimize the models parameters to minimize $D(\Phi^\theta_\sharp p_0, p_{\textup{data}})$, where $D$ is some distance measure.
Recently, a new paradigm has become very popular, where the samples $X_0$ from $p_0$ are transformed successively into synthetic data $X_1$.
Most notably, this paradigm shift has led to the development of many successful models, including normalizing flows \cite{Dinh2015}, diffusion models \cite{Sohl-Dickstein2015, Ho2020,rombach2022high}, score and flow matching \cite{lipman2023flow}, and Schr\"odinger bridges \cite{de2021diffusion}.
In flow matching, one considers the flow $\varphi_{t}$ associated to a velocity field $v_{t}$ given by
\begin{equation*}
	\partial_{t}\varphi_{t}(x)=v_{t}(\varphi_{t}(x)).
\end{equation*}
Then, synthetic data is produced by sampling $X_{0}\sim p_{0}$ and transforming the data along the flow until the time $t=1$.
Hence, one considers $X_{1}=\varphi_{1}(X_{0})$ and one hopes to approximate $(\varphi_{1})_{\sharp}p_{0}\approx p_{\textup{data}}$.
The velocity $v_{\bullet}$ is then parametrized by a neural network $v_{\bullet}^{\theta}\approx v_{\bullet}$ and the parameters $\theta$ are optimized to minimize the objective
\begin{equation}
	L_{\textup{FM}}(\theta)=\frac{1}{2}\int_{0}^{1}\int_{\R^d}\|v_{t}^{\theta}(x)-v_{t}^{\star}(x)\|^{2}p_{t}^{\star}(x)\dif x\dif t.
\end{equation}
Here, a reference velocity field $v_{\bullet}^{\star}$ is constructed to perfectly match the data, meaning that $(\varphi_{1}^{\star})_{\sharp}p_{0}=p_{\textup{data}}$, where $\varphi_{\bullet}^{\star}$ denotes the flow induced by $v_{\bullet}^{\star}$.
Various ways to construct such a reference velocity field have been proposed in the literature, such as via conditional interpolation and optimal transport \cite{lipman2023flow}, via the Schr\"odinger bridge problem \cite{de2021diffusion}.
For more details on such constructions, we refer to the general frameworks for stochastic interpolants \cite{albergo2025stochastic,wald2025flow} and the overview articles \cite{lipman2024flow,holderrieth2025introduction,pierret2026flow}.
Note that typically, the velocity field $v_{\bullet}^{\star}$ is not accessible directly, but rather the conditioned version on the initial data $v_{t}^{\star}(x|X_{0})$.
Similar to flow matching, diffusion models aim to approximate a reference velocity of a---possibly stochastic---differential equation, thereby approximately transporting $p_{0}$ to the data distribution $p_{\textup{data}}$.
They use a forward-backward process where first $p_{\textup{data}}$ is diffused by a forward dynamic
\begin{equation*}
	\D X_{t}^{\star}=f_{t}(X_{t}^{\star})\D t+\D W_{t}
\end{equation*}
leading to densities $p_{t}^{\star}=\mathcal{L}(X_{t}^{\star})$, where $f_{\bullet}$ is a drift term chosen by the practitioner.
At some time $T>0$, the dynamics are assumed to have approximately equilibrated to $p_{T}^{\star}\approx p_{0}$.
It is common to approximate the score function $s_{t}^{\star}=\nabla\ln p_{T-t}^{\star}$ with a neural network $s_{\bullet}^{\theta}$ by minimizing the objective
\begin{equ}
	L_{\textup{SM}}(\theta)=\frac{1}{2}\int_{0}^{1}\int_{\R^d}\|s_{t}^{\theta}(x)-s_{t}^{\star}(x)\|^{2}p_{t}^{\star}(x)\dif x\dif t.
\end{equ}
The data is then either generated by simulating the probability flow ODE
\begin{equ}
	\dot{x}_{t}=f_{t}(x_{t})-\frac{1}{2}s_{t}^{\theta}(x_{t})
\end{equ}
or the reverse-time SDE
\begin{equ}
	\D X_{t}=\left(f_{t}(X_{t})-\frac{1}{2}s_{t}^{\theta}(X_{t})\right)\D t+\D\tilde{W}_{t},
\end{equ}
which both induce the same marginal distributions as the forward SDE~\cite{lai2025principles}.
A second important paradigm shift, next to the parametrization of the instantaneous change of the data that led to the empirical success of flow-based generative models, lies in the choice of the loss function.
Earlier flow-based generative models like normalizing flows use $D((\varphi_1^\star)_\sharp p_0, p_{\textup{data}})$, which requires both the simulation of the forward dynamics as well as the differentiation through the numerical solver of the forward dynamics.
In contrast, flow matching and score matching objectives operate on the level of velocity fields.
Further, it measures the discrepancy of the velocity fields in terms of the reference densities $p_{t}^{\star}$, which avoids the costly simulation of the forward dynamics induced by $v^\theta_\bullet$.
For this reason, flow matching and score matching are often referred to as \emph{simulation-free}.
The choice of the loss function as a square-based objective has great computational advantages, which have arguably led to the success of flow-based generative models.
However, it comes at the cost of introducing a proxy for the eventual goal of matching the data distribution at the terminal time.
Indeed, the standard Flow Matching loss acts on an Eulerian level of velocity fields, where the generative performance depends on the densities induced by the trajectories.
At the level of velocities, the least-squares loss is simply a quadratic function. However, it is inherently connected to relative entropy at the level of path measures, a concept that is well known in stochastic optimal control theory~\cite{follmer1985entropy}.
This already suggests the existence of a non-trivial geometry underlying the optimization of flow-based generative models, which is not visible at the level of velocity fields.
In this work, we reveal the geometry induced by the least-squares objectives of flow-based generative models.
\subsection{Computational considerations}
When training a flow-matching model, we do not have access to the reference velocity field $v^\star_\bullet$; hence, the flow-matching loss
\begin{equ}
    L_{\textup{FM}}(\theta) = \frac12\int_0^1 \int_{\R^d} \lVert v_t^\theta(x) - v_t^\star(x) \rVert_2^2 p_t^\star(x) \dif x \dif t
\end{equ}
is not directly accessible.
Therefore, it is common to use a conditional flow-matching loss, which is given by
\begin{equ}
    L_{\textup{CFM}}(\theta) = \frac12\int_0^1 \int_{\R^d} \int_{\R^d} \left\lVert v_t^\theta(x) - v_t^\star(x|x_0) \right\rVert^2 p_t(x|x_0) \dif x \dif x_0 \dif t,
\end{equ}
where $v_t^\star(x|x_0)$ is a conditional reference velocity field.
One of the most common choices for the conditional reference velocity field is given by the velocity field obtained through sample-wise interpolation, which is given by
\begin{equ}
    v_t^\star(x|x_0) = \frac{x-x_0}{1-t}.
\end{equ}
We refer to this approach as \emph{conditional linear interpolation}, where a variety of other methods and frameworks, which use couplings of the source and target distributions, Markov kernels, and stochastic processes, have been suggested \cite{lipman2023flow,albergo2025stochastic,wald2025flow}.
It is well-known that the conditional flow-matching loss differs from the unconditional flow-matching loss only by a constant; hence, it can be used as a surrogate loss for training, as $\nabla L_{\textup{FM}}=\nabla L_{\textup{CFM}}$.
Note that samples from the reference density $p^\star_t$ are given by conditional interpolation; then we can sample from $p^\star_t$ by sampling $X_0$ and $X_1$ from $p_0$ and $p_1$, respectively, and setting
\begin{equ}
    X_t = (1-t)X_0 + tX_1.
\end{equ}
\paragraph{The natural gradient method}
In the context of flow-based generative models, the Fisher-information matrix is given by
\begin{equ}\label{eq:definition-FIM}
    F(\theta)_{ij}=\int_{0}^{1}\int_{\R^d}\partial_{\theta_{i}}v_{t}^{\theta}(x) \cdot \partial_{\theta_{j}}v_{t}^{\theta}(x)p_{t}^{\theta}(x)\dif x\dif t.
\end{equ}
Note again the difference from the Gauss--Newton matrix, where the reference density $p_{\bullet}^{\star}$ is used instead of the current density $p_{\bullet}^{\theta}$.
This means that, different from the Gauss--Newton method, the natural gradient method requires the simulation of the forward dynamics to compute the current density $p_{\bullet}^{\theta}$.
The natural gradient method is given by the iteration
\begin{equation}
    \theta_{k+1}=\theta_{k}-\eta\cdot F(\theta_{k})^{+}\nabla L(\theta_{k}).
\end{equation}
\paragraph{The Gauss--Newton method}
The $L^{2}$-geometry of velocity fields leads to the Gauss--Newton method for optimization. It is given by the iteration
\begin{equ}
	\theta_{k+1}=\theta_{k}-\eta\cdot G(\theta_{k})^{+}\nabla L(\theta_{k}),
\end{equ}
where the Gauss--Newton matrix is given by
\begin{equ}\label{eq:definition-GNM}
	G(\theta)_{ij}
	= (\partial_{\theta_i} v^\theta_\bullet, \partial_{\theta_j} v^\theta_\bullet)_{L^2(p_\bullet^\star)}
	=\int_{0}^{1}\int_{\R^d}\partial_{\theta_{i}}v_{t}^{\theta}(x)\cdot\partial_{\theta_{j}}v_{t}^{\theta}(x)p_{t}^{\star}(x)\dif x\dif t.
\end{equ}
\paragraph{Estimation of preconditioning matrices}
Recall the definition of the Gramian matrices $F(\theta)$ and $G(\theta)$ in \eqref{eq:definition-FIM} and \eqref{eq:definition-GNM}, respectively.
In practice, we need to estimate these matrices from samples.
To estimate the Gauss--Newton matrix $G(\theta)$, we can use the samples $(t_k^\star, x^\star_k)$ used for the discretization of the loss, where $t^\star_k$ is sampled uniformly from $[0,1]$ and $x^\star_k$ is sampled from $p^\star_{t_k^\star}$.
Then, we obtain the empirical Gauss--Newton matrix
\begin{equ}
    \hat{G}(\theta)_{ij} = \frac{1}{N}\sum_{k=1}^N \partial_{\theta_i} v_{t_k^\star}^\theta(x_k^\star)\cdot \partial_{\theta_j} v_{t_k^\star}^\theta(x_k^\star).
\end{equ}
As the Fisher-information matrix uses the current density $p_t^\theta$ instead of the reference density $p_t^\star$, we need to sample from $p_t^\theta$ to obtain an empirical estimate.
To this end, we simulate the forward dynamics of the model to obtain samples $(t_k, x_k)$, where $t_k$ is sampled uniformly in time and $x_k$ is sampled from $p_t^\theta$.
Then, we can obtain an empirical estimate of the Fisher information matrix by
\begin{equ}
    \hat{F}(\theta)_{ij} = \frac{1}{N}\sum_{k=1}^N \partial_{\theta_i} v_{t_k}^\theta(x_k)\cdot \partial_{\theta_j} v_{t_k}^\theta(x_k).
\end{equ}
Therefore, the natural gradient method is not simulation-free and has an additional overhead compared to the Gauss--Newton method, which is simulation-free.
\paragraph{Linear algebra}
For the actual computation, we use standard implementation procedures for preconditioned and natural gradient systems as laid out in \cite{martens2020new,guzman-cordero2025improving}.
Both $G(\theta)$ and $F(\theta)$ are positive semi-definite Gram matrices, so their pseudo-inverses can be computed via the Cholesky decomposition.
To ensure numerical stability, we apply damping, also known as Tikhonov regularization, which gives
\begin{equs}
    (G(\theta) + \lambda I)\,\delta\theta_{\text{GN}} &= \nabla L(\theta), \\
    (F(\theta) + \lambda I)\,\delta\theta_{\text{NG}} &= \nabla L(\theta),
\end{equs}
where $\lambda > 0$ is a small regularization parameter.
The Gauss--Newton Gramian $G(\theta)$ uses the reference density $p_t^\star$, whereas the Fisher information $F(\theta)$ uses the current density $p_t^\theta$; the gradient $\nabla L(\theta)$ is the same in both cases.
Empirically, with $N$ samples, $\hat G = \frac1N J_\star^\top J_\star$, $\hat F = \frac1N J_\theta^\top J_\theta$, and $\nabla L = \frac1N J_\star^\top r$, where $J_\star$ and $J_\theta$ are the Jacobians of $v_t^\theta$ at samples from
$p_t^\star$ and $p_t^\theta$, respectively, and $r$ is the residual $v_t^\theta - v_t^\star$.
For small parameter dimension $p$, we factorize the damped Gramian via Cholesky and solve directly at $O(p^3)$ cost.
For GN, where both the Gramian and the gradient involve the same Jacobian $J_\star$, the push-through identity
\begin{equ}
    (J^\top J + \mu I)^{-1} J^\top = J^\top (J J^\top + \mu I)^{-1}
\end{equ}
reduces the $p\times p$ system to an $(Nd)\times(Nd)$ system:
\begin{equ}
    \delta\theta_{\text{GN}} = J_\star^\top (J_\star J_\star^\top + N\lambda I)^{-1} r.
\end{equ}
For NG, the Gramian uses $J_\theta$ while the gradient uses $J_\star$, so the push-through identity does not apply.
In this case, we either solve the damped $p\times p$ system directly (Cholesky, $O(p^3)$) when $p$ is small, or apply the Woodbury identity to the Gramian factor alone
\begin{equ}
    \delta\theta_{\text{NG}} = \frac{1}{N\lambda}
    \bigl[ I - J_\theta^\top (N\lambda I + J_\theta J_\theta^\top)^{-1} J_\theta \bigr]
    \,J_\star^\top r.
\end{equ}
When both $p$ and $d$ are large, we avoid forming the Gramian or the Jacobian explicitly and use conjugate gradient to solve the damped system iteratively via the conjugate gradient method, which only requires Jacobian--vector products, see \cite{martens2020new}.
\subsection{Visualization of update directions} \label{subsec:visualization}
Note that the $L^{2}$-gradient points directly into the direction of the optimal velocity field.
Hence, by the general projection property \eqref{eq:projection} of natural gradients, the Gauss--Newton method leads to an optimal fitting of the velocity fields.
\begin{figure}[!htbp]
    \centering
    \includegraphics{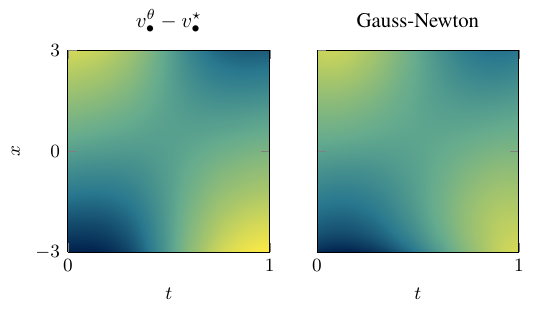}
    \includegraphics{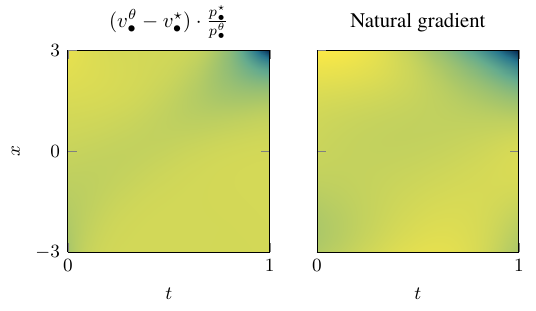}
    \includegraphics{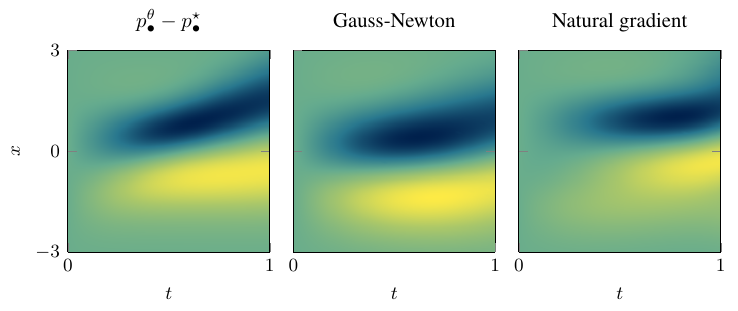}
    \caption{
      Shown are the gradient with respect to the $L^2$ metric $v_\bullet-v^\star_\bullet$ (top left) and and continuity Fisher--Rao metric $(v_\bullet-v^\star_\bullet)\cdot\frac{p^\star_\bullet}{p^v_\bullet}$ as well as the pushforward corresponding to the update directions of the Gauss--Newton method (top right) and natural gradient method (bottom right); from \eqref{eq:projection} we expect the pushforwards to agree with the gradients up to a projection; in the bottom row, the residual $p_\bullet-p^\star_\bullet$ in the space of paths of probability densities (left panel) as well as the pushforwards corresponding to the update directions of the Gauss--Newton and the natural gradient method (middle and right panels) are shown; from \Cref{eq:projection} we expect the pushforward of the natural gradient to agree with the residual up to a projection.
    }
    \label{fig:velocity-GN}
\end{figure}
We can visualize this in an easy case of matching two Gaussians in one dimension.
As a source distribution $p_0$, we consider a standard Gaussian $\mathcal{N}(0,1)$ and as a target distribution $p_{\textup{data}}$ we consider $\mathcal{N}(1,1)$.
One great benefit of this simple setting is that we have access to a closed-form expression of the reference velocity field $v^\star$ obtained through conditional interpolation.
First, the reference density $p_{t}^{\star}$ obtained by sample-wise interpolation is the Gaussian $\mathcal{N}(\mu_{t}^{\star},\sigma_{t}^{\star})$, where
\begin{equs}
	\mu_{t}^{\star} & =(1-t)\mu_{0}^{\star}+t\mu_{1}^{\star},\\
	(\sigma_{t}^{\star})^{2} & =(1-t)^{2}\sigma_{0}^{2}+t^{2}\sigma_{1}^{2}.
\end{equs}
Further, the marginal vector field is given by
\begin{equ}
	v_{t}^{\star}(x)=(\mu_{1}-\mu_{0})+\frac{t\sigma_{1}^{2}-(1-t)\sigma_{0}^{2}}{(1-t)^{2}\sigma_{0}^{2}+t^{2}\sigma_{1}^{2}}\cdot(x-\mu_{t}^{\star}),
\end{equ}
see \cite{bertrand2025closed}.
To visualize the update directions of the two optimizers, we consider a feedforward, fully connected neural network $v^\theta_\bullet$ with $2$ hidden layers of width $32$ and hyperbolic tangent as an activation function.
Here, we provide the time as an explicit input to the network, which is a common choice in the literature.
We initialize the weights of the network according to a normal distribution and the biases to zero and visualize the $L^2$-gradient, which agrees with the difference to the reference velocity field $v^\theta_\bullet-v^\star_\bullet$, see the left plot in \Cref{fig:velocity-GN}.
We compare this to the pushforward of the Gauss--Newton update direction, which is given by $DP(\theta)G(\theta)^{+}\nabla L(\theta)$, see the right plot in \Cref{fig:velocity-GN}.
The Gauss--Newton method leads to an optimal fitting of the velocity fields, but it is a method that is agnostic of the actual goal of fitting the probability densities induced by the corresponding generative model.
This motivates the construction of an optimizer based on the geometry of probability densities.
This is contrasted with the natural gradient method.
Here, the gradient in the space of velocities is weighted by the density ratio.
We compare the push forward $DP(\theta)\delta\theta_{\textup{NG}}$ of the natural gradient direction $\delta\theta_{\textup{NG}}$ to this weighted difference, see \Cref{fig:velocity-GN}.
\subsection{Computational Experiments}
Here, we conduct a series of computational experiments to compare the performance of the natural gradient and the Gauss--Newton method and the standard first-order optimizers like stochastic gradient descent (SGD) and Adam.
The code for these experiments will be made available at the time of publication.
We use a standard Gaussian as a source distribution and compare the optimizers on the following target distributions, which are:
\begin{enumerate}
    \item a $2D$ Gaussian mixture with two standard uniform modes
    \item a $2D$ Gaussian mixture with a standard uniform and an ill-conditioned mode
    \item a half-moon distribution
    \item a Gaussian mixture model with eight components 
    \item and a checkerboard distribution.
\end{enumerate}
For every experiment, we report the learning curves of the different optimizers in terms of the conditional flow-matching loss $L_{\textup{CFM}}$ as well as the $W_2$ distance to the target distribution, which we estimate from samples.
Further, we provide tables of the final accuracy of the optimizers when given the same computational budget.
\paragraph{Discussion}
Across all five target distributions, the second-order methods Gauss--Newton and natural gradient consistently achieve lower $W_2$ distances than the first-order baselines.
For all experiments, the training curves for a fixed number of iterations as well as samples produced after training are shown in \Cref{fig:balanced-gmm,fig:ill-gmm,fig:half-moons,fig:8gmm,fig:checkerboard}.
Here, multiple initializations are used, and the median and 10\% and 90\% percentiles are reported in the learning curves.
Finally, smoothed medians at the end of training are reported for all experiments in \Cref{tab:final-results}.
We consistently observe that the second-order optimizers in the form of the Gauss--Newton and the natural gradient method provide accuracy surpassing that of first-order methods even when given the same computational budget.
\begin{figure}
\centering
    \includegraphics{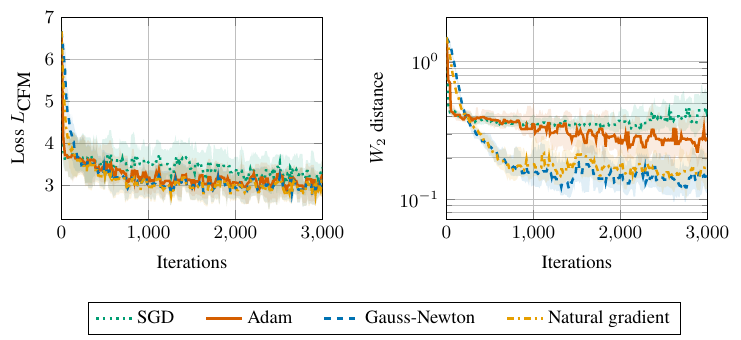}
    \includegraphics{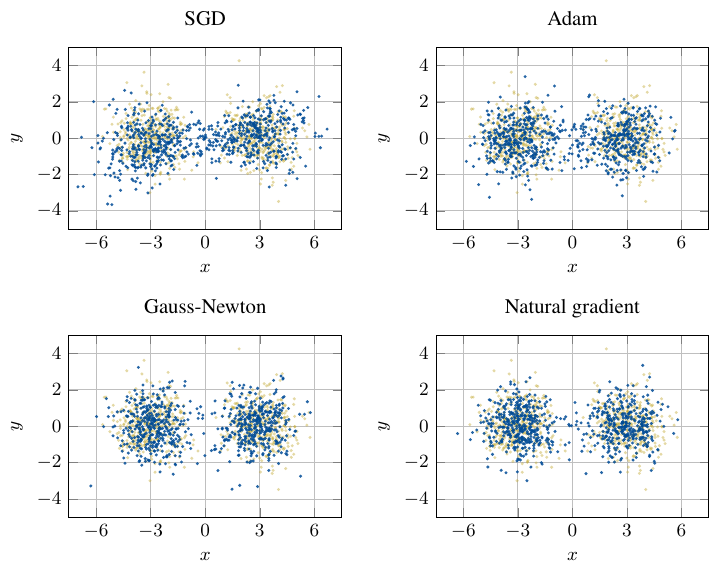}
    \caption{
      The top row shows the loss $L_{\textup{CFM}}$ (left) and the $W_2$ distance to the target distribution (right) for the different optimization methods;
      the curves show the median over 7 runs, while the shaded areas show the region between the 10\% and 90\% percentiles;
      note that the conditional flow matching loss differs from the pure regression loss by a constant; hence, it cannot be expected to vanish;
      the second and third rows show scatter plots of generated samples for each optimizer;
      each panel overlays generated samples (blue) with the target distribution (gold) for visual comparison.
    }
    \label{fig:balanced-gmm}
\end{figure}
\begin{figure}
\centering
    \includegraphics{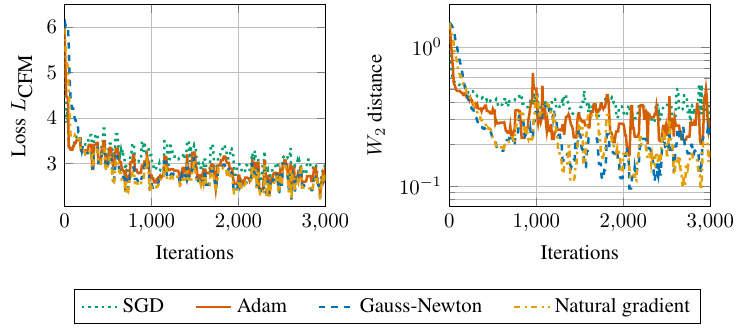}
    \includegraphics{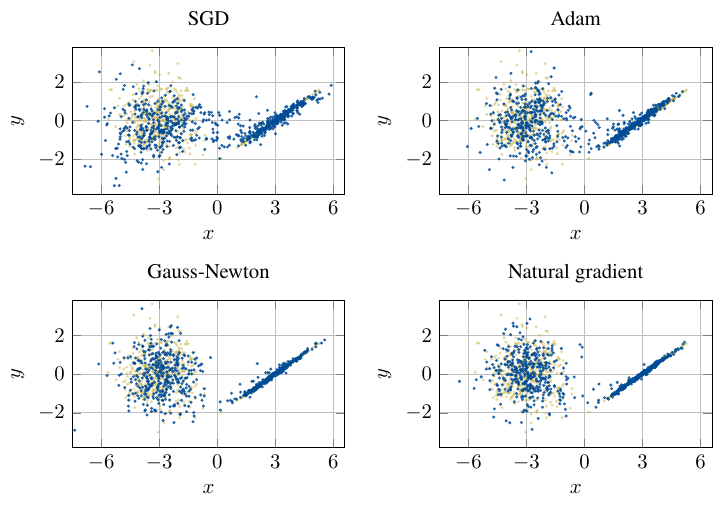}
    \caption{
      Ill-conditioned GMM: The top row shows the loss $L_{\textup{CFM}}$ (left) and the $W_2$ distance to the target distribution (right) for the different optimization methods;
      the curves show the median over 7 runs, while the shaded areas show the region between the 10\% and 90\% percentiles;
      note that the conditional flow matching loss differs from the pure regression loss by a constant; hence, it cannot be expected to vanish;
      the second and third rows show scatter plots of generated samples for each optimizer;
      each panel overlays generated samples (blue) with the target distribution (gold) for visual comparison.
    }
    \label{fig:ill-gmm}
\end{figure}
\begin{figure}
    \centering
    \includegraphics{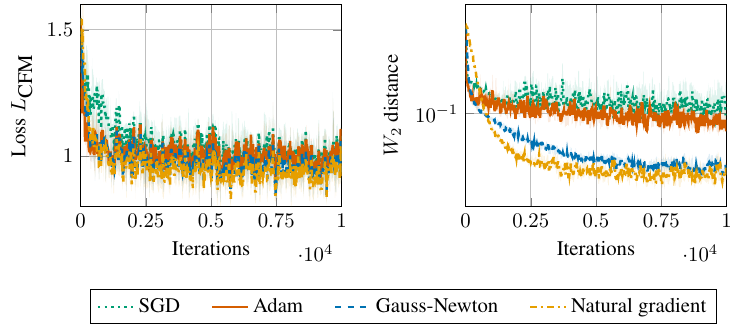}
    \includegraphics{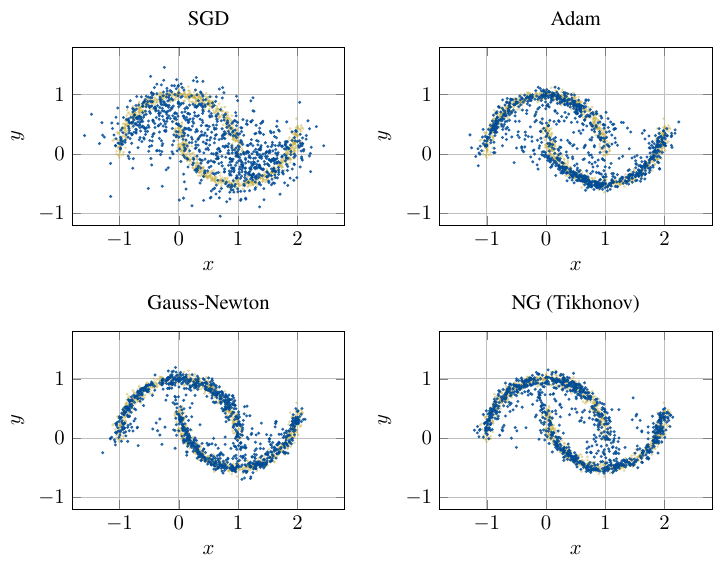}
    \caption{
      Half moons:
        The top row shows the loss $L_{\textup{CFM}}$ (left) and the $W_2$ distance to the target distribution (right) for the different optimization methods;
        the curves show the median over 7 runs, while the shaded areas show the region between the 10\% and 90\% percentiles;
        note that the conditional flow matching loss differs from the pure regression loss by a constant; hence, it cannot be expected to vanish;
        the second and third rows show scatter plots of generated samples for each optimizer;
        each panel overlays generated samples (blue) with the target distribution (gold) for visual comparison.
    }
    \label{fig:half-moons}
\end{figure}
\begin{figure}
    \centering
    \includegraphics{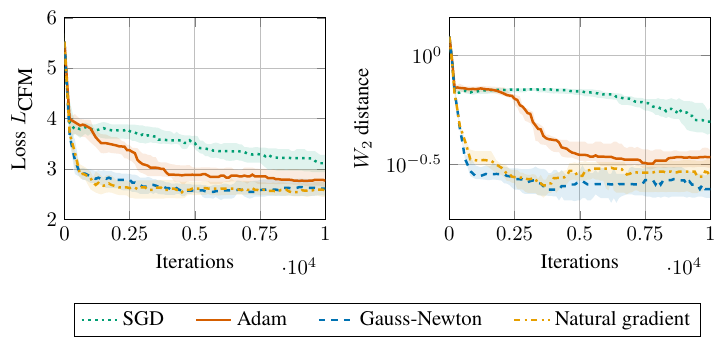}
    \includegraphics{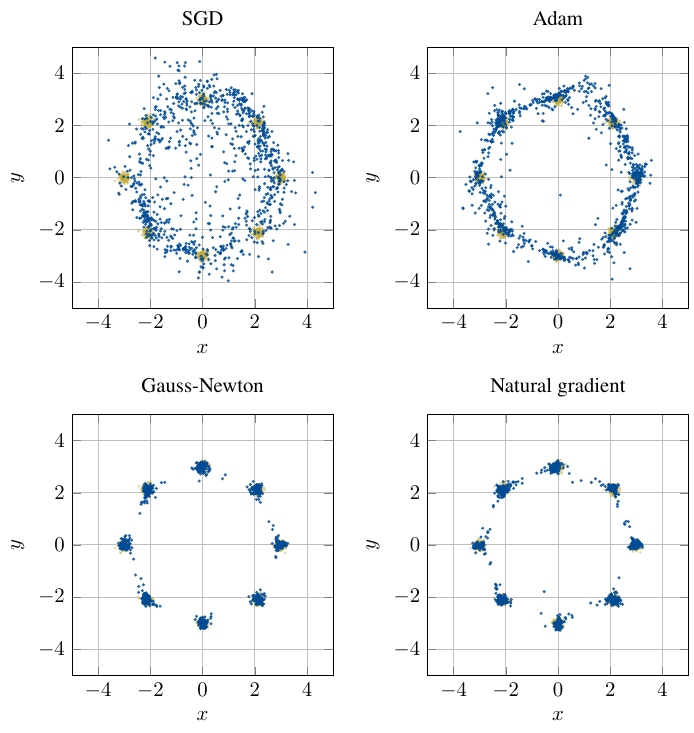}
    \centering
    \caption{
      Eight-GMM experiment: The top row shows the loss $L_{\textup{CFM}}$ (left) and the $W_2$ distance to the target distribution (right) for the different optimization methods;
      the curves show the median over 7 runs, while the shaded areas show the region between the 10\% and 90\% percentiles;
      note that the conditional flow matching loss differs from the pure regression loss by a constant; hence, it cannot be expected to vanish;
      the second and third rows show scatter plots of generated samples for each optimizer;
      each panel overlays generated samples (blue) with the target distribution (gold) for visual comparison.
    }
    \label{fig:8gmm}
\end{figure}
\begin{figure}
    \centering
    \includegraphics{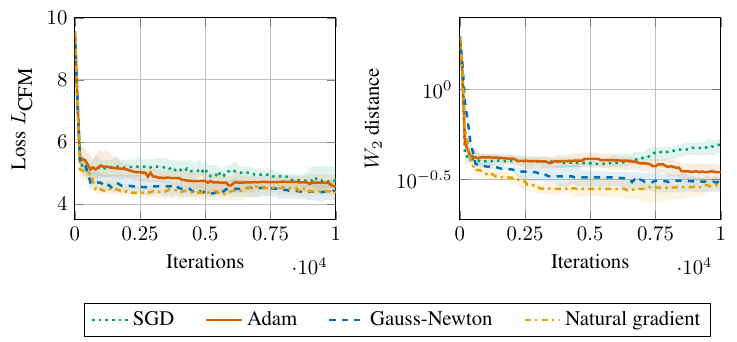}
    \includegraphics{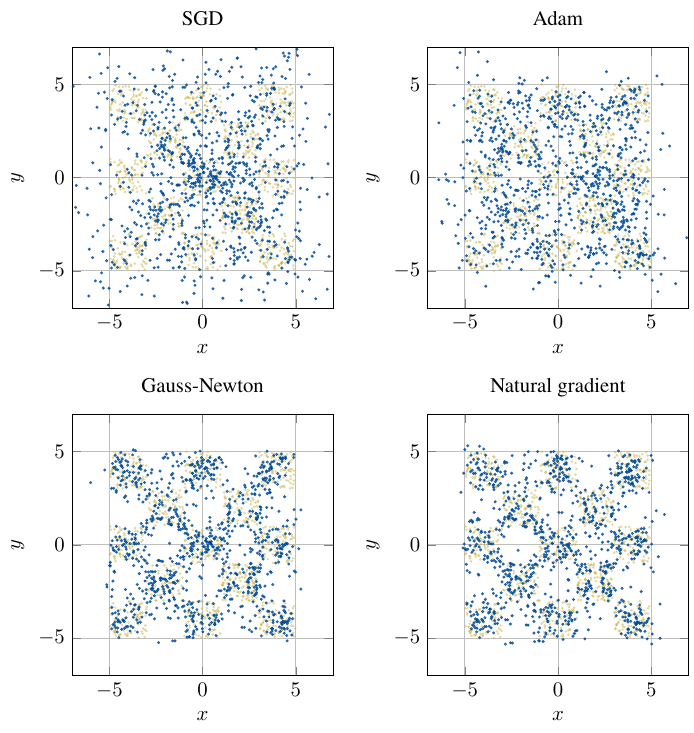}
    \caption{
      Checkerboard experiment: Top row shows the loss $L_{\textup{CFM}}$ (left) and the $W_2$ distance to the target distribution (right) for the different optimization methods;
      the curves show the median over 7 runs, while the shaded areas show the region between the 10\% and 90\% percentiles;
      note that the conditional flow matching loss differs from the pure regression loss by a constant; hence, it cannot be expected to vanish;
      the second and third rows show scatter plots of generated samples for each optimizer;
      each panel overlays generated samples (blue) with the target distribution (gold) for visual comparison.
    }
    \label{fig:checkerboard}
\end{figure}
\renewcommand{\arraystretch}{1.2}
\begin{table}
\centering
\small
\renewcommand{\arraystretch}{1.3}
\begin{tabular}{llcccccc}
\toprule
 &  & \multicolumn{6}{c}{Optimizer} \\
\cmidrule(lr){3-8}
Problem & Metric & SGD & Adam & GN & GN-LS & NG & NG-LS \\
\midrule
\multirow{2}{*}{Isotropic GMM} & $W_2$ & 0.481 & 0.482 & 0.273 & 0.351 & \textbf{0.167} & 0.403 \\
 & TV & 0.346 & 0.340 & 0.197 & 0.289 & \textbf{0.191} & 0.322 \\
\multirow{2}{*}{Anisotropic GMM} & $W_2$ & 0.647 & 0.507 & 0.466 & 0.391 & \textbf{0.135} & 0.502 \\
 & TV & 0.589 & 0.465 & \textbf{0.182} & 0.443 & 0.207 & 0.542 \\
\multirow{2}{*}{8-GMM} & $W_2$ & 0.597 & 0.261 & 0.309 & \textbf{0.202} & 0.268 & 0.240 \\
 & TV & 0.891 & 0.476 & \textbf{0.212} & 0.463 & 0.240 & 0.536 \\
\multirow{2}{*}{Checkerboard} & $W_2$ & 0.352 & 0.272 & 0.321 & \textbf{0.206} & 0.305 & 0.271 \\
 & TV & 0.493 & 0.422 & \textbf{0.374} & 0.388 & 0.387 & 0.443 \\
\multirow{2}{*}{Moons 2D} & $W_2$ & 0.178 & 0.141 & \textbf{0.078} & 0.155 & 0.083 & 0.165 \\
 & TV & 0.690 & 0.618 & \textbf{0.532} & 0.681 & 0.606 & 0.684 \\
\bottomrule
\end{tabular}
\caption{Final $W_2$ and TV distances for each optimizer across all problems after a fixed wall-clock budget; best values per metric per problem are shown in bold; LS indicates the use of a linesearch.}
\label{tab:final-results}
\end{table}
\FloatBarrier

\appendix
\section{Carath\'{e}odory Theory and Method of Characteristics}
We begin by recalling a classic well-posedness theorem.
\begin{theorem}
[Carath\'{e}odory existence theorem] \label{app:them:caratheodory}
Consider a velocity field $v_\bullet$ such that there are functions
$\alpha,\beta\in L^1_T$ satisfying
\begin{equ}
    \lVert v_t(0) \rVert \le \alpha_t, \quad \lVert v_t(x)-v_t(y) \rVert \le \beta_t \lVert x -y\rVert \quad \text{for all } t\in[0,1], x, y\in\R^d.
\end{equ}
Then the flow $\varphi_{t}^{v}$, given by
\begin{equation}
	\partial_{t}\varphi_{t}^{v}(x) =v_{t}(\varphi_{t}^{v}(x)) \quad \text{and } \varphi_{0}^{v}(x) =x,
\end{equation}
for all $x\in\R^{d}$, $t\in[0,1]$ exists and is unique.
Further, we have
\begin{equation}
	\lVert\varphi_{t}(x)-\varphi_{t}(y)\rVert\le\lVert x-y\rVert e^{\int_{0}^{t}\beta_{s}\dif s}
\end{equation}
for all $x,y\in\R^{d}$, $t\in[0,1]$.
\end{theorem}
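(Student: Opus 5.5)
Under these hypotheses I would prove the Carath\'eodory existence theorem by a Picard iteration in integral form, followed by a Gr\"onwall argument for uniqueness and for the Lipschitz estimate. Fix $x\in\R^d$ and consider the map
\begin{equ}
    (\mathcal{T}\gamma)_t = x + \int_0^t v_s(\gamma_s)\dif s
\end{equ}
on the space $C([0,1];\R^d)$.

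The first step is to check that $\mathcal{T}$ maps continuous paths to continuous paths. For a continuous $\gamma$, the map $s\mapsto v_s(\gamma_s)$ is measurable, since $v$ is Carath\'eodory: measurable in $t$ and continuous in $x$. It is also integrable, because
\begin{equ}
    \lVert v_s(\gamma_s)\rVert \le \alpha_s + \beta_s\lVert\gamma_s\rVert \le \alpha_s+\beta_s \sup_{r}\lVert \gamma_r\rVert,
\end{equ}
and the right-hand side lies in $L^1_T$. Hence $\mathcal{T}\gamma$ is absolutely continuous.

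The second step, which I expect to be the main (if mild) obstacle, is to obtain a contraction when $\beta$ is only in $L^1_T$ rather than bounded. I would set $B(t)=\int_0^t\beta_s\dif s$ and use the weighted norm
\begin{equ}
    \lVert\gamma\rVert_\ast=\sup_t e^{-2B(t)}\lVert\gamma_t\rVert.
\end{equ}
With this norm,
\begin{equ}
    \lVert(\mathcal{T}\gamma-\mathcal{T}\eta)_t\rVert\le\int_0^t\beta_s e^{2B(s)}\dif s\,\lVert\gamma-\eta\rVert_\ast\le \tfrac12 e^{2B(t)}\lVert\gamma-\eta\rVert_\ast,
\end{equ}
so $\mathcal{T}$ is a $\tfrac12$-contraction. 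Banach's fixed point theorem then gives a unique continuous solution of the integral equation. Such a solution is absolutely continuous and satisfies the ODE for almost every $t$, which is the Carath\'eodory sense. If one prefers to avoid weighted norms, an alternative is to iterate on subintervals where $\int\beta<\tfrac12$ and concatenate; this works because $\beta$ is integrable and therefore has an absolutely continuous primitive.

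For the stability estimate, I would take two solutions $\varphi_t(x),\varphi_t(y)$ and subtract their integral equations:
\begin{equ}
    \lVert\varphi_t(x)-\varphi_t(y)\rVert\le\lVert x-y\rVert+\int_0^t\beta_s\lVert\varphi_s(x)-\varphi_s(y)\rVert\dif s.
\end{equ}
Gr\"onwall's inequality in its integral form with an $L^1$ kernel then yields the bound $\lVert x-y\rVert e^{\int_0^t\beta_s\dif s}$. Taking $x=y$ in this estimate also recovers uniqueness independently of the fixed-point argument.
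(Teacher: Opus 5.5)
Your argument is correct and complete, and it follows a different route from the paper. The paper does not prove this theorem at all; it only cites \cite[Lemma 8.1.4]{ambrosio2008gradient}. That reference is a Cauchy--Lipschitz result formulated for Borel fields that are only locally Lipschitz with time-integrable local constants, and it gets solutions on the whole interval through a maximal-solution and non-blow-up argument.

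You instead use the \emph{global} Lipschitz bound directly. The weighted norm $\sup_t e^{-2B(t)}\lVert\gamma_t\rVert$ is equivalent to the sup norm because $B(1)=\lVert\beta\rVert_{L^1_T}<\infty$. Combined with $\int_0^t\beta_s e^{2B(s)}\dif s=\tfrac12\bigl(e^{2B(t)}-1\bigr)$, it makes $\mathcal{T}$ a $\tfrac12$-contraction on all of $[0,1]$ at once, so no local-existence-plus-continuation step is needed. Your Gr\"onwall step then gives exactly the constant $e^{\int_0^t\beta_s\dif s}$ claimed in the statement. The cost is that your proof does not cover merely locally Lipschitz fields, but the paper never needs that generality.

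You also handle correctly two points the statement leaves implicit. First, measurability of $t\mapsto v_t(x)$ has to be assumed; it is automatic for $v_\bullet\in\V$, since point evaluation is a bounded linear map on $C^1_{\textup{Lip}}$. Second, the ODE can hold only for almost every $t$ (the Carath\'eodory sense), not for every $t$ as literally written. Any such absolutely continuous solution satisfies the integral equation, so your fixed-point uniqueness covers the whole Carath\'eodory class.
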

The proof can be found in \cite[Lemma 8.1.4]{ambrosio2008gradient}.
Additionally, the following bounds will be useful.
\begin{lemma}\label{app:lem:flow-grow}
Consider the setting of the previous theorem. Then, the following bounds hold:
\begin{enumerate}
    \item For all $x\in\R^{d}$, $t\in[0,1]$ it holds that
    \begin{equation}
        \lVert\varphi_{t}(x)-x\rVert
        \le \lVert \alpha \rVert_{L^1([0,t])} e^{\lVert \beta \rVert_{L^1([0,t])}} + \lVert x \rVert \left(e^{\lVert \beta \rVert_{L^1([0,t])}} -1  \right).
    \end{equation}
    \item For all $x\in\R^{d}$, $t\in[0,1]$ it holds that
    \begin{equation}
        \lVert\varphi_{t}(x)\rVert
        \le \lVert \alpha \rVert_{L^1([0,t])} e^{\lVert \beta \rVert_{L^1([0,t])}} + \lVert x \rVert \cdot e^{\lVert \beta \rVert_{L^1([0,t])}}.
    \end{equation}
\end{enumerate}
\end{lemma}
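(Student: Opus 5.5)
We prove the two estimates by a Grönwall argument applied directly to the integral form of the flow equation. The argument is entirely pathwise and uses only the bounds $\lVert v_t(0)\rVert\le\alpha_t$ and $[v_t]_{\mathrm{Lip}}\le\beta_t$. Fix $x\in\R^d$ and set $r(t)\coloneqq\lVert\varphi_t(x)-x\rVert$. By \Cref{app:them:caratheodory} the map $t\mapsto\varphi_t(x)$ is absolutely continuous, and
\begin{equ}
    \varphi_t(x)-x=\int_0^t v_s(\varphi_s(x))\dif s .
\end{equ}
We bound the integrand by splitting it into the displacement and a term evaluated at $x$:
\begin{equ}
    \lVert v_s(\varphi_s(x))\rVert\le \lVert v_s(\varphi_s(x))-v_s(x)\rVert+\lVert v_s(x)-v_s(0)\rVert+\lVert v_s(0)\rVert\le \beta_s r(s)+\beta_s\lVert x\rVert+\alpha_s .
\end{equ}
This yields the integral inequality
\begin{equ}
    r(t)\le \int_0^t\left(\alpha_s+\beta_s\lVert x\rVert\right)\dif s+\int_0^t\beta_s r(s)\dif s .
\end{equ}

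For the first estimate we apply Grönwall's inequality in integral form with an $L^1$ kernel. Set $B(t)=\int_0^t\beta_s\dif s$ and $f(t)=\int_0^t(\alpha_s+\beta_s\lVert x\rVert)\dif s$. Since $f$ is nondecreasing, Grönwall gives $r(t)\le f(t)e^{B(t)}$. This is slightly too crude for the $\lVert x\rVert$ term, so we use the sharper Grönwall form
\begin{equ}
    r(t)\le f(t)+\int_0^t f'(s)\bigl(e^{B(t)-B(s)}-1\bigr)\dif s=\int_0^t(\alpha_s+\beta_s\lVert x\rVert)e^{B(t)-B(s)}\dif s .
\end{equ}
Equivalently, we differentiate $e^{-B(t)}R(t)$, where $R$ is the right-hand side of the integral inequality.

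We then treat the two parts of this integral separately. For the $\alpha$ part, we bound $e^{B(t)-B(s)}\le e^{B(t)}$, which gives the term $\lVert\alpha\rVert_{L^1([0,t])}e^{\lVert\beta\rVert_{L^1([0,t])}}$. For the $\beta$ part, we compute exactly:
\begin{equ}
    \int_0^t\beta_s e^{B(t)-B(s)}\dif s=e^{B(t)}-1 .
\end{equ}
This produces $\lVert x\rVert\bigl(e^{\lVert\beta\rVert_{L^1([0,t])}}-1\bigr)$, which proves item 1.

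Item 2 then follows from the triangle inequality $\lVert\varphi_t(x)\rVert\le\lVert\varphi_t(x)-x\rVert+\lVert x\rVert$ combined with item 1, since $\lVert x\rVert\bigl(e^{B(t)}-1\bigr)+\lVert x\rVert=\lVert x\rVert e^{B(t)}$.

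The only delicate step is justifying Grönwall's inequality in this setting, because $r$ need not be differentiable and $\beta$ is only in $L^1$. We handle this by working with the absolutely continuous majorant $R(t)$ instead of $r$. Since $R'=\alpha+\beta\lVert x\rVert+\beta r\le\alpha+\beta\lVert x\rVert+\beta R$ almost everywhere, the function $e^{-B}R$ is absolutely continuous with derivative at most $e^{-B}(\alpha+\beta\lVert x\rVert)$. Integrating this bound from $0$ to $t$ gives the sharper Grönwall form above.
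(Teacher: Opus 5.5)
Your proof is correct and yields exactly the stated constants, but it runs in the opposite direction from the paper's proof. The paper first proves item 2 by applying the elementary Gr\"onwall inequality to $\lVert\varphi_t(x)\rVert$. Splitting the integrand around $0$ gives a forcing term $\lVert x\rVert+\lVert\alpha\rVert_{L^1([0,t])}$ that is nondecreasing and has no $\beta$-weighted part, so the plain bound $f(t)e^{B(t)}$ is already sharp. The paper then gets item 1 by inserting this bound into $\lVert\varphi_t(x)-x\rVert\le\int_0^t(\alpha_s+\beta_s\lVert\varphi_s(x)\rVert)\dif s$, using $\lVert\alpha\rVert_{L^1([0,s])}\le\lVert\alpha\rVert_{L^1([0,t])}$ and the same exact integral $\int_0^t\beta_se^{B(s)}\dif s=e^{B(t)}-1$ that you use.

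You instead split around $x$, which puts $\beta_s\lVert x\rVert$ into the forcing term. As you note, plain Gr\"onwall would then only give the coefficient $B(t)e^{B(t)}\ge e^{B(t)}-1$ in front of $\lVert x\rVert$. So you need the integrating-factor form $r(t)\le\int_0^t(\alpha_s+\beta_s\lVert x\rVert)e^{B(t)-B(s)}\dif s$, after which item 2 follows from the triangle inequality in one line.

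The paper's route needs only the textbook Gr\"onwall inequality, at the price of a second pass through the integral equation. Your route needs the variation-of-constants form but does everything in one pass, and it keeps the slightly sharper intermediate weight $e^{B(t)-B(s)}$ on the $\alpha$-term. You also justify that form for an $L^1$ kernel and a non-differentiable $r$ via the absolutely continuous majorant $R$, whereas the paper simply invokes Gr\"onwall.
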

\begin{proof}
First, observe that for any $t\in[0,1]$ and $x\in\R^d$ it holds that
\begin{equation}
    \lVert v_t(x) \rVert \le \lVert v_t(y) - v_t(0) \rVert + \lVert v_t(0) \rVert \le \alpha_t + \beta_t \lVert y \rVert.
\end{equation}
By the definition of the flow, we have $\varphi_t(x) = x + \int_0^t v_s(\varphi_s(x)) \dif s$.
\begin{enumerate}
    \item[2.] We start by showing the second statement, for which we take the norm of the integral equation and obtain
    \begin{equs}
        \lVert \varphi_t(x) \rVert &\le \lVert x \rVert + \int_0^t \lVert v_s(\varphi_s(x)) \rVert \dif s \\
        &\le \lVert x \rVert + \int_0^t \left( \alpha_s + \beta_s \lVert \varphi_s(x) \rVert \right) \dif s \\
        &= \left( \lVert x \rVert + \lVert \alpha \rVert_{L^1([0,t])} \right) + \int_0^t \beta_s \lVert \varphi_s(x) \rVert \dif s.
    \end{equs}
    Now Gr\"onwall's inequality yields
    \begin{equs}\label{app:lem:proof-help}
        \lVert \varphi_t(x) \rVert &\le \left( \lVert x \rVert + \int_0^t \alpha_s ds \right) \exp\left( \int_0^t \beta_s \dif s \right) \\
        &= \left( \lVert x \rVert + \lVert \alpha \rVert_{L^1([0,t])} \right) e^{\lVert \beta \rVert_{L^1([0,t])}} \\
        &= \lVert \alpha \rVert_{L^1([0,t])} e^{\lVert \beta \rVert_{L^1([0,t])}} + \lVert x \rVert \cdot e^{\lVert \beta \rVert_{L^1([0,t])}}.
    \end{equs}
    \item To prove the first statement, we estimate
    \begin{equs}
        \lVert \varphi_t(x) - x \rVert &= \left\lVert \int_0^t v_s(\varphi_s(x)) \dif s \right\rVert \\
        &\le \int_0^t \lVert v_s(\varphi_s(x)) \rVert \dif s \\
        &\le \int_0^t \left( \alpha_s + \beta_s \lVert \varphi_s(x) \rVert \right) \dif s.
    \end{equs}
    Substituting the bound \eqref{app:lem:proof-help} for $\lVert \varphi_s(x) \rVert$ we get
    \begin{equs}
        \lVert \varphi_t(x) - x \rVert &\le \int_0^t \alpha_s \dif s + \int_0^t \beta_s \left( \lVert \alpha \rVert_{L^1([0,s])} e^{\lVert \beta \rVert_{L^1([0,s])}} + \lVert x \rVert e^{\lVert \beta \rVert_{L^1([0,s])}} \right) \dif s.
    \end{equs}
    Since $\lVert \alpha \rVert_{L^1([0,s])} \le \lVert \alpha \rVert_{L^1([0,t])}$ for all $s \in [0,t]$, we can upper bound the integral by pulling this term out
    \begin{equs}
        \lVert \varphi_t(x) - x \rVert &\le \lVert \alpha \rVert_{L^1([0,t])} + \left( \lVert \alpha \rVert_{L^1([0,t])} + \lVert x \rVert \right) \int_0^t \beta_s e^{\lVert \beta \rVert_{L^1([0,s])}} \dif s.
    \end{equs}
    For almost every $s\in[0,1]$ the chain rule for absolutely continuous functions gives
    $\beta_s e^{\lVert \beta \rVert_{L^1([0,s])}} = \frac{d}{ds} e^{\lVert \beta \rVert_{L^1([0,s])}}$.
    Hence, we obtain $e^{\lVert \beta \rVert_{L^1([0,t])}} - 1$ and consequently
    \begin{equs}
        \lVert \varphi_t(x) - x \rVert &\le \lVert \alpha \rVert_{L^1([0,t])} + \left( \lVert \alpha \rVert_{L^1([0,t])} + \lVert x \rVert \right) \left( e^{\lVert \beta \rVert_{L^1([0,t])}} - 1 \right) \\
        &= \lVert \alpha \rVert_{L^1([0,t])} e^{\lVert \beta \rVert_{L^1([0,t])}} + \lVert x \rVert \left( e^{\lVert \beta \rVert_{L^1([0,t])}} - 1 \right).
    \end{equs}
    This concludes the proof of the first statement.
\end{enumerate}
\end{proof}
\begin{lemma}[Method of characteristics]
Consider a velocity field $v\in \V$ and the corresponding flow $\varphi_{\bullet}^{v}$ as well as an initial probability measure $\rho_{0}\in\mathcal{P}_{2}(\R^{d})$ with finite second moments.
Then $\rho_{t}\coloneqq(\varphi_{t}^{v})_{\sharp}\rho_{0}$ is a distributional solution to the continuity equation
\begin{equation}
    \partial_{t}\rho_{t}+\nabla\cdot(v_{t}\rho_{t})=0
\end{equation}
meaning that for any test function $\phi\in C_{\textup{c}}^{\infty}((0,1)\times\R^{d})$ it holds that
\begin{equation}
    \int_{0}^{1}\int_{\R^{d}}\left(\partial_{t}\phi_{t}(x)+v_{t}(x)\cdot\nabla\phi_{t}(x)\right)\dif\rho_{t}(x)\dif t=0.
\end{equation}
\end{lemma}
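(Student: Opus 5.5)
The plan is the standard weak-formulation argument: test the push-forward against a smooth compactly supported function and use that the flow solves the ODE. Fix $\phi\in C_{\textup{c}}^{\infty}((0,1)\times\R^{d})$. By definition of the push-forward,
\begin{equation*}
\int_{\R^d}\bigl(\partial_t\phi_t+v_t\cdot\nabla\phi_t\bigr)\dif\rho_t=\int_{\R^d}\bigl(\partial_t\phi_t(\varphi^v_t(x))+v_t(\varphi^v_t(x))\cdot\nabla\phi_t(\varphi^v_t(x))\bigr)\rho_0(\D x).
\end{equation*}
For each fixed $x$, the map $t\mapsto\varphi^v_t(x)$ is absolutely continuous by \Cref{app:them:caratheodory}, with $\partial_t\varphi^v_t(x)=v_t(\varphi^v_t(x))$ for a.e.\ $t$. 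Since $\phi$ is smooth, the chain rule for absolutely continuous curves shows that $t\mapsto\phi_t(\varphi^v_t(x))$ is absolutely continuous, and its a.e.\ derivative is exactly the integrand on the right-hand side.

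Integrating in $t$ over $[0,1]$ gives $\phi_1(\varphi_1^v(x))-\phi_0(x)$. This vanishes because $\phi$ has compact support in $(0,1)\times\R^d$, so $\phi_0\equiv\phi_1\equiv0$.

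To conclude, I will exchange the $t$- and $x$-integrals by Fubini. For that I need a bound on
\begin{equation*}
\bigl|\partial_t\phi_t(\varphi^v_t(x))\bigr|+\bigl|v_t(\varphi^v_t(x))\bigr|\,\bigl|\nabla\phi_t(\varphi^v_t(x))\bigr|
\end{equation*}
that is integrable on $[0,1]\times\R^d$ with respect to $\dif t\otimes\rho_0$. The first term is bounded by $\|\partial_t\phi\|_\infty$. For the second, I use $\|v_t(y)\|\le\alpha_t+\beta_t\|y\|$ together with Lemma \ref{app:lem:flow-grow}, which gives $\|\varphi_t^v(x)\|\le C(1+\|x\|)$ uniformly in $t$. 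Hence the second term is at most $\|\nabla\phi\|_\infty(\alpha_t+\beta_t C(1+\|x\|))$. This is integrable because $\alpha,\beta\in L^1_T$ and $\rho_0$ has finite first moment, which follows from its finite second moment. Fubini then yields
\begin{equation*}
\int_0^1\int_{\R^d}\bigl(\partial_t\phi_t+v_t\cdot\nabla\phi_t\bigr)\dif\rho_t\dif t=\int_{\R^d}\bigl(\phi_1(\varphi^v_1(x))-\phi_0(x)\bigr)\rho_0(\D x)=0.
\end{equation*}

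The only delicate point is regularity in time. Since $v$ is merely $L^1$ in $t$, the identity $\frac{\D}{\D t}\phi_t(\varphi_t(x))=\partial_t\phi_t+v_t\cdot\nabla\phi_t$ holds only almost everywhere. One must therefore argue via absolute continuity of the composite curve rather than classical differentiability, and the Carath\'eodory setting provides exactly that. The domination step is the other point to verify, and it reduces to the linear-growth bounds already established.
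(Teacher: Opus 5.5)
Your proof is correct and is essentially the standard argument behind the references the paper cites in place of a proof (Ambrosio--Gigli--Savar\'e, Lemma 8.1.6; Figalli--Glaudo, Lemma 4.1.1): compose $\phi$ with the characteristics, apply the a.e.\ chain rule along the absolutely continuous curve $t\mapsto(t,\varphi^v_t(x))$, and exchange the integrals by Fubini. As a minor remark, the domination does not need the moment assumption: $\nabla\phi$ is supported in some ball of radius $R$, so the transport term is bounded by $\lVert\nabla\phi\rVert_\infty(\alpha_t+\beta_t R)$, which is integrable against $\D t\otimes\rho_0$ for any probability measure $\rho_0$.
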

The proof can be found in \cite[Lemma 8.1.6.]{ambrosio2008gradient} or \cite[Lemma 4.1.1.]{figalli2021invitation}.
\begin{lemma}[Stability of flows]\label{app:lem:stability}
Consider two velocity fields $v_\bullet$ and $u_\bullet$ satisfying
\begin{equs}
    \lVert v_{t}(0)\rVert & \le\alpha_{t}\quad\text{and }\lVert v_{t}(x)-v_{t}(y)\rVert\le\beta_{t} \lVert x -y \rVert\\
    \lVert u_{t}(0)\rVert & \le\alpha_{t}^{\prime}\quad\text{and }\lVert u_{t}(x)-u_{t}(y)\rVert\le\beta_{t}^{\prime} \lVert x -y \rVert
\end{equs}
 and by $\varphi^{v+hu}$ we denote the flow induced by the velocity
$v+hu$ for $h\in\R$. Then for any $\delta>0$ there are constants
$c_{1},c_{2}\ge0$ such that
\begin{equ}
    \lVert\varphi_{t}^{v+hu}(x)-\varphi_{t}^{v}(x)\rVert\le
    \lvert h \rvert (c_{1}+c_{2}\lVert x\rVert)\quad\text{for all }h\in(-\delta,\delta),x\in\R^{d},t\in[0,1],
\end{equ}
 where we can choose
\begin{equs}
    c_{1} & = \lVert \beta'\rVert_{L^1_T}\left(\lVert\alpha\rVert_{L^1_T}+\delta\lVert\alpha'\rVert_{L^1_T}\right) e^{2\lVert\beta\rVert_{L^1_T}+\delta\lVert\beta'\rVert_{L^1_T}}
    \\ & \qquad  + \lVert \alpha' \rVert_{L^1_T}e^{\lVert\beta\rVert_{L^1_T}},
    \quad\text{and }
    \\
    c_{2} & =\lVert\beta^{\prime}\rVert_{L^1_T} e^{2\lVert\beta\rVert_{L^1_T}+\delta\lVert\beta^{\prime}\rVert_{L^1_T}}.
\end{equs}
\end{lemma}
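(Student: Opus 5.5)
We subtract the two integral equations and run a Gr\"onwall argument, using the linear growth of the unperturbed flow from \Cref{app:lem:flow-grow}. Fix $\delta>0$, $h\in(-\delta,\delta)$, $x\in\R^d$, and set $e_t\coloneqq\varphi^{v+hu}_t(x)-\varphi^v_t(x)$, so $e_0=0$ and
\begin{equ}
    e_t=\int_0^t\Big(v_s(\varphi^{v+hu}_s(x))-v_s(\varphi^v_s(x))\Big)\dif s+h\int_0^t u_s(\varphi^{v+hu}_s(x))\dif s .
\end{equ}

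For the first integrand we use the Lipschitz bound $\beta_s\lVert e_s\rVert$. For the second we add and subtract $u_s(\varphi^v_s(x))$ and obtain
\begin{equ}
    \lVert u_s(\varphi^{v+hu}_s(x))\rVert\le \alpha'_s+\beta'_s\lVert\varphi^v_s(x)\rVert+\beta'_s\lVert e_s\rVert .
\end{equ}
This gives
\begin{equ}
    \lVert e_t\rVert\le \lvert h\rvert\int_0^t\big(\alpha'_s+\beta'_s\lVert\varphi^v_s(x)\rVert\big)\dif s+\int_0^t(\beta_s+\lvert h\rvert\beta'_s)\lVert e_s\rVert\dif s .
\end{equ}
Next we insert the growth bound $\lVert\varphi^v_s(x)\rVert\le(\lVert\alpha\rVert_{L^1_T}+\lVert x\rVert)e^{\lVert\beta\rVert_{L^1_T}}$ from \Cref{app:lem:flow-grow}. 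The forcing term is then at most
\begin{equ}
    \lvert h\rvert\Big(\lVert\alpha'\rVert_{L^1_T}+\lVert\beta'\rVert_{L^1_T}(\lVert\alpha\rVert_{L^1_T}+\lVert x\rVert)e^{\lVert\beta\rVert_{L^1_T}}\Big),
\end{equ}
which is nondecreasing in $t$. Gr\"onwall's inequality in integral form, with the $L^1_T$ kernel $\beta+\delta\beta'$, multiplies this by $e^{\lVert\beta\rVert_{L^1_T}+\delta\lVert\beta'\rVert_{L^1_T}}$. The result has the form $\lvert h\rvert(c_1+c_2\lVert x\rVert)$ with constants independent of $t$, $x$, and $h$.

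These constants are at least as good as those stated, up to bookkeeping. The coefficient of $\lVert x\rVert$ is $\lVert\beta'\rVert_{L^1_T} e^{2\lVert\beta\rVert_{L^1_T}+\delta\lVert\beta'\rVert_{L^1_T}}$, which is exactly the stated $c_2$. The constant part is
\begin{equ}
    \lVert\alpha'\rVert_{L^1_T}e^{\lVert\beta\rVert_{L^1_T}+\delta\lVert\beta'\rVert_{L^1_T}}+\lVert\beta'\rVert_{L^1_T}\lVert\alpha\rVert_{L^1_T}e^{2\lVert\beta\rVert_{L^1_T}+\delta\lVert\beta'\rVert_{L^1_T}} .
\end{equ}
Its second summand is bounded by the first summand of the stated $c_1$. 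Its first summand carries an extra factor $e^{\delta\lVert\beta'\rVert_{L^1_T}}$ compared with the stated second summand $\lVert\alpha'\rVert_{L^1_T}e^{\lVert\beta\rVert_{L^1_T}}$, so we do not get below the stated $c_1$ verbatim.

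To match the stated form, we expect to need a different decomposition: compare $\varphi^{v+hu}$ with $\varphi^v$ while bounding $\varphi^{v+hu}$ itself by \Cref{app:lem:flow-grow} applied to the field $v+hu$, whose bounds are $\alpha+\delta\alpha'$ and $\beta+\delta\beta'$. That version produces the factor $(\lVert\alpha\rVert_{L^1_T}+\delta\lVert\alpha'\rVert_{L^1_T})$ appearing in $c_1$. Concretely, we bound $\lVert u_s(\varphi^{v+hu}_s(x))\rVert\le\alpha'_s+\beta'_s\lVert\varphi^{v+hu}_s(x)\rVert$ directly and then apply Gr\"onwall with kernel $\beta$ only, which gives the factor $e^{\lVert\beta\rVert_{L^1_T}}$ on the $\alpha'$ term. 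The linear growth of $\varphi^{v+hu}$ contributes $e^{\lVert\beta\rVert_{L^1_T}+\delta\lVert\beta'\rVert_{L^1_T}}$, and the product gives the exponent $2\lVert\beta\rVert_{L^1_T}+\delta\lVert\beta'\rVert_{L^1_T}$.

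The only delicate step is keeping the constants uniform in $h$ and $t$, which this second route handles. It reproduces the stated $c_1$ and $c_2$, so we adopt it.
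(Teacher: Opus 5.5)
Your adopted second route is the paper's argument: you bound $\lVert u_s(\varphi^{v+hu}_s(x))\rVert$ through the linear growth of the perturbed flow, with constants $\alpha+\delta\alpha'$ and $\beta+\delta\beta'$, and then apply Gr\"onwall with kernel $\beta$ alone, which reproduces the stated $c_1$ and $c_2$ exactly. Your first route is also valid; it just gives different, incomparable constants, so the switch is needed only to match the stated form.
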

\begin{proof}
Assume without loss of generality that $h>0$, as the other case is then obtained by symmetry.
We fix $x\in\R^{d}$ and set $x_{t}^{h}\coloneqq\varphi_{t}^{v+hu}(x)$.
Then, we have
\begin{equs}
    \lVert x_{t}^{h}-x_{t}^{0}\rVert & =\left\lVert \int_{0}^{t}(v_{s}(x_{s}^{h})-v_{s}(x_{s}^{0}))\dif s-h\int_{0}^{t}u_{s}(x_{s}^{h})\dif s\right\rVert \\
    & \le\int_{0}^{t}\lVert v_{s}(x_{s}^{h})-v_{s}(x_{s}^{0})\rVert\dif s+h\int_{0}^{t}\lVert u_{s}(x_{s}^{h})\rVert\dif s\\
    & \le\int_{0}^{t}\beta_{s}\lVert x_{s}^{h}-x_{s}^{0}\rVert\dif s+h\int_{0}^{t}\lVert u_{s}(x_{s}^{h})\rVert\dif s.
\end{equs}
To bound $\lVert u_{s}\rVert$, we use the notation $v_{t}^{h}\coloneqq v+hu$,
which satisfies the boundedness and Lipschitz conditions with $\tilde{\alpha}=\alpha+h\alpha'$
and $\tilde{\beta}=\beta+h\beta'$ and apply Gr\"{o}nwall's inequality
to
\begin{equs}
    \lVert x_{t}^{h}\rVert & =\left\lVert x+\int_{0}^{t}v_{s}^{h}(x_{s}^{h})\dif s\right\rVert
    \\ & \le\lVert x \rVert + \int_{0}^{t}\lVert v_{s}^{h}(x_{s}^{h})\rVert\dif s
    \\ & \le\lVert x \rVert + \int_{0}^{t}\left(\tilde{\alpha}_{s}+\tilde{\beta}_{s}\lVert x_{s}^{h}\rVert\right)\dif s,
\end{equs}
which yields
\begin{equs}
\lVert x_{t}^{h}\rVert & \le
\left(\lVert x\rVert+\int_{0}^{t}\tilde{\alpha}_{s}\dif s \right)\cdot\exp\left(\int_{0}^{t}\tilde{\beta}_{s}\dif s\right)\\
 & \le
 \left(\lVert x\rVert+\lVert\alpha\rVert_{L^1_T}+h\lVert\alpha'\rVert_{L^1_T}\right)\cdot\exp\left(\lVert\beta\rVert_{L^1_T}+h\lVert\beta'\rVert_{L^1_T}\right).
\end{equs}
This gives us
\begin{equation*}
\begin{split}
    \lVert u_{s}(x_{s}^{h})\rVert & \le\alpha'_{s}+\beta'_{s}\lVert x_{s}^{h}\rVert\\
    & \le\alpha'_{s}+\beta'_{s}\left(\lVert x\rVert+\lVert\alpha\rVert_{L^1_T}+h\lVert\alpha'\rVert_{L^1_T}\right)
    \\
    & \qquad \times \exp\left(\lVert\beta\rVert_{L^1_T}+h\lVert\beta'\rVert_{L^1_T}\right)\\
    & \eqqcolon a_{s}.
\end{split}
\end{equation*}
Plugging back in yields
\begin{equation*}
\lVert x_{t}^{h}-x_{t}^{0}\rVert\le\int_{0}^{t}\beta_{s}\lVert x_{s}^{h}-x_{s}^{0}\rVert\dif s+h\int_{0}^{t}a_{s}\dif s.
\end{equation*}
An application of Gr\"{o}nwall's inequality yields the claim.
\end{proof}
\begin{lemma}[Sensitivity of ODEs]\label{app:lem:sensitivity}
Consider two velocity fields $v,u\in\V$
and denote the flow induced by the velocity $v+hu$  by $\varphi^{v+hu}$ for $h\in\R$.
Then, the limit $\psi_{t}^{u}(x)\coloneqq\ddh \varphi_{t}^{v+hu}(x)$ exists for all $x\in\R^{d}$, $t\in[0,1]$ and is the unique solution to the equation
\begin{equation}
	\partial_{t}\psi_{t}^{u}(x)=Dv_{t}(\varphi_{t}^{v}(x))\cdot\psi_{t}^{u}(x)+u_{t}(\varphi_{t}^{v}(x)), \quad \psi_0(x)=0.
\end{equation}
\end{lemma}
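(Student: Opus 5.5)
The plan is the classical sensitivity argument: first show that a candidate derivative $\psi^u$ exists and is unique, then show that the difference quotients converge to it by Grönwall's inequality.

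\emph{Candidate derivative.} Fix $x\in\R^d$ and write $x^h_t\coloneqq\varphi^{v+hu}_t(x)$. The linear equation
\[
\partial_t\psi_t=Dv_t(x^0_t)\psi_t+u_t(x^0_t),\qquad \psi_0=0,
\]
has an $L^1_T$ coefficient matrix, since $\lVert Dv_t\rVert_\infty\le\beta_t$. By \Cref{app:lem:flow-grow}, the forcing satisfies $\lVert u_t(x^0_t)\rVert\le \alpha'_t+\beta'_t C(1+\lVert x\rVert)$, which is also in $L^1_T$. \Cref{app:them:caratheodory} therefore gives a unique absolutely continuous solution $\psi^u_\bullet(x)$.

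\emph{Integral equation for the error.} Set $r^h_t\coloneqq h^{-1}(x^h_t-x^0_t)-\psi_t$. Subtracting the integral equations and using the fundamental theorem of calculus along the segment $[x^0_s,x^h_s]$ gives
\[
r^h_t=\int_0^t Dv_s(x^0_s)\,r^h_s\dif s+\int_0^t e^h_s\dif s,
\]
where the error term is
\[
e^h_s=\int_0^1\bigl(Dv_s(x^0_s+\lambda(x^h_s-x^0_s))-Dv_s(x^0_s)\bigr)\dif\lambda\;\frac{x^h_s-x^0_s}{h}+\bigl(u_s(x^h_s)-u_s(x^0_s)\bigr).
\]

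\emph{Closing with Grönwall.} Grönwall's inequality yields
\[
\sup_{t}\lVert r^h_t\rVert\le e^{\lVert\beta\rVert_{L^1_T}}\int_0^1\lVert e^h_s\rVert\dif s,
\]
so it remains to show $\int_0^1\lVert e^h_s\rVert\dif s\to0$.

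\emph{Main step: $e^h\to0$ in $L^1_T$.} By \Cref{app:lem:stability}, $\lVert x^h_s-x^0_s\rVert\le |h|(c_1+c_2\lVert x\rVert)$, so $x^h_s\to x^0_s$ uniformly in $s$ and the quotient $(x^h_s-x^0_s)/h$ is bounded. For almost every $s$, $Dv_s$ is continuous because $v_s\in C^1$. Hence the $\lambda$-integral tends to $0$ pointwise in $s$ by dominated convergence in $\lambda$, with bound $2\beta_s$. The second term in $e^h_s$ is at most $\beta'_s\lVert x^h_s-x^0_s\rVert\to0$. Altogether, $\lVert e^h_s\rVert\le 2\beta_s(c_1+c_2\lVert x\rVert)+\beta'_s\,|h|(c_1+c_2\lVert x\rVert)$, which is an integrable majorant. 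Dominated convergence in $s$ then gives $\int_0^1\lVert e^h_s\rVert\dif s\to0$.

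I expect this last step to be the only delicate point. $Dv_s$ is merely continuous, not uniformly continuous, and only $L^1$ in time, so no modulus of continuity is available. The argument therefore cannot use a uniform rate and must rest entirely on the pointwise-in-$s$ convergence combined with the $L^1_T$ majorant from \Cref{app:lem:stability}.

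Combining the steps gives $r^h_t\to0$ uniformly in $t\in[0,1]$. Thus the limit $\ddh\varphi^{v+hu}_t(x)$ exists, equals $\psi^u_t(x)$, and is the unique solution of the linearized equation.
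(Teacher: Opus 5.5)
Your proposal is correct and follows essentially the same route as the paper: the candidate $\psi$ from the linearized ODE, Gr\"{o}nwall applied to the error $h^{-1}(x^h_t-x^0_t)-\psi_t$ using the stability bound $\lVert x^h_s-x^0_s\rVert\le |h|(c_1+c_2\lVert x\rVert)$, and dominated convergence in time with the majorant $2\beta_s$. The only cosmetic difference is how the Taylor remainder is controlled. The paper uses the modulus of continuity of $Dv_s$ on a compact set containing all trajectories, which does exist for each fixed $s$, so your remark that no modulus is available is only true uniformly in $s$. You instead use the integral mean-value form together with continuity of $Dv_s$ at the single point $x^0_s$.
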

\begin{proof}
Let us denote the spatial Lipschitz constants of $v_\bullet$ and $u_\bullet$ by $\beta, \beta'\in L^1_T$, respectively.
We fix the initial condition $x\in\R^{d}$ and consider the difference
quotients $\Delta_{t}^{h}\coloneq\frac{\varphi_{t}^{v+hu}(x)-\varphi_{t}^{v}(x)}{h}$.
Further, let $\psi$ be the unique solution of
\begin{equation*}
\partial_{t}\psi_{t}=Dv_{t}(\varphi_{t}^{v}(x))\cdot\psi_{t}+u_{t}(\varphi_{t}^{v}(x))\quad\text{and }\psi_{0}=0.
\end{equation*}
Note that there is a unique solution, as this is a linear ordinary differential equation and $Dv_{t}(\varphi_{t}^{v}(x))$ is bounded by $\beta_{t}$, where $\beta\in L^1_T$.
We denote the difference between the difference quotients and the limiting candidate by $e_{t}^{h}\coloneqq\Delta_{t}^{h}-\psi_{t}$.
By using the differential equations that both $\Delta_{t}^{h}$ as well as $\psi_{t}$ satisfy, we obtain
\begin{equs}
    \lVert e_{t}^{h}\rVert & =\left\lVert \int_{0}^{t}\left(\frac{v_{s}(x_{s}^{h})-v_{s}(x_{s}^{0})}{h}-Dv_{s}(x_{s}^{0})\psi_{s}\right)\dif s+\int_{0}^{t}(u_{s}(x_{s}^{h})-u_{s}(x_{s}^{0}))\dif s\right\rVert \\
    & \le\int_{0}^{t}\left\lVert \frac{v_{s}(x_{s}^{h})-v_{s}(x_{s}^{0})}{h}-Dv_{s}(x_{s}^{0})\psi_{s}\right\rVert \dif s+\int_{0}^{t}\lVert u_{s}(x_{s}^{h})-u_{s}(x_{s}^{0})\rVert\dif s\\
    & \le\int_{0}^{t}\left\lVert \frac{v_{s}(x_{s}^{h})-v_{s}(x_{s}^{0})}{h}-Dv_{s}(x_{s}^{0})\psi_{s}\right\rVert \dif s+\int_{0}^{t}\beta'_{s}\lVert x_{s}^{h}-x_{s}^{0}\rVert\dif s\\
    & \le\int_{0}^{t}\left\lVert \frac{v_{s}(x_{s}^{h})-v_{s}(x_{s}^{0})}{h}-Dv_{s}(x_{s}^{0})\psi_{s}\right\rVert \dif s+h(c_{1}+c_{2}\lVert x\rVert)\int_{0}^{t}\beta'_{s}\dif s
\end{equs}
By \Cref{app:lem:flow-grow} we can find a compact set $K$ such that $x_{s}^{h}\in K$
for all $h\in(-\delta,\delta)$, $s\in[0,1]$.
Over $K$ we obtain
\begin{equation*}
    \lVert v_{s}(y)-v_{s}(x)-Dv_{s}(x)(y-x)\rVert\le\omega_{Dv_s}^{K}(\lVert x-y\rVert)\lVert x-y\rVert
\end{equation*}
for $x,y\in K$, where $\omega_{Dv_s}^{K}$ denotes the modulus of continuity.
Now, we estimate
\begin{equs}
    \left\lVert \frac{v_{s}(x_{s}^{h})-v_{s}(x_{s}^{0})}{h}-Dv_{s}(x_{s}^{0})\psi_{s}\right\rVert  &
    \le\left\lVert \frac{Dv_{s}(x_{s}^{0})(x_{s}^{h}-x_{s}^{0})}{h}-Dv_{s}(x_{s}^{0})\psi_{s}\right\rVert
    \\ & \qquad+\frac{\omega_{Dv_{s}}^{K}(\lVert x_{s}^{0}-x_{s}^{h}\rVert)\lVert x_{s}^{0}-x_{s}^{h}\rVert}{h}\\
    & \le\lVert Dv_{s}(x_{s}^{0})\rVert\left\lVert \Delta_{s}^{h}-\psi_{s}\right\rVert
    \\ & \qquad+\frac{\omega_{Dv_{s}}^{K}((c_{1}+c_{2}\lVert x\rVert)h)(c_{1}+c_{2}\lVert x\rVert)h}{h}\\
    & =\lVert Dv_{s}(x_{s}^{0})\rVert\left\lVert e_{s}^{h}\right\rVert
    \\ & \qquad+\omega_{Dv_{s}}^{K}((c_{1}+c_{2}\lVert x\rVert)h)(c_{1}+c_{2}\lVert x\rVert) \\
    & \le \lVert Dv_{s}(x_{s}^{0})\rVert\left\lVert e_{s}^{h}\right\rVert+c_3\omega_{Dv_{s}}^{K}(c_4h).
\end{equs}
This gives
\begin{equ}
    \lVert e_{t}^{h}\rVert\le\int_{0}^{t}\beta_{s}\lVert e_{s}^{h}\rVert\dif s + f(h),
\end{equ}
where
\begin{equ}
    f(h)=c_3\int_0^t \omega_{Dv_s}^K(c_4 h) \dif s + c_5 h \int_0^t \beta'_s \dif s.
\end{equ}
Gr\"{o}nwall's inequality yields
\begin{equation*}
    \lVert e_{t}^{h}\rVert\le f(h)e^{\int_{0}^{t}\beta_{s}\dif s},
\end{equation*}
hence it remains to show that $\lim_{h\to0} f(h)=0$.
The second term of $f$ clearly vanishes as $h\to0$.
To see that the second term vanishes, we first notice that for any $r>0$ we obtain
\begin{equ}
    \omega_{Dv_s}^K(r) = \sup\{ \lVert Dv_s(x) - Dv_s(y) \rVert : x,y\in K, \lVert x-y\rVert \le r \} \le 2 \lVert Dv_s \rVert_{\infty} \le 2\beta_s,
\end{equ}
which is integrable in time.
As $Dv_s$ is continuous and $K$ is compact, pointwise in time, we have $\lim_{h\to0} \omega_{Dv_s}^K(c_4 h)=0$ and by the dominated convergence theorem, it holds that
\begin{equ}
    \lim_{h\to0} \int_0^t \omega_{Dv_s}^K(c_4 h) \dif s = 0,
\end{equ}
which finishes the proof.
\end{proof}
\section{Fisher--Rao Gradient Flows and Geodesics}
Here, we derive the expressions of the Fisher--Rao gradients of the
KL-divergence and reverse KL-divergence. We show that they point into
the directions of the $m$- and $e$-geodesics, respectively.
\begin{lemma}
[$m$- and $e$-logarithmic maps]
For any two probability measures $\P_0$ and $\P_1$ we have
\begin{equation*}
    \left.\frac{\D}{\D t}\right|_{t=0}\mathbb{P}_{t}^{(m)}=\mathbb{P}_{1}-\mathbb{P}_{0}.
\end{equation*}
If further $\P_0\ll\P_1$ and $D_{\KL}(\P_0, \P_1)<+\infty$, then
\begin{equation*}
    \left.\frac{\D}{\D t}\right|_{t=0}\mathbb{P}_{t}^{(e)}=\left(D_{\textup{KL}}(\mathbb{P}_{0},\mathbb{P}_{1})-\ln\left(\frac{\D\mathbb{P}_{0}}{\D\mathbb{P}_{1}}\right)\right)\mathbb{P}_{0}.
\end{equation*}
\end{lemma}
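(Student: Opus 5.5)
We compute both derivatives directly from the definitions of the two interpolations, interpreting $\left.\frac{\D}{\D t}\right|_{t=0}$ as a derivative in the sense of the Pistone--Sempi tangent space, i.e.\ via the derivative of the log-density with respect to $\P_0$. The $m$-geodesic is affine in $t$, so
\begin{equ}
\frac{\P_t^{(m)}-\P_0}{t} = \P_1-\P_0
\end{equ}
for every $t\neq0$. The limit therefore trivially exists, for instance in total variation, and equals $\P_1-\P_0$. If one wants the tangent-space interpretation, we additionally note that $\P_1-\P_0$ is a finite signed measure, so no further argument is needed at this level of generality.

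For the $e$-geodesic, we choose the reference measure $\Q=\P_1$, which is allowed since $\P_0\ll\P_1$. Set $f\coloneqq\ln\frac{\D\P_0}{\D\P_1}$. Then $\frac{\D\P_t^{(e)}}{\D\P_1}=e^{(1-t)f}/Z(t)$ with $Z(t)=\E_{\P_1}[e^{(1-t)f}]$, and hence
\begin{equ}
\ln\frac{\D\P_t^{(e)}}{\D\P_0}=-tf-\ln Z(t).
\end{equ}
Differentiating at $t=0$ gives $-f-Z'(0)/Z(0)$. Here $Z(0)=\E_{\P_1}[e^f]=1$, and formally
\begin{equ}
Z'(0)=-\E_{\P_1}[fe^f]=-\E_{\P_0}[f]=-D_{\KL}(\P_0,\P_1).
\end{equ}
The log-derivative is therefore $D_{\KL}(\P_0,\P_1)-f$. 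Multiplying by $\P_0$ yields the claim, using the representation $\ddh\P^h=(\ddh\ln\frac{\D\P^h}{\D\P})\P$.

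The main obstacle is justifying the differentiation of $Z$ at $t=0$, together with the finiteness of $Z(t)$ for $t$ near $0$. For $t\in[0,1]$ we have $Z(t)=\E_{\P_0}[e^{-tf}]=\E_{\P_0}[(\D\P_1/\D\P_0)^t]\le1$ by Jensen, using that $\P_0\ll\P_1$ so that $\D\P_1/\D\P_0$ is defined $\P_0$-a.s. We therefore take one-sided derivatives $t\searrow0$. The difference quotients $(e^{-tf}-1)/t$ are monotone in $t$ by convexity of $t\mapsto e^{-tf}$, and they converge to $-f$. The bounds $-f\le(e^{-tf}-1)/t\le e^{-f}-1$ (the latter for $t\le1$) give integrable majorants, since $\E_{\P_0}[|f|]<\infty$ follows from $D_{\KL}(\P_0,\P_1)<\infty$ (the negative part of $f$ is always $\P_0$-integrable) and $\E_{\P_0}[e^{-f}]\le1$. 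Dominated convergence then yields $Z'(0^+)=-D_{\KL}(\P_0,\P_1)$.

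The derivative in the strong Orlicz sense would need exponential integrability of $f$, which is not assumed. I would therefore state the result as a derivative of the log-density pointwise and in $L^1(\P_0)$, consistent with how \Cref{thm:FR-gradients} uses it.
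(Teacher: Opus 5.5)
Your proof is correct and is essentially the paper's computation: normalize the geometric interpolation and differentiate the normalizing constant at $t=0$ to obtain $-D_{\KL}(\P_0,\P_1)$. The only cosmetic difference is that you take $\Q=\P_1$ and differentiate the log-density relative to $\P_0$, whereas the paper applies the quotient rule to the density relative to a general $\Q$. Your convexity and dominated-convergence argument for the one-sided derivative $Z'(0^+)$ justifies the derivative--integral interchange that the paper only performs formally. Your caveats are also accurate: the derivative is in general only one-sided, since $Z(t)$ can be infinite for $t<0$, and an Orlicz-sense derivative would require exponential integrability of $f$.
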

\begin{proof}
The first statement is obtained by differentiating the linear interpolation.
For the second statement, we fix a reference measure $\mathbb{Q}$,
set $\rho_{t}=\frac{\D\mathbb{P}_{t}^{(e)}}{\D\mathbb{Q}}\propto\rho_{0}^{1-t}\rho_{1}^{t},$
denote the normalization constant by $\psi_{t}=\int\rho_{0}^{1-t}\rho_{1}^{t}\dif x$.
We note that $\psi_{0}=1$ and
\begin{equation*}
    \frac{\D}{\D t}\rho_{0}^{1-t}\rho_{1}^{t}=\frac{\D}{\D t}\rho_{0}\exp\left(t\ln\left(\frac{\rho_{1}}{\rho_{0}}\right)\right)=\rho_{0}^{1-t}\rho_{1}^{t}\ln\left(\frac{\rho_{1}}{\rho_{0}}\right).
\end{equation*}
 Further, we have
\begin{equs}
    \left.\frac{\D}{\D t}\right|_{t=0}\psi_{t}
    =\int\left.\frac{\D}{\D t}\right|_{t=0}\rho_{0}^{1-t}\rho_{1}^{t}\dif x
    =\int\rho_{0}^{1-t}\rho_{1}^{t}\ln\left(\frac{\rho_{1}}{\rho_{0}}\right)\dif x|_{t=0}
    =-D_{\textup{KL}}(\mathbb{P}_{0},\mathbb{P}_{1}).
\end{equs}
Overall, we compute
\begin{equs}
    \frac{\D}{\D t}\rho_{t}|_{t=0} & =\frac{\D}{\D t}\left(\frac{\rho_{0}^{1-t}\rho_{1}^{t}}{\psi_{t}}\right)|_{t=0}\\
    & =\frac{\psi_{0}\frac{\D}{\D t}\rho_{0}^{1-t}\rho_{1}^{t}|_{t=0}-\rho_{0}\frac{\D}{\D t}\psi_{t}|_{t=0}}{\psi_{0}^{2}}\\
    & =\rho_{0}\ln\left(\frac{\rho_{1}}{\rho_{0}}\right)+\rho_{0}D_{\textup{KL}}(\mathbb{P}_{0},\mathbb{P}_{1})\\
    & =\left(D_{\textup{KL}}(\mathbb{P}_{0},\mathbb{P}_{1})-\ln\left(\frac{\rho_{0}}{\rho_{1}}\right)\right)\rho_{0}.
\end{equs}
\end{proof}
\begin{theorem}[Fisher--Rao gradient flows and geodesics]
Consider two probability measures $\P$ and $\P^\star$.
If $D_{\KL}(\P^\star, \P)<+\infty$ it holds that
\begin{equation}
\begin{split}
	\nabla_{\mathbb{P}}^{\textup{FR}}D_{\textup{KL}}(\mathbb{P}^{\star},\mathbb{P}) & =\mathbb{P}-\mathbb{P}^{\star},
\end{split}
\end{equation}
in the sense that
\begin{equ}
    \ddh D_{\KL}(\P^\star, \P+h\Xi) = g^{\FR}_{\P}(\P-\P^\star, \Xi)
\end{equ}
for all $\Xi$ such that $D_{\KL}(\P^\star, \P+h\Xi)<+\infty$ for $\lvert h \rvert$ small enough.
Further, if $D_{\KL}(\P, \P^\star)<+\infty$ it holds that
\begin{equation}
    \nabla_{\mathbb{P}}^{\textup{FR}}D_{\textup{KL}}(\mathbb{P},\mathbb{P}^{\star})=\left(\ln\left(\frac{\D\mathbb{P}}{\D\mathbb{P}^{\star}}\right)-D_{\textup{KL}}(\mathbb{P},\mathbb{P}^{\star})\right)\mathbb{P}
\end{equation}
in the sense that
\begin{equ}
    \ddh D_{\KL}(\P+h\Xi, \P^\star) = g^{\FR}_{\P}\left(\left(\ln\left(\frac{\D\mathbb{P}}{\D\mathbb{P}^{\star}}\right)-D_{\textup{KL}}(\mathbb{P},\mathbb{P}^{\star})\right)\mathbb{P}, \Xi\right)
\end{equ}
for all $\Xi\ll\P$ such that $D_{\KL}(\P+h\Xi, \P^\star)<+\infty$ for $\lvert h \rvert$ small enough. 
\end{theorem}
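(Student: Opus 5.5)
The proof reduces both identities to a computation with log-densities, using the definition \eqref{eq:log-expression-FR} of the Fisher--Rao metric. For a tangent vector $\Xi \ll \P$ I write $\Xi = a\,\P$ with $a = \frac{\D\Xi}{\D\P}$. Since $\P + h\Xi$ remains a probability measure, $\E_\P[a]=0$. The log-derivative is
\begin{equ}
\ddh \ln\frac{\D(\P+h\Xi)}{\D\P} = a,
\end{equ}
so $g^{\FR}_\P(b\,\P, a\,\P) = \E_\P[ab]$ whenever the expectation exists. Each gradient identity therefore amounts to computing the directional derivative of the respective KL divergence, writing it as $\E_\P[b\,a]$, and reading off $b\,\P$ as the gradient.

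For the forward KL I would write
\begin{equ}
D_{\KL}(\P^\star,\P+h\Xi) = \E_{\P^\star}\Big[\ln\tfrac{\D\P^\star}{\D\P}\Big] - \E_{\P^\star}[\ln(1+ha)].
\end{equ}
Differentiating at $h=0$ gives $-\E_{\P^\star}[a] = -\E_\P\big[\tfrac{\D\P^\star}{\D\P}\,a\big]$. Because $\E_\P[a]=0$, this equals $\E_\P\big[(1-\tfrac{\D\P^\star}{\D\P})\,a\big]$, which is $g^{\FR}_\P(\P-\P^\star,\Xi)$. For the reverse KL I would compute
\begin{equ}
\ddh \E_\P\Big[(1+ha)\ln\Big((1+ha)\tfrac{\D\P}{\D\P^\star}\Big)\Big] = \E_\P\Big[a\ln\tfrac{\D\P}{\D\P^\star}\Big] + \E_\P[a].
\end{equ}
The last term vanishes, and subtracting the constant $D_{\KL}(\P,\P^\star)\,\E_\P[a]=0$ yields the claimed centred form.

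The main obstacle is justifying the interchange of derivative and expectation. For $|h|$ small the map $h\mapsto\ln(1+ha)$ is concave, so the difference quotients are monotone in $h$. I would use this monotonicity together with the finiteness of the KL divergences at $h\neq0$, which the statement assumes, to apply monotone or dominated convergence. This is the same convexity trick used in Lemma~\ref{lem:first-variation-action}.

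Finally, I would combine these formulas with the preceding lemma on $m$- and $e$-logarithmic maps. That lemma shows the negative gradients coincide with the initial velocities of the $m$- and $e$-geodesics from $\P$ to $\P^\star$, which gives the stated geometric interpretation.
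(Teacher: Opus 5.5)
Your proposal is correct and follows essentially the same route as the paper. Both arguments write $\Xi=a\,\P$ with $\E_\P[a]=0$ and differentiate each divergence under the expectation, obtaining $-\E_{\P^\star}[a]$ for the forward divergence and $\E_\P[a\ln\frac{\D\P}{\D\P^\star}]$ for the reverse one. These are then matched with $\E_\P[ab]$ for $b=1-\frac{\D\P^\star}{\D\P}$ and $b=\ln\frac{\D\P}{\D\P^\star}-D_{\KL}(\P,\P^\star)$, with the interchange justified by the same concavity/convexity domination the paper invokes.

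One small correction: for the reverse divergence, the monotone difference quotients come from convexity of $h\mapsto(1+ha)\ln\big((1+ha)\frac{\D\P}{\D\P^\star}\big)$, not from concavity of $\ln(1+ha)$. Also, requiring $\P+h\Xi\ge 0$ for both signs of $h$ already forces $a\in L^\infty(\P)$, which makes the domination step essentially trivial.
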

\begin{proof}
For the finite-dimensional case, see, for example, \cite{datar2025convergence}.
Here, we provide the proof for general Polish spaces.
To this end, we compute the first variation of $D_{\textup{KL}}(\P^{\star},\P)$ into a direction $\Xi$ such that $D_{\KL}(\P^{\star},\P+h\Xi)$ is finite for $\lvert h \rvert$ sufficiently small.
Note that the finiteness of the Kullback--Leibler divergence together with the concavity of the logarithm allows us to use the dominated convergence theorem, and we obtain
\begin{equs}
    \frac{\D}{\D h}D_{\textup{KL}}(\mathbb{P}^{\star},\P+h\Xi)|_{h=0}
    & =\frac{\D}{\D h}\mathbb{E}_{\mathbb{P}^{\star}}\left[\ln\left(\frac{\D\P^{\star}}{\D(\P+h\Xi)}\right)\right]\Big|_{h=0}\\
    & =\frac{\D}{\D h}\left(-\mathbb{E}_{\mathbb{P}^{\star}}\left[\ln\left(\frac{\D(\P+h\Xi)}{\D\P^{\star}}\right)\right]\right)\Big|_{h=0}\\
    & =-\mathbb{E}_{\mathbb{P}^{\star}}\left[\frac{\D}{\D h}\ln\left(\frac{\D(\P+h\Xi)}{\D\P^{\star}}\right)\Big|_{h=0}\right]\\
    & =-\mathbb{E}_{\mathbb{P}^{\star}}\left[\frac{\frac{\D}{\D h}\frac{\D(\P+h\Xi)}{\D\P^{\star}}}{\frac{\D(\P+h\Xi)}{\D\P^{\star}}}\Big|_{h=0}\right]\\
    & =-\mathbb{E}_{\mathbb{P}^{\star}}\left[\frac{\frac{\D\Xi}{\D\P^{\star}}}{\frac{\D\P}{\D\P^{\star}}}\right]\\
    & =-\mathbb{E}_{\mathbb{P}^{\star}}\left[\frac{\D\Xi}{\D\P}\right]
\end{equs}
Note that $Z=\P-\P^{\star}$ is contained in the tangent space and
we have
\begin{equs}
    g_{\P}^{\textup{FR}}(\Xi,Z) & =\mathbb{E}_{\mathbb{P}}\left[\frac{\mathrm{d}\Xi}{\mathrm{d}\P}\cdot\frac{\mathrm{d}Z}{\mathrm{d}\P}\right]=\mathbb{E}_{\mathbb{P}}\left[\frac{\mathrm{d}\Xi}{\mathrm{d}\P}\right]-\mathbb{E}_{\mathbb{P}}\left[\frac{\mathrm{d}\Xi}{\mathrm{d}\P}\cdot\frac{\mathrm{d}\mathbb{P}^{\star}}{\mathrm{d}\P}\right]=-\mathbb{E}_{\mathbb{P}^{\star}}\left[\frac{\D\Xi}{\D\P}\right].
\end{equs}
For the second part, we fix $\Xi$ with an $L^1(\P)$ density such that $D_{\KL}(\P+h\Xi, \P^\star)$ is finite for $\lvert h \rvert$ small enough.
As the integrand $x\ln x$ is  to compute the first variation
\begin{equs}
    \frac{\D}{\D h}D_{\textup{KL}}(\P+h\Xi,\mathbb{P}^{\star})|_{h=0} & =\frac{\D}{\D h}\mathbb{E}_{\P+h\Xi}\left[\ln\left(\frac{\D(\P+h\Xi)}{\D\P^{\star}}\right)\right]\Big|_{h=0}\\
    & =\mathbb{E}_{\Xi}\left[\ln\left(\frac{\D\P}{\D\P^{\star}}\right)\right]+\mathbb{E}_{\P}\left[\frac{\D}{\D h}\ln\left(\frac{\D(\P+h\Xi)}{\D\P^{\star}}\right)\Big|_{h=0}\right]\\
    & =\mathbb{E}_{\Xi}\left[\ln\left(\frac{\D\P}{\D\P^{\star}}\right)\right]+\mathbb{E}_{\P}\left[\frac{\frac{\D}{\D h}\frac{\D(\P+h\Xi)}{\D\P^{\star}}}{\frac{\D(\P+h\Xi)}{\D\P^{\star}}}\Big|_{h=0}\right]\\
    & =\mathbb{E}_{\Xi}\left[\ln\left(\frac{\D\P}{\D\P^{\star}}\right)\right]+\mathbb{E}_{\P}\left[\frac{\D\Xi}{\D\mathbb{P}}\right]\\
    & =\mathbb{E}_{\Xi}\left[\ln\left(\frac{\D\P}{\D\P^{\star}}\right)\right].
\end{equs}
Note that $Z=(\ln\left(\frac{\D\mathbb{P}}{\D\mathbb{P}^{\star}}\right)-D_{\textup{KL}}(\mathbb{P},\mathbb{P}^{\star}))\mathbb{P}$
is contained in the tangent space, and we have
\begin{equs}
    g_{\mathbb{P}}^{\textup{FR}}(\Xi,Z) & =\mathbb{E}_{\mathbb{P}}\left[\frac{\D\Xi}{\D\mathbb{P}}\cdot\left(\ln\left(\frac{\D\mathbb{P}}{\D\mathbb{P}^{\star}}\right)-D_{\textup{KL}}(\mathbb{P},\mathbb{P}^{\star})\right)\right]\\
    & =\mathbb{E}_{\mathbb{P}}\left[\frac{\D\Xi}{\D\mathbb{P}}\cdot\ln\left(\frac{\D\mathbb{P}}{\D\mathbb{P}^{\star}}\right)\right]-D_{\textup{KL}}(\mathbb{P},\mathbb{P}^{\star})\cdot\mathbb{E}_{\mathbb{P}}\left[\frac{\D\Xi}{\D\mathbb{P}}\right]\\
    & =\mathbb{E}_{\mathbb{P}}\left[\frac{\D\Xi}{\D\mathbb{P}}\cdot\ln\left(\frac{\D\mathbb{P}}{\D\mathbb{P}^{\star}}\right)\right].
\end{equs}
\end{proof}

\endappendix

\section*{Acknowledgements}
We thank Guido Mont\'{u}far for helpful comments on this manuscript. 
The first author acknowledges support from the Max Planck Society through the Research Group ``Stochastic Analysis in the Sciences (SAiS)". This work was supported in part by ERC grant FluCo (Grant Agreement No. 101088488). Funded by the European Union. Views and opinions expressed are, however, those of the author(s) only and do not necessarily reflect those of the European Union or the European Research Council Executive Agency. Neither the European Union nor the granting authority can be held responsible for them.

\small

\bibliographystyle{Martin}
\bibliography{./refs}{}

\end{document}